\newcommand{\bbF}{\mathbb{F}}
\newcommand{\bbQ}{\mathbb{Q}}
\newcommand{\bbR}{\mathbb{R}}
\newcommand{\bbZ}{\mathbb{Z}}
\newcommand{\bbu}{\boldsymbol{1}}
\newcommand{\mcA}{\mathcal{A}}
\newcommand{\mcB}{\mathcal{B}}
\newcommand{\mcC}{\mathcal{C}}
\newcommand{\mcD}{\mathcal{D}}
\newcommand{\mcH}{\mathcal{H}}
\newcommand{\mcT}{\mathcal{T}}
\newcommand{\Fq}{\mathbb{F}_q}
\newcommand{\bB}{\boldsymbol{B}}
\newcommand{\bOB}{\boldsymbol{B'}}
\newcommand{\bM}{\boldsymbol{M}}
\newcommand{\bN}{\boldsymbol{N}}
\newcommand{\bV}{\boldsymbol{V}}
\newcommand{\bW}{\boldsymbol{W}}
\newcommand{\bX}{\boldsymbol{X}}
\newcommand{\frka}{\mathfrak{a}}
\newcommand{\frkp}{\mathfrak{p}}
\newcommand{\frkP}{\mathfrak{P}}
\DeclareMathOperator{\id}{id}
\DeclareMathOperator{\coker}{coker}
\DeclareMathOperator{\End}{End}
\DeclareMathOperator{\Mod}{Mod}
\DeclareMathOperator{\Hom}{Hom}
\DeclareMathOperator{\im}{im}
\DeclareMathOperator{\Tp}{T_{\frkp}}
\DeclareMathOperator{\Vp}{V_{\frkp}}
\DeclareMathOperator{\rk}{rk}
\DeclareMathOperator{\Gal}{Gal}
\DeclareMathOperator{\Mat}{Mat}
\DeclareMathOperator{\Ann}{Ann}
\DeclareMathOperator{\GL}{GL}
\DeclareMathOperator{\Aut}{Aut}
\DeclareMathOperator{\Frac}{Frac}
\DeclareMathOperator{\den}{den}
\DeclareMathOperator{\lcm}{lcm}
\DeclareMathOperator{\pr}{pr}
\DeclareMathOperator{\Rp}{R_\frkp}
\DeclareMathOperator{\Cp}{C_\frkp}
\DeclareMathOperator{\OCp}{C_\frkp^{\prime}}
\DeclareMathOperator{\Dp}{D_\frkp}
\newcommand{\isom}{\cong}
\newcommand{\llkurv}{(\!(}
\newcommand{\rrkurv}{)\!)}
\newcommand{\lleck}{[\![}
\newcommand{\rreck}{]\!]}
\newcommand{\Ssep}{\Sigma^{\mathrm{sep}}}
\newcommand{\To}{\longrightarrow}
\newcommand{\kp}{\bbF_{\frkp}}
\newcommand{\Fp}{F_{\frkp}}
\newcommand{\Ap}{A_{\frkp}}
\newcommand{\ktt}{\Fq\llkurv t\rrkurv}
\newcommand{\Ksep}{K^{\mathrm{sep}}}
\newcommand{\FK}{F_K}
\newcommand{\AK}{A_K}
\newcommand{\FpK}{F_{\frkp,K}}
\newcommand{\FKp}{F_{K,\frkp}}
\newcommand{\OFKsepp}{A_{\Ksep,\frkp}}
\newcommand{\bAKsepp}{\boldsymbol{\AKsepp}}
\newcommand{\FKsepp}{F_{\Ksep,\frkp}}
\newcommand{\OB}{B'}
\newcommand{\AKsepp}{A_{\Ksep,\frkp}}
\newcommand{\bAK}{\boldsymbol{A_K}}
\newcommand{\bFK}{\boldsymbol{F_K}}
\newcommand{\bFFK}{\bFK\!\!\!^{\prime\phantom{.}}}
\newcommand{\bFFFK}{\bFK\!\!\!^{\prime\prime\phantom{.}}}
\newcommand{\FFK}{F_K'}
\newcommand{\bFpK}{\boldsymbol{\FpK}}
\newcommand{\bFKp}{\boldsymbol{\FKp}}
\newcommand{\bFKsepp}{\boldsymbol{F}_{\boldsymbol{\Ksep},\frkp}}
\newcommand{\bR}{\boldsymbol{R}}
\newcommand{\bS}{\boldsymbol{S}}
\newcommand{\bT}{\boldsymbol{T}}
\newcommand{\tils}{\widetilde{s}}
\newcommand{\tilf}{\widetilde{f}}
\newcommand{\GalK}{{\Gamma\!_K}}
\newcommand{\Gp}{G_{\frkp}}
\newcommand{\taulin}{\tau_{\mathrm{lin}}}
\newcommand{\bMcal}{\boldsymbol{\mathcal{M}}}
\newcommand{\bNcal}{\boldsymbol{\mathcal{N}}}
\DeclareMathOperator{\Qp}{C_{\frkp}}
\newcommand{\TFp}{\Cp}
\newcommand{\OQp}{C_{\frkp}^{\prime}}
\newcommand{\AlpK}{A_{(\frkp),K}}
\newcommand{\bAlpK}{\boldsymbol{\AlpK}}
\DeclareMathOperator{\Rep}{Rep}
\DeclareMathOperator{\Vect}{Vec}
\newcommand{\RepApGalK}{\Rep_{\Ap}(\GalK)}
\newcommand{\RepFpGalK}{\Rep_{\Fp}(\GalK)}
\DeclareMathOperator{\Mot}{Mot}
\DeclareMathOperator{\Isomot}{Isomot}
\newcommand{\AMotKeff}{\text{$A$-$\Mot_K^\mathrm{eff}$}}
\newcommand{\AMotK}{\text{$A$-$\Mot_K$}}
\newcommand{\AIsomotK}{\text{$A$-$\Isomot_K$}}
\newcommand{\bRMod}{\text{$\bR$-$\Mod$}}
\newcommand{\bSMod}{\text{$\bS$-$\Mod$}}
\newcommand{\AKMod}{\text{$\bAK$-$\Mod$}}
\newcommand{\FKMod}{\text{$\bFK$-$\Mod$}}
\newcommand{\FKpMod}{\text{$\bFKp$-$\Mod$}}
\newcommand{\FKModpet}{\text{$\bFK$-$\Mod^{\frkp\text{-res}}$}}
\newcommand{\FpKModres}{\text{$\bFpK$-$\Mod^{\frkp\text{-}\mathrm{res}}$}}
\newcommand{\FpKModet}{\FpKModres}
\newcommand{\FKpModres}{\text{$\bFKp$-$\Mod^{\frkp\text{-}\mathrm{res}}$}}
\newcommand{\FKpModet}{\FKpModres}
\newcommand{\AKp}{A_{K,\frkp}}
\newcommand{\ApK}{A_{\frkp,K}}
\newcommand{\bAKp}{\boldsymbol{\AKp}}
\newcommand{\bApK}{\boldsymbol{\ApK}}
\numberwithin{equation}{section}
\theoremstyle{plain} 
\newtheorem{cor}[equation]{Corollary}
\newtheorem{lem}[equation]{Lemma}
\newtheorem{prop}[equation]{Proposition}
\newtheorem{thm}[equation]{Theorem} 
\newtheorem{claim}[equation]{Claim}
\theoremstyle{definition} 
\newtheorem{dfn}[equation]{Definition}
\newtheorem{ex}[equation]{Example}
\theoremstyle{remark} 
\newtheorem{rem}[equation]{Remark}
\newtheorem{constr}[equation]{Construction}
\mathchardef\ordinarycolon\mathcode`\:
\newbox\mybox
\def\arrover#1{\mathrel{
\setbox\mybox=\hbox spread 1.4em{\hfil$\scriptstyle#1$\hfil}
\vbox{\offinterlineskip\copy\mybox \hbox
to\wd\mybox{\rightarrowfill}}}}
\def\larrover#1{\mathrel{
\setbox\mybox=\hbox spread 1.4em{\hfil$\scriptstyle#1$\hfil}
\vbox{\offinterlineskip\copy\mybox \hbox
to\wd\mybox{\leftarrowfill}}}}
\def\ontoover#1{\mathrel{
\setbox\mybox=\hbox spread 1.4em{\hfil$\scriptstyle#1$\hfil}
\vbox{\offinterlineskip\copy\mybox \hbox
to\wd\mybox{\rightarrowfill\hskip-2.8mm $\rightarrow$}}}}
\def\leftontoover#1{\mathrel{
\setbox\mybox=\hbox spread 1.4em{\hfil$\scriptstyle#1$\hfil}
\vbox{\offinterlineskip\copy\mybox \hbox
to\wd\mybox{$\leftarrow$\hskip-2.8mm \leftarrowfill}}}}
\def\into{\hookrightarrow}
\begin{document}
\author{Nicolas Stalder\footnote{Dept. of Mathematics, ETH Zurich, 8092 Zurich, Switzerland, nicolas@math.ethz.ch}}
\title{The Semisimplicity Conjecture for $A$-Motives}
\date{\today}
\maketitle
\begin{abstract}
We prove the semisimplicity conjecture for $A$-motives over finitely generated fields $K$.
This conjecture states that the rational Tate modules $\Vp(M)$ of a semisimple $A$-motive $M$
are semisimple as representations of the absolute Galois group of $K$.
This theorem is in analogy with known results for abelian varieties and Drinfeld modules, and
has been sketched previously by Akio Tamagawa. 

We deduce two consequences of the theorem for the algebraic monodromy groups $G_\frkp(M)$
associated to an $A$-motive $M$ by Tannakian duality. The first requires no semisimplicity
condition on $M$ and states that $G_\frkp(M)$ may be identified naturally with the Zariski closure
of the image of the absolute Galois group of $K$ in the automorphism group of $\Vp(M)$.
The second states that the connected component of $\Gp(M)$ is reductive if $M$ 
is semisimple and has a separable endomorphism algebra. 
\end{abstract}
\tableofcontents

\section{Introduction}

The aim of this article is to prove the following result, which is called the
\emph{semisimplicity conjecture for $A$-motives}.

\begin{thm}\label{thm:mainthm}
Let $K$ be a field which is finitely generated over a finite field.
Let $M$ be a semisimple $A$-motive over $K$ of characteristic $\iota$.
Let $\frkp\neq\ker\iota$ be a maximal ideal of $A$.
Then the rational Tate module $\Vp(M)$ associated to $M$ is semisimple as
$\frkp$-adic representation of the absolute Galois group $\Gal(\Ksep/K)$
of $K$.
\end{thm}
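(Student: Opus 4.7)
My plan is to adapt the strategy that Faltings used for abelian varieties to the function-field setting, following an outline of Tamagawa. Two deep inputs drive the proof: the Tate conjecture (isogeny theorem) for $A$-motives, in the form of a natural isomorphism
\[
\Hom_A(M,N)\otimes_A\Ap \;\isoto\; \Hom_{\Ap[\GalK]}\bigl(\Tp(M),\,\Tp(N)\bigr),
\]
and a finiteness theorem for isogeny classes of $A$-motives over finitely generated fields of positive characteristic (the function-field analogue of Faltings' finiteness).

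First I would reduce to the case that $M$ is a simple object of $\AIsomotK$. By semisimplicity, $M\sim\bigoplus_i M_i$ with each $M_i$ simple; since $\Vp$ is additive, $\Vp(M)\cong\bigoplus_i\Vp(M_i)$, and a direct sum of semisimple Galois representations is semisimple, so it suffices to treat each $M_i$ separately. For simple $M$, the isomotive endomorphism algebra $D$ of $M$ is a finite-dimensional division algebra over $F$ by Schur, and the isogeny theorem identifies $\End_{\GalK}(\Vp(M))$ with the semisimple $\Fp$-algebra $D_\frkp:=D\otimes_F\Fp$.

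Given any $\GalK$-stable subspace $W\subseteq\Vp(M)$, my goal is to produce a $\GalK$-equivariant projector $e\in D_\frkp$ with image $W$; its kernel is then the required $\GalK$-stable complement. Since $D_\frkp$ is semisimple, every principal left ideal is generated by an idempotent, so it is enough to exhibit \emph{any} element $f\in\End_{\GalK}(\Vp(M))$ with image equal to $W$. I would construct such an $f$ by a Faltings-type finiteness argument: form the descending chain of $\GalK$-stable sublattices $L_n:=\Tp(M)\cap(W+\frkp^n\Tp(M))$, observe that each inclusion $\Tp(M)\supseteq L_n$ corresponds (via the $\frkp$-adic avatar of the isogeny theorem) to an isogeny $M\To M_n$ of $A$-motives with $\frkp$-power kernel, and use finiteness of isogeny classes to produce indices $m<n$ with $M_m\isom M_n$ over $K$. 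Composing with the structural isogenies yields endomorphisms of $M$ whose $\Vp$-images accumulate, in the $\frkp$-adic topology on $\End_A(M)\otimes_A\Ap\subseteq\End_{\GalK}(\Vp(M))$, to an element $f$ with image precisely $W$.

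The main obstacle is this last construction. Combining the isogeny and finiteness theorems requires delicate control over the lattice data: the extracted endomorphism must have image \emph{exactly} $W$ rather than a smaller or larger $\GalK$-stable subspace, and the sequence of candidate endomorphisms must converge in the $\frkp$-adic topology. Neither point is automatic, and translating the asymptotic behaviour of lattice quotients under the successive isogenies into a precise statement about an element of the commutant $D_\frkp$ is where most of the proof's technical weight will lie.
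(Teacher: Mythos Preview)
Your approach is genuinely different from the paper's, and it has a real gap at the level of inputs. You invoke ``a finiteness theorem for isogeny classes of $A$-motives over finitely generated fields'' as one of two deep inputs. At the time of this paper the isogeny conjecture for $A$-motives was known only for base fields of transcendence degree $\le 1$ (Pink), not for arbitrary finitely generated $K$; indeed, the introduction explicitly remarks that the isogeny conjecture implies both the Tate and semisimplicity conjectures, and that the present results may help extend Pink's to higher transcendence degree. So your Faltings-style argument, while structurally sound, assumes precisely the finiteness statement that the paper is designed to circumvent. Without that input your chain $M\to M_n$ need not stabilise up to isomorphism, and the extraction of an endomorphism with image $W$ cannot proceed.

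The paper takes an entirely different route, following Tamagawa's \emph{Fontaine-style} outline rather than a Faltings-style one. It factors $\Vp$ through semilinear algebra as
\[
\AIsomotK \xrightarrow{I} \FKModpet \xrightarrow{\bFpK\otimes_{\bFK}(-)} \FpKModet \xrightarrow{\bFKp\otimes_{\bFpK}(-)} \FKpModet \xrightarrow{\Rp} \RepFpGalK,
\]
and shows each arrow is semisimple on objects. The first bold scalar extension is handled by a direct argument using separability of $\Fp/F$ (Theorem~\ref{thm:boldscalarext}); the second requires constructing a ring of periods $B\subset\FKsepp$ with $B^\sigma=\Fp$ and $B^{\GalK}=\FpK$, which yields a coreflection functor $\Cp$ showing that $\Vp$ on $\frkp$-restricted $\bFpK$-modules is fully faithful with essential image closed under subquotients (Theorem~\ref{thm:tfmainthm}). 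No finiteness of isogeny classes is used anywhere; the Tate conjecture is recovered as a by-product (Proposition~\ref{prop:tateconj}) rather than consumed as an input. What your approach would buy, were the finiteness available, is a shorter and more conceptual argument parallel to the abelian-variety case; what the paper's approach buys is an unconditional proof over all finitely generated $K$.
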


The strategy of our proof of the semisimplicity conjecture
is not original, it has been sketched by Tamagawa \cite{Tam95}.

Using the categorical machinery of my article \cite{Sta08},
the following consequences for the algebraic monodromy groups of
$A$-motives ensue formally from Theorem \ref{thm:mainthm}.

\begin{thm}\label{thm:secondmainthm}
Let $K$ be a field which is finitely generated over a finite field.
Let $M$ be an $A$-motive over $K$ of characteristic $\iota$, not necessarily
semisimple.
Let $\frkp\neq\ker\iota$ be a maximal ideal of $A$. Let $G_\frkp(M)$ be
the algebraic monodromy group of $M$, and let $\Gamma_\frkp(M)$ denote
the image of the absolute Galois group $\Gal(\Ksep/K)$ of $K$ in $\Aut_{\Fp}\big(\Vp(M)\big)$.
\begin{enumerate}
  \item The natural inclusion $\Gamma_\frkp(M)\subset\Gp(M)(\Fp)$
  has Zariski dense image.
  \item If $M$ is semisimple and its endomorphism algebra is separable,
  then the connected component of $G_\frkp(M)$ is a reductive group.
\end{enumerate}
\end{thm}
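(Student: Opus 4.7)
The strategy is to recast both statements as assertions about Tannakian fundamental groups of subcategories of $A$-motives, and then to read them off from Theorem \ref{thm:mainthm} using the categorical framework of \cite{Sta08}.

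For part (a), let $\mcM := \langle M\rangle^\otimes$ be the Tannakian subcategory of $A$-motives over $K$ generated by $M$; by definition $G_\frkp(M)$ is its fundamental group with respect to the fiber functor $V_\frkp$. Similarly, let $\mcD := \langle V_\frkp(M)\rangle^\otimes$ inside $\Rep_{\Fp}(\Gal(\Ksep/K))$, whose fundamental group is canonically the Zariski closure $\bar\Gamma$ of $\Gamma_\frkp(M)$ in $\Aut_{\Fp}(\Vp(M))$. The exact tensor functor $V_\frkp \colon \mcM \to \mcD$ has the property that every object of $\mcD$ is by construction a subquotient of $V_\frkp(N)$ for some $N \in \mcM$, so a standard Tannakian criterion (Deligne--Milne) produces a closed immersion $\bar\Gamma \hookrightarrow G_\frkp(M)$. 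To upgrade this to an isomorphism, it suffices to show $V_\frkp$ is fully faithful on $\mcM$---equivalently, that every Galois-stable $\Fp$-subspace of $V_\frkp(N)$ is the image under $V_\frkp$ of some submotive $N' \subset N$. For this, I would apply Theorem \ref{thm:mainthm} to the semisimplification $N^{\mathrm{ss}}$: the resulting semisimplicity of $V_\frkp(N^{\mathrm{ss}})$ matches every such subrepresentation with a direct summand of $N^{\mathrm{ss}}$, and a filtration argument along a Jordan--H\"older sequence of $N$ lifts this back to the desired submotive of $N$.

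For part (b), assume $M$ is semisimple with separable endomorphism algebra. Theorem \ref{thm:mainthm} then yields semisimplicity of $V_\frkp(M)$ as $\Gal(\Ksep/K)$-representation, and part (a) identifies $G_\frkp(M)$ with the Zariski closure of $\Gamma_\frkp(M)$. Hence $G_\frkp(M)$ acts semisimply on $V_\frkp(M)$, and the Tannakian category $\mcM$ is itself semisimple. Separability of $\End(M)$ over $F$ propagates to the generating object of $\mcM$, and the general Tannakian reductivity criterion developed in \cite{Sta08}---valid for a semisimple neutral Tannakian category over a field of positive characteristic whose generating objects carry separable endomorphism algebras---concludes that the identity component of $G_\frkp(M)$ is reductive. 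The separability hypothesis is exactly what is required in positive characteristic to extract genuine reductivity rather than the weaker conclusion that unipotent elements act trivially on $\Vp(M)$.

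The principal obstacle will be the filtration step in part (a), which must lift the submotive/subrepresentation correspondence from the semisimple case to an arbitrary $M$. This requires careful control of the interplay between extensions of $A$-motives in $\mcM$ and extensions of their Tate modules under $V_\frkp$, and it is precisely here that the categorical machinery of \cite{Sta08} becomes indispensable. Once part (a) is in hand, part (b) is a direct application of the general Tannakian reductivity criterion.
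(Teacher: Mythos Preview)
Your proposal for part (a) has a genuine gap. You claim it suffices to show that every Galois-stable $\Fp$-subspace $W\subset\Vp(N)$ arises as $\Vp(N')$ for a sub-$A$-isomotive $N'\subset N$, and you plan to reduce this to the semisimple case via Theorem \ref{thm:mainthm}. But this subobject correspondence is \emph{false} in general, already for simple $N$: if $N$ is a simple $A$-isomotive with $\End(N)=F$, then $\Vp(N)$ is semisimple by Theorem \ref{thm:mainthm} but need not be simple over $\Fp$; a proper simple $\Fp[\GalK]$-summand of $\Vp(N)$ cannot be $\Vp(N')$ for any nonzero proper $N'\subset N$, since $N$ has none. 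The underlying issue is the change of scalars: $\llkurv M\rrkurv_\otimes$ is $F$-linear while $\llkurv\Vp(M)\rrkurv_\otimes$ is $\Fp$-linear, so the naive Deligne--Milne criterion for surjectivity of $\overline{\Gamma}\hookrightarrow G_\frkp(M)$ does not apply verbatim, and no filtration argument will manufacture the missing $F$-linear subobjects.

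What the paper actually does is feed \emph{two} ingredients into the comparison theorem from \cite{Sta08} (Theorem \ref{thm:sta08y} here): Theorem \ref{thm:mainthm} supplies ``semisimple on objects'', and the Tate conjecture (Proposition \ref{prop:tateconj}) supplies $\Fp/F$-\emph{full faithfulness}, i.e.\ $\Fp\otimes_F\Hom(X,Y)\isoto\Hom_{\GalK}(\Vp X,\Vp Y)$. That theorem is designed precisely to handle the non-neutral scalar extension and conclude $G_U(\Vp M)\isom G_{U\circ\Vp}(M)$; combined with Proposition \ref{prop:monodromygroupsofabstractgroups} this gives the Zariski density. You never invoke the Tate conjecture, and without it the argument cannot close.

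Your sketch of part (b) is essentially correct and matches the paper's approach, but note that the key step ``separability of $\End(M)$ propagates'' again rests on the Tate conjecture: one needs $\End_{G_\frkp(M)}(\Vp M)\isom\Fp\otimes_F\End(M)$ to transport the separability hypothesis to the $\Fp$-side before applying the reductivity criterion (Proposition \ref{thm:redcrit}). Also, you do not need (and should not try to prove) that the entire category $\llkurv M\rrkurv_\otimes$ is semisimple; the criterion only requires that the single faithful representation $\Vp(M)$ be semisimple with separable commutant.
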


The concept of effective $A$-motives was invented by Anderson \cite{And86} in the
case $A=\Fq[t]$ for perfect $K$ under the name of $t$-motives. They may be viewed as analogues
of Grothendieck's pure motives, and even the conjectural heart of Voevodsky's
derived mixed motives, with the essential
difference that both the field of definition \emph{and} the ring of coefficients
of an $A$-motive are of positive characteristic. For an introduction to the theory of $A$-motives
we refer to the original source \cite{And86} and the books of Goss \cite{Gos96} and Thakur \cite{Tha04}.

The semisimplicity conjecture is an analogue of the Grothendieck-Serre conjecture which asserts the
semisimplicity of the etale cohomology groups of pure motives. This analogue has been proven only in
the case of abelian varieties, by Faltings \cite{Fal83} for fields of definition of characteristic
zero, and by Zarhin \cite{Zar76} for fields of definition of positive characteristic.

The semisimplicity conjecture is closely connected with two other conjectures,
the Tate conjecture and the isogeny conjecture. Only the conjunction of the Tate conjecture with the
semisimplicity conjecture allows us to deduce the consequences for the algebraic monodromy groups
of $A$-motives. The Tate conjecture characterises Galois-invariant endomorphisms of the associated
Tate modules. It has been proven independently by Tamagawa \cite{Tam94a} and Taguchi \cite{Tag95,Tag96} and will be
reproven in this article (Proposition \ref{prop:tateconj}).
The isogeny conjecture on the other hand is a fundamental finiteness statement which, as in the
case of abelian varieties, implies both the Tate conjecture and the semisimplicity conjecture.
For fields of definition of transcendence degree $\le 1$, the isogeny conjecture has been proven quite recently
by Pink \cite{Pin07}, using a different method. It seems that his results combined with
ours allow to deduce the isogeny conjecture for all finitely generated fields of
definition.

A special class of $A$-motives arises from Drinfeld modules. All such $A$-motives
are semisimple, and the semsimplicity conjecture for this class has been proven previously
by Taguchi in \cite{Tag91,Tag93} for fields of definition of transcendence degree $\le 1$.

We end the introduction with an overview of this article. In Section $2$ we
construct the rigid tensor category $\AMotK$ of $A$-motives in the spirit of Taelman \cite{Tae07},
containing the full subcategory $\AMotKeff$ of effective $A$-motives.
Inverting isogenies, we obtain the Tannakian category of $A$-isomotives. We introduce
the integral Tate module functors $\Tp$ with values in the categories of integral
$\frkp$-adic Galois representations $\RepApGalK$. They induce the rational
Tate module functors $\Vp$ with values in the Tannakian categories of rational $\frkp$-adic
Galois representations $\RepFpGalK$.
\[\xymatrix{
\AMotKeff \ar@{}[r]|{\subset} & \AMotK \ar[rr]^{\Tp}\ar[d] && \RepApGalK \ar[d]^{\Fp\otimes_{\Ap}(-)}\\
& \AIsomotK \ar[rr]_{\Vp} && \RepFpGalK
}\]

Section $3$ begins with the introduction of some terminology for semilinear
algebra: the notions of bold rings $\bR$, bold modules $\bM$, restricted bold modules
and bold scalar extension of modules from one bold ring to another.
Its main result concerns the study of bold scalar extension in a special situation.

In Section $4$ we show that the category of $A$-isomotives embeds into the
category $\FKModpet$ of $\frkp$-restricted bold modules over a certain bold ring $\bFK$.
We recall the classification of $\frkp$-adic Galois
representations in terms of the category $\FKpModet$ of $\frkp$-restricted $\bFKp$-modules, which employs
the functor $\Dp$ of Dieudonn\'e modules. In this translation to semilinear algebra, the functor induced
by the Tate module functor is of a rather simple form, it is the functor
$\bFKp\otimes_{\bFK}(-)$ of bold scalar extension from $\bFK$ to $\bFKp$. Following Tamagawa, we introduce an intermediate
bold ring $\bFK\subset\bFpK\subset\bFKp$, which allows to factor the above bold scalar extension
functor through the category of $\FpKModet$ of $\frkp$-restricted $\bFpK$-modules.
\[\xymatrix{
\AIsomotK \ar@{^(->}[d]_I \ar[rrrr]^{\Vp} &&&& \RepFpGalK \ar[d]_{\Dp}^{\isom}\\
\FKModpet \ar[rr]_{\bFpK\otimes_{\bFK}(-)} && \FpKModet \ar[rr]_{\bFKp\otimes_{\bFpK}(-)} && \FKpModet
}\]
The main result of Section $3$ then implies that the bold scalar extension functor $\bFpK\otimes_{\bFK}(-)$
maps semisimple objects to semisimple objects.

Sections $5$ and $6$ follow Tamagawa in constructing a certain bold ring $\bB$ which
induces a functor $\Cp$ from rational $\frkp$-adic Galois representations to $\frkp$-restricted $\bFpK$-modules.
All this is very much in the spirit of Fontaine theory, note however that we are dealing with
global Galois representations, not local Galois representations as in Fontaine theory.
\[\xymatrix{
&&\RepFpGalK \ar[d]^{\Dp} \ar[dll]_{\Cp}\\
\FpKModet \ar[rr]_{\bFKp\otimes_{\bFpK}(-)} && \FKpModet
}\]
The functor $\Cp$ has a variety of favourable properties. Among others, it allows to decide which Galois representations
arise from a $\frkp$-restricted $\bFpK$-module\footnote{Tamagawa calls such representations quasigeometric.} by a numerical criterion.
It also ensures that the bold scalar extension functor $\bFKp\otimes_{\bFpK}(-)$
maps semisimple objects to semisimple objects. Thereby, the proof of Theorem \ref{thm:mainthm}
is completed.

Finally, Section $7$ introduces the algebraic monodromy groups associated to $A$-isomotives
via Tannakian duality applied to the fibre functor $\Vp$ of Tate modules. We deduce
Theorem \ref{thm:secondmainthm} from Theorem \ref{thm:mainthm}, using results
from my article \cite{Sta08}.

This article as well as \cite{Sta08} are developments of my Ph.D. thesis. It is my
pleasure to thank Richard Pink for his guidance during my doctoral studies. I also wish
to thank Akio Tamagawa for helpful email exchanges, and encouraging me to publish
this article on my own.

\section{$A$-Isomotives}

Let $F$ be a global field of positive characteristic $p$, with finite field of constants $\Fq$
of cardinality $q$. Fix a finite non-empty set $\{\infty_1,\ldots,\infty_s\}$ of places of $F$,
the ``infinite'' places.
Denote by $A$ the subring of $F$ consisting of those elements integral outside the infinite places. Choose
a field $K$ containing $\Fq$, and set $\AK:=A\otimes_{\Fq}K$, this is a Dedekind ring. Choose
also an $\Fq$-algebra homomorphism $\iota:\:A\to K$, it corresponds to a prime ideal $\frkP_0$ of $\AK$
of degree $1$. If $\iota$ is injective, we say that the characteristic is generic. If not,
we say that the characteristic is special.

Let $\sigma_q$ denote the Frobenius endomorphism $c\mapsto c^q$ of $K$, and
let $\sigma$ denote the induced endomorphism $a\otimes c\mapsto a\otimes c^q$ of $\AK$.
For any $\AK$-module $M$, a \emph{$\sigma$-linear map} $\tau:\:M\to M$ is an additive map which
satisfies $\tau(r\cdot m)=\sigma(r)\cdot\tau(m)$ for all $(r,m)\in\AK\times M$.

Note that to give a $\sigma$-linear map $\tau:\:M\to M$ is equivalent to giving its
\emph{linearisation} $\taulin:\:\sigma_*M:=\AK\otimes_{\sigma,\AK} M\to M,\:r\otimes m\mapsto r\cdot\tau(m)$,
which is an $\AK$-linear map.

\begin{dfn}
An \emph{effective $A$-motive over $K$} (of characteristic $\iota$) is a finitely generated projective $\AK$-module $M$ together
with a $\sigma$-linear map $\tau:\:M\to M$ such that the support of $M/(\AK\cdot\tau M)$ is contained
in $\{\frkP_0\}$.
The \emph{rank} $\rk(M)$ of an effective $A$-motive $(M,\tau)$ is the rank of its underlying $\AK$-module $M$.
\end{dfn}

\begin{dfn}
Let $M$ and $N$ be effective $A$-motives over $K$. A \emph{homomorphism} $M\to N$ is an $\AK$-linear map
that commutes with $\tau$. An \emph{isogeny} is an injective homomorphism with torsion cokernel
(as a homomorphism of $\AK$-modules).
\end{dfn}

The category $\AMotKeff$ of effective $A$-motives over $K$ is an $A$-linear category.
While the kernels and cokernels of all homomorphisms exist categorically, it is not
an abelian category since the categorical kernel and cokernel of an isogeny are
both zero, even though not all isogenies are isomorphisms.

\begin{dfn}
Let $(M,\tau_M)$ and $(N,\tau_N)$ be effective $A$-motives over $K$. The \emph{tensor product} $M\otimes N$ of $M$ and $N$
is the effective $A$-motive consisting of the $\AK$-module $M\otimes_{\AK}N$ together with
the $\sigma$-linear map
\[\tau:\:M\otimes_{\AK}N\to M\otimes_{\AK}N,\quad m\otimes n\mapsto \tau_M(m)\otimes\tau_N(n).\]
\end{dfn}

Endowed with this tensor product, the category $\AMotKeff$ is an associative, commutative and
unital tensor category. The unit $\bbu$ is given by $\AK$ itself, equipped with the $\sigma$-linear
map $\sigma$ itself. However, it is not a rigid tensor category, since the dual of an effective $A$-motive
$M$ does not exist except if its $\taulin$ is bijective.

\begin{prop}\label{prop:forcompofamothoms}
Let $L,M,N$ be effective $A$-motives over $K$. If $L$ is of rank $1$, then the natural homomorphism
\[\Hom(M,N)\To\Hom(M\otimes L,N\otimes L),\quad f\mapsto f\otimes\id\]
is an isomorphism.
\end{prop}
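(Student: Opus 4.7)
The plan is to reduce the problem to underlying $\AK$-modules, where $L$ \emph{is} invertible. Since $\AK$ is a Dedekind ring and $L$ is finitely generated projective of rank one, $L$ is an invertible $\AK$-module, so the functor $-\otimes_{\AK}L$ is an autoequivalence of $\AK$-modules and induces an $\AK$-linear isomorphism
\[\Phi:\:\Hom_{\AK}(M,N)\isoto\Hom_{\AK}(M\otimes L,N\otimes L),\quad f\mapsto f\otimes\id_L.\]
Functoriality of the tensor product shows that $\Phi$ sends $\tau$-compatible homomorphisms to $\tau$-compatible ones, so the morphism in the statement is the restriction of $\Phi$ to these subspaces. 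Injectivity is thus automatic, and the task reduces to showing that $\Phi^{-1}$ also preserves $\tau$-compatibility.

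Before addressing surjectivity I would isolate the following observation, needed later to cancel $\tau_L$: the linearisation $\tau_L^{\mathrm{lin}}\colon\sigma^*L\to L$ is injective. Indeed, both $\sigma^*L$ and $L$ are invertible $\AK$-modules, so a nonzero kernel would necessarily be of generic rank one, forcing $\tau_L^{\mathrm{lin}}$ to vanish at the generic point of $\Spec\AK$ and $\coker(\tau_L^{\mathrm{lin}})=L/(\AK\cdot\tau_LL)$ to have full rank there. This contradicts the support condition $\Supp\coker(\tau_L^{\mathrm{lin}})\subset\{\frkP_0\}$ coming from the effectiveness of $L$.

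For surjectivity, given a $\tau$-compatible $g\colon M\otimes L\to N\otimes L$, I set $f:=\Phi^{-1}(g)$ and introduce the obstruction $\delta:=f\circ\tau_M^{\mathrm{lin}}-\tau_N^{\mathrm{lin}}\circ\sigma^*f\colon\sigma^*M\to N$, which vanishes precisely when $f$ is $\tau$-compatible. Using the canonical identifications $\tau_{M\otimes L}^{\mathrm{lin}}=\tau_M^{\mathrm{lin}}\otimes\tau_L^{\mathrm{lin}}$ and $\tau_{N\otimes L}^{\mathrm{lin}}=\tau_N^{\mathrm{lin}}\otimes\tau_L^{\mathrm{lin}}$, the $\tau$-compatibility of $g=f\otimes\id_L$ unfolds to the single equation $\delta\otimes\tau_L^{\mathrm{lin}}=0$ of $\AK$-linear maps $\sigma^*M\otimes\sigma^*L\to N\otimes L$. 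Factoring this as $(\id_N\otimes\tau_L^{\mathrm{lin}})\circ(\delta\otimes\id_{\sigma^*L})$, and noting that $N$ is flat (being projective) so that $\id_N\otimes\tau_L^{\mathrm{lin}}$ inherits the injectivity of $\tau_L^{\mathrm{lin}}$, one concludes $\delta\otimes\id_{\sigma^*L}=0$. Invertibility of $\sigma^*L$ then yields $\delta=0$, as required.

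The main obstacle is that $L$ is not invertible in $\AMotKeff$ itself---its linearisation $\tau_L^{\mathrm{lin}}$ need not be bijective, only injective with small cokernel---so one cannot directly invoke an equivalence of categories at the level of $A$-motives. The workaround is to descend to $\AK$-modules, where invertibility does hold, and to exploit the effectiveness hypothesis only to the minimal extent of securing injectivity of $\tau_L^{\mathrm{lin}}$, which is exactly what is needed to cancel it from the relation $\delta\otimes\tau_L^{\mathrm{lin}}=0$.
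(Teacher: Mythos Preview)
Your proof is correct and follows essentially the same approach as the paper: both reduce to the underlying $\AK$-module level, where $L$ is genuinely invertible, and then cancel $\tau_L$ from the relation expressing $\tau$-compatibility of $g=f\otimes\id_L$. The paper's version is terser and simply asserts that $(f\circ\tau_M)\otimes\tau_L=(\tau_N\circ f)\otimes\tau_L$ implies $f\circ\tau_M=\tau_N\circ f$, without justifying the cancellation; your careful argument via the injectivity of $\tau_L^{\mathrm{lin}}$ (deduced from the support condition) and flatness of $N$ fills in exactly this step.
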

\begin{rem}
If a dual $L^\vee$ of $L$ would exist in the category of effective $A$-motives,
then Proposition \ref{prop:forcompofamothoms} would be trivial: We could
simply ``twist back'' using $L^\vee$. This is true more generally for
invertible objects in tensor categories, and we will use this fact
in the following without further mention.
\end{rem}
\begin{proof}\hspace{-4pt}\footnote{Compare \cite[2.2.1, Lemma]{Tae07}.}\hspace{3pt}
The given homomorphism is induced by the bijective homomorphism
$\Hom_{\AK}(M,N)\to\Hom_{\AK}(M\otimes_{\AK}L,N\otimes_{\AK}L),\:f\mapsto f\otimes\id$ of the underlying $\AK$-modules,
so it is injective. An $\AK$-linear map $g=f\otimes 1:\:M\otimes_{\AK}N\to M\otimes_{\AK}N$
is a homomorphism of effective $A$-motives if $(f\circ\tau_M)\otimes\tau_L=(\tau_N\circ f)\otimes\tau_L$.
This implies that $f\circ\tau_M=\tau_N\circ f$, so $f$ is a homomorphism of effective $A$-motives,
as required.
\end{proof}

\begin{dfn}
An \emph{$A$-motive over $K$} is a pair $X=(M,L)$ consisting
of two effective $A$-motives over $K$ of which $L$ is of rank $1$.
\end{dfn}

\begin{dfn}
Let $(M',L')$ and $(M,L)$ be $A$-motives over $K$. A \emph{homomorphism}
$(M',L')\to (M,L)$ of $A$-motives is a homomorphism $M'\otimes L\to M\otimes L'$ of effective
$A$-motives over $K$. If the latter is an isogeny, then we say
that the given homomorphism of $A$-motives is an isogeny.
\end{dfn}

\begin{ex}
Let $X=(M,L)$ be an $A$-motive. For every $0\neq a\in A$, the homomorphism
$M\otimes_{\AK}L\to M\otimes_{\AK}L,\:m\otimes l\mapsto a\cdot m\otimes l$ is
an isogeny $[a]_X:\:X\to X$, the \emph{scalar isogeny} of $X$ induced by $a$.
\end{ex}

Given this definition of homomorphisms of $A$-motives, it is not completely
obvious how to compose two homomorphisms. We will use Proposition \ref{prop:forcompofamothoms}.
Let $X'=(M',L')$, $X=(M,L)$ and $X''=(M'',L'')$ be $A$-motives over $K$. We define the
composition of homomorphisms as follows, where the isomorphisms are given by Proposition
\ref{prop:forcompofamothoms} and $\to$ is the composition of homomorphisms of effective $A$-motives:
\[
\def\objectstyle{\scriptstyle}
\def\labelstyle{\scriptstyle}
\xymatrix{
\Hom(X',X)\times\Hom(X,X'') \ar@{=}[d]\\
\Hom(M'\otimes L,M\otimes L')\times\Hom(M\otimes L'',M''\otimes L) \ar[d]^{\isom}\\
\Hom(M'\otimes L\otimes L'',M\otimes L'\otimes L'')\times\Hom(M\otimes L'\otimes L'',M''\otimes L'\otimes L)\ar[d]\\
\Hom(M'\otimes L\otimes L'',M''\otimes L'\otimes L)\\
\Hom(M'\otimes L'',M''\otimes L') \ar@{=}[d]\ar[u]_{\isom}\\
\Hom(X',X'').
}\]

The category $\AMotK$ of $A$-motives over $K$ is an $A$-linear category.
Note that the direct sum of two $A$-motives $X'=(M',L')$ and $X=(M,L)$
is given by $X'\oplus X=\big((M'\otimes L)\oplus(M\otimes L'),L'\otimes L)$.

We have a natural functor from effective $A$-motives
to $A$-motives, mapping $M$ to $(M,\bbu)$.

\begin{dfn}
The \emph{tensor product} of two $A$-motives $X'=(M',L')$ and $X=(M,L)$
is the $A$-motive \[X'\otimes X=(M'\otimes M,L'\otimes L).\]
\end{dfn}

\begin{dfn}
Let $X=(M,L)$ be an $A$-motive, and let $d\ge 0$ be an integer.
The \emph{$d$-th exterior power} $\bigwedge^d X$ of $X$
is the $A$-motive $(\bigwedge^dM,L)$, where $\bigwedge^d M$
denotes the $d$-th exterior power of the $\AK$-module underlying
$M$ together with the unique $\sigma$-linear endomorphism such
that the homomorphism $\bigotimes_{\AK}^dM\to\bigwedge_{\AK}M$
is a homomorphism of $A$-motives.

We denote the second-highest and highest nontrivial exterior
powers of $X$ as $M^*:=\bigwedge^{\rk(M)-1}M$ and $\det(M):=\bigwedge^{\rk(M)}M$,
respectively.
\end{dfn}

\begin{prop}
The category $\AMotK$ of $A$-motives over $K$ is a rigid
$A$-linear tensor category, and the natural functor $\AMotKeff\to\AMotK$
is a fully faithful $A$-linear tensor functor.
\end{prop}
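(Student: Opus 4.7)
The plan is to break the proposition into three tasks: (i) equip $\AMotK$ with a symmetric monoidal structure, (ii) exhibit a dual for every object, and (iii) verify that $M \mapsto (M, \bbu)$ is a fully faithful tensor functor. The bulk of the technical content lies in (ii).

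For (i), I would take the tensor product and the associativity, symmetry and unit constraints all componentwise from $\AMotKeff$, with unit $(\bbu, \bbu)$. The only point requiring care is that composition and the tensor product of morphisms in $\AMotK$ are well-defined; both are formal consequences of Proposition \ref{prop:forcompofamothoms}, which permits the insertion or cancellation of rank-one twist factors. Symmetric monoidal coherence then descends from $\AMotKeff$.

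For (ii), fix $X = (M, L)$ with $n := \rk(M) \geq 1$. Guided by the heuristic identification $(M, L) \leftrightarrow M \otimes L^{-1}$ and the standard isomorphism $M^\vee \cong M^* \otimes (\det M)^{-1}$ for a rank-$n$ projective $\AK$-module, I set
\[X^\vee := (M^* \otimes L,\: \det M).\]
(Rank-zero objects are zero, hence trivially self-dual.) Unwinding the definition of $\Hom$ in $\AMotK$, an evaluation $\ev : X^\vee \otimes X \to \bbu$ is the same datum as a morphism $M^* \otimes M \otimes L \to \det M \otimes L$ in $\AMotKeff$, for which I take the wedge pairing $M^* \otimes M \to \det M$ tensored with $\id_L$. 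Using Proposition \ref{prop:forcompofamothoms}, a coevaluation $\delta : \bbu \to X \otimes X^\vee$ is equivalent to a morphism $\det M \to M \otimes M^*$ in $\AMotKeff$, for which I take the standard dual coevaluation, locally given by $e_1 \wedge \cdots \wedge e_n \mapsto \sum_{i=1}^{n} (-1)^{i-1} e_i \otimes (e_1 \wedge \cdots \wedge \widehat{e_i} \wedge \cdots \wedge e_n)$, and globally well-defined on the Dedekind ring $\AK$. Both maps are $\tau$-equivariant because $\tau$ is multiplicative on tensor and exterior powers.

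I expect the main obstacle to be the verification of the two snake identities for $(\ev, \delta)$. Once unwound in $\AMotK$ via Proposition \ref{prop:forcompofamothoms}, they reduce to identities between morphisms in $\AMotKeff$ built from the wedge pairing and the dual coevaluation; these in turn amount to classical identities for the standard duality $M \leftrightarrow \Hom_{\AK}(M, \AK)$ on finitely generated projective $\AK$-modules, twisted by $\det M$, with the twists cancelling. One verifies them locally on $\Spec(\AK)$, where $M$ is free, and concludes by gluing. Task (iii) is then direct: the identity $\Hom((M', \bbu), (M, \bbu)) = \Hom(M' \otimes \bbu, M \otimes \bbu) = \Hom(M', M)$ gives full faithfulness, while $(M, \bbu) \otimes (N, \bbu) = (M \otimes N, \bbu)$ together with $\bbu \mapsto (\bbu, \bbu)$ supplies the tensor functor structure; $A$-linearity is built in.
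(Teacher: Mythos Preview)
Your proposal is correct and follows exactly the approach the paper takes: the paper suppresses all details and records only the formula $X^\vee=(M^*\otimes L,\det M)$, which is precisely your candidate dual. Your sketch of the evaluation, coevaluation, and snake identities simply fills in the details the paper omits, and your argument for full faithfulness of $\AMotKeff\to\AMotK$ is the intended one.
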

\begin{proof}
We suppress the details, remarking only that the
dual of an $A$-motive $X=(M,L)$ is given by $X^\vee:=(M^*\otimes L,\det M)$.
\end{proof}

Considering $\AMotKeff$ as a subcategory of $\AMotK$,
we note that an $A$-motive $X=(M,L)$ is the internal Hom $\mcH om(L,M)$
of the effective $A$-motives $M$ and $L$.

The category of $A$-motives is again not an abelian category. To obtain such a category,
we must invert those homomorphisms which have both zero kernel and zero cokernel, the isogenies.
We start by studying isogenies more carefully.

We will see that every isogeny is a factor of a scalar isogeny
(Proposition \ref{prop:isogfactstand}). This will allow us to
``invert isogenies'' by inverting scalar isogenies, technically
a simpler task.

\begin{dfn}
\begin{enumerate}
\item
A \emph{torsion $\bAK$-module} is a finitely-generated torsion $\AK$-module $T$
together with a $\sigma$-linear map $\tau:\:T\to T$. A homomorphism of torsion
$\bAK$-modules is a $\tau$-equivariant homomorphism of $\AK$-modules. The category
of torsion $\bAK$-modules is an $A$-linear abelian category, and has an
evident tensor product.
\item
We say that a torsion $\bAK$-module $(T,\tau)$ is \emph{of characteristic $\iota$}
if the supports of both kernel and cokernel
of $\taulin$ are contained in $\{\frkP_0\}$.
\end{enumerate}
\end{dfn}

Given an isogeny $f:\:M\to N$ of effective $A$-motives, the quotient $T:=N/f(M)$
in the category of $\AK$-modules inherits a $\sigma$-linear map, so
$T$ is a torsion $\bAK$-module. Note that it is of characteristic $\iota$.
If necessary, we denote $(T,\tau)$ by $\coker_{\bAK}(f)$.

\begin{dfn}
Let $f:\:M'\to M$ be an isogeny of effective $A$-motives, and set $(T,\tau):=\coker_{\bAK}(f)$.
The isogeny $f$ is \emph{separable} if $\taulin$ is bijective. The isogeny is \emph{purely inseparable} if $\tau$ is nilpotent.
We extend these two notions to isogenies of $A$-motives via the corresponding isogenies of effective $A$-motives.
\end{dfn}
%

With an eye towards our interest in isogenies of $A$-motives, we turn
to a discussion (Theorem \ref{thm:discusstorsionakmods}) of the structure
of the associated torsion $\bAK$-modules of characteristic $\iota$.

We intersperse a discussion of the connection of torsion $\bAK$-modules with bijective
$\taulin$ with Galois representations. The natural place for this would be later in the article, but it
will be useful in the proof of the next theorem.

\begin{dfn}
Let $\GalK:=\Gal(\Ksep/K)$ denote the absolute Galois group of $K$.
An \emph{$A$-torsion Galois representation} is an $A$-module $V$ of finite length
together with a group homomorphism $\rho:\:\GalK\to\Aut_A(V)$.
\end{dfn}

\begin{dfn}
\begin{enumerate}
  \item Let $(T,\tau)$ be a torsion $\bAK$-module such that $\taulin$ is bijective.
    We set $R_q(T,\tau):=(\Ksep\otimes_KT)^{\tau}$, taking $\tau$-invariants
    with respect to the diagonal action\footnote{``R'' for representation.}. Note that the action of $\GalK$ on $\Ksep$
    induces an action of $\GalK$ on $R_q(T,\tau)$.
  \item Let $(V,\rho)$ be an $A$-torsion Galois representation.
    We set $D_q(V,\rho):=(\Ksep\otimes_{\Fq}V)^{\GalK}$, taking $\GalK$-invariants
    with respect to the diagonal action\footnote{``D'' for Dieudonn\'e.}. Note that the $\sigma$-linear endomorphism $\sigma_q$
    of $\Ksep$ induces a $\sigma$-linear endomorphism $\tau$ of $D_q(V,\rho)$.
\end{enumerate}
\end{dfn}

\begin{prop}\label{prop:torsiongaloisrepsclassified}
Let $\GalK:=\Gal(\Ksep/K)$ denote the absolute Galois group of $K$.
The functors $D_q,R_q$ are quasi-inverse equivalences of $A$-linear rigid abelian
tensor categories:
\[\xymatrix{
\left(\!\!\left(\text{\begin{tabular}{c}$A$-torsion \\ Galois representations\end{tabular}}\right)\!\!\right) \ar@{}[r]|-{\isom} \ar@<2ex>[r]^-{D_q}&
\left(\!\!\left(\text{\begin{tabular}{c}torsion $\bAK$-modules \\ with bijective $\taulin$\end{tabular}}\right)\!\!\right) \ar@<2ex>[l]^-{R_q}
}\]
Moreover, the following is true:
\begin{enumerate}
  \item $\dim_KD(V,\rho)=\dim_{\Fq}V$ for every $A$-torsion Galois representation.
  \item The homomorphism $\Ksep\otimes_{\Fq} R_q(T,\tau)\to\Ksep\otimes_KT$ is an isomorphism
    for every torsion $\bAK$-module $(T,\tau)$ with bijective $\taulin$.
  \item The homomorphism $\Ksep\otimes_K D_q(V,\rho)\to\Ksep\otimes_{\Fq}V$ is an isomorphism
    for every $A$-torsion Galois representation $(V,\rho)$.
\end{enumerate}
\end{prop}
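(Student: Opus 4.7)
The plan is to regard the proposition as a twofold descent statement and reduce it to two classical facts: Galois descent (Hilbert 90) for $\Ksep$-vector spaces and Lang-Steinberg descent for $\sigma_q$-semilinear Frobenius operators. The initial observation is that a finitely generated torsion $A$-module has finite length, hence is a finite-dimensional $\Fq$-vector space, and likewise a finitely generated torsion $\AK$-module is a finite-dimensional $K$-vector space. Thus on both sides of the claimed equivalence we have finite-dimensional vector spaces decorated with a commuting $A$-action, and the proposition reduces to the analogous equivalence after forgetting $A$.

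First I would establish the key vector-space descents. Hilbert 90 in the form needed states that for any finite-dimensional $\Ksep$-vector space $W$ carrying a continuous semilinear $\GalK$-action, the natural map $\Ksep \otimes_K W^{\GalK} \to W$ is an isomorphism; this reduces to classical Hilbert 90 after passing to a finite Galois extension that trivializes the action. Applied to $W = \Ksep \otimes_{\Fq} V$ with the diagonal action, this yields property (c), and comparing $\Ksep$-dimensions gives property (a). Lang-Steinberg descent states that for any finite-dimensional $K$-vector space $T$ with a $\sigma_q$-semilinear endomorphism $\tau$ of bijective $K$-linearization, the $\Fq$-subspace $U := (\Ksep \otimes_K T)^{\sigma_q \otimes \tau}$ has $\Fq$-dimension $\dim_K T$ and the map $\Ksep \otimes_{\Fq} U \to \Ksep \otimes_K T$ is an isomorphism. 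This is proved by choosing a $K$-basis, whereupon the fixed-point condition becomes a system $X_i^q = \sum_j a_{ij} X_j$ with $(a_{ij}) \in \GL_n(\Ksep)$; by the Jacobian criterion the associated $\Ksep$-algebra is etale of rank $q^n$, and since $\Ksep$ is separably closed it splits as $\Ksep^{q^n}$. Applied to $T$, this gives property (b).

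With (a), (b), (c) in hand, the quasi-inverse property is a direct computation using the descent isomorphisms and the identifications $\Ksep^{\sigma_q} = \Fq$ and $\Ksep^{\GalK} = K$:
\[ R_q(D_q(V,\rho)) = (\Ksep \otimes_K D_q(V))^{\tau} \isom (\Ksep \otimes_{\Fq} V)^{\sigma_q \otimes 1} = \Fq \otimes_{\Fq} V = V, \]
\[ D_q(R_q(T,\tau)) = (\Ksep \otimes_{\Fq} R_q(T))^{\GalK} \isom (\Ksep \otimes_K T)^{\GalK} = K \otimes_K T = T. \]
Compatibility of these identifications with the $\GalK$- and $\tau$-structures, and with the $A$-module, tensor product, and rigid structures, is then routine, since the $A$-action commutes with both $\GalK$ and $\tau$ throughout.

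The main obstacle is the Lang-Steinberg step when $K$ is not perfect, since then $\sigma_q: \Ksep \to \Ksep$ fails to be surjective and naive $\Ksep$-semilinear descent could break down. The Jacobian criterion nonetheless ensures that the relevant fixed-point equations are separable and have all $q^n$ solutions already in $\Ksep$, so the descent goes through without any perfectness assumption on $K$.
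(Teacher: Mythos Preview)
Your proof is correct and follows the same reduction as the paper: forget the $A$-module structure, prove the equivalence for finite-dimensional vector spaces, and recover the $A$-linear (and tensor, rigid) structure by naturality. The only difference is that where the paper simply cites \cite[Proposition 4.1]{PiT06} for the vector-space case, you actually supply the argument via Galois descent (Hilbert 90) for $D_q$ and the Lang--Steinberg/\'etale Jacobian argument for $R_q$; this is the standard proof and is essentially what the cited reference contains. One small point: your Hilbert 90 step uses that the $\GalK$-action on $V$ factors through a finite quotient, which holds because $V$ is a finite set, so continuity of $\rho$ is automatic even though the paper's definition does not state it explicitly.
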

\begin{proof}
Forgetting the $A$-module structure of both sides, this is \cite[Proposition 4.1]{PiT06}
and its proof.
By naturality of that proposition, the statement of our proposition holds.
\end{proof}

\begin{thm}\label{thm:discusstorsionakmods}
Let $(T,\tau)$ be a torsion $\bAK$-module of characteristic $\iota$.
\begin{enumerate}
  \item If $\ker\iota=0$, then $\taulin$ is bijective.
  \item If $\ker\iota\neq 0$, then there exists a canonical filtration
    \[0\to(T',\tau')\to(T,\tau)\to(T'',\tau'')\to 0\]
    of $(T,\tau)$ by torsion $\bAK$-modules such that $\taulin'$ is bijective
    and $\tau''$ is nilpotent.
  \item If $\tau$ is nilpotent, then there exists a canonical filtration
    of $(T,\tau)$ by torsion $\bAK$-modules such that each successive subquotient
    is annihilated by $\tau$.
  \item We have $\Ann_A(T)\neq 0$.
\end{enumerate}
\end{thm}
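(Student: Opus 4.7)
I would prove the parts in the order (c), (b), (a), (d). All arguments exploit that $T$ has finite length as an $\AK$-module (since $\AK$ is Dedekind and $T$ is finitely generated torsion) together with the flatness of $\sigma = \id_A \otimes \sigma_q$ as a ring endomorphism of $\AK$.

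For (c), the canonical filtration $0 \subset \ker\tau \subset \ker\tau^2 \subset \cdots$ consists of $\AK$-submodules, because the identity $\tau^n(rx) = \sigma^n(r)\tau^n(x)$ shows $\ker\tau^n$ is $\AK$-stable; the chain stabilizes by finite length and fills up $T$ by nilpotence of $\tau$, and each successive quotient is killed by $\tau$. For (b), I would set $T' := \bigcap_{n\ge 0} \AK\cdot\tau^n(T)$; by finite length this descending chain stabilizes at some $T_N = T'$ with $T' = \AK\cdot\tau(T')$, giving surjectivity of $\taulin' \colon \sigma_*T' \to T'$. Since $\tau^N(T) \subseteq T'$, the quotient $T'' := T/T'$ has $\tau''$ nilpotent. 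Injectivity of $\taulin'$ is the delicate point: its kernel lies in $\ker\taulin$ and so is supported at $\{\frkP_0\}$, but on the $\frkP_0$-primary part all composition factors are $\AK/\frkP_0 = K$ and $\sigma_*$ sends this simple module to the abstractly isomorphic simple module $\AK/\sigma^*\frkP_0$. Hence $\sigma_*$ preserves $\AK$-length locally at $\frkP_0$, and the equal-length surjection $\taulin'$ is forced to be injective.

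For (a), in generic characteristic we have $\ker\iota = 0$, and since $A$ is infinite, $\iota(A) \not\subset \Fq$; an explicit computation with the generators $a\otimes 1 - 1 \otimes \iota(a)$ of $\frkP_0$ then gives $\sigma^*\frkP_0 \neq \frkP_0$. By (b) it suffices to show $T'' = 0$, and by (c) it is enough to show that any torsion $\bAK$-module $S$ of characteristic $\iota$ with $\tau_S = 0$ vanishes. A snake-lemma argument using flatness of $\sigma$ shows that sub- and quotient objects of objects of characteristic $\iota$ remain of characteristic $\iota$, so $S$ inherits the hypotheses from $T$; in particular $\Supp(S) \subseteq \{\frkP_0\}$ and $\Supp(\sigma_*S) \subseteq \{\frkP_0\}$. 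But the composition factors of $\sigma_*S$ are $\sigma_*(\AK/\frkP_0) = \AK/\sigma^*\frkP_0$, so $S \neq 0$ would force $\sigma^*\frkP_0 \in \Supp(\sigma_*S) \subseteq \{\frkP_0\}$, contradicting $\sigma^*\frkP_0 \neq \frkP_0$; hence $S = 0$.

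Finally for (d), the sequence $0 \to T' \to T \to T'' \to 0$ of (b) reduces the problem to $T'$ and $T''$ separately. For $T'$ (with $\taulin'$ bijective), Proposition \ref{prop:torsiongaloisrepsclassified} identifies $T'$ with $D_q(V)$ for an $A$-torsion Galois representation $V$ of finite $\Fq$-length; then $A/\Ann_A(V)$ is a finite $\Fq$-algebra, $\Ann_A(V)\neq 0$, and using the embedding $T' \hookrightarrow T'\otimes_K\Ksep \cong \Ksep\otimes_{\Fq}V$ from the proposition together with faithful flatness of $K \to \Ksep$, one obtains $\Ann_A(T') \supseteq \Ann_A(V) \neq 0$. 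For $T''$, part (c) yields a filtration whose subquotients $S$ satisfy $\tau_S = 0$ and are of characteristic $\iota$, hence supported at $\{\frkP_0\}$; in special characteristic $\frkP_0 \cap A = \ker\iota$ is a nonzero ideal annihilating a suitable power of each such $S$, while in generic characteristic (a) already forces $T''=0$. Combining, $\Ann_A(T) \supseteq \Ann_A(T')\cdot\Ann_A(T'') \neq 0$. The main obstacle is the length-preservation required for the injectivity claim in (b); it succeeds precisely because the residue field at $\frkP_0$ is exactly $K$, which sidesteps the pathologies of $\sigma_*$ on inseparable residue extensions that can otherwise appear when $K$ is not perfect.
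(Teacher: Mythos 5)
Parts (b), (c) and (d) of your proposal are essentially the paper's argument (the paper uses the descending image filtration $\AK\cdot\tau^m(T)$ in (c) rather than your ascending kernel filtration, but both are canonical and both work). One caveat on (b): the invariant that $\sigma_*$ preserves is $\dim_K$, not $\AK$-length --- for a prime $\frkQ$ with inseparable residue field, $\sigma_*(\AK/\frkQ)=\AK/\sigma(\frkQ)\AK$ can have length $>1$. The clean statement is that $\dim_K\sigma_*N=\dim_KN$ for \emph{every} finite-length $\AK$-module $N$ (because $\sigma_q\colon K\to K^q$ is an isomorphism), so the surjection $\sigma_*T'\to T'$ is between $K$-vector spaces of equal finite dimension and hence bijective; your detour through $\frkP_0$-primary parts is not needed and, as phrased, does not cover the part of the kernel away from $\frkP_0$.

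The genuine gap is in (a). Your reduction rests on the claim that sub- and quotient objects of a characteristic-$\iota$ torsion $\bAK$-module are again of characteristic $\iota$, ``by a snake-lemma argument''. The snake lemma does not give this: for $0\to S'\to S\to S''\to 0$ it shows that the kernel of the linearisation of $\tau_{S'}$ and the cokernel of the linearisation of $\tau_{S''}$ are supported at $\frkP_0$, but for the remaining two terms it only yields an isomorphism between them away from $\frkP_0$, with no control on either. The claim is in fact false: take $A=\Fq[t]$, $\Fq^2\subseteq K$, $\iota(t)=c$ with $c\in\Fq^2\setminus\Fq$, and $T=\AK e_0\oplus\AK e_1$ with $\Ann(e_0)=(t-c)=\frkP_0$, $\Ann(e_1)=(t-c^q)$, $\tau(e_0)=e_1$, $\tau(e_1)=0$; then $T$ is of characteristic $\iota$, but neither $\AK e_1$ nor $T/\AK e_1$ is. Worse, in your intended application the claim is circular: a subquotient $S$ of the filtration from (c) has $\tau_S=0$, so the kernel of its linearisation is all of $\sigma_*S$, which is supported at $\sigma(\frkP_0)\AK\neq\frkP_0$; hence in generic characteristic ``$S$ is of characteristic $\iota$'' is literally equivalent to $S=0$, which is what you are trying to prove. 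The missing idea is the paper's multiplicity count: since $\dim_K\ker(\taulin)=\dim_K\coker(\taulin)$ and $\deg\frkP_0=1$, the characteristic ideals satisfy $\chi(\sigma_*T)=\chi(T)$, so $\sigma_*$ permutes the finite support of $T$ and every prime in it is $\sigma_*$-periodic; as the ideals $\sigma_*^m(\frkP_0)$ are pairwise distinct in generic characteristic, $\frkP_0\notin\Supp(T)$, whence $\ker(\taulin)=\coker(\taulin)=0$ directly, with no reduction to (b) and (c). (The counterexample above also shows that in (d) the subquotients of the nilpotent part need not be supported at $\{\frkP_0\}$, only at primes in the $\sigma_*$-orbit of $\frkP_0$; since these all lie over $\ker\iota$, your conclusion there survives.)
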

\begin{proof}\hspace{-4pt}\footnote{I thank Gebhard B\"ockle for helping me simplify this proof.}\hspace{3pt}
(a): Since $\frkP_0$ lies over the generic prime of $A$, we have:
\begin{equation}\label{eqn:generic}
\text{The prime ideals $\sigma_*^m(\frkP_0)$ for $m\ge 0$ are pairwise different}.\end{equation}
Set $X:=\ker(\taulin)$ and $Y:=\coker(\taulin)$.
We consider the exact sequence of $\AK$-modules
\[0\To X\To\sigma_*T\arrover{\taulin}T\To Y\To 0.\]
To every finitely-generated torsion $\AK$-module
$N\isom\bigoplus_{\frka}\AK/\frka$ we may associate its characteristic ideal $\chi(N):=\prod\frka$.
We have $\dim_KX=\dim_KY$, so $\chi(X)=\chi(Y)=\frkP_0^n$ for some $n\ge 0$,
and
\begin{equation}\label{eqn:generic2}
\chi(\sigma_*T)=\chi(T).
\end{equation}

Now (\ref{eqn:generic2}) means that $\sigma_*$ permutes the (finitely many)
prime ideals lying in the support of $T$. Therefore, for every such prime ideal $\frkP$
in the support there exists an integer $m\ge 0$ such that $\sigma^m_*\frkP=\frkP$.
Now (\ref{eqn:generic}) excludes the possibility that $\frkP_0$ is contained in
the support of $T$. It follows that both $X$ and $Y$ are zero,
so $\taulin$ is indeed bijective.

(b): Note that $\im(\taulin^m)=\AK\cdot\tau^m(T)$. Since $T$ has finite length, this
chain of submodules becomes stationary and
$T':=\bigcap_{m\ge0}\im(\taulin^m)=\im(\taulin^n)$ for some $n\gg 0$.
In particular, the restriction of $\taulin$ to $T'$ is bijective, and
the induced $\sigma$-linear endomorphism of $T'':=T/T'$ is nilpotent.

(c): Clearly, $\taulin(T)\subset T$ is a $\tau$-invariant $\AK$-submodule.
The induced action of $\tau$ on the quotient $T/\taulin(T)$ is zero by construction.
Since $T$ has finite length, we may repeat this construction to obtain
a filtration with the desired properties.

(d): It is sufficient to prove the statement for the successive subquotients
of any chosen filtration of $(T,\tau)$ by torsion $\bAK$-modules. We use those
given by items (b) and (c).

If $\taulin$ is bijective, then the $A$-torsion Galois representation
associated by Proposition \ref{prop:torsiongaloisrepsclassified} has finite length
as $A$-module, so it has non-zero annihilator in $A$. Again by Proposition \ref{prop:torsiongaloisrepsclassified},
it follows that $T$ itself has non-zero annihilator in $A$.

If $\tau$ is zero and $T$ is non-zero, then $T=\coker\taulin$ has support
contained in $\{\frkP_0\}$. By (a) we have $\frkP_0\cap A=\ker\iota\neq 0$, so again
$T$ has non-zero annihilator in $A$.

Using (a,b,c) and the previous special cases, it follows that $\Ann_A(T)\neq 0$
for all torsion $\bAK$-modules $(T,\tau)$ of characteristic $\iota$.
\end{proof}

\begin{prop}\label{prop:isogfactstand}
Every isogeny is a factor of a scalar isogeny. More
precisely, let $f:\: X'\to X$ be an isogeny of $A$-motives over $K$. There exists an element $0\neq a\in A$,
and an isogeny $g:\:X\to X'$ such that $g\circ f=[a]_{X'}$ and $f\circ g=[a]_{X}$, so the following diagram commutes:
\[\xymatrix{
& X\ar[dr]^g\ar[rr]^{[a]_X} && X\\
X' \ar[ur]^f \ar[rr]_{[a]_{X'}} && X'\ar[ur]_f
}\]
In particular, the relation of isogeny is an equivalence relation.
\end{prop}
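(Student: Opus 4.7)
The plan is to reduce to effective $A$-motives and then use Theorem \ref{thm:discusstorsionakmods}(d) to manufacture $a$ as an annihilator of the underlying cokernel.

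I would first handle the effective case. Given an isogeny $f: M' \to M$ of effective $A$-motives, set $(T, \tau) := \coker_{\bAK}(f)$. By Theorem \ref{thm:discusstorsionakmods}(d), $\Ann_A(T) \neq 0$, so pick $0 \neq a \in \Ann_A(T)$. Then $aM \subset f(M')$, and since $f$ is injective there is a unique $\AK$-linear map $g: M \to M'$ with $f \circ g = a \cdot \id_M$. A short check applying $f$ on the left and invoking its injectivity then yields $g \circ f = a \cdot \id_{M'}$, and the same trick applied to $f \circ \tau_{M'} = \tau_M \circ f$ forces $g \circ \tau_M = \tau_{M'} \circ g$, so that $g$ is a homomorphism of effective $A$-motives.

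Next I would verify that $g$ itself is an isogeny. Injectivity reduces to showing that $a$ acts injectively on $M$, which holds because $0 \neq a \in A$ remains a non-zero-divisor in $\AK = A \otimes_{\Fq} K$ (by flatness of $K$ over $\Fq$) and $M$ is $\AK$-projective, hence torsion-free. For the cokernel, the inclusion $g(M) \supset g(f(M')) = aM'$ shows $\coker(g)$ is a quotient of $M'/aM'$, which is annihilated by $a$ and is therefore an $\AK$-torsion module.

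For the general case, write $X' = (M', L')$ and $X = (M, L)$; by definition $f$ corresponds to an isogeny $\widetilde{f}: M' \otimes L \to M \otimes L'$ of effective $A$-motives. The effective case yields $0 \neq a \in A$ and an isogeny $\widetilde{g}: M \otimes L' \to M' \otimes L$ with $\widetilde{g} \circ \widetilde{f}$ and $\widetilde{f} \circ \widetilde{g}$ both equal to multiplication by $a$. The map $\widetilde{g}$ defines an isogeny $g: X \to X'$ of $A$-motives, and unwinding the composition recipe in $\AMotK$ (which reduces to composition of effective homomorphisms after tensoring with suitable rank-one objects and then applying Proposition \ref{prop:forcompofamothoms}) translates the two identities into $g \circ f = [a]_{X'}$ and $f \circ g = [a]_X$. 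The equivalence-relation claim follows formally: reflexivity is trivial, symmetry is exactly what was just proved, and transitivity follows from composing two scalar factorizations, using $[a]_X \circ [b]_X = [ab]_X$ and that compositions of isogenies are isogenies.

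The main obstacle is the bookkeeping at the interface between effective and general $A$-motives, where the composition rule is slightly intricate; the genuinely new input is Theorem \ref{thm:discusstorsionakmods}(d), which furnishes the annihilator of $\coker_{\bAK}(\widetilde{f})$ and thus the scalar $a$.
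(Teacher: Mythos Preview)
Your proof is correct and follows essentially the same route as the paper's: reduce to the effective case, invoke Theorem~\ref{thm:discusstorsionakmods}(d) to find $0\neq a\in\Ann_A(T)$, and build $g$ from the inclusion $aM\subset f(M')$, then use injectivity of $f$ to get the second identity. You have simply filled in details the paper leaves implicit (that $g$ is $\tau$-equivariant, that $g$ is itself an isogeny, and the bookkeeping for the reduction to the effective case).
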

\begin{proof}
We may assume that both $X'$ and $X$ are effective $A$-motives.
Let $(T,\tau):=\coker_{\bAK}(f)$, a torsion $\bAK$-module of characteristic $\iota$.
By Theorem \ref{thm:discusstorsionakmods}(d), there exists an element $0\neq a\in A$ such that $a\cdot T=0$.
Therefore, $a\cdot X$ is contained in $f(X')\isom X$, so we obtain an isogeny $X\arrover{g}X'$
with $f\circ g=[a]_{X}$. Since $f$ is a homomorphism of $\bAK$-modules, we have
$f\circ g\circ f=[a]_{X}\circ f=f\circ [a]_{X'}$, so since $f$ is injective we obtain $g\circ f=[a]_{X'}$.
\end{proof}

We include the following consequence of Theorem \ref{thm:discusstorsionakmods},
it will not be needed in the following.

\begin{prop}\label{prop:factiso}
Let $X'\arrover{f}X''$ be an isogeny of $A$-motives.
\begin{enumerate}
  \item If $\ker\iota=0$, then $f$ is separable.
  \item If $\ker\iota\neq 0$, then there exist canonically an $A$-motive $X$ and a factorisation
  $f=f''\circ f'$, 
\[\xymatrix{
X'\ar[rr]^f \ar[dr]_{f'}&&X'',\\
&X\ar[ur]_{f''}
}\]
such that $f':\:X'\to X$ is a
separable isogeny and $f'':\:X \to X''$ is a purely
inseparable isogeny.
\end{enumerate}
\end{prop}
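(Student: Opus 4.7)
The plan is to apply Theorem \ref{thm:discusstorsionakmods} to the torsion $\bAK$-module $(T,\tau) := \coker_{\bAK}(f)$ associated to the isogeny. The first step is to reduce to the case where $X'$ and $X''$ are effective $A$-motives: by definition an isogeny of $A$-motives $(M',L') \to (M'',L'')$ is an isogeny $\tilf : M' \otimes L'' \to M'' \otimes L'$ of effective $A$-motives, and any intermediate effective $A$-motive $N$ reassembles into an $A$-motive of the form $X = (N, L' \otimes L'')$, with the required homomorphisms from $X'$ and to $X''$ obtained by tensoring the two factors of $\tilf$ with $L'$ and $L''$ respectively, exactly as in the composition formalism of Proposition \ref{prop:forcompofamothoms}.

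For part (a), the case $\ker\iota = 0$ is then immediate: Theorem \ref{thm:discusstorsionakmods}(a) gives that $\taulin$ on $T$ is bijective, which is by definition the statement that $f$ is separable.

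For part (b), with $\ker\iota \neq 0$, I would invoke the canonical exact sequence
\[ 0 \to (T',\tau') \to (T,\tau) \to (T'',\tau'') \to 0 \]
of Theorem \ref{thm:discusstorsionakmods}(b), in which $\taulin'$ is bijective and $\tau''$ is nilpotent. Then I would let $X \subset X''$ be the preimage of $T' \subset T = X''/f(X')$ under the quotient map. Since $T'$ is $\tau$-stable, so is $X$. This yields a factorisation $f = f'' \circ f'$ of effective $A$-motives with $\coker_{\bAK}(f') \cong T'$ and $\coker_{\bAK}(f'') \cong T''$, so that $f'$ is separable and $f''$ is purely inseparable by definition. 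Canonicity of the factorisation follows from the canonicity of the filtration in Theorem \ref{thm:discusstorsionakmods}(b).

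The main obstacle is verifying that the intermediate object $X$ really is an effective $A$-motive: it must be finitely generated and projective over $\AK$ with $X/(\AK\cdot\tau X)$ supported in $\{\frkP_0\}$. Finite generation and projectivity come essentially for free, since $X$ sits in an extension of finitely generated $\AK$-modules and is a submodule of the projective $\AK$-module underlying $X''$, hence torsion-free and thus projective over the Dedekind ring $\AK$. The support condition requires a short diagram chase: applying the snake lemma to the rows $0 \to \sigma_*f(X') \to \sigma_*X \to \sigma_*T' \to 0$ and $0 \to f(X') \to X \to T' \to 0$ linked by $\taulin$, and using that $\taulin$ on $T'$ is bijective, one obtains $\coker(\taulin|_X) \cong \coker(\taulin|_{f(X')}) \cong X'/(\AK\cdot\tau_{X'}X')$, whose support lies in $\{\frkP_0\}$ because $X'$ is an effective $A$-motive.
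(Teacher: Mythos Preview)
Your proof is correct and follows essentially the same approach as the paper: reduce to effective $A$-motives, invoke Theorem~\ref{thm:discusstorsionakmods}(a) for part~(a), and for part~(b) take $X$ to be the preimage of $T'$ in $X''$ from the canonical filtration of Theorem~\ref{thm:discusstorsionakmods}(b). You supply more detail than the paper does, in particular the explicit reassembly of the intermediate effective $A$-motive into an $A$-motive $(N, L'\otimes L'')$ and the snake-lemma verification that $X$ satisfies the support condition; the paper simply asserts that $X$ is an effective $A$-motive without comment.
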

\begin{proof}
We may assume that all $A$-motives involved are effective.
Set $(T,\tau):=\coker_{\bAK}(f)$. If $\ker\iota=0$, then $\taulin$ is bijective
by Theorem \ref{thm:discusstorsionakmods}(a), so $f$ is separable.

(b): If $\ker\iota\neq 0$, Theorem \ref{thm:discusstorsionakmods}(b)
gives us a canonical filtration
\[0\to(T',\tau')\to(T,\tau)\to(T'',\tau'')\to 0\]
such that $\taulin'$ is bijective and $\tau''$ is nilpotent.
Letting $X$ be the inverse image of $T'$ in $X''$, we obtain an
effective $A$-motive such that $f$ factors as desired.
\end{proof}

\begin{dfn}
An \emph{$A$-isomotive over $K$} is an $A$-motive over $K$.
A homomorphism of $A$-isomotives is an $F$-linear combination
of homomorphisms of $A$-motives. More precisely, given two $A$-isomotives
$X',X$, we set
\[\Hom_{\AIsomotK}(X',X):=F\otimes_A\Hom_{\AMotK}(X',X),\]
where $\AIsomotK$ denotes the category of $A$-isomotives over $K$.

We might say that an $A$-isomotive is \emph{effective}
if it is isomorphic in $\AIsomotK$ to an effective $A$-motive.
\end{dfn}

\begin{thm}\label{thm:aboutaisomotk}
\begin{enumerate}
  \item The natural functor $\AMotK\to\AIsomotK$ is universal among
  $A$-linear functors with target an $F$-linear category and mapping isogenies
  to isomorphisms.
  \item $\AIsomotK$ is an $F$-linear rigid abelian tensor category.
\end{enumerate}
\end{thm}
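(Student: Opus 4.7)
The plan for part (a) is to extend an $A$-linear functor $G \colon \AMotK \to \mathcal{C}$ with $F$-linear target $\mathcal{C}$ by clearing denominators. Every morphism $\phi \in \Hom_{\AIsomotK}(X', X) = F \otimes_A \Hom_{\AMotK}(X', X)$ has the form $\phi = a^{-1} f$ for some $0 \neq a \in A$ and some $f \in \Hom_{\AMotK}(X', X)$. I would define $\tilde{G}$ to agree with $G$ on objects and to send $a^{-1} f$ to $a^{-1} \cdot G(f)$, using the $F$-module structure on $\Hom$-groups in $\mathcal{C}$. For well-definedness, if $a^{-1} f = b^{-1} g$ in $F \otimes_A \Hom_{\AMotK}$, then $bf - ag$ is $A$-torsion, so some $0 \neq c \in A$ satisfies $cbf = cag$ in $\Hom_{\AMotK}$; applying $G$ and using $A$-linearity yields $cb \cdot G(f) = ca \cdot G(g)$, whence $a^{-1} G(f) = b^{-1} G(g)$ in $\mathcal{C}$. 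Functoriality, $F$-linearity, and uniqueness of $\tilde{G}$ are then immediate. I note in passing that the hypothesis on isogenies is in fact automatic: scalar isogenies $[a]_X$ map to $a \cdot \id_{G(X)}$, invertible in $F$-linear $\mathcal{C}$, and every isogeny becomes invertible in $\mathcal{C}$ by the identities of Proposition~\ref{prop:isogfactstand}.

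For part (b), the tensor product, unit, associativity and symmetry constraints, and rigidity data on $\AMotK$ extend $F$-bilinearly to $\AIsomotK$, giving it the structure of an $F$-linear rigid tensor category; in particular the dual $X^\vee$ constructed in the preceding proposition continues to serve as dual in $\AIsomotK$, and $F$-linearity of the underlying additive category is built into the definition of the $\Hom$-groups. The substantial content is abelianness, which I would address in two steps. First, categorical kernels and cokernels lift from $\AMotK$ to $\AIsomotK$: given $\phi = a^{-1} f \colon X' \to X$ in $\AIsomotK$, since $[a]$ is invertible in $\AIsomotK$ the kernel of $\phi$ coincides with that of $f$, and the categorical kernel $k \colon K \to X'$ of $f$ in $\AMotK$ (whose existence is asserted in the text) still serves as kernel in $\AIsomotK$. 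Indeed, for $\psi = b^{-1} g \colon Y \to X'$ with $f\psi = 0$ in $\AIsomotK$, the composite $fg \in \Hom_{\AMotK}(Y, X)$ is $A$-torsion, so $c \cdot fg = 0$ for some $0 \neq c \in A$; hence $cg$ factors as $cg = k \circ h$ in $\AMotK$, and $\psi = k \circ (bc)^{-1} h$ in $\AIsomotK$. Cokernels are handled dually.

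The main obstacle is then to verify that the canonical morphism $\coim(\phi) \to \im(\phi)$ is an isomorphism in $\AIsomotK$ for every $\phi$. Reducing again to $\phi = f$ in $\AMotK$, the natural map $\coim(f) \to \im(f)$ has vanishing categorical kernel and cokernel and is therefore both monomorphism and epimorphism in $\AMotK$. The critical identification is that such a mono-plus-epi morphism is necessarily an isogeny; granting this, Proposition~\ref{prop:isogfactstand} supplies its two-sided inverse in $\AIsomotK$. The technical details that the author suppresses lie mainly in the construction of categorical kernels and cokernels for general (not necessarily effective) $A$-motives $X = (M, L)$ and the verification that a mono-plus-epi morphism in $\AMotK$ coincides with an isogeny. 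Once these bookkeeping points are settled, the assertion that $\AIsomotK$ is abelian follows formally from the invertibility of isogenies in $\AIsomotK$ established in Proposition~\ref{prop:isogfactstand}.
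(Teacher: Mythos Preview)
Your proposal is correct and follows essentially the same route as the paper: both parts hinge on Proposition~\ref{prop:isogfactstand} to reduce invertibility of isogenies to invertibility of scalar isogenies, and both identify the key step in (b) as showing that a morphism with vanishing categorical kernel and cokernel is an isogeny. You are more explicit than the paper in two places---the well-definedness check in (a) and the existence of kernels and cokernels in $\AIsomotK$ in (b), which the paper tacitly assumes---and your observation that the isogeny hypothesis in (a) is automatic (since scalar isogenies already go to isomorphisms by $F$-linearity of the target) is a sharpening the paper does not make.
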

\begin{proof}
(a): Our given functor is $A$-linear by definition. It maps
isogenies to isomorphisms by Proposition \ref{prop:isogfactstand}.
Let $\mcC$ be an $F$-linear category, and let $V:\:\AMotK\to\mcC$
be an $A$-linear functor which maps isogenies to isomorphisms.

It remains to show that there exists a unique $A$-linear functor $V':\:\AIsomotK\to\mcC$
extending $V$. Since $\AMotK$ and $\AIsomotK$ have the same objects, we
turn our attention to homomorphisms. Since scalar isogenies \emph{are} isogenies,
and $V$ \emph{does} map isogenies to isomorphisms, the desired extension
$V'$ exists and is unique.

(b): The category of $A$-isomotives is $F$-linear by construction.
It inherits a rigid tensor product from the category of $A$-motives. We must
show that it is abelian. For this, assume that $f:\:X'\to X$ is a
homomorphism of $A$-isomotives with vanishing categorical kernel and cokernel.
We may assume that both $X'$ and $X$ are effective $A$-isomotives.
By the definition of homomorphisms of $A$-isomotives, there exists an element
$0\neq a\in A$ such that $a\cdot f:\:X'\to X$ is a homomorphism of effective
$A$-motives. The categorical kernel and cokernel of $a\cdot f$ remain zero, since
multiplication by $a$ is an isomorphism. Clearly, this implies that $a\cdot f$
is injective, and $\coker_{\bAK}(a\cdot f)$ is a torsion $\bAK$-module. Therefore,
$a\cdot f$ is an isogeny, and Proposition \ref{prop:isogfactstand} gives an element
$0\neq b\in A$ and an isogeny $g:\:X\to X$ such that $(a\cdot f)\circ g$ and $g\circ(a\cdot f)$ are both
multiplication by $b$. Since multiplication by $b$ is an isomorphism in
$\AIsomotK$, this implies that $f$ is an isomorphism.
\end{proof}

\begin{dfn}
An $A$-motive $M$ is \emph{semisimple} if it is such
as an object of the category of $A$-isomotives.
\end{dfn}

We turn to $\frkp$-adic Galois representations. For the
remainder of this section, we introduce the following notation:
Let $\GalK:=\Gal(\Ksep/K)$ denote the absolute Galois group of $K$.
For every maximal ideal $\frkp$ of $A$, denote
the $\frkp$-adic completions of $A$ and $F$ by $\Ap$ and $\Fp$.

\begin{dfn}
\begin{enumerate}
  \item An \emph{integral $\frkp$-adic Galois representation} is
  a free $\Ap$-module of finite rank together with a continuous
  group homomorphism $\rho:\:\GalK\to\Aut_{\Ap}(V)$. Equipped with
  $\GalK$-equivariant $\Ap$-linear homomorphisms, we obtain the
  category $\RepApGalK$ of integral $\frkp$-adic Galois representations.
  \item A \emph{rational $\frkp$-adic Galois representation} is
  a finite-dimensional $\Fp$-vector space together with a continuous
  group homomorphism $\rho:\:\GalK\to\Aut_{\Fp}(V)$. Equipped with
  $\GalK$-equivariant $\Ap$-linear homomorphisms, we obtain the
  category $\RepFpGalK$ of rational $\frkp$-adic Galois representations.
\end{enumerate}
\end{dfn}

\begin{dfn}
Let $\frkp\neq\ker\iota$ be
a maximal ideal of $A$, and let $\AKsepp:=\varprojlim_n \left((A/\frkp^n)\otimes_{\Fq}\Ksep\right)$
denote the completion of $A\otimes_{\Fq}\Ksep$ at $\frkp$.
For every $A$-motive $X=(M,L)$ over $K$:
\begin{enumerate}
  \item  The \emph{integral Tate module}
  of $X$ at $\frkp$ is the $\Ap$-module
    \[\Tp(X):=\left(\AKsepp\otimes_{\AK}M\right)^\tau\otimes_{\Ap}\left((\AKsepp\otimes_{\AK}L)^\tau\right)^\vee,\]
  with $\tau$-invariants taken with respect to the natural diagonal $\sigma$-linear endomorphism,
  equipped with the induced action of $\GalK$.
  \item The \emph{rational Tate module} of $X$ at $\frkp$ is the $\Fp$-vector space
    \[\Vp(X):=\Fp\otimes_{\Ap}\Tp(X),\]
    equipped with the induced action of $\GalK$.
\end{enumerate}
\end{dfn}

\begin{dfn}\label{dfn:SRff}
Let $R\to S$ be a homomorphism of unital rings, $\mcC$ an $R$-linear category,
and $\mcD$ an $S$-linear category. An $R$-linear functor $V:\:\mcC\to\mcD$ is
\emph{$S/R$-faithful} (resp., \emph{$S/R$-fully faithful}) if the natural homomorphism
  \[S\otimes_R\Hom_{\mcC}(X,Y)\to\Hom_{\mcD}(VX,VY)\]
  is injective (resp., bijective) for all objects $X,Y$ of $\mcC$.
\end{dfn}

\begin{prop}
 Let $\frkp\neq\ker\iota$ be
a maximal ideal of $A$.
\begin{enumerate}
  \item $\Tp$ is an $A$-linear tensor functor with values in integral $\frkp$-adic representations,
  which is $\Ap/A$-faithful and preserves ranks.
  \item $\Vp$ extends uniquely to an $F$-linear functor with values in rational $\frkp$-adic representations,
  again denoted as $\Vp$, such that the following diagram commutes:
    \[\xymatrix{
    \AMotK \ar[d] \ar[rr]^-{\Tp} && \RepApGalK \ar[d]^{\Fp\otimes_{\Ap}(-)} \\
    \AIsomotK     \ar[rr]^-{\Vp} && \RepFpGalK
    }\]
  \item $\Vp$ is an exact $F$-linear tensor functor which is $\Fp/F$-faithful and preserves ranks.
\end{enumerate}
\end{prop}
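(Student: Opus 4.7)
The plan is to prove (a) first, obtain (b) from the universal property Theorem \ref{thm:aboutaisomotk}(a), and deduce (c) by transporting properties of $\Tp$ through the localisation at $\frkp$.

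For (a), the essential observation is that under the hypothesis $\frkp\neq\ker\iota$, the linearisation $\taulin$ of an effective $A$-motive $(M,\tau)$ becomes bijective after $\frkp$-adic completion of $A_K$: its cokernel $M/(A_K\cdot\tau M)$ is supported in $\{\frkP_0\}$, which lies over $\ker\iota\neq\frkp$ and therefore disappears in $A_{K,\frkp}\otimes_{A_K}M$, while injectivity follows from the fact that $\taulin$ is generically an isomorphism between torsion-free $A_K$-modules of equal rank. With $\taulin$ bijective $\frkp$-adically, a limit version of Proposition \ref{prop:torsiongaloisrepsclassified} (applied levelwise to $M/\frkp^nM$ and then passing to the inverse limit) shows that $\Tp(M)=(\AKsepp\otimes_{A_K}M)^\tau$ is a free $\Ap$-module of rank $\rk(M)$ with continuous $\GalK$-action, and that the natural map $\AKsepp\otimes_{\Ap}\Tp(M)\to\AKsepp\otimes_{A_K}M$ is an isomorphism. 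Tensor functoriality and $A$-linearity are immediate from the construction, and the extension to $X=(M,L)$ via the defining formula inherits them because $L$ has rank one. For $\Ap/A$-faithfulness I reduce via tensor duality to showing that for an effective $A$-motive $Z$ the natural map $\Ap\otimes_A Z^\tau\hookrightarrow\Tp(Z)^{\GalK}$ is injective; this follows from the chain of flat base changes $A\to\Ap$, $A_K\to A_{K,\frkp}$, and $A_{K,\frkp}\to\AKsepp$ applied to the $A$-module inclusion $Z^\tau\hookrightarrow M$.

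For (b), I apply Theorem \ref{thm:aboutaisomotk}(a) to the composite $\Fp\otimes_{\Ap}\Tp$, which is $A$-linear with target the $F$-linear category $\RepFpGalK$. It sends isogenies to isomorphisms: by Proposition \ref{prop:isogfactstand} an isogeny $f:X'\to X$ satisfies $g\circ f=[a]_{X'}$ for some $0\neq a\in A$, whence $\Tp(f)$ is injective (as multiplication by $a$ is injective on the free $\Ap$-module $\Tp(X')$), and $\Fp\otimes\Tp(f)$ is an injection between $\Fp$-vector spaces of equal dimension (by rank preservation), hence an isomorphism. The universal property then produces the unique $A$-linear extension $\Vp$ making the diagram commute, which is automatically $F$-linear because $\AIsomotK$ is $F$-linear.

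For (c), the tensor structure and rank preservation descend directly from (a), and $\Fp/F$-faithfulness follows from $\Ap/A$-faithfulness by tensoring the resulting injection with $\Fp$ over $\Ap$. The remaining point is exactness of $\Vp$: a short exact sequence in $\AIsomotK$ may be represented, after clearing denominators, by morphisms of effective $A$-motives $M'\xrightarrow{i}M\xrightarrow{p}M''$ in which $i$ is injective as $A_K$-linear map and the quotient $M/iM'$ is isogenous to $M''$. Applying $\Tp$ yields a left exact sequence $0\to\Tp(M')\to\Tp(M)\to\Tp(M'')$ of $\Ap$-modules, and the cokernel $\Tp(M)/\Tp(M')$ injects into $\Tp(M'')$ since a $\tau$-invariant element of $\AKsepp\otimes M$ whose image in $\AKsepp\otimes M''$ vanishes lies in the $\tau$-invariants of the submodule $\AKsepp\otimes M'$; comparing ranks (additive across the sequence and preserved by $\Tp$) then shows that tensoring with $\Fp$ produces a genuine short exact sequence in $\RepFpGalK$. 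The main obstacle is the $\frkp$-adic limit form of Proposition \ref{prop:torsiongaloisrepsclassified} underlying (a); once that is in place, the remaining arguments are standard flat-base-change and dimension counts.
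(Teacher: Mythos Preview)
Your argument is correct and follows essentially the same route as the paper: levelwise application of Proposition~\ref{prop:torsiongaloisrepsclassified} for (a), the universal property of Theorem~\ref{thm:aboutaisomotk}(a) for (b), and transport through localisation for (c). Two small points are worth noting. First, in your faithfulness argument the object $Z$ you obtain by ``tensor duality'' is not an effective $A$-motive: for effective $M,N$ the internal Hom $\Hom_{\AK}(M,N)$ carries no natural $\sigma$-linear endomorphism unless $\taulin$ on $M$ is invertible, so $Z^\tau$ must be read as $\ker\big(\tau_N\circ(-)-(-)\circ\tau_M\big)$ rather than as invariants of an action; the paper makes exactly this move, and your flat base change argument (with ``$Z^\tau\hookrightarrow M$'' corrected to ``$Z^\tau\hookrightarrow Z$'') then goes through via the inclusion $\Ap\otimes_{\Fq}\Ksep\subset\AKsepp$. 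Second, for exactness in (c) the paper is terser: it simply observes that a faithful $F$-linear tensor functor between rigid abelian tensor categories which preserves ranks is automatically exact, whereas you unpack this by hand via clearing denominators and a dimension count. Both arguments are valid; yours is more self-contained, the paper's is quicker once one accepts the general principle.
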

\begin{proof}
(a): Let us first consider the restriction of $\Tp$ to effective $A$-motives, it
maps a given effective $A$-motive $M$ to
\[\Tp(M)=(\AKsepp\otimes_{\AK}M)^\tau=\varprojlim_n\left((M\otimes_K\Ksep)/\frkp^n\right)^\tau. \]
Note that the assumption that $\frkp\neq\ker\iota$ implies
that the linearisation of the $\sigma$-linear endomorphism of $(M\otimes_K\Ksep)/\frkp^n$ is bijective.
By applying Proposition \ref{prop:torsiongaloisrepsclassified} to $\Ksep$ and $(M\otimes_K\Ksep)/\frkp^n$,
we see that $\left((M\otimes_K\Ksep)/\frkp^n\right)^\tau$ is a free $A/\frkp^n$-module of rank
$\rk(M)$. It follows that $\Tp(M)$ is an integral $\frkp$-adic Galois representation
of rank $\rk(M)$. Using Proposition \ref{prop:torsiongaloisrepsclassified} again,
it follows that the restriction of $\Tp$ to $\AMotKeff$ is an $A$-linear tensor functor with values
in integral $\frkp$-adic representations which preserves ranks. By construction, this implies
that $\Tp$ itself has these properties.

It remains to show that $\Tp$ is $\Ap/A$-faithful. Let $M,N$ be $A$-motives.
We may assume that both are effective. Note that we have a natural
inclusion $\Ap\otimes_{\Fq}\Ksep\subset\AKsepp$. It follows that
we have a natural inclusion
\[(\Ap\otimes\Ksep)\otimes_{\AK}\Hom_{\AK}(M,N)\subset\AKsepp\otimes_{\AK}\Hom_{\AK}(M,N).\]
On both sides, the left exact functors $(-)^{\GalK}$ of Galois-invariants and
$(-)^\tau:=\ker(\tau_N\circ(-)-(-)\circ\tau_M)$ of $\tau$-invariants act, and the two actions commute.
Therefore,
\[\left((\Ap\otimes\Ksep)\otimes_{\AK}\Hom_{\AK}(M,N)\right)^{\GalK,\tau}\subset\left(\AKsepp\otimes_{\AK}\Hom_{\AK}(M,N)\right)^{\tau,\GalK},\]
so
\[\Ap\otimes_A\Hom(M,N)^\tau\subset\Hom_A(\Tp M,\Tp N)^\GalK,\]
which means that $\Ap\otimes_A\Hom(M,N)\to\Hom_{\GalK}(\Tp M,\Tp N)$ is
injective, as desired.

(b): Since scalar isogenies are mapped to isomorphisms in $\RepFpGalK$,
$\Vp$ extends to an $F$-linear functor on $\AIsomotK$ with values in rational
$\frkp$-adic Galois representations.

(c): Now item (a) implies that $\Vp$ is an $\Fp/F$-fully faithful tensor functor,
and preserves ranks. This last property implies that $\Vp$ is exact.
\end{proof}

\begin{cor}
\begin{enumerate}
  \item For every two $A$-motives $M,N$, the $A$-module of homomorphisms $\Hom_{\AMotK}(M,N)$ is
  finitely-generated and projective.
  \item For every two $A$-isomotives $X,Y$, the $F$-vector space of homomorphisms $\Hom_{\AIsomotK}(X,Y)$
  is finite-dimensional.
  \item Every $A$-isomotive has a composition series of finite length.
\end{enumerate}
\end{cor}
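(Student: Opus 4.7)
My plan is to deduce all three parts from the faithfulness properties of the Tate module functors established in the preceding proposition, together with the fact (Theorem \ref{thm:aboutaisomotk}) that $\AIsomotK$ is abelian.

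I would begin with part (b), which is essentially immediate from the $\Fp/F$-faithfulness of $\Vp$. That faithfulness gives an injection
\[
\Fp\otimes_F\Hom_{\AIsomotK}(X,Y)\hookrightarrow\Hom_{\Fp[\GalK]}(\Vp X,\Vp Y)\subset\Hom_{\Fp}(\Vp X,\Vp Y),
\]
exhibiting the source as an $\Fp$-subspace of a finite-dimensional $\Fp$-vector space. Hence $\Fp\otimes_F\Hom_{\AIsomotK}(X,Y)$ is finite-dimensional over $\Fp$, which forces $\Hom_{\AIsomotK}(X,Y)$ itself to be finite-dimensional over $F$.

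Part (c) should then follow formally from (b). I would use that $\Vp$ is $F$-linear, exact, and faithful on the abelian category $\AIsomotK$. Any strict descending (or ascending) chain of subobjects of an $A$-isomotive $X$ maps to a chain of subrepresentations of $\Vp X$ by exactness, and this latter chain is strict: if the quotient $Z$ between two successive terms had $\Vp Z=0$, then faithfulness applied to $\id_Z$ would give $\id_Z=0$, hence $Z=0$. Since $\Vp X$ is a finite-dimensional $\Fp$-vector space, every strict chain of subrepresentations has length at most $\dim_{\Fp}\Vp X$, and the same bound transfers back to $\AIsomotK$. Thus $X$ is both Artinian and Noetherian, and therefore admits a composition series of finite length.

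For part (a), I would set $H:=\Hom_{\AMotK}(M,N)$. By the definition of $A$-isomotives one has $F\otimes_A H=\Hom_{\AIsomotK}(M,N)$, which is finite-dimensional over $F$ by part (b); moreover $H$ is torsion-free as an $A$-module since it embeds into the torsion-free $A$-module $\Hom_{\AK}(M,N)$. Hence $H$ has finite rank $r$ over $A$, and since $A$ is Dedekind it only remains to show that $H$ is finitely generated. Fixing any $\frkp\neq\ker\iota$, the $\Ap/A$-faithfulness of $\Tp$ yields an injection $\Ap\otimes_A H\hookrightarrow\Hom_{\Ap[\GalK]}(\Tp M,\Tp N)$, whose target is an $\Ap$-submodule of the free $\Ap$-module $\Hom_{\Ap}(\Tp M,\Tp N)$ of finite rank; since $\Ap$ is a DVR, $\Ap\otimes_A H$ is free of rank $r$. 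After choosing an $F$-basis $h_1,\ldots,h_r\in H$ of $F\otimes_A H$ so that $H\subset F^r$, the task reduces to establishing a global bound on the $A$-denominators of arbitrary $h\in H$ written in terms of the $h_i$.

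The main obstacle is precisely this last step: the Tate module at a single prime controls denominators only locally at $\frkp$, whereas global finite generation demands uniform control at every prime. I would complete the argument by exploiting the ambient integral structure $H\subset\Hom_{\AK}(M,N)$: since the latter is finitely generated over the Noetherian ring $\AK$, the $\AK$-denominators of the $h_i$ and of any chosen $\AK$-generators bound the $A$-denominators at all but finitely many primes, and the local Tate-module estimate handles the remaining exceptional primes. Together these give the two-sided bound exhibiting $H$ as a fractional ideal in $F^r$, hence finitely generated projective over $A$.
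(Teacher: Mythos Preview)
Your treatments of (b) and (c) are correct and match the paper's proof essentially verbatim.

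For (a) the paper is far more laconic than you: it simply notes that $\Hom_{\AMotK}(M,N)=(M^\vee\otimes N)^\tau$ sits inside the torsion-free $A$-module $M^\vee\otimes N$, hence is itself torsion-free, and asserts that together with (b) this yields (a). You are right to sense that torsion-freeness plus finite $F$-rank does not by itself force finite generation over a Dedekind ring (witness $\bbZ[1/2]\subset\bbQ$), and you attempt to supply the missing argument.

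Your proposed fix, however, has a gap. The two embeddings $H\subset F^r$ (via the chosen $h_1,\dots,h_r$) and $H\subset\Hom_{\AK}(M,N)$ can only be played off against each other once you know that $h_1,\dots,h_r$ remain linearly independent over $\FK$ inside $\Hom_{\AK}(M,N)$; otherwise, expressing $h\in H$ in terms of $\AK$-generators of $\Hom_{\AK}(M,N)$ says nothing about its $F$-coordinates relative to the $h_i$. You neither state nor prove this $\FK$-independence. Moreover, your fallback of invoking Tate modules at the ``remaining exceptional primes'' is unavailable if $\ker\iota$ lies among them. The standard remedy is to prove directly that the natural map $H\otimes_{\Fq}K\to\Hom_{\AK}(M,N)$ is injective: a minimal-counterexample argument using $\tau$-equivariance shows that any $K$-linear relation among elements of $H$ already has coefficients in $\Fq$. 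Granting this, your denominator-bounding argument goes through without any appeal to Tate modules; alternatively and more directly, $H\otimes_A\AK$ is then a submodule of a finitely generated module over the Noetherian ring $\AK$, and faithful flatness of $A\to\AK$ forces $H$ to be finitely generated over $A$.
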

\begin{proof}
(b): Since $\Hom_{\GalK}(\Vp X,\Vp Y)$ is finite $\Fp$-dimensional, so
is $\Fp\otimes_F\Hom_{\AIsomotK}(X,Y)$ by $\Fp/F$-faithfulness of $\Vp$. This
implies the desired statement.

(a): If we show that $\Hom_{\AMotK}(M,N)$ is torsion-free, item (a) follows
from item (b). However, $\Hom_{\AMotK}(M,N)=(M^\vee\otimes N)^\tau$ is a submodule
of the torsion-free $A$-module $M^\vee\otimes N$, so we are done.

(c): Since $\Vp$ is faithful, it maps non-zero objects to non-zero objects.
Therefore, the length of an $A$-isomotive is bounded by the length of its
Tate module. Since the latter is of finite length, so is the former.
\end{proof}

\section{Some Semilinear Algebra}

We begin this section by introducing the notion ``semisimple
on objects'' for functors, a categorical generalisation of the
statement of Theorem \ref{thm:mainthm}, and discuss how this
property combines with the notion of ``relative full faithfulness'',
introduced in Definition \ref{dfn:SRff}.

We then introduce some terminology for semilinear algebra, and prove
a theorem on bold scalar extension of restricted modules for a certain
class of bold rings. The reader may choose to skip to Section $4$ after reading
the statement of Theorem \ref{thm:boldscalarext}, to see how it is employed.

\begin{dfn}
Let $\mcA,\mcB$ be abelian categories. An exact functor $V:\:\mcA\to\mcB$ is \emph{semisimple on objects}
if it maps semisimple objects of $\mcA$ to semisimple objects of $\mcB$.
\end{dfn}

We intersperse a proposition which exemplifies nicely how the properties of
being ``relatively'' full faithful and being semisimple on objects combine.

\begin{prop}\label{prop:relfullfaithfulisnonsemisimpleonobjects}
Let $F'/F$ be a field extension, $\mcA$ an $F$-linear
abelian category, and $\mcB$ an $F'$-linear abelian category.
Consider an $F'/F$-fully faithful $F$-linear exact functor $V:\:\mcA\to\mcB$.
For every object $X$ of $\mcA$, if $V(X)$ is semisimple in $\mcB$, then
$X$ is semisimple in $\mcA$.\footnote{In other words, $V$ maps non-semisimple objects of $\mcA$
to non-semisimple objects of $\mcB$, we will use this
in the proof of Proposition \ref{prop:boldscalarext2}.}
\end{prop}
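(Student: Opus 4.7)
The plan is to prove that $X$ is semisimple by verifying directly that every subobject of $X$ is a direct summand. So I fix a monomorphism $\iota: Y \hookrightarrow X$ in $\mcA$ and aim to construct a retraction $r: X \to Y$ in $\mcA$ with $r \circ \iota = \id_Y$.

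The first step is to apply the exact functor $V$ to $\iota$, yielding a monomorphism $V\iota: V(Y) \hookrightarrow V(X)$ in $\mcB$. Since $V(X)$ is semisimple by hypothesis, there exists a retraction $\tilde r \in \Hom_{\mcB}(V(X), V(Y))$ with $\tilde r \circ V\iota = \id_{V(Y)}$. The key observation is that $Y$ is a direct summand of $X$ precisely when $\id_Y$ lies in the image of the $F$-linear precomposition map
\[
\iota^*: \Hom_{\mcA}(X, Y) \to \Hom_{\mcA}(Y, Y), \quad f \mapsto f \circ \iota.
\]
Applying $F' \otimes_F (-)$ and invoking the $F'/F$-full faithfulness of $V$, this map is identified with the corresponding precomposition map $(V\iota)^*$ in $\mcB$. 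Under these identifications, $1 \otimes \id_Y$ on the target corresponds to $\id_{V(Y)}$, which lies in the image thanks to $\tilde r$.

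The remaining step is the descent from the $F'$-linear statement back to an $F$-linear one, which I expect to be the main (though mild) obstacle. Setting $C := \coker(\iota^*)$, flatness of $F'/F$ identifies $\coker(F' \otimes_F \iota^*)$ with $F' \otimes_F C$, so the preceding paragraph yields that the class of $1 \otimes \id_Y$ vanishes in $F' \otimes_F C$. The descent is unblocked because any field extension is faithfully flat---$F'$ admits an $F$-basis---so the natural map $C \to F' \otimes_F C$ is injective. Therefore the class of $\id_Y$ vanishes in $C$, producing the desired retraction $r \in \Hom_{\mcA}(X, Y)$ and exhibiting $Y$ as a direct summand of $X$.
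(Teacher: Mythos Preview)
Your proof is correct and follows essentially the same approach as the paper's. The only cosmetic differences are that the paper works with a quotient $X''$ and seeks a section $X'' \to X$ (checking that $\id_{X''}$ lies in the image of the postcomposition map), whereas you work with a subobject $Y$ and seek a retraction; and the paper phrases the descent step as ``image equals the intersection with $\Hom_{\mcA}(Y,Y)$'' rather than via the cokernel and faithful flatness, but these are equivalent formulations of the same linear-algebra fact.
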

\begin{proof}
Assume that
\[\alpha:\quad 0\to X'\to X\to X''\to 0\]
is a short exact sequence
in $\mcA$ such that the exact sequence $V(\alpha)$ splits in $\mcB$.
We must show that $\alpha$ splits, and for this it suffices to show that $\id_{X''}$ is in the image
of the natural homomorphism $\Hom_{\mcA}(X'',X)\to\Hom_{\mcA}(X'',X'')$. This image
coincides with the intersection of $\Hom_{\mcA}(X'',X'')$ and the image of
the natural homomorphism $F'\otimes_F\Hom_{\mcA}(X'',X)\to F'\otimes_F\Hom_{\mcA}(X'',X'')$.
By $F'/F$-full faithfulness, we may identify this latter image with
the image of the natural homomorphism $\Hom_{\mcB}(V(X''),V(X))\to\Hom_{\mcB}(V(X''),V(X''))$.
By assumption, $\id_{V(X'')}=V(\id_{X''})$ is an element of this image, and under our natural
identifications it is also clearly an element of $\Hom_{\mcA}(X'',X'')$, therefore we are done.
\end{proof}

We turn to some general terminology for semilinear algebra.

\begin{dfn}
A \emph{bold ring} $\bR$ is a unital commutative ring $R$ equipped with a unital ring endomorphism $\sigma:\:R\to R$.
The \emph{coefficient ring} of $\bR$ is its subring $R^\sigma:=\{r\in R:\:\sigma(r)=r\}$
of $\sigma$-invariant elements. 

A \emph{homomorphism} $\bS\to\bR$ of bold rings
is a ring homomorphism that commutes with $\sigma$.
It induces a homomorphism $S^\sigma\to R^\sigma$ of coefficient rings.
\end{dfn}

\begin{dfn}
Let $\bR$ be a bold ring. A (bold) \emph{$\bR$-module} $\bM$ is an $R$-module $M$ together
with a $\sigma$-linear endomorphism $\tau:\:M\to M$.

A \emph{homomorphism} $\bM\to\bN$ of $\bR$-modules is an $R$-module homomorphism that commutes with $\tau$.
The \emph{tensor product} $\bM\otimes_{\bR}\bN$
of $\bM=(M,\tau_M)$ and $\bN=(N,\tau_N)$ is the $R$-module $M\otimes_RN$ together with the $\sigma$-linear
endomorphism
\[M\otimes_RN\to M\otimes_RN,\quad m\otimes n\mapsto \tau_M(m)\otimes\tau_N(n).\]
The category $\bRMod$ of
$\bR$-modules is an $R^\sigma$-linear abelian tensor category.
\end{dfn}

\begin{dfn}
Let $\bS\arrover{f}\bR$ be a homomorphism of bold rings. \emph{Bold scalar extension} from
$\bS$ to $\bR$ is the functor $\bSMod\to\bRMod$ mapping an $\bS$-module $\bM$ to $\bR\otimes_{\bS}\bN$
and a homomorphism $h$ of $\bS$-modules to $\id_R\otimes h$.
\end{dfn}

Recall from Section $2$ that the $\sigma$-linear endomorphism $\tau$ of a module
$\bM$ over a bold ring $\bR=(R,\sigma)$ corresponds to a unique $R$-linear
homomorphism $\taulin:\:\sigma_*M:=R\otimes_{\sigma,R}M\to M$, its linearisation.

\begin{dfn}
Let $\bR$ be a bold ring.
\begin{enumerate}
  \item An $\bR$-module $\bM=(M,\tau)$ is \emph{restricted}
				if $M$ is a finitely generated projective $R$-module and $\taulin$ is bijective.
  \item Let $\bS\arrover{f}\bR$ be a homomorphism of bold rings. An $\bR$-module $\bM$ is
\emph{$f$-restricted} if there exist a restricted $\bS$-module $\bN$ and an isomorphism
$\bM\isom\bR\otimes_{\bS}\bN$ of $\bR$-modules. Clearly, this implies that
$\bM$ is restricted in the sense of (a).
\end{enumerate}
\end{dfn}

Let $\Fq,K,\sigma_q$ be as in Section $2$, so $\Fq$ is a finite field and $K$ is a field
containing $\Fq$. In this section (but not the next) $F/\Fq$ may be any field
extension, that is,
\begin{tabular}{|c|}
\hline
we \emph{drop} the assumption that $F$ is a global field.\\
\hline
\end{tabular}
Besides yielding more generality, this allows us more flexibility in the proofs.

Let $\FK=\Frac(F\otimes_{\Fq}K)$ denote the total ring of fractions
of $F\otimes_{\Fq}K$. The bold ring $\bFK$ is given by $\FK$ together
with the endomorphism $\sigma=\sigma_{\FK}=\Frac(\id\otimes\sigma_q)$ induced
by $\sigma_q$. If $F'/F$ is a field extension, the bold ring $\bFFK$
with underlying ring $\FFK=\Frac(F'\otimes_{\Fq}K)$ is defined analogously,
and we have a bold scalar extension functor $\bFFK\otimes_{\bFK}(-)$ from
$\bFK$-modules to $\bFFK$-modules.

\begin{lem}\label{lem:onbffkmodules}
Assume that the number of roots of unity of $K$ is finite.
\begin{enumerate}
  \item The ring $\FK$ is a finite product of pairwise isomorphic fields.
  \item The underlying $\FK$-module of every restricted $\bFK$-module is free.
\end{enumerate}
\end{lem}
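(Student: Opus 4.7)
The plan is as follows. Let $k$ denote the algebraic closure of $\Fq$ in $K$; the hypothesis that $K$ has finitely many roots of unity forces $k$ to be finite, say $k = \bbF_{q^n}$ for some $n \ge 1$. Since $k$ is perfect and algebraically closed in $K$, the extension $K/k$ is regular, so $L \otimes_k K$ is an integral domain for every algebraic extension $L/k$. For part (a), I would decompose $F \otimes_{\Fq} K = (F \otimes_{\Fq} k) \otimes_k K$. Set $d := [F \cap k : \Fq]$ (a divisor of $n$), and let $f \in \Fq[X]$ be the minimal polynomial of a generator of $k/\Fq$, so that $F \otimes_{\Fq} k \cong F[X]/(f(X))$. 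Standard Galois theory of finite fields shows that $f$ factors over $F$ into exactly $d$ irreducible factors, each of degree $n/d$, and each root generates the compositum $F \cdot k$ over $F$. Hence $F \otimes_{\Fq} k \cong (F \cdot k)^d$ as $F$-algebras, and tensoring with $K$ over $k$ yields
\[
F \otimes_{\Fq} K \;\cong\; \bigl((F \cdot k) \otimes_k K\bigr)^d,
\]
a product of $d$ integral domains by regularity of $K/k$. Taking total rings of fractions then gives $\FK \cong E^d$ with $E := \Frac\bigl((F \cdot k) \otimes_k K\bigr)$, which proves part (a).

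For part (b), the crucial additional observation is that $\sigma = \Frac(\id \otimes \sigma_q)$ cyclically permutes the $d$ factors of $\FK$. Under the isomorphism $F \otimes_{\Fq} k \cong F[X]/(f(X))$ the map $1 \otimes \sigma_q$ becomes the $F$-algebra automorphism $X \mapsto X^q$; in terms of the decomposition into factors indexed by the $\Gal(F \cdot k / F) = \langle \sigma_q^d \rangle$-orbits on roots of $f$, the cyclic generator $\sigma_q$ carries one orbit to the next, hence permutes the $d$ primitive idempotents cyclically, and the same then holds for $\sigma$ on $\FK$.

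Now let $\bM = (M, \tau)$ be a restricted $\bFK$-module and decompose $M = M_1 \oplus \cdots \oplus M_d$ according to the primitive idempotents $e_1, \ldots, e_d$ of $\FK$. Since $\tau(e_i m) = \sigma(e_i) \tau(m) = e_{i+1} \tau(m)$ (indices mod $d$), one has $\tau(M_i) \subseteq M_{i+1}$, and bijectivity of $\taulin$ forces each induced map $E \otimes_{\sigma, E} M_i \to M_{i+1}$ to be an $E$-linear isomorphism. Consequently $\dim_E M_i$ is independent of $i$, so $M$ is free over $\FK = E^d$. The main technical point to verify carefully is the cyclic-permutation behaviour of $\sigma$ on the factorisation of $\FK$; the factorisation itself in part (a) is straightforward Galois theory of finite fields, and part (b) is then a short module-theoretic consequence.
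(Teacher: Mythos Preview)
Your proof is correct and follows essentially the same strategy as the paper: decompose $F\otimes_{\Fq}K$ via the constant subfield, identify the factors as copies of one field, and show that $\sigma$ permutes the primitive idempotents transitively so that a restricted module has constant rank on each factor. The paper organises this slightly differently (first treating the algebraic closures $\bbF_F,\bbF_K$ of $\Fq$ in $F,K$ and then bootstrapping via \cite[Theorem 8.50]{Jac90}), whereas you handle everything in one pass through the factorisation of the minimal polynomial of a generator of $k$ over $F$; both routes are equivalent. One small slip: you state that regularity of $K/k$ gives $L\otimes_k K$ a domain for every \emph{algebraic} extension $L/k$, but you then apply it to $L=F\cdot k$, which is not algebraic over $k$ in general---of course regularity gives the conclusion for \emph{all} field extensions $L/k$, so simply drop the word ``algebraic''.
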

\begin{proof}
Let $\bbF_F$ and $\bbF_K$ denote the algebraic closures of $\Fq$ in $F$ and $K$,
respectively. If $\bbF_F=\bbF_{q^r}$ and $\bbF_K=\bbF_{q^s}$ are both finite,
then
\[\bbF_F\otimes_{\Fq}\bbF_K\isom\left(\bbF_{q^{\lcm(r,s)}}\right)^{\times\gcd(r,s)}\]
and $\sigma=\id\otimes\sigma_q$ corresponds to an endomorphism
of the product which permutes the factors transitively. This implies
that every restricted $(\bbF_F\otimes_{\Fq}\bbF_K,\id\otimes\sigma_q)$-module
has an underlying $\bbF_F\otimes_{\Fq}\bbF_K$-module which is projective of
\emph{constant} rank, and hence free. Hereby, items (a) and (b) are proven for
$F$ and $K$ both finite.

If $\bbF_F$ is infinite, then it is an algebraic closure of $\Fq$ and
\[\bbF_F\otimes_{\Fq}\bbF_K\isom(\bbF_F)^{\times\dim_{\Fq}\bbF_K}\]is a product of
pairwise isomorphic fields. It follows from the above that the
endomorphism corresponding to $\sigma=\id\otimes\sigma_q$ permutes the factors
transitively, so again we have items (a) and (b) for $F$ and $K$ both algebraic.

In the general case, $\bbF_F\otimes_{\Fq}\bbF_K\isom\bbF^r$ for an
algebraic extension $\bbF/\bbF_q$ and an integer $r\ge 1$. Then \cite[Theorem 8.50]{Jac90}
shows that $F\otimes_{\bbF_F}\bbF\otimes_{\bbF_K}K$ is a domain,
which implies that
\[\FK\isom\Frac(F\otimes_{\bbF_F}\bbF\otimes_{\bbF_K}K)^{\times r}\]
is a product of pairwise isomorphic fields. Tracing through these identifications,
we see that $\sigma_{\FK}$ permutes these fields transitively, so we obtain
items (a) and (b) in general.
\end{proof}

\begin{prop}\label{prop:subquotclosedbFK}
Assume that the number of roots of unity of $K$ is finite.
The full subcategory of restricted $\bFK$-modules is closed under
subquotients and tensor products in the category of all $\bFK$-modules. In particular,
it is an $F$-linear rigid abelian tensor category.
\end{prop}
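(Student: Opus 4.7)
The plan is to exploit the explicit structure from Lemma \ref{lem:onbffkmodules}: $\FK \cong \prod_{i=1}^r K_i$ is a finite product of pairwise isomorphic fields, and $\sigma = \sigma_{\FK}$ permutes the primitive idempotents of this decomposition by some transitive permutation $\pi$ of $\{1, \ldots, r\}$ (this transitivity is already what drives the proof of that lemma). Consequently, every $\bFK$-module $\bM = (M, \tau)$ decomposes as $M = \bigoplus_i M_i$ with $M_i := e_i M$, and $\tau(M_i) \subseteq M_{\pi(i)}$. In these terms, $\bM$ is restricted if and only if each $M_i$ is finite-dimensional over $K_i$ of a common dimension $d$ and each $\tau \colon M_i \to M_{\pi(i)}$ has bijective $K_{\pi(i)}$-linearization.

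The central step will be closure under sub-$\bFK$-modules. Given $\bN \subseteq \bM$ with $\bM$ restricted: since $\FK$ is semisimple, any $\FK$-submodule $N \subseteq M$ of the fg projective $M$ is itself fg projective, so the components $N_i := e_i N \subseteq M_i$ are finite-dimensional. The restriction of $\tau$ yields injective $\sigma_q$-semilinear maps $N_i \to N_{\pi(i)}$, hence $\dim_{K_i} N_i \le \dim_{K_{\pi(i)}} N_{\pi(i)}$ for every $i$. Iterating along a full $\pi$-orbit and using transitivity of $\pi$ gives a cyclic chain of inequalities that forces equality throughout, so the $\dim_{K_i} N_i$ share a common value. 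Then the injective semilinear maps $\tau \colon N_i \to N_{\pi(i)}$ between equidimensional finite-dimensional spaces are automatically bijective; thus $\taulin$ on $N$ is bijective and $\bN$ is restricted.

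For quotients $\bM/\bN$ with $\bN \subseteq \bM$ a sub-$\bFK$-module, I would first observe that $\sigma_*(-) = \FK \otimes_{\sigma, \FK} (-)$ is exact (since $\FK$ is flat over $\sigma(\FK) \subseteq \FK$, each being a product of fields). Applying $\sigma_*$ to $0 \to N \to M \to M/N \to 0$ and mapping to it via the $\taulin$-maps produces a commutative diagram with exact rows; by the subobject case $\bN$ is restricted, so together with bijectivity of $\taulin$ on $\bM$ the five-lemma yields bijectivity on $\bM/\bN$. For tensor products of restricted modules, fg projectivity is preserved and the canonical isomorphism $\sigma_*(M \otimes_{\FK} N) \cong \sigma_*M \otimes_{\FK} \sigma_*N$ identifies $\taulin_{\bM \otimes \bN}$ with $\taulin_{\bM} \otimes \taulin_{\bN}$, a tensor product of bijections.

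Together these closure properties make the full subcategory of restricted $\bFK$-modules an $F$-linear abelian tensor subcategory of all $\bFK$-modules. For rigidity, given a restricted $\bM$, one constructs the dual $\bM^\vee$ on the underlying $\FK$-module $\Hom_{\FK}(M, \FK)$ with $\sigma$-linear structure determined by $\taulin_{\bM^\vee} := \bigl((\taulin_{\bM})^\vee\bigr)^{-1}$, using the canonical isomorphism $\sigma_*\Hom_{\FK}(M, \FK) \cong \Hom_{\FK}(\sigma_*M, \FK)$ valid for fg projective $M$; this is well-defined because $\taulin_{\bM}$ is bijective, and $\bM^\vee$ is restricted by construction. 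I expect the subobject step to be the main obstacle: the bijectivity of $\taulin$ on $\bN$ does not follow from abstract categorical manipulations but requires genuinely unpacking the component decomposition and exploiting the transitivity of $\sigma$ on the factors of $\FK$ supplied by Lemma \ref{lem:onbffkmodules}.
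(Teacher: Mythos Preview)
Your proposal is correct and follows essentially the same approach as the paper: both arguments hinge on the transitive permutation of the factors of $\FK$ by $\sigma$ (from Lemma~\ref{lem:onbffkmodules}) to upgrade injectivity/surjectivity of $\taulin$ on a sub or quotient to bijectivity via a cyclic dimension count. The only organizational difference is that the paper handles sub and quotient simultaneously via the Snake Lemma (obtaining $\taulin'$ injective and $\taulin''$ surjective, then invoking Lemma~\ref{lem:onbffkmodules}), whereas you do subobjects first by the explicit component argument and then deduce quotients by the five-lemma; your dual construction also matches the paper's.
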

\begin{proof}
Let $\bM=(M,\tau)$ be a restricted $\bFK$-module,
and consider an exact sequence
\[0\to(M',\tau')\to\bM\to(M'',\tau')\to 0\]
of $\bFK$-modules. Both $M'$ and $M''$ are finitely generated $\FK$-modules
since $\FK$ is Noetherian, and both are projective $\FK$-modules
since $\FK$ is a product of fields by Lemma \ref{lem:onbffkmodules}(a).
Since $\taulin:\:\sigma_*M\to M$ is bijective, the Snake Lemma
implies that $\taulin'$ is injective and $\taulin''$ is surjective.
By Lemma \ref{lem:onbffkmodules}, this implies that both $\taulin'$ and
$\taulin''$ are bijective. Therefore, both $(M',\tau')$ and $(M'',\tau'')$
are restricted $\bFK$-modules as claimed.

We suppress the easy proof that the tensor product of restricted
$\bFK$-modules is restricted. It follows that the full subcategory
of restricted $\bFK$-modules is an $F$-linear abelian tensor category,
since $\bFK$-$\Mod$ is. One checks that the dual of a restricted
$\bFK$-module $(M,\tau)$ is given by $M^\vee:=\Hom_{\AK}(M,\AK)$ together
with the $\sigma$-linear endomorphism mapping
$f\in M^\vee$ to
$\tau_{M^\vee}(f):=\sigma_{\mathrm{lin}}\circ\sigma_*(f)\circ(\taulin)^{-1}$.
It follows that the category of restricted $\bFK$-modules is a rigid
tensor category.
\end{proof}

We turn to the main theorem of this section, its proof will occupy the
remainder of the section. To state it, we recall
the algebraic concept of separability.

\begin{dfn}\label{dfn:fieldexseper}
A field extension $F'/F$ is \emph{separable}
if for every field extension $F''\supset F$ the ring
$F'\otimes_FF''$ is reduced (contains no nilpotent elements).
\end{dfn}

\begin{rem}
An algebraic field extension $F'/F$ is separable in the
sense of Definition \ref{dfn:fieldexseper} if and only if
it is separable in the usual sense. If $F''/F'/F$ is a
tower of field extensions such that $F''/F$ is separable,
then $F'/F$ is separable as well.
\end{rem}

\begin{thm}\label{thm:boldscalarext}
Let $F'/F/\Fq$ be a tower of field extensions.
Assume that the number of roots of unity of $K$ is finite.
The restriction of the functor of bold scalar extension $\bFFK\otimes_{\bFK}(-)$
to restricted $\bFK$-modules is:
\begin{enumerate}
  \item $F'/F$-fully faithful and,
  \item if $F'/F$ is a separable field extension, it is semisimple on objects.
\end{enumerate}
\end{thm}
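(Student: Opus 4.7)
My plan is to prove part (a) first via a reduction to a $\tau$-fixed-point statement using rigidity and the explicit structure of $\FK$ from Lemma \ref{lem:onbffkmodules}; then derive part (b) from (a) by Wedderburn-style structure theory.

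For part (a), I would use the rigidity of restricted $\bFK$-modules (Proposition \ref{prop:subquotclosedbFK}) to rewrite $\Hom_{\bFK}(\bM,\bN)=\iHom(\bM,\bN)^\tau$, with $\iHom(\bM,\bN)$ again restricted. Since $\bFFK\otimes_{\bFK}(-)$ is a rigid tensor functor it commutes with internal Hom, so (a) reduces to verifying, for every restricted $\bFK$-module $\bH$, that the natural map
\[F'\otimes_F H^\tau \to \bigl(\bFFK\otimes_{\bFK}\bH\bigr)^\tau\]
is bijective. Using Lemma \ref{lem:onbffkmodules}, I would write $\FK\isom L^{\times r}$ with $\sigma$ transitively permuting the factors, decompose $\bH$ accordingly as $H=\bigoplus_{i=1}^r H_i$, and reduce the computation of $H^\tau$ to the fixed points of $\tau^r$ acting as a $\sigma^r$-linear bijection on the single component $H_1$. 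An analogous decomposition for $\FFK$, combined with a direct Galois descent / Hilbert 90 style computation on $H_1$, then yields the claimed bijection.

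For part (b), I would first reduce to the case where $\bM$ is simple. Then $D:=\End_{\bFK}(\bM)$ is a division $F$-algebra by Schur's lemma, which one can show to be finite-dimensional over $F$ by bounding it against $\End_{\FK}(M)$. By (a) applied to $\iHom(\bM,\bM)$, we have $\End_{\bFFK}(\bM')\isom D\otimes_F F'$ for $\bM':=\bFFK\otimes_{\bFK}\bM$. Writing $Z:=Z(D)$, a finite field extension of $F$, separability of $F'/F$ applied via Definition \ref{dfn:fieldexseper} with $F''=Z$ yields that $Z\otimes_F F'$ is reduced and finite-dimensional over $F'$, hence a product $\prod Z'_i$ of fields; then $D\otimes_F F'\isom\prod D\otimes_Z Z'_i$ is a product of central simple algebras over the $Z'_i$, in particular semisimple.

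To finally deduce that $\bM'$ itself is semisimple, I would invoke a Wedderburn/Jacobson-density style argument: the central idempotents of $D\otimes_F F'=\End_{\bFFK}(\bM')$ decompose $\bM'$ into a direct sum of $\bFFK$-submodules on each of which the endomorphism algebra is simple, and Morita theory together with finite-length Krull--Schmidt in the abelian category of restricted $\bFFK$-modules then reduce the problem to showing that such a component is a sum of copies of a single simple restricted $\bFFK$-module. I expect this last step---in particular, showing that an indecomposable restricted $\bFFK$-module whose endomorphism algebra is a division ring must be simple---to be the main obstacle, and would settle it by exploiting the rigid tensor structure together with Fitting's lemma and a careful analysis of the socle and radical filtrations.
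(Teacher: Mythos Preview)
Your reduction of part (a) to the statement that $F'\otimes_F H^\tau\to(\bFFK\otimes_{\bFK}\bH)^\tau$ is bijective for every restricted $\bFK$-module $\bH$ is correct and is exactly how the paper begins. However, your proposed ``Hilbert 90 / Galois descent'' computation does not address the main difficulty. The ring $\FFK=\Frac(F'\otimes_{\Fq}K)$ is in general \emph{strictly larger} than $F'\otimes_F\FK$ whenever $F'/F$ is not algebraic, and the surjectivity you need is precisely the assertion that a $\tau$-invariant element of $\FFK\otimes_{\FK}H$ already lies in the smaller ring $F'\otimes_F H$. Galois descent gives you nothing here, because the passage from $F'\otimes_F\FK$ to $\FFK$ is a localisation, not a Galois extension. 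The paper handles this by first reducing to $F'/F$ finitely generated, then to the two building blocks $F'/F$ finite (where indeed $\FFK=F'\otimes_F\FK$ and the claim is trivial) and $F'=F(X)$ purely transcendental of degree $1$, and finally settling the transcendental case by a concrete denominator argument (Proposition \ref{prop:resinvtrans1}): one shows that if $x'\in\FK(X)^r$ satisfies $x'=\Delta\sigma(x')$ with $\Delta\in\GL_r(\FK)$, then $\den(x')$ is $\sigma$-invariant and hence lies in $F[X]$, forcing $x'\in F(X)\otimes_F\FK^{\,r}$.

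Your strategy for part (b) has a more serious gap. The implication ``$\End(\bM')$ semisimple $\Rightarrow$ $\bM'$ semisimple'' is \emph{false} in any abelian category that admits non-split extensions of non-isomorphic simples: if $0\to S_1\to\bM'\to S_2\to 0$ is such an extension, then $\End(\bM')$ can perfectly well be a field while $\bM'$ is not semisimple. Your proposed remedy via Fitting's lemma and socle/radical filtrations does not repair this, since Fitting's lemma only tells you that an indecomposable of finite length has \emph{local} endomorphism ring, which is compatible with the counterexample. The paper takes a completely different route: after reducing to $F'/F$ finitely generated and separable, it treats the finite separable case by passing to a Galois closure and using the standard orbit argument $\bM'=\sum_{g\in\Gamma}g\bS'$ for a simple submodule $\bS'$, and it treats the purely transcendental case $F'=F(X)$ directly by spreading a hypothetical proper submodule of $\bM'$ to a model over $\FK[X]$ and specialising at $X=0$ to contradict the simplicity of $\bM$. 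Neither step uses the endomorphism algebra at all.
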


We turn first to the proof of item (a) of Theorem \ref{thm:boldscalarext}.

\begin{prop}\label{prop:boldscalarext1}
Let $F'/F/\Fq$ be a tower of field extensions.
Assume that the number of roots of unity of $K$ is finite.
The restriction of the functor of bold scalar extension $\bFFK\otimes_{\bFK}(-)$
to restricted $\bFK$-modules is $F'/F$-fully faithful.
\end{prop}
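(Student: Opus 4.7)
The plan is first to use rigidity of the category of restricted $\bFK$-modules (Proposition~\ref{prop:subquotclosedbFK}) to reduce the full-faithfulness statement to a claim about $\tau$-invariants, and then to establish that claim via the product-of-fields decomposition of $\FK$ supplied by Lemma~\ref{lem:onbffkmodules}.

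\textbf{Step 1 (Rigidity reduction and injectivity).} Since the category of restricted $\bFK$-modules is rigid, one has natural identifications $\Hom_{\bFK}(\bM,\bN)=(\bM^\vee\otimes_{\bFK}\bN)^\tau$ and the analogous identification after bold scalar extension to $\bFFK$, using that this extension is a tensor functor compatible with duals. Setting $\bP:=\bM^\vee\otimes_{\bFK}\bN$, the assertion reduces to showing that for every restricted $\bFK$-module $\bP$ the natural $F'$-linear map
\[
\Phi_{\bP}\colon\ F'\otimes_F P^\tau\ \longrightarrow\ (\FFK\otimes_{\FK}P)^\tau
\]
is bijective; here one uses $\FK^\sigma=F$ (a direct computation from $\FK=\Frac(F\otimes_{\Fq}K)$ together with $K^{\sigma_q}=\Fq$) to view $P^\tau$ as an $F$-module. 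Injectivity of $\Phi_{\bP}$ is straightforward: the ring map $F'\otimes_F\FK\to\FFK$ is a further localization of $F'\otimes_{\Fq}K$, hence injective; tensoring with the flat $\FK$-module $P$ and then restricting to $\tau$-invariants preserves injectivity.

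\textbf{Step 2 (Surjectivity via structural decomposition).} Write $\FK=L_1\times\cdots\times L_r$ as in Lemma~\ref{lem:onbffkmodules}, with $\sigma$ cyclically permuting the factors, and set $\phi:=\sigma^r|_{L_1}$. A restricted $\bFK$-module $\bP$ is then equivalent to the data of a finite-dimensional $L_1$-vector space $P_1$ together with a $\phi$-linear bijection $T\colon P_1\to P_1$, via $P=P_1\oplus\tau(P_1)\oplus\cdots\oplus\tau^{r-1}(P_1)$, and under this correspondence $P^\tau\cong P_1^T$ as $L_1^\phi=F$-vector spaces. Applying the analogous decomposition to $\FFK=L_1'\times\cdots\times L_{r'}'$ produces a pair $(Q_1,T')$ associated to $\bQ:=\bFFK\otimes_{\bFK}\bP$, with $Q^\tau\cong Q_1^{T'}$ as $F'$-vector spaces. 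The inclusion $\bbF_F\subseteq\bbF_{F'}$ forces $r\mid r'$, and the explicit description of $L_1,L_1'$ from the proof of Lemma~\ref{lem:onbffkmodules} shows that $(Q_1,T')$ is obtained from $(P_1,T)$ by base change along the natural embedding $L_1\hookrightarrow L_1'$, compatibly with $\phi$ and $\phi'$. A direct semilinear-algebra computation -- essentially using flatness of $F'/F$ together with $(T'-\id)=\id\otimes(T-\id)$ after the appropriate identification -- then yields $Q_1^{T'}\cong F'\otimes_F P_1^T$, completing the proof.

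\textbf{Main obstacle.} The substantive work is Step~2, specifically the compatibility claim that under $\FK\hookrightarrow\FFK$ the pair $(Q_1,T')$ is obtained from $(P_1,T)$ by the asserted base change. This requires careful tracking of minimal idempotents and of the residue fields $\bbF_F\subseteq\bbF_{F'}$ and $\bbF_K$ entering into Lemma~\ref{lem:onbffkmodules}, including the sub-case where $\bbF_F\subsetneq\bbF_{F'}$ introduces new constants (so that $r<r'$) and the inclusion is a true diagonal embedding rather than a simple factor-by-factor map. Once this bookkeeping is in hand, the final identification of fixed points is an elementary consequence of the fact that the semilinear operators $T$ and $T'$ restrict to the same underlying $F$-linear endomorphism after the base-change identification, so that passing to kernels commutes with tensoring by $F'$ over $F$.
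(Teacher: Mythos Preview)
Your Step~1 is correct and coincides with the paper's reduction. The gap is in Step~2, specifically the final sentence. You claim $Q_1^{T'}\cong F'\otimes_F P_1^T$ follows from flatness of $F'/F$ together with an identification $(T'-\id)=\id\otimes(T-\id)$. But $Q_1=L_1'\otimes_{L_1}P_1$, and in general $L_1'$ is \emph{strictly larger} than $F'\otimes_F L_1$: for instance, with $F=\Fq$, $F'=\Fq(X)$, and $K$ infinite with $\bbF_K=\Fq$, one has $L_1=K$ and $L_1'=K(X)$, whereas $F'\otimes_F L_1$ is only the localization of $K[X]$ at the nonzero polynomials with coefficients in $\Fq$. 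So there is no identification of $Q_1$ with $F'\otimes_F P_1$ to which your flatness argument applies. What your argument does yield is the inclusion $F'\otimes_F P_1^T\hookrightarrow Q_1^{T'}$, i.e.\ exactly the injectivity already established in Step~1. The surjectivity---that every $T'$-fixed vector in $L_1'\otimes_{L_1}P_1$ already lies in the subspace $(F'\otimes_F L_1)\otimes_{L_1}P_1$---is precisely the content of the proposition, and your sketch does not address it. Note also that since $T$ is $\phi$-semilinear rather than $L_1$-linear, the map $T'-\id$ on $Q_1$ is not of the form $\id_{L_1'}\otimes(T-\id)$ in any case.

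The paper proceeds differently after the common reduction of Step~1: it first reduces to $F'/F$ finitely generated, and then to the two building blocks $F'/F$ finite (where $F'\otimes_F\FK\cong\FFK$ and the claim is immediate) and $F'=F(X)$ purely transcendental of degree~$1$. The latter case requires a genuine argument (Proposition~\ref{prop:resinvtrans1}): for a $\tau$-invariant vector $x'\in\FK(X)^r$ satisfying $x'=\Delta\,\sigma(x')$ with $\Delta\in\GL_r(\FK)$, one shows that the componentwise denominator of $x'$ is $\sigma$-invariant, hence lies in $F[X]$, and therefore $x'\in\big(F(X)\otimes_F\FK\big)^r$. This denominator argument is the missing ingredient in your proposal.
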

\begin{proof}
Let $\bM,\bN$ be restricted $\bFK$-modules,
and set $\bX:=\bM^\vee\otimes_{\bFK}\bN$. Since
$\Hom_{\bFK}(\bM,\bN)=\bX^\tau$ and
$\Hom_{\bFFK}(\bFFK\otimes_{\bFK}\bM,\bFFK\otimes_{\bFK}\bN)=(\bFFK\otimes_{\bFK}\bX)^\tau$,
it is sufficient to prove that
\begin{equation}\label{eqn:reductiontotauinvariants}
F'\otimes_F\bX^\tau\to(\bFFK\otimes_{\bFK}\bX)^\tau
\end{equation}
is bijective for all restricted $\bFK$-modules $\bX$. We
set $\bX':=\bFFK\otimes_{\bFK}\bX$.

Since the homomorphism
$F'\otimes_F\FK\to F'_K=\Frac(F'\otimes_F\FK)$ is injective and the functor
$(-)^\tau$ is left-exact, the homomorphism of (\ref{eqn:reductiontotauinvariants})
is injective. We must show that it is surjective!

Moreover, we may assume that $F'\supset F$ is finitely
generated, since for every element $x'\in(\bX')^\tau$ there
exists a finitely generated field extension $F'\supset F^0\supset F$
such that $x'$ lies in $(\boldsymbol{X^0})^\tau$,
where $\boldsymbol{X^0}:=\boldsymbol{F_K^0}\otimes_{\bFK}\bX$
with $\boldsymbol{F_K^0}:=\Frac(F^0\otimes_F\FK,\id\otimes\sigma_q)$.

All in all, the theorem reduces to proving the surjectivity
of (\ref{eqn:reductiontotauinvariants}) for the two special cases of
$F'\supset F$ finite, and $F'\supset F$ purely transcendental of transcendence degree $1$.
The first is easy, since if $F'/F$ is finite, then $F'\otimes_F\bFK\isom\bFFK$,
and hence
\[F'\otimes_F\bM^\tau=(F'\otimes_F\bFK\otimes_{\bFK}\bM)^\tau\isom(\bFFK\otimes_{\bFK}\bM)^\tau\]
as claimed. The second is dealt with in the following Proposition \ref{prop:resinvtrans1}.
\end{proof}

\begin{prop}\label{prop:resinvtrans1}
If $F'=F(X)$ is a purely transcendental extension of $F$ of transcendence degree $1$
and $\bX$ is a restricted $\bFK$-module, then $F'\otimes_F\bX^\tau\to(\bFFK\otimes_{\bFK}\bX)^\tau$
is surjective.
\end{prop}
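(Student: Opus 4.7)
Plan: Letting $t$ denote the variable (the paper's $X$, renamed to avoid clash with $\bX$), the goal is to realise any $\tau$-invariant element of $\bFFK\otimes_{\bFK}\bX$ as an $F(t)$-linear combination of elements of $\bX^\tau$. First reduce to the case where $\FK$ is a single field by exploiting Lemma~\ref{lem:onbffkmodules}: if $\FK=\prod_i L_i$ with $\sigma$ cyclically permuting the factors, restriction of scalars along the cycle repackages everything over one factor field. Having done so, fix an $\FK$-basis of $\bX$, represent $\tau$ by a matrix $A\in\GL_n(\FK)$, and identify $\bFFK\otimes_{\bFK}\bX\isom \FK(t)^n$. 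Under this identification, $\tau$-invariance of $x\in \FK(t)^n$ becomes the matrix equation $x=A\sigma(x)$, where $\sigma$ acts on $\FK(t)$ by fixing $t$ and inducing the bold-ring endomorphism on $\FK$.

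The main step is to clear denominators in a $\tau$-controlled way. Write $x=y/Q$ with $y\in \FK[t]^n$ and $Q\in \FK[t]$ monic of minimal possible degree, so that no monic prime factor of $Q$ divides every coordinate of $y$. Substituting into $x=A\sigma(x)$ produces
\[\sigma(Q)\,y \;=\; Q\cdot A\sigma(y)\qquad \text{in }\FK[t]^n.\]
Since $\sigma(Q)$ is again monic of the same degree as $Q$ and $Q$ is coprime to the components of $y$ by minimality, one deduces $Q\mid\sigma(Q)$, hence $Q=\sigma(Q)$; thus $Q\in \FK^\sigma[t]$. The equation then collapses to $y=A\sigma(y)$, and expanding $y=\sum_i t^i\alpha_i$ with $\alpha_i\in \FK^n$ and comparing coefficients of $t^i$ gives $\alpha_i = A\sigma(\alpha_i)$, i.e.\ $\alpha_i\in \bX^\tau$.

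It remains to ensure that $Q$ lives in $F[t]$, not merely in $\FK^\sigma[t]$. Under the standing hypothesis that $K$ has only finitely many roots of unity, I expect $\FK^\sigma=F$; this should fall out by combining $K^{\sigma_q}=\Fq$ (immediate from the definition of $\Fq$ as the fixed field of Frobenius) with flatness of $F$ over $\Fq$ to propagate the fixed-point computation through $F\otimes_{\Fq}K$, and then with the product decomposition of $\FK$ from Lemma~\ref{lem:onbffkmodules} to pass to the total ring of fractions. Granting this, $Q\in F[t]$ and
\[x \;=\; \sum_i (t^i/Q)\cdot\alpha_i \;\in\; F(t)\cdot\bX^\tau,\]
as required. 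The hardest point of the argument is precisely this identification $\FK^\sigma=F$; the denominator-minimality trick that supplies the rest is elementary once the reduction to the field case is in place.
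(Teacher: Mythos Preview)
Your argument is correct and follows the same route as the paper: show that the denominator of a $\tau$-invariant vector in $\FK(t)^n$ is $\sigma$-invariant, hence lies in $F[t]$, so that the vector belongs to $F(t)\otimes_F\bX$ and therefore to $F(t)\otimes_F\bX^\tau$. The paper differs only in packaging---it works directly with the product $\FK=Q^{\times s}$ via componentwise denominators (so your reduction to a single field is unnecessary) and replaces your minimality/coprimality step by the observation that multiplication by an invertible matrix over $\FK$ preserves denominators (Lemma~\ref{lem:resinvtrans1lem1}); the identity $\FK^\sigma=F$ that you single out as the hardest point is indeed what is needed, and the paper uses it without comment.
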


For the proof of Proposition \ref{prop:resinvtrans1}, we use a slightly
extended notion of ``denominators''. By Lemma \ref{lem:onbffkmodules}(a),
the ring $\FK=Q^{\times s}$ for some field $Q$. We set
$\FK[X]:=Q[X]^{\times s}$ and $\FK(X):=\Frac(F(X)\otimes_F\FK)=Q(X)^{\times s}$.

For $f\in\FK(X)$, we define the \emph{denominator} $\den(f)$ of $f$
componentwise, as the $s$-tuple of the usual (monic) denominators of its $s$ components.
Similarly, for $f,g\in\FK(X)$, we define the \emph{least common multiple}
$\lcm(f,g)$ of $f$ and $g$ componentwise, as the $s$-tuple of the usual
(monic) least common multiples of their corresponding components.

Clearly, for $f,g\in\FK(X)$ the following relation holds,
where $\mid$ denotes componentwise divisibility in $\FK[X]$:
\begin{equation}\label{eqn:denlcm}
\den(f+g) \mid \lcm(\den f,\den g).
\end{equation}

We may now characterise the subring $F(X)\otimes_F\FK$ of $\FK(X)$.

\begin{lem}\label{lem:resinvtrans1lem2}
We have
\[F(X)\otimes_F\FK=\left\{f\in\FK(X):\:\begin{array}{c}\den(f)\mid g\\\text{for some $g\in F[X]\smallsetminus\{0\}$}\end{array}\:\right\},\]
\end{lem}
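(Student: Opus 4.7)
The plan is to prove both inclusions separately, using the product decomposition $\FK = Q^{\times s}$ from Lemma \ref{lem:onbffkmodules}(a) together with the basic fact that the polynomial ring construction commutes with the tensor product, so that $F[X]\otimes_F\FK = \FK[X] = Q[X]^{\times s}$. One also needs to observe that $F(X)\otimes_F\FK$ sits inside $\FK(X)$: it is the localisation of $\FK[X]$ at the image of $F[X]\setminus\{0\}$, and every nonzero $g \in F[X]$ maps to an element of $\FK[X] = Q[X]^{\times s}$ all of whose components are nonzero polynomials in $Q[X]$, hence a non-zero-divisor in $\FK[X]$; thus the localisation injects into $\FK(X) = Q(X)^{\times s}$.

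For the inclusion $\supseteq$, I would take $f \in \FK(X)$ with $\den(f)\mid g$ componentwise for some $0\neq g \in F[X]$. Then $g\cdot f$ lies in $\FK[X] = F[X]\otimes_F\FK \subseteq F(X)\otimes_F\FK$, while $1/g \in F(X) \subseteq F(X)\otimes_F\FK$ is the inverse (in $\FK(X)$) of $g$. Hence $f = (gf)\cdot(1/g)$ belongs to $F(X)\otimes_F\FK$.

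For the inclusion $\subseteq$, I would take $f \in F(X)\otimes_F\FK$ and write it as a finite sum $f = \sum_i (p_i/q_i)\otimes a_i$ with $p_i,q_i\in F[X]$, $q_i\neq 0$, and $a_i \in \FK$. Setting $g := \prod_i q_i \in F[X]\setminus\{0\}$, one has $g\cdot f \in F[X]\otimes_F\FK = \FK[X]$. In each component of $\FK[X] = Q[X]^{\times s}$, this gives $g\cdot f_i \in Q[X]$, and writing $f_i = a_i/\den(f_i)$ in lowest terms over $Q[X]$ yields $\den(f_i)\mid g\cdot a_i$ with $\gcd(a_i,\den(f_i))=1$, whence $\den(f_i)\mid g$. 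This is the desired componentwise divisibility $\den(f)\mid g$.

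The argument is essentially formal once one is comfortable with the product decomposition of $\FK$ and the componentwise definitions of $\den$; there is no real obstacle. The one subtle point to keep verified throughout is that ``denominator divides $g$'' is the componentwise statement in $Q[X]^{\times s}$ (with $g \in F[X]$ mapped diagonally), and that the tensor product with a field extension commutes with polynomial rings, so that no manipulation accidentally leaves the ring $\FK(X)$.
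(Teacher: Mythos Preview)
Your proof is correct and follows essentially the same approach as the paper's. For $\subset$ the paper invokes the relation $\den(f+g)\mid\lcm(\den f,\den g)$ of (\ref{eqn:denlcm}) directly, whereas you clear denominators by multiplying through by $g=\prod_i q_i$ and then read off $\den(f_i)\mid g$ from the lowest-terms representation in the PID $Q[X]$; these are equivalent elementary manoeuvres. For $\supset$ both arguments amount to the observation that $gf\in\FK[X]\subset F(X)\otimes_F\FK$ and $1/g\in F(X)$. Your preliminary remark that $F(X)\otimes_F\FK$ really does inject into $\FK(X)$ (because nonzero elements of $F[X]$ map to non-zero-divisors in $\FK[X]=Q[X]^{\times s}$) is a useful clarification that the paper leaves implicit. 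One cosmetic point: in your $\subseteq$ paragraph you use $a_i$ both for the tensor factors in $\FK$ and for the numerators of the $f_i$ in $Q[X]$; rename one of them.
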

\begin{proof}
$\subset$: Assume that $f$ is an element of $F(X)\otimes_F\FK$.
We may write $f=\sum_{i=1}^m(a_i/b_i)\otimes\lambda_i$ for elements
$\lambda_i\in\FK$ and $a_i,b_i\in F[X]$ with $b_i\neq 0$. By (\ref{eqn:denlcm}), 
$\den(f)$ divides $d:=\prod_{i=1}^m b_i$, an element of $F[X]\smallsetminus\{0\}$ as claimed.

$\supset$: Assume that $f$ is an element of $\FK(X)$ which divides
a non-zero element $g\in F[X]$. This means that there exists an element $h\in\FK[X]$
such that $g=\den(f)\cdot h$. We have $f=f'/\den(f)$ for $f':=f\den(f)\in\FK[X]$.
Therefore $f=(f'h)/(\den(f)h)$ with $1/(\den(f)h)=1/g\in F(X)$ and
$f' h\in\FK[X]\subset F(X)\otimes_F\FK$, which implies our claim that $f$ is an element of $F(X)\otimes_F\FK$.
\end{proof}

Given a vector $v=(v_j)\in\FK(X)^r$ for some $r\ge 1$, we set $\den(v)=\lcm_j(\den v_j)$.

\begin{lem}\label{lem:resinvtrans1lem1}
Given two integers $m,n\ge 1$, a matrix $A\in\Mat_{m\times n}(\FK)$
and a vector $v\in\FK(X)^{\oplus n}$, we have
\[\den\left(Av\right)\mid\den(v).\]
In particular, if $m=n$ and $A$ is invertible, then $\den(Av)=\den(v)$.
\end{lem}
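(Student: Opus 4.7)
The plan is to reduce everything to the componentwise case via the relation~(\ref{eqn:denlcm}), since $\FK(X)$ is by construction a product of polynomial fraction fields over the field $Q$ and $\den$ is defined componentwise.

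First I would verify the auxiliary scalar fact: for $a \in \FK$ and $f \in \FK(X)$, one has $\den(af) \mid \den(f)$. Indeed, an element of $\FK$ has componentwise denominator $1$ (as it sits inside $\FK[X] = Q[X]^{\times s}$), and so writing $f$ in componentwise reduced form $f = (p_k/q_k)_{k=1}^s$ gives $af = (a_k p_k / q_k)_k$, whose reduced denominator in each component divides $q_k$.

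Given this, the first assertion is immediate. For each row index $i$ we have $(Av)_i = \sum_{j=1}^n A_{ij} v_j$ with $A_{ij} \in \FK$, so iterating~(\ref{eqn:denlcm}) and applying the auxiliary fact to each summand yields
\[
\den\bigl((Av)_i\bigr) \;\mid\; \lcm_{j} \den(A_{ij} v_j) \;\mid\; \lcm_{j} \den(v_j) \;\mid\; \den(v).
\]
Taking the componentwise lcm over $i=1,\dots,m$ on the left then gives $\den(Av) \mid \den(v)$.

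For the second assertion, assume $m=n$ and $A$ invertible. Applying the first part to the matrix $A^{-1} \in \Mat_{n\times n}(\FK)$ and the vector $Av$ yields $\den(v) = \den(A^{-1}(Av)) \mid \den(Av)$, and combining with the opposite divisibility already established gives $\den(Av) = \den(v)$ (both being monic). There is no real obstacle here; the only point that needs care is that $A$ has entries in $\FK$ rather than $\FK(X)$, which is precisely what makes the scalar denominator comparison trivial.
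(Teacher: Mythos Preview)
Your proof is correct and follows essentially the same approach as the paper. The paper suppresses the argument for the divisibility statement entirely (calling it ``clear intuitively''), so your explicit reduction via the scalar fact and iterated use of~(\ref{eqn:denlcm}) simply fills in those details; the second assertion is handled identically, by applying the first part to $A^{-1}$ and $Av$ and using monicity.
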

\begin{proof}
We suppress the easy proof of the divisibility statement, which
is clear intuitively.

In case $m=n$ and $A$ is invertible, we may additionally apply
this divisibility statement to the matrix $A^{-1}$ and
the vector $Av$. We obtain $\den(v)=\den\big(A^{-1}(Av)\big)\mid\den(Av)$.
Since both $\den(Av)$ and $\den(v)$ have monic components, we
infer that $\den(Av)=\den(v)$.
\end{proof}

\begin{proof}[Proof of Proposition \ref{prop:resinvtrans1}]
By Lemma \ref{lem:onbffkmodules}(a),
$X=F_K^r$ for an integer $r\ge 0$ and $\tau=\Delta\circ\sigma$
for a certain matrix $\Delta\in\GL_r(\FK)$.

Assume that $x'\in\FK(X)\otimes_{\FK}X$ is $\tau$-invariant,
so $x'=(x_i')_i\in\FK(X)^r$ and $x'=\Delta\big(\sigma(x')\big)$.
By Lemma \ref{lem:resinvtrans1lem1} applied to the invertible matrix $\Delta$
and the vector $\sigma(x')$, we obtain that $\den(x')=\den\big(\sigma(x')\big)$,
and this latter vector clearly coincides with $\sigma\big(\den(x')\big)$.
Therefore, $\den(x')=\sigma\big(\den(x'))$ is an element of $F[X]$.
Since $\den(x_i')\mid\den(x')$ by definition, all $x_i'$ are elements of
$F'\otimes_F\FK$ by Lemma \ref{lem:resinvtrans1lem2}, and so
$x'\in F'\otimes_F\bX^\tau$, as claimed.
\end{proof}

We now turn to the proof of item (b) of Theorem \ref{thm:boldscalarext}.

\begin{prop}\label{prop:boldscalarext2}
Let $F'/F/\Fq$ be a tower of field extensions.
Assume that $F'/F$ is separable and the number of roots of unity of $K$ is finite.
The restriction of the functor of bold scalar extension $\bFFK\otimes_{\bFK}(-)$
to restricted $\bFK$-modules is semisimple on objects.
\end{prop}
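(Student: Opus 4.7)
The plan is to reduce the statement to $F'/F$ finitely generated, and then to two elementary cases---$F'/F$ finite separable and $F'=F(t)$ purely transcendental of transcendence degree one---mirroring the structure of the proof of Proposition \ref{prop:boldscalarext1}. Any failure of semisimplicity of $\bM' := \bFFK \otimes_{\bFK} \bM$ (for a semisimple $\bM$) is witnessed by a non-split short exact sequence of restricted $\bFFK$-modules; since the $\FK$-module underlying $\bM$ has finite rank (Lemma \ref{lem:onbffkmodules}) and all candidate splittings involve finitely many matrix entries, the failure already occurs over some finitely generated subextension $F^0/F$. A finitely generated separable extension admits a separating transcendence basis, so it factors through $F(t_1, \ldots, t_n)/F$ followed by a finite separable extension, and induction on $n$ reduces to the two stated cases.

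For the finite separable case, I would pass to a Galois closure $F''/F$ containing $F'$ and perform Galois descent. The group $G := \Gal(F''/F)$ acts on $\bFFFK$ with fixed subring $\bFK$ (via Lemma \ref{lem:onbffkmodules}); given a sub-$\bFFK$-module $\bN' \subset \bM'$, the base change $\bFFFK \otimes_{\bFFK} \bN'$ is $H$-stable for $H := \Gal(F''/F') \le G$. Averaging via the (non-degenerate, by separability) trace pairing of $F''/F'$ then produces a complement to $\bN'$, once one has the result for $\bFFFK \otimes_{\bFK} \bM$, which in turn follows from the Galois case of the descent: $G$-stable summands of $\bFFFK \otimes_{\bFK} \bM$ descend via the invariants functor to summands of $\bM$, and conversely a semisimple decomposition of $\bM$ becomes $G$-equivariantly semisimple after base change.

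For the purely transcendental case $F'=F(t)$, no finite group is available, and this is the main obstacle. I would reduce to $\bM$ simple via the direct-sum decomposition, and then argue that $\bM'$ has no proper non-trivial direct summand. Any projector $e \in \End_{\bFFK}(\bM') = F(t) \otimes_F D$ (with $D := \End_{\bFK}(\bM)$ a division $F$-algebra by Schur, and the identification given by Proposition \ref{prop:boldscalarext1}) is $\tau$-invariant; the $\sigma$-invariance of its denominators, combined with the denominator arithmetic of Lemmas \ref{lem:resinvtrans1lem2} and \ref{lem:resinvtrans1lem1}, forces $e \in D$, where it must be trivial. Excluding a non-semisimple indecomposable $\bM'$ requires the same descent applied to sub-$\bFFK$-modules themselves, again powered by the $\tau$-invariance mechanism of Proposition \ref{prop:resinvtrans1}.
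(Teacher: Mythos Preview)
Your reduction to the finitely generated case and then to the two elementary cases (finite separable, purely transcendental of degree one) matches the paper exactly. The gaps lie in how you handle those two cases.

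For the finite Galois step $F''/F$, your argument is circular: saying that ``a semisimple decomposition of $\bM$ becomes $G$-equivariantly semisimple after base change'' only yields that $\bM'':=\bFFFK\otimes_{\bFK}\bM$ is semisimple \emph{as a $(\bFFFK,G)$-module}, which by descent is just the hypothesis that $\bM$ is semisimple. You need semisimplicity of $\bM''$ as a plain $\bFFFK$-module, and that does not follow from $G$-equivariant statements alone. The paper supplies the missing move: for $\bM$ simple, pick any simple $\bFFFK$-submodule $\bS'\subset\bM''$ and note that $\sum_{g\in G}g\bS'$ is $G$-stable, hence descends to a non-zero submodule of $\bM$, hence equals $\bM''$; so $\bM''$ is a sum of simples. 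Your passage from $\bM''$ down to $\bM'$ by averaging is then fine (and amounts to the paper's use of Proposition~\ref{prop:relfullfaithfulisnonsemisimpleonobjects}).

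The more serious gap is the purely transcendental case $F'=F(t)$. Your projector argument at best shows that $\bM'$ is \emph{indecomposable}: indeed $\End_{\bFFK}(\bM')\cong F(t)\otimes_F D$ is a division algebra, but not for the denominator reason you give --- that mechanism is exactly what proves Proposition~\ref{prop:boldscalarext1}, i.e.\ it places an endomorphism in $F(t)\otimes_F D$, not in $D$. Indecomposable does not mean simple, and your closing gesture (``the same descent applied to sub-$\bFFK$-modules'') is not a proof: arbitrary $\bFFK$-submodules of $\bM'$ need not be defined over $\bFK$, and the $\tau$-invariance argument of Proposition~\ref{prop:resinvtrans1} controls endomorphisms, not submodules. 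The paper instead proves that $\bM'$ is \emph{simple} via an integral model: set $\bMcal:=\bFK[X]\otimes_{\bFK}\bM$, so $\bM'=\bFK(X)\otimes_{\bFK[X]}\bMcal$ and $\bM\cong\bMcal/X\bMcal$; if $\bN'\subsetneq\bM'$ is a proper non-zero submodule, then $\bNcal:=\bMcal\cap\bN'$ is a proper non-zero $\bFK[X]$-submodule whose image in $\bMcal/X\bMcal\cong\bM$ is a proper non-zero $\bFK$-submodule, contradicting simplicity of $\bM$. This specialization-at-$X=0$ idea is the ingredient your sketch is missing.
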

\begin{proof}
As in the proof of Proposition \ref{prop:boldscalarext1},
we start by reducing to the case where $F'/F$ is finitely
generated: If $\bM$ is a semisimple restricted $\bFK$-module
but $\bM':=\bFFK\otimes_{\bFK}\bM$ is not semisimple, then
there exists a non-split short exact sequence
\begin{equation}\label{eqn:thenonsplitseqforpropboldscalarext2}
0\to\bM_1'\arrover{f}\bM'\arrover{g}\bM_2'\to 0.
\end{equation}
Clearly, there exists a finitely generated field extension,
$F'\supset F^0\supset F$ such that $\bM_1',\bM_2',f,g$ are
defined over $\boldsymbol{F_K^0}=\Frac(F^0\otimes_F\FK,\id\otimes\sigma_q)$.
The short exact sequence inducing (\ref{eqn:thenonsplitseqforpropboldscalarext2})
must be non-split by Propositions \ref{prop:relfullfaithfulisnonsemisimpleonobjects}
and \ref{prop:boldscalarext1}. Thereby, we would find a contradiction
to Proposition \ref{prop:boldscalarext2} for finitely generated
field extensions. Note that $F^0/F$ is separable since $F'/F$ is.

The same argument shows that the proof of our proposition
reduces to the special cases of finite separable field extensions
and purely transcendental field extensions of transcendence degree $1$.
We deal with these cases separately in the following two propositions.
Note that it is sufficient to show that the bold scalar extension
of a simple restricted $\bFK$-module is semisimple, since bold
scalar extension is an additive functor.
\end{proof}

\begin{prop}
Assume that the number of roots of unity of $K$ is finite.
Let $F'/F/\Fq$ be a tower of field extensions such that
$F'/F$ is finite separable, and let $\bM$ be a simple
restricted $\bFK$-module. Then $\bM':=\bFFK\otimes_{\bFK}\bM$
is semisimple.
\end{prop}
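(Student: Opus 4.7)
The approach is Galois descent. First I will reduce to the case where $F'/F$ is Galois, then I will exploit the Galois action to descend simple submodules.

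\emph{Reduction to the Galois case.} Let $F''/F$ be a Galois closure of $F'/F$, which is again finite separable. Assuming the result for $F''/F$, the module $\bM'':=\bFFFK\otimes_{\bFK}\bM$ is a semisimple restricted $\bFFFK$-module. Writing $\bM''=\bFFFK\otimes_{\bFFK}\bM'$ and noting that the bold scalar extension $\bFFFK\otimes_{\bFFK}(-)$ is $F''/F'$-fully faithful by Proposition \ref{prop:boldscalarext1}, Proposition \ref{prop:relfullfaithfulisnonsemisimpleonobjects} then forces $\bM'$ to be semisimple. So I may assume $F'/F$ is finite Galois with group $G:=\Gal(F'/F)$.

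\emph{The Galois action.} Since $F'/F$ is finite separable, the minimal polynomial of a primitive element remains separable over $\FK$, so the natural map $F'\otimes_F\FK\to\FFK$ is an isomorphism (both are obtained from $F\otimes_{\Fq}K$ by base-changing along $F\to F'$ and then localising at the non-zero-divisors of $F\otimes_{\Fq}K$, which remain non-zero-divisors by flatness). The group $G$ therefore acts on $\bFFK$ through the first tensor factor, commuting with $\sigma$, with fixed subring $\bFK$. The induced semilinear $G$-action on $\bM'=\bFFK\otimes_{\bFK}\bM$ commutes with $\tau$, and since $\bFK\to\bFFK$ is faithfully flat, standard Galois descent yields $(\bM')^G=\bM$ and a bijective correspondence between $G$-stable restricted $\bFFK$-submodules of $\bM'$ and restricted $\bFK$-submodules of $\bM$.

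\emph{Descent of a simple submodule.} The module $\bM'$ is a nonzero finite-dimensional restricted $\bFFK$-module (finite dimensional since $\bFFK$ is a finite product of fields and $\bM'$ has finite rank over it). By Proposition \ref{prop:subquotclosedbFK}, the category of restricted $\bFFK$-modules is abelian and closed under subquotients, and since $\bM'$ is Artinian it contains a simple restricted submodule $\bN$. Each translate $g(\bN)$ for $g\in G$ is again a simple restricted submodule, because the semilinear automorphism $g$ of $\bM'$ carries restricted submodules to restricted submodules. Hence $\bN_G:=\sum_{g\in G}g(\bN)$ is a nonzero $G$-stable semisimple restricted submodule. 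By descent, $\bN_G=\bFFK\otimes_{\bFK}\bN_0$ for some nonzero $\bFK$-submodule $\bN_0\subseteq\bM$, and simplicity of $\bM$ forces $\bN_0=\bM$, so $\bN_G=\bM'$. Thus $\bM'$ is a sum of simple restricted submodules, hence semisimple.

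\emph{Main difficulty.} The only substantive point is the faithfully flat Galois descent needed in the second paragraph: one must verify that $\bFK\to\bFFK$ is faithfully flat (it is, as the base change of $F\to F'$), that $G$-invariants of $\bFFK$ reduce to $\bFK$, and that the restricted $\tau$-structure is respected throughout, so that sub-descent is valid inside the abelian category of Proposition \ref{prop:subquotclosedbFK}. Once this bookkeeping is in place, the argument is essentially formal.
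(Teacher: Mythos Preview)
Your proof is correct and follows essentially the same approach as the paper: reduce to the Galois case via a Galois closure and Propositions \ref{prop:boldscalarext1} and \ref{prop:relfullfaithfulisnonsemisimpleonobjects}, then in the Galois case take a simple submodule, sum its Galois translates to obtain a $G$-stable semisimple submodule, and descend to conclude by the simplicity of $\bM$. The only differences are cosmetic---you present the reduction step first and spell out the faithfully flat descent more explicitly than the paper does.
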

\begin{proof}
We start with the case where $F'/F$ is a finite \emph{Galois}
extension, and set $\Gamma:=\Gal(F'/F)$. Assume that
$\bS'\subset\bM'$ is a simple $\bFFK$-submodule. Set
\[\bX':=\sum_{g\in\Gamma}g\bS'\subset\bM'.\]
This $\bFFK$-module is $\Gamma$-invariant, so
$\bX'=\bFFK\otimes_{\bFK}\bX$ for some $\bFK$-submodule
of $\bX\subset\bM$. Since $\bS$ is simple and $\bS'\neq 0$,
we see that $\bX=\bM$ and so $\bM'=\sum_{g\in\Gamma}g\bS'$
is semisimple as a sum of simple objects.

In the general case, let $F''/F$ denote a Galois
closure of $F'/F$, and consider a simple restricted
$\bFK$-module $\bM$. By what we have proven,
$\bM'':=\bFFFK\otimes_{\bFK}\bM$ is semisimple.
Now Proposition \ref{prop:relfullfaithfulisnonsemisimpleonobjects} shows
that $\bM':=\bFFK\otimes_{\bFK}\bM$ is semisimple, since
we have already proven Proposition \ref{prop:boldscalarext1}.
\end{proof}

\begin{prop}
Assume that the number of roots of unity of $K$ is finite.
Let $F/\Fq$ be a field extension, consider $F'=F(X)$
and let $\bM$ be a simple restricted $\bFK$-module.
Then $\bM':=\bFFK\otimes_{\bFK}\bM$ is simple.
\end{prop}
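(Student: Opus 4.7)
The plan is to suppose, for contradiction, that $\bM'$ admits a nonzero proper $\bFFK$-submodule $\bN'$, and then to extract from $\bN'$ a nonzero proper $\tau$-invariant $\FK$-submodule of $M$, contradicting the simplicity of $\bM$. The extraction uses a ``leading coefficient'' construction that exploits the fact that $\sigma$ fixes $X$ in $\bFFK$.

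First I would use Lemma \ref{lem:onbffkmodules} to identify $\FFK$ with $\FK(X)$ (componentwise) and $M$ with a free $\FK$-module of rank $r$, so that $\bM' = \FK(X)^r$ and $\tau = \Delta \circ \sigma$ for some matrix $\Delta \in \GL_r(\FK)$. The crucial point is that $\sigma$ acts trivially on $X$, so $\tau$ preserves the $\FK[X]$-submodule $\FK[X]^r \subset \FK(X)^r$. Define $\bN'_{\text{poly}} := \bN' \cap \FK[X]^r$; this is $\tau$-invariant, and it is nonzero because any nonzero element of $\bN'$ may be multiplied by a common denominator to lie in $\FK[X]^r$.

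Next, I would define the leading-coefficient set
\[L := \{\text{lead}(p) : p \in \bN'_{\text{poly}}, p \neq 0\} \cup \{0\} \subset M,\]
where $\text{lead}(p) = a_d$ if $p = \sum_{i=0}^d a_i X^i$ with $a_d \neq 0$. I would verify that $L$ is a $\tau$-invariant $\FK$-submodule of $M$: closure under addition follows by multiplying the lower-degree element by a suitable power of $X$ to match degrees; closure under $\FK$-scalars is immediate; and closure under $\tau$ uses that $\tau$ preserves the degree filtration (since $\sigma$ fixes $X$) and that $\tau$ is injective on $M$ (because $\bM$ is restricted). Since $\bM$ is simple and $L \neq 0$, the simplicity forces $L = M$.

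Finally, choosing a basis $e_1, \ldots, e_r$ of $M$ over $\FK$ and picking $p_i \in \bN'_{\text{poly}}$ with $\text{lead}(p_i) = e_i$ of degree $d_i$, the $r\times r$ matrix over $\FK[X]$ whose columns are $p_1,\ldots,p_r$ has a top-degree coefficient which is (up to the shift $\prod X^{d_i}$) the change-of-basis matrix $(e_1,\ldots, e_r)$, whose determinant is a unit in $\FK$. Its determinant is therefore a nonzero element of $\FK[X]$, so the $p_i$ form an $\FK(X)$-basis of $\bM'$. Hence $\bN' = \bM'$, contradicting the assumption. The main obstacle I anticipate is the bookkeeping when $\FK$ is a genuine product of fields rather than a single field, since the notion of ``degree'' and ``leading coefficient'' must be handled componentwise and the transitive $\sigma$-action on the factors used to interpret ``$\text{lead}(p) \neq 0$'' consistently; this is not conceptually hard but requires care in writing the proof.
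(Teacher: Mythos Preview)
Your proof is correct and takes a genuinely different route from the paper's. The paper also intersects $\bN'$ with the polynomial model $\bMcal=\bFK[X]\otimes_{\bFK}\bM$ to get $\bNcal$, but then produces a submodule of $\bM$ by \emph{reducing modulo $X$}: it takes the image of $\bNcal$ in $\bMcal/X\bMcal\cong\bM$, checks this image is nonzero and proper, and invokes simplicity. You instead take \emph{leading coefficients}, i.e.\ you work at $X=\infty$ rather than at $X=0$. Both constructions exploit the same key fact, that $\sigma$ fixes $X$ so the degree filtration on $\FK[X]^r$ is $\tau$-stable. The paper's version is a bit slicker because evaluation at $X=0$ is an honest $\bFK[X]$-module map $\bMcal\to\bM$, so the image of $\bNcal$ is automatically a $\tau$-stable $\FK$-submodule and no further checking is needed; your leading-coefficient map is not additive, which is why you have to verify the submodule axioms for $L$ by hand. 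Conversely, your approach has the pleasant feature that once $L=M$ you get explicit elements $p_1,\dots,p_r\in\bN'$ spanning $\bM'$, so the contradiction is very concrete.

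One small point on the bookkeeping you flag: when $\FK=Q^s$ with $s>1$, closure of $L$ under addition is slightly more delicate than you suggest. After matching componentwise degrees (multiplying by an element $\sum_j e_jX^{c_j}\in\FK[X]$ rather than a single power of $X$), you may have cancellation $a^{(j)}+b^{(j)}=0$ in some factors but not others, and then the componentwise leading coefficient of $p+q$ is not $a+b$. The fix is to multiply $p+q$ by the idempotent that kills exactly those factors; this stays in $\bN'_{\text{poly}}$ and has leading coefficient $a+b$. This is routine but worth making explicit.
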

\begin{proof}
Recall that $\FK=Q^s$ for some field $Q$ by Lemma \ref{lem:onbffkmodules}(a),
so $\FK(X):=\FFK=Q(X)^s$. Let $\bFK[X]$ be the bold ring
consisting of $\FK[X]=Q[X]^s$ together with the restriction
of $\sigma_{\FFK}$; it acts as the identity on $X$.
Now $\bMcal:=\bFK[X]\otimes_{\bFK}\bM$ is a ``model'' of $\bM'$
in the sense that $\bMcal$ is a restricted $\bFK[X]$-module such that
$\bM'=\bFK(X)\otimes_{\bFK[X]}\bMcal$. Moreover, $\bM=\bMcal/X$.

Assume that $\bM'$ is not simple, so there exists a nontrivial $\bFFK$-submodule $\bN'\subsetneqq\bM'$.
It follows that $\bNcal:=\bMcal\cap\bN'$ is a non-trivial $\bFK[X]$-submodule of $\bMcal$ other than
$\bMcal$, and therefore that $\bN:=\bNcal/(X)$ is a non-trivial $\bFK$-submodule of $\bMcal/(X)\isom\bM$ other than
$\bM$. This contradicts the simplicity of $\bM$, using Proposition \ref{prop:subquotclosedbFK}.
\end{proof}

\begin{proof}[Proof of Theorem \ref{thm:boldscalarext}]
Proposition \ref{prop:boldscalarext1} gives
item (a), and Proposition \ref{prop:boldscalarext2}
gives item (b).
\end{proof}

\section{Translation to Semilinear Algebra}

In this section, we embed the categories of $A$-motives and $A$-isomotives in categories of
bold modules, and classify the categories of integral and rational $\frkp$-adic Galois
representations in terms of categories of bold modules.

This allows us to factor the functors induced by the integral and rational Tate module functors as
composites of two bold scalar extension functors each. The section ends with a proof
that the first factor is ``relatively'' fully faithful in both cases, and semisimple on objects in the rational case.

Let $F,\Fq,A,K,\iota,\sigma_q$ be as in Section $2$. Let $\FK$ denote the total ring of quotients $\Frac(F\otimes_{\Fq}K)$,
it is a field. The bold ring $\bFK$ is given by $\FK$ together with $\sigma=\sigma_{\FK}=\Frac(\id_F\otimes\sigma_q)$.
We refer to Lemma \ref{lem:onbffkmodules} and Proposition \ref{prop:subquotclosedbFK} for the structure of $\bFK$ and its
consquences.
The bold ring $\bAK\subset\bFK$ is given by $\AK:=A\otimes_{\Fq}K$, a Dedekind domain, together with the restriction
$\sigma=\sigma_{\AK}=\id_A\otimes\sigma_q$ of $\sigma_{\FK}$. Given a maximal ideal $\frkp$ of $A$,
let $\AlpK$ denote the subring of $\FK$ consisting of those elements integral at all places $\frkP$ of $\FK$ lying
above $\frkp$, it is a semilocal Dedekind domain. The bold ring $\bAK\subset\bAlpK\subset\bFK$ is given by $\AlpK$ together with the
restriction $\sigma=\sigma_{\AlpK}$ of $\sigma_{\FK}$. We say that an $\bFK$-module $\bM$ is \emph{$\frkp$-restricted}
if it is restricted with respect to the inclusion $\bAlpK\subset\bFK$.

\begin{constr}
An effective $A$-motive $(M,\tau)$ over $K$ induces an $\bFK$-module $\bFK\otimes_{\bAK}(M,\tau)$,
which is $\frkp$-restricted for $\frkp\neq\ker\iota$ by the assumption that $(M,\tau)$ is of
characteristic $\iota$, and hence restricted. Thus the essential image of the tensor functor
$\AMotKeff\to\AKMod\to\FKMod$
consists of dualisable objects by Proposition \ref{prop:subquotclosedbFK}, and so
it extends uniquely to an $A$-linear tensor functor $I_0:\:\AMotK\to\FKMod$.
It maps an $A$-motive $(M,L)$ to $(\bFK\otimes_{\bAK}M)\otimes(\bFK\otimes_{\bAK}L)^\vee$.
Now Theorem \ref{thm:aboutaisomotk}(a) implies that $I_0$ factors through the
category of $A$-isomotives, so there there exists a unique $F$-linear
exact tensor functor $I:\:\AIsomotK\To\FKMod$ such that the following diagram commutes:
\[\xymatrix{
\AMotKeff\ar@{}[d]|{\bigcap} \ar[r]       & \AKMod\ar[d]^{\bFK\otimes_{\bAK}(-)}\\
\AMotK \ar[d]   \ar[r]^{I_0} & \FKMod\\
\AIsomotK \ar[ru]_I
}\]
\end{constr}

\begin{prop}\label{prop:embedaisomotk}
The functor $I$ is fully faithful and semisimple on objects.
For every maximal ideal $\frkp\neq\iota$ of $A$, the essential image
of $I$ consists of $\frkp$-restricted $\bFK$-modules.
\end{prop}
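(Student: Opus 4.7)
The plan is to handle the three assertions separately.

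For the essential image claim, I would observe that for an effective $A$-motive $(M,\tau)$ of characteristic $\iota$, the linearization $\taulin:\sigma_*M\to M$ is automatically injective: both $\sigma_*M$ and $M$ are finitely generated projective $\AK$-modules of the same rank, and since the cokernel of $\taulin$ is torsion (supported on $\{\frkP_0\}$), the image of $\taulin$ has rank equal to $\rk M$, so $\ker\taulin$ has rank zero and thus vanishes as a torsion submodule of the torsion-free module $\sigma_*M$. For $\frkp\neq\ker\iota$, no prime of $\bAlpK$ lies over $\ker\iota$, so the cokernel of $\taulin$ vanishes after base change to $\bAlpK$. Hence $\bAlpK\otimes_{\bAK}M$ is a restricted $\bAlpK$-module and $\bFK\otimes_{\bAK}M$ is $\frkp$-restricted. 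For a general $A$-motive $X=(M,L)$, the image $I(X)$ is built from $\frkp$-restricted modules via tensor product and dual, hence remains $\frkp$-restricted by Proposition~\ref{prop:subquotclosedbFK}.

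For full faithfulness, I would first reduce via the invertibility of rank-one effective motives to showing that for effective $A$-motives $M,N$, the natural map
\[
F\otimes_A\Hom_{\bAK}(\bM,\bN)\To\Hom_{\bFK}(\bFK\otimes\bM,\bFK\otimes\bN)
\]
is bijective. The left side equals $\Hom_{\AK}(M,N)^\tau\otimes_AF$, which by flatness of $A\to F$ identifies with $\Hom_{F\otimes_{\Fq}K}(F\otimes_A M,F\otimes_A N)^\tau$, while the right side equals $\Hom_{\FK}(\FK\otimes M,\FK\otimes N)^\tau$. Injectivity follows from the inclusion $F\otimes_{\Fq}K\hookrightarrow\FK$. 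For surjectivity, I would write a $\tau$-invariant element of the right side as $(1/s)\phi_0$ with $s$ a non-zero-divisor of $F\otimes_{\Fq}K$ and $\phi_0:F\otimes_A M\to F\otimes_A N$ an $(F\otimes_{\Fq}K)$-linear map; the $\tau$-invariance becomes the relation $s\tau_N\phi_0=\sigma(s)\phi_0\tau_M$, and using that $\FK$ is a finite product of fields cyclically permuted by $\sigma$ (Lemma~\ref{lem:onbffkmodules}(a)), I can replace $s$ by a suitable product $\prod_i\sigma^i(s)$ that lands in the $\sigma$-fixed subring $F$, after which $\phi_0$ itself becomes $\tau$-invariant.

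The main obstacle will be semisimplicity on objects. My strategy is to establish that every $\bFK$-submodule $\bN\subset I(X)$ comes from a sub-$A$-isomotive of $X$; combined with full faithfulness and additivity of $I$, this yields an inclusion-preserving bijection between sub-$A$-isomotives of $X$ and $\bFK$-submodules of $I(X)$, through which simplicity and semisimplicity transfer directly. Reducing to $X=M$ effective and given $\bN\subset\bFK\otimes_{\bAK}M$, I would set $N_0:=\bN\cap M$ inside $\FK\otimes_{\AK}M$; this is $\tau$-stable, finitely generated projective over $\AK$ as a submodule of the f.g.\ projective $M$ over the Dedekind domain $\AK$, and a denominator-clearing argument shows $\bFK\otimes_{\bAK}N_0=\bN$. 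The remaining delicate point is to show that $N_0$ defines an effective sub-$A$-motive of $M$, which amounts to constraining the support of the torsion $\bAK$-module $N_0/(\AK\cdot\tau N_0)$ to $\{\frkP_0\}$. I would verify this by analyzing $N_0$ through its $\bAlpK$-localizations for each $\frkp\neq\ker\iota$, exploiting that $\bAlpK\otimes_{\bAK}M$ is restricted so that $\bAlpK\otimes_{\bAK}N_0$ inherits an injective $\taulin$ with torsion cokernel, which together with Theorem~\ref{thm:discusstorsionakmods} forces this cokernel to vanish $\frkp$-locally for every $\frkp\neq\ker\iota$, i.e., to be supported in $\{\frkP_0\}$ globally.
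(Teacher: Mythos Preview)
Your argument for full faithfulness has a genuine gap. The step ``replace $s$ by a suitable product $\prod_i\sigma^i(s)$ that lands in the $\sigma$-fixed subring $F$'' does not work: although $\sigma$ permutes the field factors of $\FK$ cyclically, on each factor the appropriate power of $\sigma$ acts as a Frobenius-type endomorphism of infinite order (unless $K$ is finite). No finite product $\prod_i\sigma^i(s)$ will therefore be $\sigma$-invariant, and there is no reason an arbitrary non-zero-divisor $s\in F\otimes_{\Fq}K$ should divide a nonzero element of $F$. This is precisely the nontrivial content of Theorem~\ref{thm:discusstorsionakmods}(d) and Proposition~\ref{prop:isogfactstand}, which you are effectively trying to circumvent.

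The paper's proof avoids this by working geometrically: given $h\in\Hom_{\bFK}(\bFK\otimes\bM,\bFK\otimes\bN)$, one forms $h(\bM)$ and $\bN':=h(\bM)\cap\bN$ inside $\bFK\otimes\bN$, observes that $h(\bM)\supset\bN'$ is an isogeny of effective $A$-motives, and then invokes Proposition~\ref{prop:isogfactstand} to invert it by a scalar in $A$. That proposition rests on Theorem~\ref{thm:discusstorsionakmods}(d), which is exactly the statement that the relevant torsion $\bAK$-module has nonzero $A$-annihilator; this is where the ``denominator in $F$'' really comes from.

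Your treatment of the essential image is fine (and more explicit than the paper's). Your strategy for semisimplicity is the same as the paper's---intersect a submodule of $I(M)$ with $M$---but your final justification that $N_0$ is an effective $A$-motive is misdirected: Theorem~\ref{thm:discusstorsionakmods} is not the relevant input. What you need is that $\bN=\bFK\otimes N_0$ is restricted (Proposition~\ref{prop:subquotclosedbFK}), hence so is the quotient $\bFK\otimes(M/N_0)$; since $N_0$ is saturated in $M$, the induced $\overline{\taulin}$ on $M/N_0$ is injective, and then the snake lemma over $\bAlpK$ (where $\taulin^M$ is bijective) gives $\coker(\taulin^{N_0})\cong\ker(\overline{\taulin})=0$ at every $\frkp\neq\ker\iota$.
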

\begin{proof}
The essential image of $I$ consists of $\frkp$-restricted $\bFK$-modules
by construction.

Let us show that $I$ is fully faithful, so let $\bM,\bN$ be
$A$-isomotives. We may assume that both are effective. It is clear that
\[\Hom(\bM,\bN)\to\Hom_{\bFK}(\bFK\otimes_{\bAK}\bM,\bFK\otimes_{\bAK}\bN)\]
is injective, so let $h$ be an element of the target. Now
$h(\bM)$ and  $\bN':=h(\bM)\cap\bN$ are effective $A$-motives, $h|_{\bM}:\:\bM\to h(\bM)$ is
a homomorphism of effective $A$-motives, $h(\bM)\supset\bN'$ is
an isogeny of effective $A$-motives, and $\bN'\subset\bN$ is a homomorphism
of effective $A$-motives:
\[\xymatrix{
\bFK\otimes_{\bAK}\bM \ar[rrr]^h &&& \bFK\otimes_{\bAK}\bN\\
\bM\ar[r]^{h|_{\bM}} \ar@{}[u]|{\cup} & h(\bM) & \bN' \ar@{}[l]|{\supset} \ar@{}[r]|{\subset} & \bN \ar@{}[u]|{\cup}
}\]
Now Proposition \ref{prop:isogfactstand} applied to the isogeny and Theorem \ref{thm:aboutaisomotk}(a)
imply that $h$ is induced by a homomorphism $\bM\to\bN$ of $A$-isomotives.

Let us show that $I$ is semisimple on objects, so let $\bM$ be
a semisimple $A$-isomotive. We may assume that $\bM$ is effective and simple,
since $I$ is additive. Assume that $\bM'_0\subset\bFK\otimes_{\bAK}\bM$ is an $\bFK$-submodule.
Then $\bM':=\bM\cap\bM'_0$ is an effective $A$-isomotive contained in $\bM$, so either
$\bM'=0$ or $\bM'\isom\bM$ by assumption. It follows that $\bFK\otimes_{\bAK}\bM$ is simple.
\end{proof}

We turn to two torsion-free versions of Proposition \ref{prop:torsiongaloisrepsclassified}.
Let $\Ksep$ denote a separable
closure of $K$, with associated Galois group $\GalK:=\Gal(\Ksep/K)$.
Given a maximal ideal $\frkp$ of $A$, let \[\AKp:=\varprojlim_n(A/\frkp^n)\otimes_{\Fq}K\]
denote the completion of $\AK$ at $\frkp$, it is a finite product of pairwise isomorphic
discrete valuation rings. Let $\FKp:=\Frac(\AKp)$ denote the
total ring of quotients of $\AKp$, it is a finite product of pairwise isomorphic fields. The bold ring $\bAKp$ is given by $\AKp$ together
with the endomorphism $\sigma=\sigma_{\AKp}=\varprojlim_n(\id_{A/\frkp^n}\otimes\sigma_q)$ induced by $\sigma_q$,
and the bold ring ring $\bFKp$ is given by $\FKp$ together with the endomorphism
$\sigma=\sigma_{\FKp}=\Frac(\sigma_{\AKp})$ induced by $\sigma_q$.
We say that an $\bFKp$-module $\bM$ is \emph{$\frkp$-restricted}
if it is restricted with respect to the inclusion $\bAKp\subset\bFKp$.

\begin{dfn}
\begin{enumerate}
  \item Let $(M,\tau)$ be restricted $\bAKp$-module.
    We set \[R_{\frkp}'(M,\tau):=(\AKsepp\otimes_{\AKp}M)^{\tau},\]taking $\tau$-invariants
    with respect to the diagonal action. Note that the action of $\GalK$ on $\AKsepp$
    induces an action of $\GalK$ on $R_{\frkp}'(T,\tau)$.
  \item Let $(V,\rho)$ be an integral $\frkp$-adic Galois representation.
    We set \[D_{\frkp}'(V,\rho):=(\AKsepp\otimes_{\Ap}V)^{\GalK},\]taking $\GalK$-invariants
    with respect to the diagonal action. Note that the $\sigma$-linear endomorphism
    of $\AKsepp$ induces a $\sigma$-linear endomorphism $\tau$ of $D_{\frkp}'(V,\rho)$.
\end{enumerate}
\end{dfn}

\begin{prop}\label{prop:intgaloisrepsclassified}
Let $\GalK:=\Gal(\Ksep/K)$ denote the absolute Galois group of $K$.
The functors $D_{\frkp}',R_{\frkp}'$ are quasi-inverse equivalences of $\Ap$-linear rigid
tensor categories:
\[\xymatrix{
\left(\!\!\left(\text{\begin{tabular}{c}integral $\frkp$-adic \\ Galois representations\end{tabular}}\right)\!\!\right) \ar@{}[r]|-{\isom} \ar@<2ex>[r]^-{D_{\frkp}'}&
\left(\!\!\left(\text{\begin{tabular}{c}restricted \\ $\bAKp$-modules\end{tabular}}\right)\!\!\right) \ar@<2ex>[l]^-{R_{\frkp}'}
}\]
Moreover, the following is true:
\begin{enumerate}
  \item $\rk_{\AKp}D_{\frkp}'(V,\rho)=\rk_{\Ap}V$ for every integral $\frkp$-adic Galois representation $(V,\rho)$.
  \item The homomorphism $\AKsepp\otimes_{\Ap}R_{\frkp}'(M,\tau)\to\AKsepp\otimes_{\AKp}M$ is an isomorphism
    for every restricted $\bAKp$-module $(M,\tau)$.
  \item The homomorphism $\AKsepp\otimes_{\AKp}D_{\frkp}'(V,\rho)\to\AKsepp\otimes_{\Ap}V$ is an isomorphism
    for every integral $\frkp$-adic Galois representation $(V,\rho)$.
\end{enumerate}
\end{prop}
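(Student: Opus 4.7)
The plan is to deduce this proposition from Proposition \ref{prop:torsiongaloisrepsclassified} by passing to the $\frkp$-adic inverse limit. Since $\AKsepp = \varprojlim_n (A/\frkp^n)\otimes_{\Fq}\Ksep$, both functors can be rewritten as
\[
D'_{\frkp}(V,\rho) = \varprojlim_n D_q(V/\frkp^nV), \qquad
R'_{\frkp}(M,\tau) = \varprojlim_n R_q(M/\frkp^nM),
\]
provided one checks that invariants commute with the inverse limits; but the transition maps are surjections of finite $A/\frkp^n$-modules, so Mittag--Leffler ensures exactness and makes this step routine. Here $V/\frkp^nV$ is an $A$-torsion Galois representation killed by $\frkp^n$, and $M/\frkp^nM$ is a torsion $\bAK$-module whose linearisation remains bijective, since $\frkp \subset A = \AK^\sigma$ so the submodule $\frkp^nM \subset M$ is $\sigma$-stable.

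Next I would apply Proposition \ref{prop:torsiongaloisrepsclassified} at each level $n$. It gives quasi-inverse equivalences between the category of $A$-torsion Galois representations killed by $\frkp^n$ and the category of torsion $\bAK$-modules $(T,\tau)$ with bijective $\taulin$ and $\frkp^nT=0$ (equivalently, $A/\frkp^n\otimes_{\Fq}K$-modules with such $\tau$). Moreover, clauses (a),(b),(c) at each level yield that $D_q(V/\frkp^nV)$ is a free $A/\frkp^n\otimes_{\Fq}K$-module of rank $\rk_{\Ap}V$, that $R_q(M/\frkp^nM)$ is a free $A/\frkp^n$-module of rank $\rk_{\AKp}M$, and that the two natural base-change maps to $\Ksep$-coefficients are isomorphisms. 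Passing to the inverse limit, one recovers all four assertions of the proposition: quasi-inverseness of $D'_{\frkp}$ and $R'_{\frkp}$, the rank equality (a), and the isomorphisms (b) and (c) obtained by tensoring level-wise isomorphisms with $\AKsepp$ and using that $M$ and $V$ are finitely generated.

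It remains to verify that $R'_{\frkp}(M,\tau)$ really is an \emph{integral $\frkp$-adic Galois representation}, i.e.\ a free $\Ap$-module of finite rank with continuous $\GalK$-action, and conversely that $D'_{\frkp}(V,\rho)$ really is a \emph{restricted} $\bAKp$-module. Freeness of $R'_{\frkp}(M,\tau)$ over $\Ap$ follows by lifting a basis at level $n=1$ through the surjective transition maps in the inverse system, using freeness at each finite level. Continuity is automatic: each $R_q(M/\frkp^nM)$ is a finite $A/\frkp^n$-module on which $\GalK$ acts through a finite quotient (the Galois group of the field generated by the finitely many $\tau$-invariants lifting a basis), so the inverse limit carries the profinite topology and a continuous action. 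For $D'_{\frkp}(V,\rho)$, freeness of finite rank over $\AKp$ and bijectivity of $\taulin$ likewise follow by taking the limit of the corresponding properties at each finite level and using the structure of $\AKp$ as a finite product of complete DVRs.

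The main obstacle I expect is not any deep point but the careful bookkeeping: identifying $(\AKsepp\otimes V)^{\GalK}$ with $\varprojlim_n (\Ksep\otimes V/\frkp^nV)^{\GalK}$ (and likewise for $\tau$-invariants), and checking that freeness, rank, tensor structure and the two base-change isomorphisms (b), (c) are all preserved in the limit. All of this is routine once one notes that the relevant finite quotients have surjective transition maps and constant rank, so $\varprojlim$ is exact and commutes with the needed functors up to the expected natural isomorphisms.
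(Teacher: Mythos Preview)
Your proposal is correct and follows essentially the same approach as the paper: the paper's proof is a single sentence stating that the result ``follows directly from Proposition \ref{prop:torsiongaloisrepsclassified} by considering the direct limits involved,'' and you have simply filled in the details of this limit argument (the paper's phrase ``direct limits'' appears to be a slip for the inverse limits over $n$ that you correctly identify). Your careful bookkeeping on Mittag--Leffler, freeness, and continuity is exactly what is implicitly assumed in the paper's one-line proof.
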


\begin{proof}
This follows directly from Proposition \ref{prop:torsiongaloisrepsclassified} by
considering the direct limits involved.
\end{proof}

\begin{dfn}
\begin{enumerate}
  \item Let $(M,\tau)$ be $\frkp$-restricted $\bFKp$-module.
    We set \[R_{\frkp}(M,\tau):=(\FKsepp\otimes_{\FKp}M)^{\tau},\]taking $\tau$-invariants
    with respect to the diagonal action. Note that the action of $\GalK$ on $\FKsepp$
    induces an action of $\GalK$ on $R_{\frkp}(T,\tau)$.
  \item Let $(V,\rho)$ be a rational $\frkp$-adic Galois representation.
    We set \[D_{\frkp}(V,\rho):=(\FKsepp\otimes_{\Fp}V)^{\GalK},\]taking $\GalK$-invariants
    with respect to the diagonal action. Note that the $\sigma$-linear endomorphism
    of $\FKsepp$ induces a $\sigma$-linear endomorphism $\tau$ of $D_{\frkp}(V,\rho)$.
\end{enumerate}
\end{dfn}

\begin{prop}\label{prop:galoisrepsclassified}
Let $\GalK:=\Gal(\Ksep/K)$ denote the absolute Galois group of $K$.
The functors $D_{\frkp},R_{\frkp}$ are quasi-inverse equivalences of $F$-linear rigid abelian
tensor categories:
\[\xymatrix{
\left(\!\!\left(\text{\begin{tabular}{c}rational $\frkp$-adic \\ Galois representations\end{tabular}}\right)\!\!\right) \ar@{}[r]|-{\isom} \ar@<2ex>[r]^-{D_{\frkp}}&
\left(\!\!\left(\text{\begin{tabular}{c}$\frkp$-restricted \\ $\bFKp$-modules\end{tabular}}\right)\!\!\right) \ar@<2ex>[l]^-{R_{\frkp}}
}\]
Moreover, the following is true:
\begin{enumerate}
  \item $\rk_{\FKp}D_{\frkp}(V,\rho)=\dim_{Fp}V$ for every rational $\frkp$-adic Galois representation $(V,\rho)$.
  \item The homomorphism $\FKsepp\otimes_{\Fp}R_{\frkp}(M,\tau)\to\FKsepp\otimes_{\FKp}M$ is an isomorphism
    for every $\frkp$-restricted $\bFKp$-module $(M,\tau)$.
  \item The homomorphism $\FKsepp\otimes_{\FKp}D_{\frkp}(V,\rho)\to\FKsepp\otimes_{\Fp}V$ is an isomorphism
    for every rational $\frkp$-adic Galois representation $(V,\rho)$.
\end{enumerate}
\end{prop}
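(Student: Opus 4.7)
The plan is to derive this proposition from its integral analogue, Proposition \ref{prop:intgaloisrepsclassified}, by inverting $\frkp$ on both sides of the equivalence.

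First, I would observe that the passage from integral to rational data is controlled on both sides. By definition, every $\frkp$-restricted $\bFKp$-module $\bM$ is of the form $\bFKp \otimes_{\bAKp} \bM_0$ for some restricted $\bAKp$-module $\bM_0$. On the representation-theoretic side, every rational $\frkp$-adic Galois representation $(V,\rho)$ admits a $\GalK$-stable free $\Ap$-lattice $V_0 \subset V$: take the $\Ap$-submodule generated by the $\GalK$-orbit of an $\Fp$-basis, which is finitely generated by continuity of $\rho$ and compactness of $\GalK$.

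Second, I would establish the key compatibility formulas
\[R_{\frkp}(\bFKp \otimes_{\bAKp} \bM_0) \isom \Fp \otimes_{\Ap} R_{\frkp}'(\bM_0) \qquad \text{and} \qquad D_{\frkp}(\Fp \otimes_{\Ap} V_0) \isom \bFKp \otimes_{\bAKp} D_{\frkp}'(V_0).\]
For this I would use that $\FKsepp$ identifies naturally with $\Fp \otimes_{\Ap} \AKsepp$ (since $\AKsepp$ is $\Ap$-torsion-free and $\FKsepp$ is the corresponding total ring of fractions / $\frkp$-adic completion), together with flatness of the localization $\Ap \to \Fp$. Applied to the finitely generated integral modules $R_{\frkp}'(\bM_0)$ over $\Ap$ and $D_{\frkp}'(V_0)$ over $\AKp$ (finitely generated by Proposition \ref{prop:intgaloisrepsclassified}), this flatness permits the formation of $\tau$-invariants and $\GalK$-invariants to commute with inversion of $\frkp$.

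Third, the quasi-inverse isomorphisms for the pair $(D_{\frkp}, R_{\frkp})$, together with items (a), (b), (c), follow by tensoring the corresponding statements for $(D_{\frkp}', R_{\frkp}')$ from Proposition \ref{prop:intgaloisrepsclassified} with $\Fp$ over $\Ap$ (respectively with $\FKp$ over $\AKp$). The target category $\RepFpGalK$ is manifestly $F$-linear and abelian, and the rigid tensor structure is respected by both functors; hence the equivalence transports abelianness and rigidity to the category of $\frkp$-restricted $\bFKp$-modules.

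The main obstacle will be the second step: carefully checking that $\tau$-invariants and $\GalK$-invariants commute with inversion of $\frkp$ in both directions. This hinges on identifying $\FKsepp$ as $\Fp \otimes_{\Ap} \AKsepp$, which must be unwound from the definitions of $\frkp$-adic completion, and on the flatness of $\Ap \to \Fp$ applied to the finitely generated modules produced by the integral theory. Once these are in place, the rest of the argument is essentially a formal consequence of Proposition \ref{prop:intgaloisrepsclassified}.
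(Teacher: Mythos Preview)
Your proposal is correct and follows essentially the same strategy as the paper: reduce to the integral case (Proposition \ref{prop:intgaloisrepsclassified}) via the existence of $\GalK$-stable $\Ap$-lattices on the representation side and of restricted $\bAKp$-models on the module side, then pass to the rational level by inverting $\frkp$. The paper phrases the compatibility slightly more directly by observing that $D_{\frkp}$ and $R_{\frkp}$ literally coincide with $D_{\frkp}'$ and $R_{\frkp}'$ when applied to rational objects (since $\FKsepp\otimes_{\Fp}V=\AKsepp\otimes_{\Ap}V$ for an $\Fp$-vector space $V$), which is exactly your identification $\FKsepp\isom\Fp\otimes_{\Ap}\AKsepp$ read the other way; your flatness argument for commuting invariants with localisation is the explicit justification the paper leaves implicit.
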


\begin{proof}
Proposition \ref{prop:intgaloisrepsclassified} implies
this rational version. In fact, ``$D_{\frkp}=D_{\frkp}'$ for rational $\frkp$-adic
Galois representations'' in the sense that
$D_{\frkp}(V,\rho)$ coincides with $(\AKsepp\otimes_{\Ap}V)^{\GalK}$, the definition of $D_{\frkp}'$
applied to $(V,\rho)$, and similarly $R_{\frkp}=R_{\frkp}'$ for $\frkp$-restricted $\bFKp$-modules.
The detailed proof also uses the fact that every rational $\frkp$-adic Galois representation
has a $\GalK$-invariant full $\Ap$-lattice (whereas not every restricted $\bFKp$-module is
$\frkp$-restricted).
\end{proof}

\begin{prop}\label{prop:translatetatemodulefunctor}
For every maximal ideal $\frkp\neq\ker\iota$ of $A$, the following diagram commutes:
\[\xymatrix{
\AIsomotK \ar[rr]^{\Vp}\ar[d]_I && \RepFpGalK \\
\FKMod \ar[rr]_{\bFKp\otimes_{\bFK}(-)} &&\FKpMod \ar[u]^{\Rp}_{\isom}
}\]
\end{prop}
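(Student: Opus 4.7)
The plan is to check the claimed commutativity first on the full subcategory of effective $A$-motives and then to extend by universality. Both $\Vp$ and the composite $\Rp \circ (\bFKp \otimes_{\bFK} (-)) \circ I$ are $F$-linear exact tensor functors $\AIsomotK \to \RepFpGalK$. By Theorem \ref{thm:aboutaisomotk}(a) each is determined by its restriction to $\AMotK$, and by the rank-one twist formula $(M,L) \mapsto F(M) \otimes F(L)^\vee$ used to extend tensor functors from $\AMotKeff$ to $\AMotK$, each is further determined by its restriction to $\AMotKeff$. So it suffices to construct a natural isomorphism of tensor functors on $\AMotKeff$.

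For an effective $A$-motive $M$, unwinding the definitions gives
\[\Rp\bigl(\bFKp \otimes_{\bFK} I(M)\bigr) = (\FKsepp \otimes_{\FKp} \FKp \otimes_{\AK} M)^{\tau} = (\FKsepp \otimes_{\AK} M)^{\tau},\]
while $\Vp(M) = F_\frkp \otimes_{A_\frkp} (\AKsepp \otimes_{\AK} M)^{\tau}$. The task reduces to producing a natural $\GalK$-equivariant isomorphism
\[F_\frkp \otimes_{A_\frkp} (\AKsepp \otimes_{\AK} M)^{\tau} \isom (\FKsepp \otimes_{\AK} M)^{\tau}.\]

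I would establish this in two steps. First, I would verify $\FKsepp \cong F_\frkp \otimes_{A_\frkp} \AKsepp$ as bold rings with $\GalK$-action: since $A/\frkp$ is a finite field, each quotient $(A/\frkp^n) \otimes_{\Fq} \Ksep$ is a finite product of Artin local rings, so $\AKsepp$ is a finite product of complete discrete valuation rings whose uniformizer comes from $\frkp \subset A_\frkp$, and its total ring of fractions $\FKsepp$ is obtained by inverting $\frkp$. Second, since $\Tp(M) = (\AKsepp \otimes_{\AK} M)^{\tau}$ is a finitely generated free $\Ap$-module of rank $\rk M$ (as established during the study of $\Tp$ in the preceding section), and $F_\frkp/A_\frkp$ is flat, the natural map commuting $F_\frkp \otimes_{A_\frkp} (-)$ with $(-)^{\tau} = \ker(\tau - \id)$ is an isomorphism.

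The resulting comparison is manifestly natural in $M$, compatible with tensor products, and $\GalK$-equivariant, since every step respects these structures. The main obstacle I anticipate is the first ingredient: a careful identification of the bold ring $\FKsepp$ with $F_\frkp \otimes_{A_\frkp} \AKsepp$ compatibly with the $\sigma$-action and the Galois action. Once that identification is in place, the exchange of flat base change with $\tau$-invariants is routine, and extension from effective $A$-motives to all $A$-isomotives follows formally from the universal property.
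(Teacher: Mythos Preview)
Your proposal is correct and supplies precisely the detailed unwinding of definitions that the paper leaves implicit; the paper's own proof consists of the single sentence ``This follows directly from the construction of the categories and functors involved.'' Your reduction to effective $A$-motives via the universal property and the twist formula, followed by the identification $\FKsepp\cong\Fp\otimes_{\Ap}\AKsepp$ and the flat base-change argument for $(-)^\tau$, is the natural way to make that sentence precise.
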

\begin{proof}
This follows directly from the construction of
the categories and functors involved.
\end{proof}

We end this section by applying the main result of Section $3$,
hence proving the ``first half'' of Theorem \ref{thm:mainthm}.
Let $\frkp$ be a maximal ideal of $A$, let $\Fp$ denote
the completion of $F$ at $\frkp$, and set $\bFpK:=\Frac(\Fp\otimes_{\Fq}K,\id\otimes\sigma_q)$.
Note that we have inclusions $\bFK\subset\bFpK\subset\bFKp$,
and that the latter is an equality if and only if $K$ is a finite field.
We set $\bApK:=\bFpK\cap\bAKp$, and say that an $\bFpK$-module is
\emph{$\frkp$-restricted} if it is restricted with respect to the inclusion
$\bApK\subset\bFpK$.

By what we have already proven, Theorem \ref{thm:mainthm} -- the semisimplicity
conjecture -- follows by proving that bold scalar extension $\bFKp\otimes_{\bFK}(-)$
restricted to $\frkp$-restricted $\bFK$-modules is semisimple on objects.
Since
\[\bFKp\otimes_{\bFK}(-)=\big(\bFKp\otimes_{\bFpK}(-)\big)\circ\big(\bFpK\otimes_{\bFK}(-)\big),\]
and being semisimple on objects is a transitive property, we may subdivide
our task in two parts.

\begin{thm}\label{thm:mainthmfirsthalf}
Let $\frkp$ be a maximal ideal of $A$.
Assume that the number of roots of unity of $K$ is finite.
The restriction of the functor of bold scalar extension $\bFpK\otimes_{\bFK}(-)$
to restricted $\bFK$-modules is:
\begin{enumerate}
  \item $\Fp/F$-fully faithful,
  \item semisimple on objects and
  \item maps $\frkp$-restricted modules to $\frkp$-restricted modules.
\end{enumerate}
\end{thm}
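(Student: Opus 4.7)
The plan is to derive parts (a) and (b) from Theorem \ref{thm:boldscalarext} applied to the field extension $\Fp/F$, and to verify part (c) by unravelling the definition of $\frkp$-restrictedness. Since the assumption that $K$ has finitely many roots of unity is in force and $\bFpK$ is by definition the bold ring $\bFFK$ with $F'=\Fp$, part (a) is immediate from Theorem \ref{thm:boldscalarext}(a).

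For part (b), applying Theorem \ref{thm:boldscalarext}(b) requires that $\Fp/F$ be a separable field extension. I would verify this by exhibiting an element of $F^{1/p}$ not lying in $\Fp$: pick $\pi\in A$ with $v_{\frkp}(\pi)=1$; then in any extension of $\Fp$ the element $\pi^{1/p}$ has valuation $1/p$, which is non-integral, so $\pi^{1/p}\notin\Fp$. Since $[F^{1/p}:F]=p$ is prime, this forces $F^{1/p}\cap\Fp=F$, so $F^{1/p}$ and $\Fp$ are linearly disjoint over $F$, making $F^{1/p}\otimes_F\Fp$ a domain, hence reduced, which by MacLane's criterion is equivalent to separability of $\Fp/F$. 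Theorem \ref{thm:boldscalarext}(b) then yields that the functor is semisimple on objects.

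For part (c), one simply unwinds the definitions. Since elements of $\AlpK$ are integral at every prime of $\FK$ above $\frkp$, one has natural inclusions $\bAlpK\subset\bApK\subset\bFpK$ of bold rings. Writing a $\frkp$-restricted $\bFK$-module as $\bM\cong\bFK\otimes_{\bAlpK}\bN$ for some restricted $\bAlpK$-module $\bN$, the calculation
\[
\bFpK\otimes_{\bFK}\bM\;\cong\;\bFpK\otimes_{\bAlpK}\bN\;\cong\;\bFpK\otimes_{\bApK}(\bApK\otimes_{\bAlpK}\bN)
\]
reduces the task to showing that $\bN':=\bApK\otimes_{\bAlpK}\bN$ is a restricted $\bApK$-module. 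Its underlying module is finitely generated projective by base change of projective modules, and its linearisation is the base change of the linearisation of $\bN$, hence bijective.

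The only step with any content beyond formal manipulation is the separability of $\Fp/F$; everything else reduces to the heavy lifting already performed in Theorem \ref{thm:boldscalarext} and to elementary properties of bold tensor products.
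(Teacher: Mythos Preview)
Your proof is correct and follows the same overall strategy as the paper: parts (a) and (b) are reduced to Theorem~\ref{thm:boldscalarext} once separability of $\Fp/F$ is known, and part (c) is a formal unwinding.

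The only substantive difference lies in how separability of $\Fp/F$ is established. The paper (Proposition~\ref{prop:FpFissep}) argues via Bourbaki's criterion on $p$-th powers, first treating the special case $\Fq(t)\subset\Fq((t))$ by an explicit calculation and then reducing the general case to it through a tower argument. Your route via MacLane's criterion and a uniformizer is shorter and more conceptual; it works cleanly because $F$, being a global function field, satisfies $[F^{1/p}:F]=p$, so that linear disjointness of $F^{1/p}$ and $\Fp$ over $F$ reduces to the single observation $\pi^{1/p}\notin\Fp$. For part~(c) the paper's one-line justification ``$\AlpK=\FK\cap\AKp$'' is exactly what underlies your inclusion $\bAlpK\subset\bApK$, so your expanded version is simply the paper's argument made explicit.
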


\begin{prop}\label{prop:FpFissep}
Every completion $\Fp$ of $F$ at a place $\frkp$ is a separable field extension.
\end{prop}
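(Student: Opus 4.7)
The plan is to apply MacLane's criterion: a field extension $L/K$ in characteristic $p$ is separable in the sense of Definition \ref{dfn:fieldexseper} if and only if $L \otimes_K K^{1/p}$ is reduced. Thus it suffices to prove that $\Fp \otimes_F F^{1/p}$ is reduced; in fact I would show it is a field, which is considerably stronger than what is needed.

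First I would choose a uniformizer $t \in F$ at the place $\frkp$, so that $v_\frkp(t) = 1$. Since $p$-th powers in $F$ have valuation divisible by $p$, this forces $t \notin F^p$. Because $F$ has transcendence degree $1$ over the perfect field $\Fq$, we have $[F^{1/p} : F] = p$, and so $F^{1/p} = F(t^{1/p}) \cong F[X]/(X^p - t)$. The same valuation argument, applied now inside $\Fp$, shows that $t$ is not a $p$-th power in $\Fp$, hence $X^p - t$ has no root there. Since any nontrivial factor of $X^p - t = (X - t^{1/p})^p$ in $\Fp[X]$ would be of the form $(X - t^{1/p})^k$ with $0 < k < p$, and its coefficient of $X^{k-1}$ would yield $k\, t^{1/p} \in \Fp$ with $k$ a unit in characteristic $p$, irreducibility of $X^p - t$ over $\Fp$ follows. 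Hence
\[
\Fp \otimes_F F^{1/p} \;\cong\; \Fp[X]/(X^p - t) \;=\; \Fp(t^{1/p})
\]
is a field, in particular reduced.

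There is no substantial obstacle here. The only non-routine step is the invocation of MacLane's criterion, which reduces an apparently universal statement (over all $F''/F$) to a single explicit test extension. Once that reduction is made, the valuation-theoretic structure of $\Fp$ trivialises the verification: a uniformizer can never become a $p$-th power, either in $F$ or in $\Fp$, and the transcendence degree one hypothesis on $F/\Fq$ ensures that the entire purely inseparable extension $F^{1/p}$ is cut out by the single equation $X^p - t$.
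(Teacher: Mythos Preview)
Your proof is correct and takes a genuinely different route from the paper's. The paper first establishes the special case $\Fq(t)\subset\Fq((t))$ via the $p$-independence criterion from Bourbaki (showing directly that if $f_1,\ldots,f_m\in\Fq((t))$ are $\Fq(t)$-linearly independent then so are $f_1^p,\ldots,f_m^p$, using the decomposition $g=\sum_j g_j^p t^j$ available over the perfect field $\Fq$), and then deduces the general case from a commutative square relating $\Fq(t)$, $F$, $\Fq((t))$ and $\kp((t))$ together with transitivity properties of separability. Your argument is more direct: the key observation that $[F^{1/p}:F]=p$ (since $F$ has transcendence degree one over the perfect field $\Fq$) means a single uniformizer $t$ already generates $F^{1/p}$ over $F$, so MacLane's criterion reduces to the irreducibility of $X^p-t$ over $\Fp$, which is immediate from the valuation. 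Your approach is shorter and avoids the two-step reduction through $\Fq(t)$; the paper's approach has the minor virtue of isolating the separability of $\Fq((t))/\Fq(t)$ as an explicit standalone lemma, but at the cost of a longer argument.
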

\begin{proof}
Let us start with the special case of $F=\Fq(t)$ completed at $\frkp=(t)$,
so $\Fp=\ktt$. By \cite[V.\S 15.4]{BourbA} it is sufficient to prove
the following: If $f_1,\ldots,f_m\in\ktt$ are linearly independent
over $\Fq(t)$, then so are the $f_i^p$. Without loss of generality,
assume that $f_i\in \Fq\lleck t\rreck$, and that for certain
$g_i\in k[t]$ we have $\sum_ig_if_i^p=0$. We must show that all $g_i$ are zero.

Since $\Fq$ is perfect, we may write $g_i=:\sum_{j=0}^{p-1}g_{ij}^pt^j$ for
certain $g_{ij}\in k[t]$. These defining equations, together with
$\sum_ig_if_i^p=0$, imply that for all $j$ we have $\sum_ig_{ij}^pf_i^p=0$.
By extracting $p$-th roots of both sides we obtain $\sum_ig_{ij}f_i=0$ for
all $j$. By assumption the $f_i$ are linearly independent, so we
have $g_{ij}=0$ for all $i$ and $j$. Therefore all $g_i$ are zero,
as required.

Let us return to the general setting. We choose a local parameter
$t\in F$ at $\frkp$. Denoting the residue field of $F$ at $\frkp$ by $\kp$,
we have $\Fp=\kp\llkurv t\rrkurv$ and the following commutative diagram
of inclusions:
\[\xymatrix{
\Fq(t) \ar[r] \ar[d] & F \ar[d]\\
\ktt \ar[r]& \kp\llkurv t\rrkurv
}\]
We have just seen that $\Fq(t)\subset\ktt$ is separable; clearly, so is
$\ktt\subset\kp\llkurv t\rrkurv$, hence $\Fq(t)\subset\Fp$ is separable.
Moreover, $\Fq(t)\subset F$ is separable algebraic since $t$ is a local parameter.
This implies that $F\subset\Fp$ is separable by \cite[V.\S15]{BourbA}.
\end{proof}

\begin{proof}[Proof of Theorem \ref{thm:mainthmfirsthalf}]
Since $\Fp/F$ is separable by Proposition \ref{prop:FpFissep},
Theorem \ref{thm:boldscalarext} implies items (a) and (b)
of Theorem \ref{thm:mainthmfirsthalf}. Item (c) follows
from the fact that $\AlpK=\FK\Cap\AKp$.
\end{proof}

\section{Tamagawa-Fontaine Theory}

In this section, we complete the proof of Theorem
\ref{thm:mainthm} with the help of what we term
``Tamagawa-Fontaine theory'', since the basic
ideas a sketch of the proofs are due to Tamagawa \cite{Tam95}
and have some formal analogy to Fontaine theory.

Let $F,\Fq,A,\frkp$ be as before,
let $K/\Fq$ be a \begin{tabular}{|c|}
\hline \emph{finitely generated}\\\hline\end{tabular}
field and let $\Ksep$ denote a separable closure of $K$
with associated absolute Galois group $\GalK:=\Gal(\Ksep/K)$.
Recall that we have constructed bold rings $\bApK\subset\bFpK$
and $\bAKp\subset\bFKp$, and that we call $\bFpK$- and $\bFKp$-modules
\emph{$\frkp$-restricted} if they are restricted with respect to these
inclusions. To any $\frkp$-restricted $\bFpK$-module $\bM$, we associate
the rational $\frkp$-adic Galois representation
\[\Vp(\bM):=\Rp(\bFKp\otimes_{\bFpK}\bM).\]
\begin{dfn}
Following \cite{Tam95}, we say that a rational $\frkp$-adic Galois representation
is \emph{quasigeometric} if it is isomorphic to $\Vp(\bM)$ for some
$\frkp$-restricted $\bFpK$-module $\bM$.
\end{dfn}
The theory consists of constructing
a bold ring $\bB\subset\bFKsepp$ and developing the properties
of the associated functor\footnote{``C'' for coreflection.}
\[\Cp:=\big((\bB\otimes_{\Fp}(-)\big)^{\GalK}:\:\RepFpGalK\to\FpKModres.\]
It allows to determine which rational $\frkp$-adic
Galois representations are quasigeometric
(those for which $\rk_{\FpK}\big(\Cp (V,\rho)\big)=\dim_{\Fp}V$),
and its properties imply that $\bFKp\otimes_{\bFpK}(-)$, restricted
to $\frkp$-restricted $\bFpK$-modules, is fully faithful and semisimple
on objects. Thereby, the proof of Theorem \ref{thm:mainthm} is completed.

We choose to postpone the construction of $\bB$ to the next section
(Definitions \ref{dfn:TamagawasBplusS} and \ref{dfn:TamagawasB}),
and develop the properties of $\Cp$ using only the properties of $\bB$
given in the following claim. These properties will also
be established in the next section (Theorem \ref{thm:Bexists}).

\begin{claim}\label{claim:Bexists}
Assume that $K/\Fq$ is finitely generated.
There exists a ring $B\subset\FKsepp$ with the following
properties:
\begin{enumerate}
  \item $\sigma_{\FKsepp}(B)\subset B$ and $B^\sigma=\Fp$.
  \item $\GalK(B)\subset B$ and $B^{\GalK}=\FpK$.
  \item Every $\frkp$-restricted $\bFpK$-module $\bM$ fulfills $\Vp(\bM)\subset\bB\otimes_{\bFpK}\bM$.
\end{enumerate}
\end{claim}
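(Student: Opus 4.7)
The strategy is to build $B$ as a subring of $\FKsepp$ that is large enough to accommodate every Tate module (for (c)), yet small enough that its $\GalK$-invariants are $\FpK$ rather than $\FKp$ (for (b)). First I would record the invariants of the ambient ring itself: since $\sigma = \id_A \otimes \sigma_q$ on $\AKsepp = \varprojlim_n (A/\frkp^n \otimes_{\Fq} \Ksep)$ and $(\Ksep)^{\sigma_q} = \Fq$, one obtains $(\FKsepp)^\sigma = \Fp$; since $(\Ksep)^{\GalK} = K$, one obtains $(\FKsepp)^{\GalK} = \FKp$. Thus the naive choice $B := \FKsepp$ satisfies (a) and (c) trivially, but fails (b) whenever $K$ is infinite, since then $\FpK \subsetneq \FKp$.

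I would then define $B$ as the smallest $\FpK$-subalgebra of $\FKsepp$ that is stable under $\sigma$ and $\GalK$ and that contains, for every $\frkp$-restricted $\bFpK$-module $\bM$ and every choice of $\FpK$-basis of $M$, the $r$ coordinates in $\FKsepp^r$ of each $\tau$-invariant vector (where $r = \rk M$). Such a minimal subring exists as an intersection; it is manifestly stable under $\sigma$ and $\GalK$, makes (c) automatic, and, together with the chain $\Fp \subseteq \FpK \subseteq B \subseteq \FKsepp$ and $(\FKsepp)^\sigma = \Fp$, immediately yields (a).

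The real work is (b). One has $\FpK \subseteq B^{\GalK} \subseteq \FKp$, and the hard direction is the reverse inclusion $B^{\GalK} \subseteq \FpK$. Heuristically this should hold because, by Proposition \ref{prop:galoisrepsclassified} applied to $\bFKp \otimes_{\bFpK} \bM$, the $\tau$-invariant vectors $\Vp(\bM)$ form an $\Fp$-structure on $\FKp \otimes_{\FpK} M$, and their $\FpK$-linear combinations inside $\FKsepp$ should, after Galois descent, contribute no new Galois-fixed elements beyond $\FpK$. The main obstacle is making this rigorous: the abstract intersection description does not lend itself to computing $(-)^{\GalK}$ directly, and the argument must genuinely use the hypothesis that $K/\Fq$ is finitely generated. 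I expect Section $6$ to overcome this obstacle by constructing $B$ concretely (Definitions \ref{dfn:TamagawasBplusS} and \ref{dfn:TamagawasB}) — presumably as a filtered colimit of finitely generated $\FpK$-subalgebras, or via a Fontaine-style completion adapted to the function-field setting — on which the Galois invariants admit an explicit computation via a descent argument, thereby pinning down $B^{\GalK}$ to exactly $\FpK$.
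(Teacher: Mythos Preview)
Your outline correctly isolates the structure of the problem: (a) and (c) are cheap once $B$ contains $\FpK$ and the coordinates of all $\tau$-invariants, and the genuine content is the reverse inclusion $B^{\GalK}\subset\FpK$ in (b). But the proposal stops exactly there, and your guess about how Section~\ref{dfn:TamagawasBplusS} proceeds is off the mark. The paper does \emph{not} build $B$ as a filtered colimit of finitely generated $\FpK$-subalgebras, nor as a Fontaine-style completion. The construction is valuation-theoretic: one realises $K$ as the function field of a proper normal $\Fq$-variety $X$, extends each divisorial valuation $v_x$ (for $x$ in the inverse limit $\Ssep$ of prime divisors of the normalisations $X_L$) to a function $v_x:\FKsepp\to\bbR\cup\{\pm\infty\}$, and defines $B^+$ by the growth conditions $v_x(f)\neq-\infty$ for all $x$ and $v_x(f)\ge 0$ for almost all $x$. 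Then $B:=S^{-1}B^+$ for an explicit multiplicative set $S\subset\OFKsepp^\times$ cut out by the condition $\sigma(s)/s\in\Fp\otimes_{\Fq}K$.

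The finite generation of $K$ enters precisely through the properness of $X$: the identification $(B^+)^{\GalK}=\Fp\otimes_{\Fq}K$ (Lemma~\ref{lem:Bplusanditsginv2}) boils down to the fact that global sections of a line bundle on a proper variety are finite-dimensional. Your abstract ``smallest subalgebra'' description gives no handle on this, and your heuristic that $\tau$-invariants ``contribute no new Galois-fixed elements beyond $\FpK$'' is exactly what must be \emph{proved}. The paper's argument for $B^{\GalK}\subset\FpK$ (Lemmas~\ref{lem:BGlemA}--\ref{lem:BGlemB}, Proposition~\ref{prop:part2ofB}) is quite delicate: given $b=b^+/s\in B^{\GalK}$, one multiplies $b$ by an explicit product of Frobenius-twisted factors to obtain an element $a_N$, and shows via careful valuation estimates (including a key Lemma~\ref{slem:eps} controlling $v_x(\sigma^N(a))$ in terms of $v_x(s\cdot\sigma^N(a))$) that $a_N\in B^+$ for $N\gg 0$, whence $a_N\in\Fp\otimes_{\Fq}K$ and so $b\in\FpK$. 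None of this is visible from the minimal-subring description.
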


Note that the existence of such a ring of periods is a matter of \emph{construction},
since property (b) requires $\bB$ to be ``small enough'' (as $(\FKsepp)^\GalK=\FKp$ strictly
contains $\FpK$ if $K$ is infinite), whereas property (c) requires $\bB$ to be ``large enough''
(as it must contain the Galois-invariant elements of $\bFKsepp\otimes_{\bFpK}\bM$ for every
$\frkp$-restricted $\bFpK$-module $\bM$).

This claim will be justified in Theorem \ref{thm:Bexists}.
Until the end of the proof of Theorem \ref{thm:tfmainthm},
\emph{we will assume that Claim \ref{claim:Bexists} is true}.
Note that there exists a smallest ring with the properties
required in Claim \ref{claim:Bexists}, the intersection of 
the (non-empty!) set of such rings. What follows does not depend on
our choice of $\bB$. But we might as well choose this canonical
smallest $\bB$ in the following, so we do.

\begin{lem}\label{lem:descendequivisom}
Let $\bM=(M,\tau)$ be a $\frkp$-restricted $\FpK$-module. Then the natural comparison isomorphism
$\FKsepp\otimes_{\Fp}\Vp(\bM)\to\FKsepp\otimes_{\FpK}M$ of Proposition
\ref{prop:galoisrepsclassified}(b) descends to a $\GalK$-equivariant isomorphism
of $\bB$-modules
\[c_{\bM}:\:\bB\otimes_{\Fp}\Vp(\bM)\To\bB\otimes_{\bFpK}\bM\]
\end{lem}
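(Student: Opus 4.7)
My plan is to build $c_{\bM}$ directly from Claim \ref{claim:Bexists}(c) as the unique $\bB$-linear extension of the inclusion $\Vp(\bM)\subset\bB\otimes_{\bFpK}\bM$, and then verify that it is a $\GalK$-equivariant $\bB$-module isomorphism. The $\GalK$-equivariance is automatic, since $\bB$ is $\GalK$-stable by Claim \ref{claim:Bexists}(b) and the $\GalK$-action on $\Vp(\bM)$ is the restriction of the natural action on $\FKsepp\otimes_{\bFpK}\bM$. To handle injectivity, I would place $c_{\bM}$ in the evident commutative square whose bottom row is the comparison isomorphism of Proposition \ref{prop:galoisrepsclassified}(b) and whose vertical arrows are $\bB\otimes_{\Fp}\Vp(\bM)\hookrightarrow\FKsepp\otimes_{\Fp}\Vp(\bM)$ and $\bB\otimes_{\bFpK}\bM\hookrightarrow\FKsepp\otimes_{\bFpK}\bM$; both verticals are injective because $\Vp(\bM)$ is $\Fp$-free and $\bM$ is $\FpK$-projective. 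The square then forces $c_{\bM}$ to be injective.

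For surjectivity I would apply Claim \ref{claim:Bexists}(c) a second time, to the dual $\bM^\vee$, obtaining a companion $\bB$-linear map $c_{\bM^\vee}:\:\bB\otimes_{\Fp}\Vp(\bM^\vee)\to\bB\otimes_{\bFpK}\bM^\vee$. The $\frkp$-restricted $\bFpK$-modules form a rigid tensor subcategory by an argument parallel to Proposition \ref{prop:subquotclosedbFK}, and $\Vp$ is a tensor functor between rigid tensor categories, hence $\Vp(\bM^\vee)\isom\Vp(\bM)^\vee$ canonically. Dualising $c_{\bM^\vee}$ over $\bB$ yields a $\bB$-linear map $c_{\bM^\vee}^\vee:\:\bB\otimes_{\bFpK}\bM\to\bB\otimes_{\Fp}\Vp(\bM)$, and the naturality of the comparison isomorphism with respect to the evaluation pairings $\bM\otimes_{\bFpK}\bM^\vee\to\bFpK$ and $\Vp(\bM)\otimes_{\Fp}\Vp(\bM)^\vee\to\Fp$ identifies $c_{\bM^\vee}^\vee$, after $\FKsepp$-extension, with the inverse of the comparison isomorphism of $\bM$. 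Using the same vertical injectivities as before, this forces $c_{\bM^\vee}^\vee\circ c_{\bM}=\id$ and $c_{\bM}\circ c_{\bM^\vee}^\vee=\id$ already at the $\bB$-level, so $c_{\bM}$ is a $\bB$-module isomorphism with inverse $c_{\bM^\vee}^\vee$.

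The main obstacle will be the categorical bookkeeping in the duality step: verifying that $\bM^\vee$ is itself $\frkp$-restricted so that Claim \ref{claim:Bexists}(c) genuinely applies, and confirming that $\Vp$ coherently intertwines duals and evaluation pairings on both sides. Neither is deep, but both are essential to conclude that the dual of the comparison isomorphism of $\bM$ is honestly the comparison isomorphism of $\bM^\vee$, which is what makes the surjectivity argument go through. An alternative route, avoiding the abstract tensor formalism, would be to fix $\Fp$- and $\FpK$-bases and argue concretely that the matrix $A\in\Mat_r(\bB)$ representing $c_{\bM}$ has inverse with entries in $\bB$, the latter being read off from the matrix of $c_{\bM^\vee}$ in dual bases via $B=(A^{-1})^T$; but this amounts to the same computation in coordinates.
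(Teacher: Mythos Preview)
Your proposal is correct and follows the same core strategy as the paper: construct $c_{\bM}$ from Claim~\ref{claim:Bexists}(c), then apply that claim to $\bM^\vee$ to exhibit the inverse inside $B$. The paper's execution is slightly more economical: rather than dualising $c_{\bM^\vee}$ categorically and checking the compatibilities you flagged, it first reduces to rank one via determinants (a map of free $B$-modules of equal finite rank is an isomorphism iff its determinant is a unit, and $\det c_{\bM}=c_{\det\bM}$ since $\Vp$ is a tensor functor), after which the duality step collapses to the observation that, in chosen bases, $c(\det\bM)\cdot c(\det\bM^\vee)=1$ in $\FKsepp$ with both factors lying in $B$. This bypasses the bookkeeping you identified as the main obstacle, and is essentially the coordinate alternative you sketched at the end, made painless by the rank-one reduction.
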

\begin{proof}
Claim \ref{claim:Bexists}(b) implies that the given isomorphism
descends to a $\GalK$-equivariant homomorphism
of $\bB$-modules \[c_{\bM}:\:\bB\otimes_{\Fp}\Vp(\bM)\To\bB\otimes_{\bFpK}\bM.\]
Since both sides are free $B$-modules of finite rank, it suffices to show
that the determinant of $c_{\bM}$ is an isomorphism. Since $\Vp$ is a tensor
functor, we have
\[\det\left(c_{\bM}\right)=c_{\det(\bM)},\]
so we may reduce to the case where $\rk(\bM)=1$. In this case,
choosing a basis for both $\Vp(\bM)$ and $\bM$, we see that $c_{\bM}$ is given by left multiplication by
an element $c(\bM)\in B$. Choosing the dual bases of $\Vp(\bM^\vee)$ and $\bM^\vee$,
analogously $c_{\bM^\vee}$ is given by left multiplication by an element $c(\bM^\vee)\in B$.

By Proposition \ref{prop:galoisrepsclassified}(c), the element $c(\bM)$
is invertible in $\FKsepp$. By naturality, its
inverse $c(\bM)^{-1}$ coincides with $c(\bM^\vee)$. Since both
$c(\bM)$ and $c(\bM^\vee)$ lie in $B$, $c_{\bM}$ is indeed
an isomorphism.
\end{proof}

We continue to exploit the consequences of Claim \ref{claim:Bexists}.

\begin{thm}\label{thm:Vpff}
The functor $\Vp$ on $\frkp$-restricted $\FpK$-modules is fully faithful.
\end{thm}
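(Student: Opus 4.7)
The plan is to prove that $V_\frkp$ is bijective on $\Hom$-sets by using Lemma \ref{lem:descendequivisom} to perform a $\GalK$-descent via the ring of periods $\bB$, exploiting the crucial property $B^{\GalK} = \FpK$ from Claim \ref{claim:Bexists}(b).

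For injectivity, suppose $g : \bM \to \bN$ has $V_\frkp(g) = 0$. Since $R_\frkp$ is an equivalence by Proposition \ref{prop:galoisrepsclassified}, this means $\id_{\bFKp} \otimes g = 0$. But $\bN$ is finitely generated projective over $\FpK$ (being $\frkp$-restricted), hence flat, so the natural map $\bN \to \bFKp \otimes_{\bFpK} \bN$ is injective (using $\FpK \subset \FKp$). Therefore $g = 0$.

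For surjectivity, let $f : V_\frkp(\bM) \to V_\frkp(\bN)$ be $\GalK$-equivariant. Using the comparison isomorphisms $c_\bM$, $c_\bN$ of Lemma \ref{lem:descendequivisom}, form the composite
\[\phi := c_{\bN} \circ (\id_{\bB} \otimes f) \circ c_{\bM}^{-1} : \bB\otimes_{\bFpK}\bM \To \bB\otimes_{\bFpK}\bN.\]
By construction $\phi$ is $\bB$-linear, and it is $\GalK$-equivariant and $\tau$-equivariant since the $c$'s and $f$ are. Now the plan is to take $\GalK$-invariants. Using Claim \ref{claim:Bexists}(b), which says $B^{\GalK}=\FpK$, and writing $\bM$ as a direct summand of $\FpK^{\oplus n}$ via some idempotent $e$ (possible by projectivity), the induced idempotent $\id_B \otimes e$ on $B^{\oplus n}$ is $\GalK$-equivariant, so taking $\GalK$-invariants commutes with the idempotent, giving
\[(\bB\otimes_{\bFpK}\bM)^{\GalK} \;=\; (B^{\GalK})\otimes_{\FpK}\bM \;=\; \bM,\]
and similarly for $\bN$. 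Thus $\phi^{\GalK}$ yields an $\FpK$-linear map $\psi : \bM \to \bN$; it commutes with $\tau$ because $\phi$ does, so $\psi$ is a homomorphism of $\bFpK$-modules.

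It remains to check $V_\frkp(\psi) = f$. By $\GalK$-descent, any $\bB$-linear $\GalK$-equivariant map between modules of the form $\bB\otimes_{\bFpK}(-)$ on projective modules is the base change of its $\GalK$-invariants, so $\phi = \id_\bB \otimes \psi$. Combined with the naturality of the comparison isomorphism $c$ in Lemma \ref{lem:descendequivisom}, this forces $\id_\bB \otimes V_\frkp(\psi) = \id_\bB \otimes f$, and hence $V_\frkp(\psi) = f$. The main obstacle — already handled by Claim \ref{claim:Bexists} and Lemma \ref{lem:descendequivisom} — is the existence of a ring of periods $\bB$ that is simultaneously small enough ($B^{\GalK}=\FpK$) to descend to $\bFpK$ and large enough to admit the comparison isomorphism for every $\frkp$-restricted $\bFpK$-module; given these, the argument is a formal $\GalK$-descent.
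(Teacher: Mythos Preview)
Your proof is correct and uses the same essential ingredients as the paper: the comparison isomorphism $c_{\bM}$ from Lemma \ref{lem:descendequivisom} together with the descent property $B^{\GalK}=\FpK$ from Claim \ref{claim:Bexists}(b). The paper's argument is somewhat more economical: rather than treating injectivity and surjectivity separately and conjugating a given $f$ by $c_{\bM}$ and $c_{\bN}$, it passes to the single object $\bX:=\bM^\vee\otimes\bN$, applies the comparison isomorphism $c_{\bX}$ once, and then takes $\GalK$- and $\tau$-invariants in the two possible orders to identify $\Hom(\bM,\bN)=\bX^\tau$ with $\Hom_{\GalK}(\Vp\bM,\Vp\bN)=(\Vp\bX)^{\GalK}$ directly. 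This buys brevity and symmetry (using $B^\sigma=\Fp$ from Claim \ref{claim:Bexists}(a) in parallel with $B^{\GalK}=\FpK$), whereas your hands-on descent makes the construction of the preimage $\psi$ of $f$ explicit. One small point: you invoke $\tau$-equivariance of $c_{\bM}$, which Lemma \ref{lem:descendequivisom} does not state explicitly; it holds because $c_{\bM}$ descends from the comparison isomorphism of Proposition \ref{prop:galoisrepsclassified}(b), and the paper uses this implicitly as well.
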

\begin{proof}
Consider two $\frkp$-restricted $\bFpK$-modules $\bM,\bN$. By Lemma \ref{lem:descendequivisom}
we have a $\tau$- and $\GalK$-equivariant natural isomorphism
\[\bB\otimes\bM^\vee\otimes\bN\To\bB\otimes\Vp(\bM^\vee\otimes\bN)=\bB\otimes\Vp(\bM)^\vee\otimes\Vp(\bN),\]
which implies that
\[(\bB\otimes\bM^\vee\otimes\bN)^{\Gamma,\tau}\isom(\bB\otimes\Vp(\bM)^\vee\otimes\Vp(\bN))^{\tau,\Gamma}\]
Since $\Hom(\bM,\bN)=(\bM^\vee\otimes\bN)^\tau$ coincides with the domain
of this isomorphism, and $\Hom(\Vp(\bM)^\vee,\Vp(\bN))=(\Vp(\bM)^\vee\otimes\Vp(\bN))^\GalK$
coincides with its target, we see that $\Vp$ is indeed fully faithful.
\end{proof}

\begin{dfn}\begin{enumerate}
  \item Let $(V,\rho)$ be a rational $\frkp$-adic Galois representation. We set
     \[\Cp(V,\rho):=(B\otimes_{\Fp}V)^{\GalK},\]taking Galois-invariants
     with respect to the diagonal action. Note that the $\sigma$-linear endomorphism
     of $B$ induces a $\sigma$-linear endomorphism $\tau$ of $\Cp(V,\rho)$.
  \item Set $\bOB:=\bB\cap\bAKsepp$. Let $(T,\rho)$ be an integral $\frkp$-adic Galois
    representation. We set
    \[\OCp(T,\rho):=(\OB\otimes_{\Ap}T)^{\GalK},\]taking Galois-invariants
    with respect to the diagonal action. Note that the $\sigma$-linear
    endomorphism of $\OB$ induces a $\sigma$-linear endomorphism $\tau$ of $\OCp(T,\rho)$.
\end{enumerate}
\end{dfn}

\begin{lem}\label{lem:corofffff}
For every $\frkp$-restricted $\bFpK$-module $\bM$, the comparison
isomorphism $c_{\bM}$ of Lemma \ref{lem:descendequivisom} induces an isomorphism
of $\bFpK$-modules
\[\Cp(\Vp\bM)\arrover{\isom}\bM.\]
\end{lem}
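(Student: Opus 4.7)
The plan is to obtain the required isomorphism by applying the functor of $\GalK$-invariants to the comparison isomorphism $c_{\bM}$ provided by Lemma \ref{lem:descendequivisom}, and identifying both sides.

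On the left hand side, $(\bB\otimes_{\Fp}\Vp(\bM))^{\GalK}$ is by definition $\Cp(\Vp\bM)$. The main content of the proof is therefore the computation of the right hand side, which amounts to showing that
\[
(\bB\otimes_{\bFpK}\bM)^{\GalK}\;=\;\bB^{\GalK}\otimes_{\bFpK}\bM\;=\;\bFpK\otimes_{\bFpK}\bM\;=\;\bM.
\]
The first equality is what I expect to be the main technical step, but it should follow essentially formally from the fact that $\bM$ is a finitely generated projective $\FpK$-module. Indeed, since $\bM$ is $\frkp$-restricted, it is by definition obtained via bold scalar extension from a restricted $\bApK$-module, and $\ApK$ is semilocal, so the underlying $\ApK$-module is free of finite rank, whence $\bM$ is free of finite rank over $\FpK$. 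For a free module of finite rank the formation of $\GalK$-invariants commutes with tensor product with $B$ over $\FpK$ (since $\GalK$ acts trivially on $\FpK \subset B^{\GalK}$ and on $\bM$), and the second equality is exactly Claim \ref{claim:Bexists}(b).

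It remains to check that the resulting isomorphism $\Cp(\Vp\bM)\isom\bM$ is an isomorphism of $\bFpK$-modules, i.e.\ is both $\FpK$-linear and $\tau$-equivariant. Linearity over $\FpK$ is inherited from the $B$-linearity of $c_{\bM}$, using $\FpK\subset B^{\GalK}$. For $\tau$-equivariance we use that $c_{\bM}$ is $\tau$-equivariant by Lemma \ref{lem:descendequivisom}, and that the $\sigma$-action on $\bB\subset\bFKsepp$ commutes with the $\GalK$-action on $\bFKsepp$, so that the Galois-invariants carry a well-defined $\tau$ and $c_{\bM}^{\GalK}$ respects it.

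The only subtlety I anticipate is confirming the interchange of $\GalK$-invariants with the tensor product $B\otimes_{\FpK}\bM$ on the right hand side; once the freeness of $\bM$ over $\FpK$ is in place (via the $\frkp$-restrictedness and the semilocal Dedekind structure of $\ApK$), the argument is reduced to the statement $B^{\GalK}=\FpK$ of Claim \ref{claim:Bexists}(b), and no further input is required.
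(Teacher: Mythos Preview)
Your proposal is correct and is precisely the argument the paper has in mind: the paper's own proof consists of the single sentence ``Take $\GalK$-invariants!'', and you have unpacked exactly this, computing $(\bB\otimes_{\Fp}\Vp(\bM))^{\GalK}=\Cp(\Vp\bM)$ on the left and $(\bB\otimes_{\bFpK}\bM)^{\GalK}=\bM$ on the right via freeness of $\bM$ over $\FpK$ together with $B^{\GalK}=\FpK$. For the freeness of $\bM$ you could alternatively invoke Lemma~\ref{lem:onbffkmodules}(b) directly (with $F$ replaced by $\Fp$), which is slightly quicker than going through $\ApK$.
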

\begin{proof}
Take $\GalK$-invariants!
\end{proof}

\begin{prop}\label{prop:Qptens}
\begin{enumerate}
  \item $\Cp$ is an exact $\Fp$-linear tensor functor.
  \item $\OCp$ is an exact $\Ap$-linear tensor functor.
\end{enumerate}
\end{prop}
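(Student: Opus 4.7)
The plan is to verify, for each of $\Cp$ and $\OCp$, that the functor lands in the stated target category of $\frkp$-restricted modules and that it is linear over the respective coefficient ring, tensor, and exact. The two cases are entirely parallel, so I describe (a) in detail, with (b) following by replacing $\bB$ by $\bOB = \bB \cap \bAKsepp$, $\Fp$ by $\Ap$, and $\FpK$ by $\bApK$, and invoking Proposition \ref{prop:intgaloisrepsclassified} in place of Proposition \ref{prop:galoisrepsclassified}.

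$\Fp$-linearity and the $\bFpK$-module structure on $\Cp(V,\rho)$ are immediate: by Claim \ref{claim:Bexists}(a,b) we have $\Fp = B^\sigma \subset \FpK = B^{\GalK}$, so $(B \otimes_{\Fp} V)^{\GalK}$ is naturally an $\FpK$-module refining the $\Fp$-action. For the tensor structure, multiplication in $B$ gives the canonical $\GalK$-equivariant isomorphism $(B \otimes_{\Fp} V) \otimes_{B}(B \otimes_{\Fp} W) \isom B \otimes_{\Fp}(V \otimes_{\Fp} W)$, whose $\GalK$-invariants yield a natural map $\Cp(V) \otimes_{\FpK} \Cp(W) \to \Cp(V \otimes_{\Fp} W)$. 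To see this is an isomorphism I would apply $\FKsepp \otimes_{B}(-)$ and use Proposition \ref{prop:galoisrepsclassified}(b,c) to identify both sides with $\FKsepp \otimes_{\Fp}(V \otimes_{\Fp} W)$, then descend using $B^{\GalK} = \FpK$.

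For exactness, note that $B \otimes_{\Fp}(-)$ is exact (since $\Fp$ is a field) and $(-)^{\GalK}$ is left exact, giving left exactness of $\Cp$ for free. The main obstacle is right exactness: given a short exact sequence $0 \to V' \to V \to V'' \to 0$ in $\RepFpGalK$, the failure of $\Cp(V) \to \Cp(V'')$ to be surjective is controlled by the Galois cohomology $H^1(\GalK, B \otimes_{\Fp} V')$, and the key step is to show this cohomology vanishes. This is a Hilbert 90-type statement that must rely on the explicit construction of $\bB$ carried out in the next section, where $\bB$ will be assembled to contain enough $\GalK$-equivariant periods. Once this is in hand, checking that $\Cp(V)$ is genuinely $\frkp$-restricted (not merely an $\bFpK$-module) similarly uses the integral subring $\bOB$ to produce $\bApK$-lattices in the invariant modules.
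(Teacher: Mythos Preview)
Your outline has two genuine gaps, and both concern points where the paper's argument is quite different from what you propose.

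\textbf{Tensor structure.} You want to check that $\Cp(V)\otimes_{\FpK}\Cp(W)\to\Cp(V\otimes_{\Fp}W)$ is an isomorphism by applying $\FKsepp\otimes_B(-)$ and invoking Proposition~\ref{prop:galoisrepsclassified}(b,c). But those comparison isomorphisms are for $D_\frkp$, not for $\Cp$: for a general representation $V$ there is no reason why $\FKsepp\otimes_{\FpK}\Cp(V)\to\FKsepp\otimes_{\Fp}V$ should be an isomorphism (indeed, this fails whenever $V$ is not quasigeometric, and quasigeometricity is only characterised \emph{later}, using the present proposition). The paper avoids this by observing that inside $\FKsepp\otimes_{\Fp}V$ one has $\Cp(V)=(B\otimes_{\Fp}V)\cap D_\frkp(V)$, and then computing the tensor product as an intersection, using that $D_\frkp$ is already known to be a tensor functor.

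\textbf{Right exactness.} You propose to control the cokernel of $\Cp(V)\to\Cp(V'')$ by the vanishing of $H^1(\GalK,B\otimes_{\Fp}V')$, calling this a Hilbert~90-type statement to be extracted from the construction of $B$. No such statement is proved (or needed) in the paper, and it is not clear it holds for this $B$. The paper's argument is entirely formal: once $\Cp$ is a tensor functor between rigid tensor categories, it preserves duals, so $\Cp(V^\vee)\isom\Cp(V)^\vee$. Then the image under $\Cp$ of a right exact sequence is the dual of the image of the dual left exact sequence; left exactness of $\Cp$ plus exactness of dualisation gives right exactness. This duality trick is the key idea you are missing.

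Finally, note that the proposition does \emph{not} assert that $\Cp$ lands in $\frkp$-restricted modules; that is the content of the later Proposition~\ref{prop:Qpfunctor}, which uses the present one as input. So that part of your plan is out of place here.
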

\begin{proof}
$\OCp$ and $\Cp$ are left exact linear functors by definition.
Let us show that they are tensor functors. We will deduce
this from the fact that the functors $D_{\frkp}'$ and $D_{\frkp}$ of Section $3$ are such.

Let us do this for $\Qp$, mutatis mutandis the proof is the same for $\OQp$.
Consider a rational $\frkp$-adic Galois representation $\bV=(V,\rho)$.
We have $D_{\frkp}(\bV)=(\FKsepp\otimes_{\Fp}V)^\GalK$ and $\Cp(\bV)=(B\otimes_{\Fp}V)^{\GalK}$.
Therefore, calculating in
$\FKsepp\otimes_{\Fp}V$, we have $\Qp(\bV)=(B\otimes_{\Fp}V)\cap D_{\frkp}(\bV)$.

Given another rational $\frkp$-adic Galois representation $\bW$, we may apply these remarks to $\bV$, $\bW$
and $\bV\otimes_{\Fp}\bW$. In $\FKsepp\otimes_{\Fp}V\otimes_{\Fp}W$ we calculate:
\begin{eqnarray*}
\Qp(V\otimes_{\Fp} W) &=& (B\otimes_{\Fp}V\otimes_{\Fp}W)\cap D_{\Fp}(V\otimes_{\Fp} W)\\
								&=& \left((B\otimes_{\Fp}V)\otimes_B(B\otimes_{\Fp}W)\right) \cap \left(D_{\Fp}(V)\otimes_{\FKp}D_{\Fp}(W)\right)\\
								&=& \left((B\otimes_{\Fp}V)\cap D_{\Fp}(V)\right) \otimes_{\FpK} \left((B\otimes_{\Fp}W)\cap D_{\Fp}(W)\right)\\
								&=& \Qp(V)\otimes_{\FpK}\Qp(W).
\end{eqnarray*}

Finally, the right exactness of $\Qp$ and $\OQp$ follows formally from what we have
proven. Again, we do this only for $\Qp$, mutatis mutandis the proof is the same for $\OQp$.
Since $\Qp$ is a tensor functor and $\bV$ admits a dual $\bV^\vee$, the $\bFpK$-module
$\Qp(\bV)$ has a dual, namely $\Qp(\bV^\vee)$. Therefore, if
\[\bV'\to\bV\to\bV''\to 0\]
is a right exact sequence of rational $\frkp$-adic Galois representations, then its image under $\Qp$ coincides with the
dual of the image of the left exact sequence $0\to(\bV'')^\vee\to\bV^\vee\to(\bV')^\vee$. Since
$\Qp$ is left exact, the image of this left exact sequence is left exact. Since dualisation is exact,
the image of our original right exact sequence is right exact, and we are done.
\end{proof}

\begin{lem}\label{lem:onbfkpmodules}
\begin{enumerate}
  \item The ring $\FKp$ is a finite product of fields, each
    isomorphic to a field of Laurent series $K'\llkurv t\rrkurv$ for some finite
    extension $K'/K$.
  \item The underlying $\FKp$-module of every restricted $\bFKp$-module is free.
\end{enumerate}
\end{lem}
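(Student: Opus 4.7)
The plan is to analyze $\AKp$ and $\FKp$ explicitly via the Cohen structure theorem, and then to transfer the transitive-permutation argument from the proof of Lemma \ref{lem:onbffkmodules} to the present setting.

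For part (a), I would first observe that since $\frkp$ is a non-zero maximal ideal of the Dedekind domain $A$, its completion $A_\frkp$ is a complete discrete valuation ring with finite residue field $\kp := A/\frkp$. Because $\Fq\subset A$ and $\kp$ is finite (hence perfect), the Cohen structure theorem (equivalently a Hensel/Teichmüller lift) yields an $\Fq$-algebra isomorphism $A_\frkp \isom \kp\lleck t\rreck$ for any uniformizer $t$, in such a way that the induced $\sigma_q$-action is trivial on $\kp$ and on $t$. Taking quotients by powers of the uniformizer, tensoring with $K$ over $\Fq$ and passing to the inverse limit then yields
\[
\AKp \;\isom\; (\kp\otimes_{\Fq}K)\lleck t\rreck,
\]
where $\sigma_{\AKp}$ corresponds to $(\id\otimes\sigma_q)$ acting coefficient-wise. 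Since $\kp/\Fq$ is finite separable and $\Fq$ is perfect, $\kp\otimes_{\Fq}K$ is a finite étale $K$-algebra, hence a product $\prod_i K_i$ of finite separable extensions $K_i/K$. Thus $\AKp\isom\prod_i K_i\lleck t\rreck$ is a product of complete DVRs, and its total ring of fractions is the product of the Laurent series fields
\[
\FKp \;=\; \Frac(\AKp) \;\isom\; \prod_i K_i\llkurv t\rrkurv,
\]
giving (a).

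For part (b), the goal is to reduce to the familiar statement that a finitely generated projective module over a product of fields is free iff its rank on each factor is the same. Given a restricted $\bFKp$-module $\bM=(M,\tau)$, the underlying module $M$ is finitely generated projective over the semisimple ring $\FKp=\prod_i K_i\llkurv t\rrkurv$, so it has a well-defined rank $r_i$ on each factor. The bijectivity of $\taulin:\sigma_*M\to M$ forces $r_i=r_{\sigma(i)}$ for the permutation of factors induced by $\sigma_{\FKp}$, so it suffices to show that $\sigma_{\FKp}$ acts transitively on the set of factors. Since the factors of $\FKp$ are in natural bijection with the factors of $\kp\otimes_{\Fq}K$ (the Laurent-series construction is performed factor-by-factor), this amounts to proving transitivity of the $(\id\otimes\sigma_q)$-action on $\kp\otimes_{\Fq}K$.

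The transitivity is exactly the kind of assertion treated in Lemma \ref{lem:onbffkmodules}, so the main technical step is to adapt that argument. Here I would use the standing assumption that $K/\Fq$ is finitely generated: the algebraic closure $\bbF_K$ of $\Fq$ in $K$ is then an algebraic finitely generated extension of $\Fq$, hence finite, say $\bbF_K=\bbF_{q^s}$. Writing $\kp=\bbF_{q^r}$, one has
\[
\kp\otimes_{\Fq}\bbF_K \;\isom\; \bigl(\bbF_{q^{\lcm(r,s)}}\bigr)^{\times \gcd(r,s)},
\]
on which $\id\otimes\sigma_q$ induces a cyclic permutation of the $\gcd(r,s)$ factors (this is essentially the computation already performed in the proof of Lemma \ref{lem:onbffkmodules}). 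Tensoring with $K$ over $\bbF_K$, each of these factors remains a field (because $\bbF_K$ is algebraically closed in $K$), and the action on factors is unchanged, hence still transitive. This yields the transitivity of $\sigma_{\FKp}$ on the factors of $\FKp$ and completes (b). The main obstacle is this transitivity assertion, whose verification depends on the finiteness of $\bbF_K$ and thus on the finite generation of $K/\Fq$; the rest of the argument is bookkeeping with completions and products of fields.
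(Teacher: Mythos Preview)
Your proof is correct and follows essentially the same approach as the paper: identify $\AKp$ with $(\kp\otimes_{\Fq}K)\lleck t\rreck$ (the paper does this directly from $A/\frkp^n\isom\bbF_\frkp[t]/t^n$ rather than invoking Cohen, but the content is identical), decompose $\kp\otimes_{\Fq}K$ as a product of fields via the argument of Lemma~\ref{lem:onbffkmodules}, and then deduce (b) from the transitivity of $\sigma$ on the factors exactly as in the proof of Lemma~\ref{lem:onbffkmodules}(b). Your version simply spells out in more detail what the paper condenses into the phrases ``as in the proof of Lemma~\ref{lem:onbffkmodules}(a)'' and ``Item (b) follows as in the proof of Lemma~\ref{lem:onbffkmodules}(b)''.
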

\begin{proof}
Let $t\in A$ denote a local parameter at $\frkp$,
and let $\bbF_\frkp$ denote the residue field of $\frkp$.
By definition, $\FKp=\Frac(\AKp)$, and we have
\[\AKp=\varprojlim_n\big((A/\frkp^n)\otimes_{\Fq}K\big)=\varprojlim_n\big((\bbF_\frkp[t]/t^n)\otimes_{\Fq}K)=\big(\bbF_\frkp\otimes_{\Fq} K)\lleck t\rreck.\]
As in the proof of Lemma \ref{lem:onbffkmodules}(a), $\bbF_\frkp\otimes_{\Fq} K\isom (K')^{\times s}$
for some finite field extension $K'/K$ and integer $s\ge 1$. It follows
that $\FKp=K'\llkurv t\rrkurv^{\times s}$ has the property stated in item (a).
Item (b) follows as in the proof of Lemma \ref{lem:onbffkmodules}(b).
\end{proof}

\begin{lem}\label{lem:OBisproj}
\begin{enumerate}
   \item $\OB$ is a projective $\ApK$-module.
   \item $B$ is a projective $\FpK$-module.
\end{enumerate}
\end{lem}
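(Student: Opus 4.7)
My plan is to prove item (b) first and then item (a).

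For (b), the key observation is that $\FpK = \Frac(\Fp \otimes_{\Fq} K)$ is a finite product of pairwise isomorphic fields, by essentially the same argument used in Lemma \ref{lem:onbffkmodules}(a): the tensor product $\Fp \otimes_{\Fq} K$ splits according to the decomposition of the subring $\kp \otimes_{\Fq} \bbF_K$ (with $\kp$ the residue field of $\Fp$ and $\bbF_K$ the algebraic closure of $\Fq$ in $K$) into a product of pairwise isomorphic fields, and this decomposition carries over to the total ring of fractions. The standing assumption that $K$ has only finitely many roots of unity ensures that the decomposition has only finitely many components. Since $\FpK$ is thus a finite product of fields, it is semisimple as a ring; consequently every $\FpK$-module --- in particular $B$ --- is a direct sum of simple modules, hence projective.

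For (a), I would combine the decomposition of $\FpK$ just obtained with the decomposition of $\AKp$ as a finite product of DVRs $K' \lleck t \rreck$ from Lemma \ref{lem:onbfkpmodules}(a); tracing through these decompositions shows that $\ApK = \FpK \cap \AKp$ splits compatibly as a finite product of discrete valuation rings, with common uniformizer a uniformizer $t \in A$ of $\frkp$. Since $\OB = B \cap \AKsepp$ is a submodule of the $\ApK$-module $\AKsepp$, and $\AKsepp$ is torsion-free (being a $\frkp$-adic completion of $A \otimes_{\Fq} \Ksep$), $\OB$ is itself torsion-free and therefore flat over the product-of-DVRs $\ApK$. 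Moreover, $\OB$ inherits $t$-adic separatedness from $\AKsepp$: since $\AKsepp$ is $\frkp$-adically complete, $\bigcap_n t^n \AKsepp = 0$ and therefore $\bigcap_n t^n \OB = 0$.

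The main obstacle is to promote flatness-plus-separatedness to projectivity, since in general a flat, $t$-adically separated module over a DVR need not be free (e.g.\ $\prod_n R$ over a DVR $R$ is flat and separated but not projective). The plan is to exploit the relation $B = \OB[1/t]$, which holds because $\FKsepp = \AKsepp[1/t]$ and so every element of $B \subset \FKsepp$ lies in $t^{-n} \OB$ for some $n \ge 0$. Consequently $\OB \otimes_{\ApK} \FpK \isom B$, and an $\FpK$-basis of $B$ (obtained from item (b)) can be rescaled to lie entirely in $\OB$. I would then argue that, after such rescaling, these elements form an $\ApK$-basis of $\OB$, using $t$-adic separatedness together with the fact that in each finite-level quotient $\OB/t^n \OB$ the rescaled vectors remain $\ApK/t^n$-linearly independent of the expected rank. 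Making this last step precise is the principal difficulty; it requires a careful comparison of $\OB/t^n \OB$ with a free module over $\ApK/t^n$ of the expected rank, and I expect it to follow from the properties of $B$ recorded in Claim \ref{claim:Bexists} together with its explicit construction in Section $6$.
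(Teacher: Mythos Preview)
Your proof of (b) is correct and coincides with the paper's: $\FpK$ is a finite product of fields, hence semisimple, so every module is projective.

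For (a), your overall strategy---decompose $\ApK$ into local factors, take an $\FpK$-basis of $B$, rescale it into $\OB$, and show the result is an $\ApK$-basis---is precisely what the paper does. However, your proposed rescaling ``by powers of $t$'' and the subsequent $t$-adic argument do not suffice, and the paper supplies a sharper normalisation that you are missing.

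The point is that after decomposing along the factors of $\FpK$, the coefficients of your Laurent series lie in $K'\otimes_K\Ksep$, which is itself a product $(\Ksep)^{\times r}$ of fields rather than a single field. Rescaling a basis element $b_{ij}$ by a power of $t$ can arrange that it lies in $\OB_i$, i.e.\ has no negative terms, but its constant term need only be \emph{nonzero}, not \emph{invertible}, in $K'\otimes_K\Ksep$. In that situation there is no reason your rescaled family should span $\OB_i$ over $R_i$, and your proposed comparison of $\OB/t^n\OB$ with a free module of the expected rank cannot be carried out. The paper's remedy is to use the idempotents $e_1,\ldots,e_r$ of $K'\otimes_K\Ksep\isom(\Ksep)^{\times r}$ and rescale each $b_{ij}$ by an element of the form $\prod_k e_k t^{n(i,j,k)}$, adjusting the $t$-order \emph{separately in each component}. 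This forces the constant term to be invertible in $K'\otimes_K\Ksep$, after which one checks directly that the resulting family is an $R_i$-basis of $\OB_i$.
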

\begin{proof}
By Lemma \ref{lem:onbffkmodules}, $\FpK=Q_1\times\cdots\times Q_s$ is a finite
product of fields. Setting $B_i:=Q_i\otimes_{\FpK}B$, wo obtain
a decomposition $B=B_1\times\cdots\times B_s$. Since the $Q_i$ are fields,
the $B_i$ are free $Q_i$-modules, so $B$ is a projective $\FpK$-module.

To show that this implies that $\OB$ is a projective $\ApK$-module, we
need some notation. Choose a local parameter $t\in F$ at $\frkp$.
We have $\FpK\subset\FKp$, and the latter ring splits as $\FKp=Q_1'\times\cdots\times Q_s'$
with $Q_i'\isom K'((t))$ for a finite field extension $K'\supset K$ by
Lemma \ref{lem:onbfkpmodules}. We may thus identify the fields $Q_i$ with subfields
of $Q_i'=K'\llkurv t\rrkurv$, note that $Q_i$ \emph{contains} $t$.

Under this identification, setting $R_i:=Q_i\cap K_r\llkurv t\rrkurv$,
we have $\ApK=R_1\times\cdots\times R_s$.

The ring $B$ is a subring of
\[\FKsepp\isom(\kp\otimes_k\Ksep)\llkurv t\rrkurv=(\kp\otimes_kK\otimes_K\Ksep)\llkurv t\rrkurv=(K'\otimes_K\Ksep)\llkurv t\rrkurv^{\times s},\]
with $B_i$ contained in the $i$-th copy of $(K_r\otimes_K\Ksep)\llkurv t\rrkurv$.
The ring $\OB$ splits as $\OB_1\times\cdots\times\OB_s$, where $\OB_i:=\OB\cap B_i$ is the ring consisting
of those elements of $B_i$ which, viewed as elements of the $i$-th copy of $(K_r\otimes_K\Ksep)\llkurv t\rrkurv$ in $\FKsepp$,
are power series, that is, lie in $(K_r\otimes_K\Ksep)\lleck t\rreck$.

Let us show that $\OB_i$ is a free $R_i$-module, which implies that $B'$ is a projective $\ApK$-module.
For this, we choose a $Q_i$-basis $\{b_{ij}\}_{j\in J_i}$ of $B_i$. Under the identifications given above,
each $b_{ij}$ corresponds to a Laurent series $\sum b_{ijn} t^n$ in $(K_r\otimes_K\Ksep)\llkurv t\rrkurv$.
Now $K_r\otimes_K\Ksep\isom(\Ksep)^{\times r}$ for some $r\ge 1$, whereby $1\otimes 1$
corresponds to an element $(e_1,\ldots,e_\rho)$. By multiplying $b_{ij}$ with a suitable element
of the form $(e_1t^{n(i,j,1)},\ldots,e_\rho t^{n(i,j,r)})$, we may assume that $b_{ijn}=0$ for $n<0$
and that $b_{ij0}$ is invertible in $K_r\otimes_K\Ksep$. And then under this assumption, one may check that $\{b_{ij}\}$
is indeed an $R_i$-basis of $\OB_i$.
\end{proof}

\begin{lem}\label{lem:aninjmap}
\begin{enumerate}
  \item The natural homomorphism $\AKp\otimes_{\ApK}B'\To\AKsepp$ is injective.
  \item The natural homomorphism $\FKp\otimes_{\FpK} B\To\FKsepp$ is injective.
\end{enumerate}
\end{lem}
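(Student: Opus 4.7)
The two parts are formally equivalent. Setting $t$ to be a uniformizer of $\Op$, we have $\FKp=\AKp[t^{-1}]$, $\FpK=\ApK[t^{-1}]$, $B=\OB[t^{-1}]$ (the last because $1/t\in\Fp\subset\FpK\subset B$, so $1/t\cdot\OB\subset B$, while conversely any element of $B\subset\FKsepp$ equals $t^{-n}\cdot c$ for some $c\in B\cap\AKsepp=\OB$), and $\FKsepp=\AKsepp[t^{-1}]$. Since $\AKp\otimes_{\ApK}\OB$ is $t$-torsion free (by projectivity of $\OB$ from Lemma~\ref{lem:OBisproj}, as a direct summand of some $\AKp^n$), localization at $t$ both preserves and reflects the injectivity in question, so (a) and (b) are equivalent. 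I focus on (a).

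The plan is to reduce to a componentwise statement via the decompositions in Lemmas \ref{lem:onbffkmodules}(a), \ref{lem:onbfkpmodules}(a), and the proof of Lemma~\ref{lem:OBisproj}: $\ApK=\prod_j R_j$, $\AKp=\prod_j R_j'$ with $R_j\subset R_j'=K'\lleck t\rreck$; $\OB=\prod_j\OB_j$ with each $\OB_j$ free over $R_j$; and $\AKsepp=\prod_j T_j$ with $T_j=(K'\otimes_K\Ksep)\lleck t\rreck$. The map decomposes into component maps $\phi_j\colon R_j'\otimes_{R_j}\OB_j\to T_j$. Fixing $j$ and dropping subscripts, pick an $R$-basis $\{b_\alpha\}$ of $\OB$, so that $R'\otimes_R\OB\cong\bigoplus R'\cdot b_\alpha$; injectivity of $\phi_j$ amounts to $R'$-linear independence of $\{b_\alpha\}$ inside $T$.

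Suppose for contradiction that $\sum c_\alpha b_\alpha=0$ in $T$ with $c_\alpha\in R'$ not all zero. Dividing out the largest common power of $t$, I arrange $\min_\alpha v_t(c_\alpha)=0$, and reducing modulo $t$ yields a non-trivial relation $\sum\bar c_\alpha\bar b_\alpha=0$ in $T/tT=K'\otimes_K\Ksep$ with $\bar c_\alpha\in K'=R'/tR'$. The key structural observation is that $\OB$ is $t$-saturated in $\AKsepp$: if $b\in\OB$ equals $tc$ for some $c\in\AKsepp$, then $c=(1/t)b\in B\cap\AKsepp=\OB$, using that $1/t\in\FpK\subset B$. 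Hence the natural map $\OB/t\OB\hookrightarrow T/tT$ is injective, so the reductions $\{\bar b_\alpha\}$ remain $(R/tR)$-linearly independent in $T/tT$. Finally, $R/tR=K'$: the subring $\Op\otimes_{\Fq}K$ of $\ApK$ reduces modulo $t$ onto $\bbF_\frkp\otimes_{\Fq}K$, whose $j$-th component is precisely $K'$, while the reverse inclusion $R/tR\subset R'/tR'=K'$ is tautological. Thus $\{\bar b_\alpha\}$ are $K'$-linearly independent in $T/tT$, contradicting the reduced relation. The main obstacle is the $t$-saturation step, which crucially uses that $B$ contains $\FpK$ (in particular $1/t$); this is exactly what guarantees the clean passage from a tensor-product relation in characteristic zero to a residue-field contradiction.
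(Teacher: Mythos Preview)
Your proof is correct and takes a genuinely different route from the paper's. The paper argues abstractly via $\frkp$-adic completion: for a projective $R$-module $M$ the map $\widehat{R}\otimes_R M\to\widehat{M}$ is injective, and completion preserves injections; applied with $R=\ApK$, $M=\OB$, and the inclusion $\OB\hookrightarrow\AKsepp$, this yields $\AKp\otimes_{\ApK}\OB\hookrightarrow\widehat{\OB}\hookrightarrow\AKsepp$ in one stroke. Your argument instead unpacks the product decompositions from Lemma~\ref{lem:OBisproj}, chooses an $R_j$-basis of $\OB_j$, and reduces a putative $R_j'$-relation modulo $t$ to contradict $K'$-linear independence, the key input being the $t$-saturation of $\OB$ in $\AKsepp$ (which you correctly derive from $1/t\in\FpK\subset B$). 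The paper's approach is shorter and hides the structure behind general completion lemmas; yours is more elementary and makes the role of $t$-saturation explicit, which is arguably more transparent.

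One small point deserves tightening. You write that ``the reverse inclusion $R/tR\subset R'/tR'=K'$ is tautological'', but what you actually need is that the natural map $R_j/tR_j\to R_j'/tR_j'=K'$ is \emph{injective}, i.e.\ that $R_j$ is $t$-saturated in $R_j'$. This is not quite tautological: it follows because $R_j=Q_j\cap R_j'$ and $1/t\in\Fp\subset Q_j$, so if $r\in R_j$ with $r=tc$ for some $c\in R_j'$, then $c=r/t\in Q_j\cap R_j'=R_j$. With this observation in hand your identification $R_j/tR_j\cong K'$ is complete, and the rest of the argument goes through.
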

\begin{proof}
(a): We will use the following facts from commutative algebra: Given an ideal $I\subset R$
of a commutative ring $R$ such that $\bigcap I^n=0$, let $\widehat{R}:=\varprojlim_nR/I^n$
denote the $I$-adic completion of $R$. If $M$ is a projective $R$-module,
then the natural homomorphism
$\widehat{R}\otimes_RM\to\widehat{M}:=\varprojlim_n M/(I^n\cdot M)$ is injective.
If $M\to N$ is an injective homomorphism of $R$-modules, then the induced
homomorphism $\widehat{M}\to\widehat{N}$ is injective.

The first of these facts is checked easily for free $R$-modules,
and this implies the statement for projective $R$-modules by the additivity
of source and target. The second fact is a consequence of the left exactness
of $\varprojlim$.

By Lemma \ref{lem:OBisproj}, we may apply this to $R=\ApK$, $I=\frkp$, $M=B'$
and $N=\FKsepp$, and obtain the desired injectivity of
\[\AKp\otimes_{\ApK}B'\to\widehat{B'}\to\widehat{\FKsepp}=\FKsepp.\]

(b): This item follows from item (a)
by inverting any local parameter $t\in F$ at $\frkp$.
\end{proof}

\begin{prop}\label{cor:aninjmap}
\begin{enumerate}
  \item For every integral $\frkp$-adic representation $\bT$, the following natural map is injective:
           \[\AKp\otimes_{\ApK}\OQp(\bT)\To D_{\frkp}'(\bT)\]
  \item For every rational $\frkp$-adic representation $\bV$, the following natural map is injective:
           \[\FKp\otimes_{\FpK}\TFp(\bV)\To D_{\frkp}(\bV)\]
\end{enumerate}
\end{prop}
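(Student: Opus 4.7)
The plan is to reduce the proposition directly to Lemma~\ref{lem:aninjmap} by factoring the natural map through the known-injective comparisons $\AKp \otimes_{\ApK} \OB \hookrightarrow \AKsepp$ and $\FKp \otimes_{\FpK} B \hookrightarrow \FKsepp$, and to handle the remaining ``inclusion of invariants'' step by an appropriate flatness statement.

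For part~(a), I would consider the composition
\[\AKp \otimes_{\ApK} \OCp(\bT) \longrightarrow \AKp \otimes_{\ApK} (\OB \otimes_{\Ap} T) \isom (\AKp \otimes_{\ApK} \OB) \otimes_{\Ap} T \longrightarrow \AKsepp \otimes_{\Ap} T,\]
whose image lands in $D_{\frkp}'(\bT)$ since $\OCp(\bT)$ is $\GalK$-invariant by definition and $\AKp$ sits $\GalK$-fixed inside $\AKsepp$. The second arrow is injective: Lemma~\ref{lem:aninjmap}(a) provides $\AKp \otimes_{\ApK} \OB \hookrightarrow \AKsepp$, and tensoring with the free $\Ap$-module $T$ preserves this. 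For the first arrow, the defining inclusion $\OCp(\bT) \hookrightarrow \OB \otimes_{\Ap} T$ remains injective after applying $\AKp \otimes_{\ApK}(-)$ provided $\AKp$ is flat over $\ApK$. I would verify this flatness by noting that $\ApK$ is a Noetherian semilocal Dedekind ring (a semilocalization of $\AK$), that $\frkp\ApK$ lies in its Jacobson radical, and that $\AKp$ is precisely the corresponding adic completion of $\ApK$; such completions are flat over Noetherian rings by standard commutative algebra.

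For part~(b), the argument is formally identical, using Lemma~\ref{lem:aninjmap}(b) and the inclusion $\Cp(\bV) \hookrightarrow B \otimes_{\Fp} V$. Here the flatness input is even cheaper: by Lemma~\ref{lem:onbffkmodules} applied to $(F_{\frkp}, K)$, the ring $\FpK$ is a finite product of fields, so all $\FpK$-modules---in particular $\FKp$---are flat. Alternatively, part~(b) could be derived from part~(a) by picking a $\GalK$-stable $\Ap$-lattice $T \subset V$ and then inverting a uniformizer $t \in F$ at $\frkp$.

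There is no real obstacle here---the proof is essentially a diagram chase once Lemma~\ref{lem:aninjmap} is available. The only point demanding a moment's care is the flatness of $\AKp$ over $\ApK$ in part~(a), which rests on the standard (but nontrivial) fact that adic completions of Noetherian rings are flat.
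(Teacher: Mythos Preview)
Your proposal is correct and follows essentially the same route as the paper: factor through Lemma~\ref{lem:aninjmap} and note that the image lands in the $\GalK$-invariants. The paper's proof is terser---it simply writes $\AKp\otimes_{\ApK}(B'\otimes_{\Ap}T)^{\GalK}=(\AKp\otimes_{\ApK}B'\otimes_{\Ap}T)^{\GalK}$ as an equality and then applies Lemma~\ref{lem:aninjmap}(a)---whereas you are more explicit in isolating the flatness of $\AKp$ over $\ApK$ needed to justify that the first map is injective (and this flatness is indeed implicitly used already in the paper's proof of Lemma~\ref{lem:aninjmap}).
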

\begin{proof}
(a): We calculate:
\begin{eqnarray*}
\AKp\otimes_{\ApK}\OQp(\bT) &=& \AKp\otimes_{\ApK}\left(B'\otimes_{\Ap}T\right)^{\GalK}\\
												  	&=& \left(\AKp\otimes_{\ApK}B'\otimes_{\Ap}T\right)^{\GalK}\\
											&\subset &\left(\AKsepp\otimes_{\Ap} T\right)^{\GalK}\qquad\text{by Lemma \ref{lem:aninjmap}(a)}\\
												      &=& D_{\frkp}'(\bT),
\end{eqnarray*}
(b): We may repeat the calculation of (a), using Lemma \ref{lem:aninjmap}(b).
\end{proof}

\begin{prop}\label{prop:Qpfunctor}
\begin{enumerate}
  \item The functor $\OQp$ has values in restricted $\bApK$-modules.
  \item The functor $\Qp$ has values in $\frkp$-restricted $\bFpK$-modules.
  \item For every rational $\frkp$-adic Galois representation $\bV=(V,\rho)$ we have $\rk_{\FpK}\Qp(\bV)\le\dim_{\Fp}V$.
\end{enumerate}
\end{prop}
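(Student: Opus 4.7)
The plan is to prove (a), (b), (c) in sequence, exploiting that $\OQp$ and $\Qp$ are exact tensor functors (Proposition \ref{prop:Qptens}), the comparison injections of Proposition \ref{cor:aninjmap}, and the structural results for $D_{\frkp}',\,D_{\frkp}$ from Section~4. For (a), the proof of Proposition \ref{prop:Qptens} in fact provides a natural isomorphism $\OQp(\bT\otimes\bT')\isom\OQp(\bT)\otimes_{\bApK}\OQp(\bT')$, so $\OQp$ is strong monoidal; it also preserves the unit, since $\OQp(\bone)=\bOB^{\GalK}=\bB^{\GalK}\cap\bAKp=\bFpK\cap\bAKp=\bApK$ by Claim \ref{claim:Bexists}(b). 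Because $\RepApGalK$ is a rigid tensor category, such a functor carries any object to a dualizable one: $\OQp(\bT)$ is dualizable in $\bApK$-$\Mod$, with dual $\OQp(\bT^\vee)$. But dualizability in the bold-module category is equivalent to being restricted---the forgetful functor to $\ApK$-$\Mod$ is strong monoidal and so preserves dualizability, forcing the underlying $\ApK$-module to be finitely generated projective, while the bold dual $\tau_{M^\vee}(f)=\sigma\circ f\circ\taulin^{-1}$ exists as a $\sigma$-linear map only when $\taulin$ is bijective. Hence $\OQp(\bT)$ is restricted.

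For (b), pick a $\GalK$-stable $\Ap$-lattice $\bT\subset\bV$ (available by continuity of $\rho$ and compactness of $\GalK$), and let $t\in A$ be a local parameter at $\frkp$. Then $\bB=\bOB[1/t]$ inside $\bFKsepp=\bAKsepp[1/t]$, and $\bFpK=\bApK[1/t]$. Since $t$ lies in the $\GalK$-invariant subring $\Ap$ and acts as a non-zero-divisor on $\bOB\otimes_{\Ap}T$ (as $\bAKsepp=(\bbF_\frkp\otimes\Ksep)\lleck t\rreck$ is $t$-adically separated), taking $\GalK$-invariants commutes with inverting $t$, giving
\[
\Qp(\bV)\;=\;(\bB\otimes_{\Fp}V)^{\GalK}\;=\;(\bB\otimes_{\Ap}T)^{\GalK}\;=\;\OQp(\bT)[1/t]\;=\;\bFpK\otimes_{\bApK}\OQp(\bT),
\]
and this identification is $\tau$-equivariant by construction. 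Since $\OQp(\bT)$ is restricted by (a), $\Qp(\bV)$ is $\frkp$-restricted.

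For (c), Proposition \ref{cor:aninjmap}(b) yields an injection $\FKp\otimes_{\FpK}\Qp(\bV)\hookrightarrow D_{\frkp}(\bV)$; by Proposition \ref{prop:galoisrepsclassified}(a), $D_{\frkp}(\bV)$ is free of rank $\dim_{\Fp}V$ over $\FKp$, and since $\FKp$ is a product of fields (Lemma \ref{lem:onbfkpmodules}(a)) every submodule has $\FKp$-rank at most $\dim_{\Fp}V$. By (b) together with Lemma \ref{lem:onbffkmodules}, $\Qp(\bV)$ is finitely generated projective over $\FpK$; moreover the inclusion $\FpK\hookrightarrow\FKp$ is a faithfully flat extension of products of fields under which each component of $\FpK$ embeds into some component of $\FKp$ (otherwise the corresponding idempotent would map to $0$), so rank is preserved under base change, yielding $\rk_{\FpK}\Qp(\bV)=\rk_{\FKp}\bigl(\FKp\otimes_{\FpK}\Qp(\bV)\bigr)\le\dim_{\Fp}V$.

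I expect the main technical delicacy to be the localization-versus-invariants interchange appearing in (b); I will invoke the standard fact that if $M$ is a $\GalK$-module and $t\in M^{\GalK}$ acts as a non-zero-divisor, then $M^{\GalK}[1/t]=M[1/t]^{\GalK}$, applied to $M=\bOB\otimes_{\Ap}T$. Once this is in place, all three items follow from purely formal tensor-categorical and rank considerations.
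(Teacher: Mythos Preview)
Your argument is correct, but part (a) takes a genuinely different route from the paper's. The paper argues by embedding: using Proposition~\ref{cor:aninjmap}(a), it realises $\AKp\otimes_{\ApK}\OQp(\bT)$ inside the free module $D_{\frkp}'(\bT)$, whence finite generation, projectivity, constant rank, and injectivity of $\taulin$ are read off directly; surjectivity of $\taulin$ is then obtained by a determinant-and-saturation argument in rank one. Your approach instead exploits the dualizability already implicit in the proof of Proposition~\ref{prop:Qptens} (namely $\OQp(\bT^\vee)$ is a dual of $\OQp(\bT)$), together with the purely categorical lemma that dualizable objects of $\bR$-$\Mod$ are restricted. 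This is shorter and more conceptual, and sidesteps the saturation step entirely; the paper's method, on the other hand, produces the rank bound of (c) essentially for free as a byproduct of the embedding, whereas you must invoke Proposition~\ref{cor:aninjmap}(b) separately.

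One point deserves more care than you give it. Your justification that ``$\tau_{M^\vee}(f)=\sigma\circ f\circ\taulin^{-1}$ exists as a $\sigma$-linear map only when $\taulin$ is bijective'' presupposes that the dual carries \emph{this} $\tau$, rather than arguing that no $\sigma$-linear structure on $M^\vee$ can make it a dual unless $\taulin$ is invertible. The clean statement is: if $(M,\tau_M)$ has dual $(M^\vee,\tau')$ with the standard evaluation and coevaluation, then the compatibility of $ev$ forces $(\taulin^M)^\vee\circ\taulin'$ to equal the canonical isomorphism $\sigma_*(M^\vee)\cong(\sigma_*M)^\vee$, so $\taulin^M$ is injective; and the compatibility of $coev$, combined with the triangle identity $m=\sum\langle m,n_i\rangle m_i$, rewrites any $m\in M$ as $\sum\langle m,\tau'(n_i)\rangle\,\tau_M(m_i)=\taulin^M\bigl(\sum\langle m,\tau'(n_i)\rangle\otimes m_i\bigr)$, so $\taulin^M$ is surjective. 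Both halves are needed, and your sketch only gestures at the first. Parts (b) and (c) match the paper's reasoning, with your added detail on why inverting $t$ commutes with taking $\GalK$-invariants being a welcome clarification of a step the paper leaves implicit.
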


\begin{proof}
For every rational representation $\bV$ there exists an integral representation
$\bT=(T,\rho)$ such that $\bV=\Fp\otimes_{\Ap}\bT$, and then $\Qp(\bV)=\bFpK\otimes_{\bApK}\OQp(\bT)$.
Therefore, it suffices to show that $\OQp(\bT)$ is a restricted $\bApK$-module
of rank bounded above by $\rk_{\Ap}T$.

By Proposition \ref{cor:aninjmap}(a), $\AKp\otimes_{\ApK}\OQp(\bT)$ is a submodule
of $D_{\frkp}'(\bT)$, which is a free $\AKp$-module of rank $\rk_{\Ap}T$. Therefore, $\OQp(\bT)$ is a finitely generated
projective $\ApK$-module.
Since $D_{\frkp}'(\bT)$ has a bijective $\taulin$,
its submodule $\AKp\otimes_{\ApK}\OQp(\bT)$ has an injective $\taulin$, and therefore
$\OQp(\bT)$ has an injective $\taulin$ as well. Since $\sigma_{\ApK}$ permutes the factors
of $\ApK=\FpK\cap\ApK\isom (Q_1\times\cdots Q_s)\cap\ApK=(Q_1\cap\ApK)\times\cdots\times(Q_s\cap\ApK)$
the injectivity of $\taulin$ implies that
$\OQp(\bT)$ is free of \emph{constant} rank $r:=\rk_{\ApK}\OQp(\bT)\le\rk_{\AKp}\bT$,
as in the proof of Lemma \ref{lem:onbffkmodules}(b).

It remains to show that the $\taulin$ of $\OQp(\bT)$ is bijective.
Clearly, this is the case if and only if the $\taulin$ of the determinant
of $\OQp(\bT)$ is bijective. By Proposition \ref{prop:Qptens}(a),
$\OQp$ is a tensor functor, so we obtain an inclusion
\[\AKp\otimes_{\ApK}\OQp\left(\bigwedge_{\Ap}^r\bT\right)\subset D_{\frkp}'\left(\bigwedge_{\Ap}^r\bT\right),\]
where the right hand side is a restricted $\bAKp$-module of rank $\ge 1$.
Tracing through the definitions, we see that the left hand side is saturated in the
right hand side, i.e., the quotient is a projective $\AKp$-module.
An application of the Snake Lemma shows that this implies
that $\AKp\otimes_{\ApK}\OQp(\Lambda^r\bT)$ has bijective $\taulin$.
Now the equality $\ApK^\times=\AKp^\times\cap\ApK$ implies that $\OQp(\bT)$ itself has
bijective $\taulin$.
\end{proof}

\begin{prop}\label{prop:inequality}
Let $\bV=(V,\rho)$ be a rational $\frkp$-adic Galois representation.
\begin{enumerate}
  \item $\bV$ is quasigeometric if and only if $\rk_{\FpK}\TFp(\bV)=\rk_{\Fp}V$.
  \item $\Vp(\Qp(\bV))$ is the largest quasigeometric subrepresentation of $\bV$.
  \item If $\bV$ is quasigeometric, then so is every subquotient of $\bV$.
\end{enumerate}
\end{prop}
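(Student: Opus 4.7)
The plan is to exploit the adjunction-like relationship between $\Vp$ and $\Cp$ that has been set up earlier in the section, in particular Lemma \ref{lem:corofffff} giving $\Cp(\Vp(\bM))\cong\bM$ for $\frkp$-restricted $\bFpK$-modules $\bM$. The key preliminary step is to construct a natural $\GalK$-equivariant $\Fp$-linear counit map $\phi_{\bV}\colon \Vp(\Cp(\bV))\to \bV$ for every rational $\frkp$-adic representation $\bV$. Starting from the inclusion $\Cp(\bV)\subset \bB\otimes_{\Fp}V$ of Claim \ref{claim:Bexists}(c) and applying the multiplication map $\FKsepp\otimes_{\FpK}\bB\to \FKsepp$, I obtain a $\tau$- and $\GalK$-equivariant map $\FKsepp\otimes_{\FpK}\Cp(\bV)\to\FKsepp\otimes_{\Fp}V$. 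Taking $\tau$-invariants, using $(\FKsepp)^{\sigma}=\Fp$ on the right, yields $\phi_{\bV}$. Injectivity of $\phi_{\bV}$ follows by combining the flatness of $\FpK$-modules (since $\FpK$ is a product of fields, Lemma \ref{lem:onbffkmodules}(a)) with the injectivity statement of Lemma \ref{lem:aninjmap}(b).

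For item (a), the forward direction is immediate: if $\bV\cong\Vp(\bM)$ with $\bM$ $\frkp$-restricted, then $\Cp(\bV)\cong\bM$ by Lemma \ref{lem:corofffff}, and the rank of $\bM$ equals $\dim_{\Fp}V$ by Proposition \ref{prop:galoisrepsclassified}(a) (rank-preservation of $\Vp$). For the converse, if the rank equality holds, then $\Vp(\Cp(\bV))$ and $\bV$ are $\Fp$-vector spaces of the same finite dimension (again by rank-preservation of $\Vp$ applied to the $\frkp$-restricted $\bFpK$-module $\Cp(\bV)$ furnished by Proposition \ref{prop:Qpfunctor}(b)), so the injective $\phi_{\bV}$ is forced to be an isomorphism, exhibiting $\bV$ as quasigeometric.

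Item (b) now falls out: $\Vp(\Cp(\bV))$ is quasigeometric by Proposition \ref{prop:Qpfunctor}(b), and $\phi_{\bV}$ embeds it as a subrepresentation of $\bV$. For any quasigeometric subrepresentation $\bV'\hookrightarrow\bV$, left-exactness of $\Cp$ gives $\Cp(\bV')\hookrightarrow\Cp(\bV)$, and naturality of $\phi$ produces a commutative square whose top row $\phi_{\bV'}\colon \Vp(\Cp(\bV'))\to\bV'$ is an isomorphism by part (a). Chasing the diagram identifies $\bV'$ with a subrepresentation of $\Vp(\Cp(\bV))$ inside $\bV$, proving maximality.

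For item (c), I would combine part (a) with the exactness of $\Cp$ (Proposition \ref{prop:Qptens}), the upper bound $\rk_{\FpK}\Cp(-)\le\dim_{\Fp}(-)$ (Proposition \ref{prop:Qpfunctor}(c)), and rank additivity in short exact sequences of $\frkp$-restricted $\bFpK$-modules (valid because $\FpK$ is a product of fields over which these modules are free of constant rank). For an exact sequence $0\to\bV'\to\bV\to\bV''\to 0$ with $\bV$ quasigeometric, the chain
\[\dim_{\Fp}V = \rk_{\FpK}\Cp(\bV) = \rk_{\FpK}\Cp(\bV') + \rk_{\FpK}\Cp(\bV'') \le \dim_{\Fp}V' + \dim_{\Fp}V'' = \dim_{\Fp}V\]
forces equality throughout, so $\bV'$ and $\bV''$ are both quasigeometric by part (a). The main obstacle I anticipate is the construction and injectivity of $\phi_{\bV}$, which is the one genuinely analytic ingredient; once it is in hand, the remaining arguments are formal manipulations with exact functors and ranks.
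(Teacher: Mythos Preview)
Your proof is correct and follows essentially the same strategy as the paper: both realize $\Vp(\Cp(\bV))$ as a subrepresentation of $\bV$ via the injection of Proposition~\ref{cor:aninjmap}(b) (which you effectively reconstruct from Lemma~\ref{lem:aninjmap}(b) plus flatness over $\FpK$), and then combine exactness of $\Cp$ (Proposition~\ref{prop:Qptens}) with the rank bound (Proposition~\ref{prop:Qpfunctor}(c)) to handle all three parts. One small point worth tightening: the map $\FKsepp\otimes_{\FpK}\Cp(\bV)\to\FKsepp\otimes_{\Fp}V$ you construct is \emph{not} itself injective, since the multiplication $\FKsepp\otimes_{\FpK}B\to\FKsepp$ factors through the non-injective $\FKsepp\otimes_{\FKp}\FKsepp\to\FKsepp$; the injectivity of $\phi_{\bV}$ is therefore cleaner if you first pass to $\FKp$, use Proposition~\ref{cor:aninjmap}(b) to get $\FKp\otimes_{\FpK}\Cp(\bV)\hookrightarrow D_\frkp(\bV)$, and then apply the exact equivalence $R_\frkp$---which is exactly what the paper does. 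Also, the inclusion $\Cp(\bV)\subset B\otimes_{\Fp}V$ is just the definition of $\Cp$, not Claim~\ref{claim:Bexists}(c).
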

\begin{proof}
(a): Assume that $\bV\isom\Vp(\bM)$ is quasigeometric.
By Lemma \ref{lem:corofffff}, $\Qp(\Vp(\bM))\isom\bM$.
Therefore, using the fact that $\Vp$ preserves ranks,
we have\[\rk\Qp(\bV)=\rk\TFp\left(\Vp(\bM)\right)=\rk(\bM)=\rk\Vp(\bM)=\rk\bV,\]as claimed.

Assume that we have an equality of ranks. By Proposition \ref{cor:aninjmap}(b),
the natural homomorphism $\bFKp\otimes_{\bFpK}\TFp(\bV)\To D_{\frkp}(\bV)$ is injective.
Since $D_{\frkp}$ preserves ranks, both sides are free of
equal finite rank over the semisimple commutative ring $\FKp$. So the homomorphism is an isomorphism!
We set $\bM:=\TFp(\bV)$, a $\frkp$-restricted $\bFpK$-module by Proposition \ref{prop:Qpfunctor}.
Then the following isomorphisms shows that $\bV$ is quasigeometric:
\[\bV\isom R_{\frkp}(D_{\frkp}(\bV))\isom R_{\frkp}(\bFKp\otimes_{\bFpK}\Qp(\bV))=\Vp(\Qp(\bV))=\Vp(\bM).\]

(b): $\Vp(\Qp\bV)$ is quasigeometric by Proposition \ref{prop:Qpfunctor}(b). Proposition
\ref{cor:aninjmap}(b) and the exactness of $\Vp$ imply that $\Vp(\Qp\bV)$ is a subrepresentation
of $\bV$. Let us show that it contains every other quasigeometric subrepresentation $\Vp(\bM')\isom \bV'\subset \bV$.
By restricting the isomorphism $c_{\bM'}$ of Lemma \ref{lem:descendequivisom} to $\GalK$-invariants,
we have $\bM'=\Qp(\Vp\bM')$.  So using the left-exactness of $\Qp$, we see
that
\[\bM'=\Qp(\Vp\bM')=\Qp\bV'\subset\Qp\bV.\]
In turn, since $\Vp$ is exact, this shows that $\bV'=\Vp(\bM')\subset\Vp(\Qp\bV)$, as claimed.

(c): Let $0\to\bV'\to\bV\to\bV''\to 0$ be an exact sequence of representations, and assume that $\bV$
is quasigeometric. Consider the induced sequence
\begin{equation}\label{eqn:inducse}
0\To\Qp\bV'\To\Qp\bV\To\Qp\bV'\To 0
\end{equation}

It is exact by Proposition \ref{prop:Qptens}. Applying the exact functor $\Vp$,
we obtain an exact sequence
\[0\To\Vp\Qp\bV'\To\bV\To\Vp\Qp\bV''\To 0,\]
where $\bV=\Vp\Qp\bV$ by item (b).
Now\[\rk\bV=\rk\Vp\Qp\bV'+\rk\Vp\Qp\bV''\le\rk\bV'+\rk\bV''=\rk\bV\]implies
that $\rk\Vp\Qp\bV'=\rk\bV'$ and $\rk\Vp\Qp\bV'=\rk\bV'$, so $\bV'=\Vp\Qp\bV'$
and  $\bV''=\Vp\Qp\bV''$ are both quasigeometric by (a).
\end{proof}

We collect our results in a categorical reformulation.

\begin{thm}\label{thm:tfmainthm}
\begin{enumerate}
  \item $\Vp:\:\FpKModres\to\RepFpGalK$ is an exact $\Fp$-linear tensor functor
    which is fully faithful and semisimple on objects.
  \item The pair $(\Vp,\Qp)$ is an adjoint pair of functors, that is, for every $\frkp$-restricted $\bFpK$-module $\bM$
  and rational $\frkp$-adic Galois representation $\bV$ there exists a natural isomorphism of $\Fp$-vector spaces
     \[\Hom\big(\Vp(\bM),\bV\big)\To\Hom\big(\bM,\Qp(\bV)\big)\]
  \item The unit $\id\Rightarrow\Qp\circ\Vp$ of this adjunction is an isomorphism (so $\Qp$ is a ``coreflection''
     of the ``inclusion'' $\Vp$).
  \item The counit $\Vp\circ\Qp\Rightarrow\id$ of this adjunction is a monomorphism.
\end{enumerate}
\end{thm}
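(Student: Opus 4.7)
The proof will simply harvest the results already established in this section. For \emph{(a)}, fully faithfulness is Theorem~\ref{thm:Vpff}, and exactness together with the tensor property follow from the factorisation $\Vp=R_\frkp\circ(\bFKp\otimes_{\bFpK}(-))$: here $R_\frkp$ is a tensor equivalence by Proposition~\ref{prop:galoisrepsclassified}, and bold scalar extension preserves exact sequences of $\frkp$-restricted modules since the latter are projective (cf.\ Lemma~\ref{lem:onbfkpmodules}). The semisimplicity on objects reduces to showing $\Vp$ preserves simplicity. If $\bM$ is a simple $\frkp$-restricted $\bFpK$-module and $\bV\subset\Vp(\bM)$ is a nonzero subrepresentation, then Proposition~\ref{prop:inequality}(c) says $\bV$ is quasigeometric, so $\bV\isom\Vp(\bM')$ for some $\frkp$-restricted $\bM'$; full faithfulness then lifts this inclusion to a map $\bM'\hookrightarrow\bM$ (injective by exactness and faithfulness), and simplicity of $\bM$ forces $\bV=0$ or $\bV=\Vp(\bM)$.

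For \emph{(b)}, I will build the adjunction bijection explicitly via the comparison isomorphism $c_{\bM}$ of Lemma~\ref{lem:descendequivisom}. Given $f\colon\bM\to\Qp(\bV)$, form the $\bB$-linear $\tau$- and $\GalK$-equivariant composite
\[
\bB\otimes_{\bFpK}\bM\xrightarrow{\id\otimes f}\bB\otimes_{\bFpK}\Qp(\bV)\To\bB\otimes_{\Fp}V,
\]
identify the source with $\bB\otimes_{\Fp}\Vp(\bM)$ via $c_{\bM}$, and take $\tau$-invariants. Claim~\ref{claim:Bexists}(a) gives $\bB^\tau=\Fp$, so the invariants trivialise the $\bB$-factor on both sides, yielding a $\GalK$-equivariant $\Fp$-linear map $\Vp(\bM)\to\bV$. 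Conversely, starting from $g\colon\Vp(\bM)\to\bV$, extend $\bB$-linearly, transport the source via $c_{\bM}^{-1}$ to $\bB\otimes_{\bFpK}\bM$, and take $\GalK$-invariants; using $\bB^\GalK=\FpK$ (Claim~\ref{claim:Bexists}(b)) and the projectivity of $\bM$ over $\bFpK$, the invariants on the source reduce to $\FpK\otimes_{\bFpK}\bM=\bM$, while on the target they give $\Qp(\bV)$ by definition, yielding a $\bFpK$-linear, $\tau$-equivariant map $\bM\to\Qp(\bV)$. A routine diagram chase will show the two assignments are mutually inverse and natural in both variables.

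Parts \emph{(c)} and \emph{(d)} are then immediate from material already in hand. For \emph{(c)}, the unit $\bM\to\Qp(\Vp\bM)$ is exactly the isomorphism furnished by Lemma~\ref{lem:corofffff}. For \emph{(d)}, Proposition~\ref{prop:inequality}(b) exhibits $\Vp(\Qp\bV)$ as the largest quasigeometric subrepresentation of $\bV$, and in particular as a subrepresentation of $\bV$, which is precisely the assertion that the counit is a monomorphism. The main technical nuisance I anticipate lies in \emph{(b)}: verifying that the two assignments invert each other requires tracing the compatibility of $c_{\bM}$ with the evaluation $\bB\otimes_{\bFpK}\Qp(\bV)\to\bB\otimes_{\Fp}V$ across the simultaneous operations of taking $\tau$- and $\GalK$-invariants, and keeping track of the two different $\tau$-actions (trivial on the Galois representation, nontrivial on the module side) throughout the comparison.
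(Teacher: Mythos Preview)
Your proof is correct, and for parts (a), (c), (d) it follows essentially the same route as the paper, citing the same ingredients (Theorem~\ref{thm:Vpff}, Proposition~\ref{prop:inequality}, Lemma~\ref{lem:corofffff}). Your treatment of semisimplicity in~(a) simply spells out what the paper leaves as a one-line remark.

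For part (b), however, you take a genuinely different route. The paper avoids your explicit period-ring construction entirely: it observes that full faithfulness of $\Vp$ gives $\Hom(\bM,\Qp\bV)\isom\Hom(\Vp\bM,\Vp\Qp\bV)$, and then notes that every homomorphism $\Vp\bM\to\bV$ has quasigeometric image by Proposition~\ref{prop:inequality}(c), hence factors through the largest quasigeometric subrepresentation $\Vp\Qp\bV\subset\bV$ by Proposition~\ref{prop:inequality}(b), so that $\Hom(\Vp\bM,\Vp\Qp\bV)=\Hom(\Vp\bM,\bV)$. This is a two-line argument with no diagram chase and no direct manipulation of $c_{\bM}$ or $\bB$. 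Your approach is more constructive---it actually writes down the unit and counit via the comparison isomorphism---which makes the adjunction concrete, but at the cost of the bookkeeping you yourself flag (tracking the two $\tau$-actions and checking mutual inverseness). The paper's argument buys brevity by leveraging the structural results already in hand; yours buys an explicit formula for the adjunction maps.
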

\begin{proof}
(a): $\Vp=\Rp\circ\big(\bFKp\otimes_{\bFpK}(-)\big)$ is an exact $\Fp$-linear tensor functor
as a composition of such.
It is fully faithful by Theorem \ref{thm:Vpff}. Proposition \ref{prop:inequality}(c) implies that $\Vp$
maps simple objects to simple objects, so it is semisimple on objects.

(b): Let us construct the inverse of the adjunction isomorphism for a given $\bM$ and $\bV$.
Since $\Vp$ is fully faithful, we have a natural isomorphism
\[\Vp:\:\Hom(\bM,\Qp\bV)\To\Hom(\Vp\bM,\Vp\Qp\bV)\]
One the other hand, every homorphism $\Vp\bM\to\bV$ has a quasigeometric image by
Proposition \ref{prop:inequality}(c), which must lie in $\Vp\Qp\bV$ by Proposition \ref{prop:inequality}(b).
Therefore, $\Hom(\Vp\bM,\Vp\Qp\bV)=\Hom(\Vp\bM,\bV)$, and we are done.

(c,d): Both items follow from Proposition \ref{prop:inequality}.
\end{proof}

\begin{proof}[Proof of Theorem \ref{thm:mainthm}]
By Proposition \ref{prop:translatetatemodulefunctor}, the functor
$\Vp$ on $A$-isomotives coincides with $\Rp\circ\big(\FKp\otimes_{\FK}(-)\big)\circ I$.
By Proposition \ref{prop:embedaisomotk}, $I$ is semisimple on objects
and has $\frkp$-restricted values.
The functor $\big(\FKp\otimes_{\FK}(-)\big)$ is a composition
of the functors $\big(\FpK\otimes_{\FK}(-)\big)$ and $\big(\FKp\otimes_{\FpK}(-)\big)$.
The former is semisimple on restricted $\bFK$-modules and maps
$\frkp$-restricted modules to $\frkp$-restricted $\bFpK$-modules
by Theorem \ref{thm:mainthmfirsthalf}(b,c), whereas the latter is
semisimple on $\frkp$-restricted $\bFpK$-modules by Theorem \ref{thm:tfmainthm}(a).
The functor $\Rp$ is semisimple on $\frkp$-restricted $\bFKp$-modules
since it is an equivalence of categories. Therefore,
$\Vp$ is semisimple on objects, being a composition of such functors.
\end{proof}

We end this section with a proof of the Tate conjecture for
$A$-motives.

\begin{prop}\label{prop:tateconj}
Let $K$ be a field which is finitely generated over a finite field.
Let $\frkp\neq\ker\iota$ be a maximal ideal of $A$.
\begin{enumerate}
  \item Let $M,N$ be $A$-motives of characteristic $\iota$. The natural
  homomorphism $\Ap\otimes_A\Hom(M,N)\to\Hom(\Tp M,\Tp N)$ is an isomorphism.
  \item Let $X,Y$ be $A$-isomotives of characteristic $\iota$. The natural
  homomorphism $\Fp\otimes_F\Hom(X,Y)\to\Hom(\Vp M,\Vp N)$ is an isomorphism.
\end{enumerate}
\end{prop}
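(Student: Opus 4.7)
My strategy is to prove (b) by concatenating the full-faithfulness results for each factor of $\Vp$ obtained in the preceding sections, and then to derive (a) from (b) by a lattice-theoretic descent from the rational to the integral level.

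First I tackle (b). By Proposition~\ref{prop:translatetatemodulefunctor}, the functor $\Vp\colon \AIsomotK \to \RepFpGalK$ factors as
\[
\AIsomotK \xrightarrow{I} \FKModpet \xrightarrow{\bFpK\otimes_{\bFK}(-)} \FpKModet \xrightarrow{\bFKp\otimes_{\bFpK}(-)} \FKpModet \xrightarrow{\Rp} \RepFpGalK.
\]
The first arrow is fully faithful by Proposition~\ref{prop:embedaisomotk}; the second is $\Fp/F$-fully faithful on $\frkp$-restricted $\bFK$-modules by Theorem~\ref{thm:mainthmfirsthalf}(a); and the composition of the third with the fourth is fully faithful on $\frkp$-restricted $\bFpK$-modules, since $\Vp = \Rp \circ (\bFKp \otimes_{\bFpK}(-))$ restricted to such modules is fully faithful by Theorem~\ref{thm:tfmainthm}(a), while $\Rp$ is an equivalence by Proposition~\ref{prop:galoisrepsclassified}. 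Concatenating these identifications yields the bijection asserted in~(b).

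For~(a), I set $H := \Hom_{\AMotK}(M,N)$ and $T := \Hom_{\GalK}(\Tp M, \Tp N)$. The natural map $\Ap \otimes_A H \to T$ is injective by the $\Ap/A$-faithfulness of $\Tp$ proven in Section~2. Moreover $T$ is saturated in the $\Ap$-free module $\Hom_{\Ap}(\Tp M, \Tp N)$: if $\pi \in \Ap$ is a uniformizer and $\pi f$ is $\GalK$-equivariant, then $\pi(\gamma f - f) = 0$ for every $\gamma \in \GalK$, and $\Ap$-torsion-freeness forces $f$ itself to be $\GalK$-equivariant. Hence $T$ is finitely generated and $\Ap$-free. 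Tensoring the injection $\Ap \otimes_A H \hookrightarrow T$ with $\Fp$ produces the isomorphism established in~(b), so $\Ap \otimes_A H$ and $T$ are $\Ap$-lattices of the same rank in $\Hom_{\GalK}(\Vp M, \Vp N)$, with the former contained in the latter.

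The main obstacle is upgrading this inclusion to an equality. I plan to handle this by passage to torsion: for each $n \ge 1$, I would apply Proposition~\ref{prop:torsiongaloisrepsclassified} to the torsion $\bAK$-module $(M^\vee \otimes N)/\frkp^n$, whose linearised $\tau$ is bijective since $\frkp \neq \ker\iota$, to identify it with the finite Galois representation $\Tp(M^\vee \otimes N)/\frkp^n$; combined with the full faithfulness of $I$ from Proposition~\ref{prop:embedaisomotk}, this realises $(A/\frkp^n) \otimes_A H$ as a saturated $A/\frkp^n$-submodule of $T/\frkp^n T$. Passing to the inverse limit over $n$, and using that $\Ap \otimes_A H$ and $T$ are both $\frkp$-adically complete of the same $\Ap$-rank, then forces equality and completes the proof. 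The key subtlety will be tracing the identification between the $\tau$-invariant submodule of $M^\vee \otimes N$ modulo $\frkp^n$ and the image of $H$ in these invariants through the successive equivalences of Sections~3 and~4.
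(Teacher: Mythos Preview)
Your proof of (b) is correct and matches the paper's argument exactly: factor $\Vp$ through Proposition~\ref{prop:translatetatemodulefunctor} and concatenate Proposition~\ref{prop:embedaisomotk}, Theorem~\ref{thm:mainthmfirsthalf}(a,c), and Theorem~\ref{thm:tfmainthm}(a).

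For (a), the paper's own argument is a single sentence: it asserts that the image of $\Ap\otimes_A\Hom(M,N)$ in $\Hom_{\GalK}(\Tp M,\Tp N)$ is \emph{saturated} (citing this as ``well-known''), and then (b) immediately yields equality. Your writeup proves a different saturation statement --- that $T=\Hom_{\GalK}(\Tp M,\Tp N)$ is saturated inside $\Hom_{\Ap}(\Tp M,\Tp N)$ --- which is true but does not bear on the question; what you need is that $\Ap\otimes_A H$ is saturated inside $T$.

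Your proposed torsion argument is aimed at exactly this missing point, and the idea is sound, but two issues would need to be resolved before it goes through. First, the expression $(M^\vee\otimes N)/\frkp^n$ is not literally meaningful: $M^\vee\otimes N$ is an $A$-motive, hence a pair of effective objects, not a single $\bAK$-module. You should instead work with an integral model over $\bAlpK$, where $\taulin$ is invertible and internal Homs exist, and then reduce that modulo $\frkp^n$. Second, and more seriously, describing $(A/\frkp^n)\otimes_A H$ as a ``saturated $A/\frkp^n$-submodule'' of $T/\frkp^n T$ is not the right condition (for $n=1$ it is vacuous), and the equivalences of Proposition~\ref{prop:torsiongaloisrepsclassified} identify $((M^\vee\otimes N)/\frkp^n)^\tau$ with Galois-invariants of the corresponding finite representation --- not with $H/\frkp^n H$ and $T/\frkp^n T$, which differ from these by cohomological obstruction terms. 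What actually makes the saturation work is the more elementary observation that $\Ap\otimes_A H=(\Ap\otimes_A P)^\tau$ for a suitable $\bAlpK$-module $P$ (using flatness of $\Ap$ over $A$), that $\Ap\otimes_A P$ is saturated in $\AKp\otimes_{\AK}P$ because the quotient is $\Ap$-torsion-free, and that $(\AKp\otimes P)^\tau\cong T$ via Proposition~\ref{prop:intgaloisrepsclassified}. This is presumably the ``well-known'' argument the paper has in mind, and it avoids the inverse-limit bookkeeping entirely.
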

\begin{proof}
(b): By Proposition \ref{prop:translatetatemodulefunctor} we have
$\Vp=\Rp\circ\big(\FKp\otimes_{\FK}(-)\big)\circ I$. As in the proof
of Theorem \ref{thm:mainthm}, this implies reduces the
proof that $\Vp$ is $\Fp/F$-fully faithful to Proposition \ref{prop:embedaisomotk},
Theorem \ref{thm:mainthmfirsthalf}(a,c) and Theorem \ref{thm:tfmainthm}(a).

(a): The image of the given homomorphism
$\Ap\otimes_A\Hom(M,N)\to\Hom(\Tp M,\Tp N)$ is saturated -- this is well-known.
Therefore, item (b) implies item (a).
\end{proof}

\section{Constructing a Ring of Periods}

We turn to the laborious task of constructing a ring
$B$ which fulfills Claim \ref{claim:Bexists}.

Recall that we assume that $K$ is a \emph{finitely generated} field extension
of a finite field $\Fq$ with $q$ elements. We identify $K$ with the function field $\Fq(X)$
of a proper normal variety $X$ over $\Fq$. For every finite Galois extension
$\Ksep\supset L\supset K$, let $X_L$ be the normalisation of $X$ in $L$,
this is a proper normal variety over $L$.

Let $\Sigma_L$ be the set of prime (Weil) divisors of $X_L$. For every
Galois tower \[\Ksep\supset L'\supset L\supset K\]we have a
projection map $\pr_{L,L'}:\:\Sigma_{L'}\To\Sigma_L$, so we may
let
\[\Ssep:=\varprojlim_{L\supset K}\Sigma_L\]
be the projective limit along the projections $\pr_{L',L}$. Given
a Galois extension $L\supset K$, an element $x_L\in\Sigma_L$ and an 
element $x\in\Ssep$, we say that \emph{$x$ lies over $x_L$} if
$x_L$ is the $L$-th component of $x$.

For each
$x=(x_L)_L\in\Ssep$, there is a unique associated valuation
\[v_x:\:\Ksep\To\bbQ\cup\{\infty\}\]
extending the normalised valuation $v_{x_K}$ of $K$ associated to $x_K$.
Explicitly, for $f\in\Ksep$ we may choose a finite Galois extension
$K\subset L\subset\Ksep$ containing $f$, and set $v_x(f):=v_{x_L}(f)/e_{x_L}$,
where $v_{x_L}$ denotes the normalised valuation of $L$ associated to
$x_L$, and $e_{x_L}$ is the index of $v_{x_L}(K^*)$ in $v_{x_L}(L^*)$.

Let $F$ be a global field with field of constants $\Fq$, and fix
a place $\frkp$ of degree $d:=\deg\frkp$ of $F$ with residue field $\kp$.
We wish to extend $v_x$ to a function
on $\FKsepp$. For calculational reasons, we choose a local parameter $t\in F$ at $\frkp$
and obtain identifications $\OFKsepp=(\kp\otimes_k\Ksep)\lleck t\rreck$
and $\FKsepp=(\kp\otimes_k\Ksep)\llkurv t\rrkurv=\OFKsepp[t^{-1}]$. Recall that by
Lemma \ref{lem:onbfkpmodules} the homomorphism
\begin{equation}\label{eqn:kpotiKsep}
(\kp\otimes_k\Ksep,\id\otimes\sigma)\To\left((\Ksep)^{\times d},\sigma'\right)
\end{equation}
mapping $x\otimes y$ to $(x\cdot\sigma_q^i(y))_{i=0}^{d-1}$ is an isomorphism
of bold rings, with
\[\sigma'(z_0,\ldots,z_{d-1})=(z_{d-1}^q,z_0^q,\ldots,z_{d-2}^q)\]
for $(z_0,\ldots,z_{d-1})\in(\Ksep)^{\times d}$.
We will denote the action of $\sigma'$ on $(\Ksep)^{\times d}$ simply by $\sigma$.
Writing an element $f\in\FKsepp$ as $f=\sum_{i\gg -\infty}f_it^i$ with
$f_i=(f_{ij})_j\in(\Ksep)^{\times d}$, we set
\[v_x(f):=\inf_i\min_jv_x(f_{ij}).\]
Moreover, for all $m,n\ge 1$ and $\Delta=(\delta_{ij})\in\Mat_{m\times n}(\FKsepp)$
we set
\[v_x(\Delta):=\inf_{i,j}v_x(\delta_{ij}).\]

\begin{prop}\label{prop:vx}
For each $x\in\Ssep$ and all $m,n\ge 1$, the function
\[v_x:\:\Mat_{m\times n}(\FKsepp)\To\bbR\cup\{\pm\infty\}\]
is well-defined and independent of the choices made.
For $m=n=1$ and all $f,g\in\FKsepp$ it has the following properties:
\begin{enumerate}
  \item $v_x(f+g)\ge\min\{v_x(f),v_x(g)\}$.
  \item $v_x(fg)\ge v_x(f)+v_x(g)$ (using the convention $-\infty+\infty=-\infty$).
  \item $v_x(\sigma(f))=q\cdot v_x(f)$.
\end{enumerate}
\end{prop}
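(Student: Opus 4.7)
The plan is to reduce every claim to the corresponding statement for $v_x$ on $\Ksep$, exploiting the concrete description $\FKsepp \cong (\Ksep)^{\times d}\llkurv t\rrkurv$ and the fact that $v_x$ is a true valuation on $\Ksep$ whose values lie in a torsion-free group, hence trivial on any algebraic extension of $\Fq$ in $\Ksep$; in particular $v_x$ vanishes on $\kp \subset \Ksep$. The matrix statement reduces tautologically to the case $m = n = 1$, since $v_x(\Delta) = \inf_{i,j} v_x(\delta_{ij})$ by definition, so I will work with scalars.

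The main obstacle is well-definedness: $v_x(f)$ must not depend on the choices of local parameter $t \in F$ at $\frkp$ or of the isomorphism (\ref{eqn:kpotiKsep}). For the latter, any two embeddings $\kp \hookrightarrow \Ksep$ differ by a power of $\sigma_q$ and hence merely cyclically permute the $d$ factors of $(\Ksep)^{\times d}$; the $\min_j$ in the definition is invariant under such permutations. For the former, if $t' = u t$ with $u \in \mathcal{O}_{F_\frkp}^\times$, then expanding $u = \sum_{n \ge 0} u_n t^n$ with $u_n \in \kp$ and $u_0 \in \kp^\times$, each $u_n$ has $v_x$-value zero under (\ref{eqn:kpotiKsep}) as a diagonal element of $(\Ksep)^{\times d}$. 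Writing $f = \sum_i g_i (t')^i$ and expanding to $f = \sum_k f_k t^k$ with $f_k = \sum_{i \le k} g_i \cdot (u^i)_{k-i}$, the ultrametric inequality together with $v_x((u^i)_n) = 0$ gives $v_x(f_k) \ge \min_{i \le k} v_x(g_i)$, so $\inf_k v_x(f_k) \ge \inf_i v_x(g_i)$. By symmetry in $(t, t')$ one has equality, establishing independence.

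Properties (a) and (b) then follow by reduction to $v_x$ on $\Ksep$. Addition in both Laurent series and $(\Ksep)^{\times d}$ is coefficient-wise, so the ultrametric inequality on $\Ksep$ yields (a) after two applications of $\min$. For (b), the Cauchy product $(fg)_k = \sum_{i+j=k} f_i g_j$ combined with componentwise multiplication in $(\Ksep)^{\times d}$ gives $v_x((fg)_k) \ge \min_{i+j=k} \min_l (v_x(f_{il}) + v_x(g_{jl})) \ge v_x(f) + v_x(g)$; only $\ge$ is claimed because zero divisors in $(\Ksep)^{\times d}$ prevent equality, and the convention $-\infty + \infty = -\infty$ handles vanishing components. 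Finally, for (c), since $\sigma$ fixes $t \in F$ we have $\sigma(f) = \sum \sigma'(f_i) t^i$; by the formula for $\sigma'$, $v_x(\sigma'(f_i)_j) = v_x(f_{i, j-1 \bmod d}^q) = q \cdot v_x(f_{i, j-1 \bmod d})$, and since cyclic permutation preserves $\min_j$, taking the infimum over $i$ yields $v_x(\sigma(f)) = q \cdot v_x(f)$.
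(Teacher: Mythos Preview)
Your proof is correct and follows essentially the same route as the paper's own (very brief) argument: independence of the local parameter because $v_x(\kp^\times)=0$, properties (a) and (b) from the ultrametric inequality on $\Ksep$ together with semicontinuity of infima, and (c) from the explicit formula for $\sigma'$ in the isomorphism $(\kp\otimes_{\Fq}\Ksep)\cong(\Ksep)^{\times d}$. You supply considerably more detail than the paper, and you additionally address independence of the chosen embedding $\kp\hookrightarrow\Ksep$, which the paper passes over in silence; one small quibble is that an element $u_n\in\kp$ need not map to a ``diagonal'' tuple under that isomorphism, but each component is still algebraic over $\Fq$ and hence has $v_x$-value zero, so your conclusion stands.
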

\begin{proof}
Since $v_x(\kp^\times)=0$, the choice of local parameter does not influence the
definition of $v_x$. Now (a,b) follow from short calculations
using the semicontinuity of infima, whereas (c) follows from 
(\ref{eqn:kpotiKsep}).
\end{proof}

\begin{rem}
Note that, in general, we do not have $v_x(fg)=v_x(f)+v_x(g)$.
\end{rem}

\begin{prop}\label{cor:fixpt}
For all integers $m,n\ge 1$, matrices $\Delta\in\Mat_{n\times n}(\FKsepp)$
and column vectors $F\in{\FKsepp}^{\oplus n}$ the equation $\sigma^m(F)=\Delta\cdot F$ implies
the inequality \[v_x(F)\ge\frac{1}{q^m-1}v_x(\Delta).\]
\end{prop}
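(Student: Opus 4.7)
The plan is to deduce this as a short consequence of properties (b) and (c) of Proposition \ref{prop:vx}, applied entry-wise to matrices and vectors. First I would extend those two properties to the matrix setting: since by definition $v_x$ of a matrix is the infimum of the $v_x$ of its entries, and since $\sigma$ operates entrywise on $\Mat_{m\times n}(\FKsepp)$, property (c) iterated gives $v_x(\sigma^m(F)) = q^m \cdot v_x(F)$. For the product, every entry of $\Delta \cdot F$ is a finite sum of products of entries of $\Delta$ and $F$, so properties (a) and (b) applied entry-wise yield $v_x(\Delta \cdot F) \geq v_x(\Delta) + v_x(F)$.

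Substituting the hypothesis $\sigma^m(F) = \Delta \cdot F$ then produces the chain
\[
q^m \, v_x(F) \;=\; v_x(\sigma^m(F)) \;=\; v_x(\Delta \cdot F) \;\geq\; v_x(\Delta) + v_x(F).
\]
Rearranging gives $(q^m - 1)\, v_x(F) \geq v_x(\Delta)$, and since $q^m - 1$ is a positive integer one may divide through to obtain the claimed lower bound.

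The only place where care is needed is the presence of $\pm\infty$ in the range of $v_x$. If $v_x(\Delta) = -\infty$ the conclusion is vacuous, and if $\Delta = 0$ (so $v_x(\Delta) = +\infty$) then the equation forces $F = 0$ by injectivity of $\sigma$ on $\FKsepp$ and the inequality again holds trivially. In all other cases $v_x(\Delta)$ and $v_x(F)$ may be treated as honest (extended) real numbers and the algebra above is legitimate verbatim, so no further argument is required. I do not anticipate any real obstacle here; the substance of the corollary is entirely contained in the formalism of Proposition \ref{prop:vx}.
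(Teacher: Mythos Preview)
There is a genuine gap in the final paragraph. You correctly dispose of the cases $v_x(\Delta)=-\infty$ and $\Delta=0$, but you have not dealt with the case where $v_x(\Delta)$ is finite while $v_x(F)=-\infty$. This case is \emph{not} vacuous: an element of $\FKsepp$ is a Laurent series $\sum_i f_i t^i$ with each $f_i\in\kp\otimes_k\Ksep$, and although each individual $v_x(f_i)$ is finite, the infimum over all $i$ can perfectly well be $-\infty$ (for instance $\sum_{i\ge0}\pi^{-i}t^i$ with $\pi$ a uniformiser at $x_K$). In that situation your chain
\[
q^m\,v_x(F)\;=\;v_x(\Delta\cdot F)\;\ge\;v_x(\Delta)+v_x(F)
\]
reads $-\infty\ge v_x(\Delta)+(-\infty)=-\infty$, which is true but tells you nothing: you cannot subtract $v_x(F)$ from both sides to reach $(q^m-1)\,v_x(F)\ge v_x(\Delta)$, because subtracting $-\infty$ is illegitimate. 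So the ``rearrangement'' step collapses precisely in the case that carries the content of the proposition, namely that the fixed-point equation \emph{forces} $v_x(F)>-\infty$.

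The paper anticipates exactly this difficulty (it even writes ``if $v_x(F)=-\infty$, there is a problem'') and circumvents it by abandoning the global quantity $v_x(F)$ altogether. Instead it expands $F$ and $\Delta$ as power series in $t$ (after a harmless shift so that all negative coefficients vanish), obtaining for each degree $r$ an equation $\sigma^m(f_{ir})=\sum_{j,l}\delta_{ijl}f_{j,r-l}$ among elements of $\kp\otimes_k\Ksep$, where every $v_x$ is automatically finite. An induction on $r$ then gives $v_x(f_{ir})\ge v_x(\Delta)/(q^m-1)$ for every $i$ and $r$ separately, and taking the infimum yields the claim. To repair your argument you would need either to supply this coefficient-wise induction or to give some independent reason why $v_x(F)\neq-\infty$ under the hypothesis; the formalism of Proposition~\ref{prop:vx} alone does not provide one.
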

\begin{proof}
If $v_x(\Delta)=-\infty$, the inequality stated is tautological,
so we assume that $C:=v_x(\Delta)\neq-\infty$. By a matrix-version of Proposition \ref{prop:vx},
the equation $\sigma^m(F)=\Delta F$ would imply that $q^m\cdot v_x(F)\ge C+v_x(F)$.
If also $v_x(F)\neq\pm\infty$, this would imply the claim of this Proposition.
However, if $v_x(F)=-\infty$, there is a problem. The following proof deals with all cases
at once!

Write $F=(f_i)$ and $\Delta=(\delta_{ij})$ with
$f_i,\delta_{ij}\in\FKsepp$. Furthermore, write $f_i=\sum_rf_{ir}t^r$ and
$\delta_{ij}=\sum_sh_{ijs}t^s$ for $f_{ir},\delta_{ijs}\in\kp\otimes_k\Ksep$. By
multiplying the entire equation by a suitable power of $t$, we
may assume that these coefficients are zero for $r,s<0$. By assumption
we have $v_x(\delta_{ijs})\ge C$, and by definition we have $v_x(f_{ir})\neq-\infty$.

The equation $\sigma^m(F)=\Delta\cdot F$ means $\sigma^m(f_i)=\sum_{j=1}^n\delta_{ij}f_j$
for all $i$, and gives
\[\sum_{r\ge 0}\sigma^m(f_{ir})t^r=\sum_{j=1}^n\sum_{a\ge 0}\sum_{b\ge 0}\delta_{ija}f_{jb}t^{a+b}
=\sum_{r\ge 0}\left(\sum_{j=1}^n\sum_{l=0}^r\delta_{ijl}f_{j,r-l}\right)t^r.\]
From this we see that
\begin{equation}\label{eqn:anineq}
\sigma^m(f_{ir})=\sum_{j=1}^n\sum_{l=0}^r\delta_{ijl}f_{j,r-l}
\end{equation}
and must prove that $v_x(f_{ir})\ge C/(q^m-1)$. We perform induction on $r$.

If $r=0$, then for all $i$ we have $\sigma^m(f_{i0})=\sum_{j=1}^n\delta_{ij0}f_{j0}$ which
gives $q^m\cdot v_x(f_{i0})\ge\min_{j=1}^n\big(C+v_x(f_{j0})\big)$. Choosing $j$ such
that the minimum is attained we get $q^mv_x(f_{j0})\ge C+v_x(f_{j0})$ and
hence $v_x(f_{j0})\ge C/(q^m-1)$. So by the choice of $j$, for all $i$ we may deduce that
$v_x(f_{i0})\ge v_x(f_{j0})\ge C/(q^m-1)$.

For $r>0$, Equation (\ref{eqn:anineq}) gives $q^mv_x(f_{ir})\ge\inf_{j\le n,l\le r}\big(C+v_x(f_{jl})\big)$,
hence by the induction hypothesis for all $r'<r$ 
\[q^mv_x(f_{ir})\ge\min\left(\frac{q^m}{q^m-1}C,\min_{j=1}^n(C+v_x(f_{jr}))\right).\]
If $q^mC/(q^m-1)$ is smaller, we obtain $v_x(f_{ir})\ge C/(q^d-1)$ for all $i$
as in the case $r=0$. Else, choosing $j$ such that the inner minimum is attained,
we get first $v_x(f_{jr})\ge C/(q^m-1)$ and then $v_x(f_{ir})\ge C/(q^m-1)$
for all $i$, as in the case $r=0$.
\end{proof}

We now turn to the definition of our ring of periods.

\begin{dfn}\label{dfn:TamagawasBplusS}
Following \cite{Tam95}, we set
\begin{enumerate}
  \item $B^+:=\left\{f\in\FKsepp:\:\begin{array}{ll}
             v_x(f)\neq-\infty & \text{for all $x\in\Ssep$}\\
             v_x(f)\ge 0 & \text{for almost all $x\in\Ssep$}
           \end{array}\right\}$,\\``almost all'' meaning that the set of
           exceptions has finite image in $\Sigma_K$.
  \item $S:=\{s\in\OFKsepp^\times:\:\frac{\sigma(s)}{s}\in\Fp\otimes_kK\}$.
\end{enumerate}
\end{dfn}

\begin{lem}\label{lem:Bplusanditsginv}
$B^+$ is a $\GalK$-stable ring.
\end{lem}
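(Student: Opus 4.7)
The plan is to verify the two assertions separately. First I will show that $B^+$ is a subring of $\FKsepp$ containing $0$ and $1$; second, that it is stable under the action of $\GalK$.

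For the subring statement, closure under addition and multiplication is immediate from Proposition \ref{prop:vx}. For $f,g\in B^+$ and any $x\in\Ssep$, part (a) gives $v_x(f+g)\ge\min(v_x(f),v_x(g))$, from which both defining conditions of $B^+$ transfer to $f+g$: the condition ``$v_x\neq-\infty$ for all $x$'' because the minimum of two values $\neq-\infty$ is $\neq-\infty$; and ``$v_x\ge 0$ for almost all $x$'' because the union of the two exceptional sets still has finite image in $\Sigma_K$. The argument for $fg$ is parallel, using part (b); the convention $-\infty+\infty=-\infty$ is harmless since by assumption neither $v_x(f)$ nor $v_x(g)$ equals $-\infty$. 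The elements $0$ and $1$ lie in $B^+$ trivially.

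For Galois stability, I will unpack how $\gamma\in\GalK$ acts on $\FKsepp=(\kp\otimes_k\Ksep)\llkurv t\rrkurv$. The action is via its action on $\Ksep$, and under the isomorphism (\ref{eqn:kpotiKsep}) identifying $\kp\otimes_k\Ksep$ with $(\Ksep)^{\times d}$, the element $\gamma$ acts componentwise on the tuple entries, because $\gamma$ commutes with $\sigma_q$ on $\Ksep$. From the definition of $v_x$ as an infimum over the $\Ksep$-coefficients $f_{ij}$ of $f$, and the standard identity $v_x(\gamma h)=v_{\gamma^{-1}x}(h)$ for $h\in\Ksep$, I obtain $v_x(\gamma f)=v_{\gamma^{-1}x}(f)$ for all $f\in\FKsepp$.

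The two defining conditions of $B^+$ then transfer to $\gamma f$ because $\gamma$ permutes $\Ssep$ \emph{within fibres of the projection $\Ssep\to\Sigma_K$}: the latter is $\GalK$-equivariant with trivial action on the base, since $\gamma$ fixes $K$ pointwise. In particular, if $E\subset\Ssep$ is the exceptional set for the ``almost all'' condition for $f$, then $\gamma(E)$ has the same (finite) image in $\Sigma_K$ as $E$, so $\gamma f\in B^+$. No step poses a genuine obstacle; the only thing to be careful with is the bookkeeping of valuation conventions ($\infty$, $-\infty$) and the convention for the $\GalK$-action on valuations.
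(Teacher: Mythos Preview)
Your proof is correct and follows essentially the same approach as the paper: closure under addition and multiplication via Proposition~\ref{prop:vx}(a,b), and $\GalK$-stability from the definition. The only difference is that you spell out the identity $v_x(\gamma f)=v_{\gamma^{-1}x}(f)$ and the fibrewise permutation argument, whereas the paper simply asserts that $\GalK$-stability is immediate from the definition.
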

\begin{proof}
The fact that $B^+$ is $\GalK$-stable follows directly from its definition.
That $B^+$ is a ring (closed under finite sums and products)
follows from Proposition \ref{prop:vx}: Clearly, $B^+$ contains $1$.
For $f\in B^+$ let $\Sigma_f$ denote the finite subset of those elements of $\Sigma_K$ 
over which there lies an element $x\in\Ssep$ such that $v_x(f)<0$.

Given two elements $f,g\in B^+$, for all $x\in\Ssep$ by Proposition \ref{prop:vx}(a)
 we have $v_x(f+g)\ge\min(v_x(f),v_x(g))$, which
is not equal to $-\infty$, since this is such for both $v_x(f)$ and $v_x(g)$. For
all $x$ whose image in $\Sigma_K$ does not lie in its the finite subset $\Sigma_f\cup\Sigma_g$
we even have $v_x(f+g)\ge 0$. Therefore, $f+g$ is an element of $B^+$.

A similar
proof, using Proposition \ref{prop:vx}(b), shows that $f\cdot g$ is an element of $B^+$.
All in all, $B^+$ is a ring.
\end{proof}

\begin{lem}\label{lem:Bplusanditsginv2}
$(B^+)^{\GalK}=\Fp\otimes_kK$.
\end{lem}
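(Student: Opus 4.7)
The plan is to prove both inclusions separately. For the forward inclusion $\Fp\otimes_k K\subseteq (B^+)^{\GalK}$, I will directly verify that any product $\phi\cdot\gamma$ with $\phi=\sum_i\phi_it^i\in\Fp$ (having $\phi_i\in\kp$) and $\gamma\in K$ is $\GalK$-invariant and lies in $B^+$. Galois invariance is clear since the $\phi_i\in\kp\subset\overline{\Fq}$ sit in the first tensor factor of $\kp\otimes_k\Ksep$ and $\gamma$ lies in the Galois-fixed field $K$. Under the isomorphism $\kp\otimes_k\Ksep\cong(\Ksep)^{\times d}$, the coefficient $\phi_i\otimes\gamma$ maps to $(\phi_i\gamma^{q^j})_j$, so $v_x(\phi_i\gamma^{q^j})=q^j v_x(\gamma)$ (using that $\phi_i\in\overline{\Fq}$ is a valuation unit), which is finite everywhere and non-negative for almost all $x$ since $\gamma\in K$ has finitely many poles on the proper variety $X$. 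Because $B^+$ is a ring by Lemma \ref{lem:Bplusanditsginv}, the inclusion extends to all finite sums, i.e.\ to all of $\Fp\otimes_k K$.

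For the reverse inclusion, I will first observe that $(\FKsepp)^{\GalK}=(\kp\otimes_k K)\llkurv t\rrkurv=\FKp$, so any $f\in(B^+)^{\GalK}$ has a Laurent expansion $f=\sum_if_it^i$ with $f_i\in\kp\otimes_k K$. The key reduction is the elementary observation that any finite-dimensional $\Fq$-subspace $U\subseteq\kp\otimes_k K$ is automatically contained in $\kp\otimes_k W$ for some finite-dimensional $\Fq$-subspace $W\subseteq K$ (pick finitely many generators of $U$, expand each as a finite sum of elementary tensors $\alpha\otimes\beta$, and let $W$ be the $\Fq$-span of the $\beta$'s that occur). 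It therefore suffices to show that the family $\{f_i\}$ spans a finite-dimensional $\Fq$-subspace of $\kp\otimes_k K$; the desired membership $f\in\Fp\otimes_k W\subseteq\Fp\otimes_k K$ will then follow at once.

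The finite-dimensionality is where the hypothesis that $K$ is finitely generated over $\Fq$ enters decisively, through properness of a suitable variety. I set $L:=K\cdot\kp\subseteq\Ksep$, a finite Galois extension of $K$, and let $X_L$ denote the normalization of $X$ in $L$, a proper normal variety over $\Fq$ with function field $L$. Under $\kp\otimes_k\Ksep\cong(\Ksep)^{\times d}$, each $f_i$ corresponds to a tuple $(f_{i,j})_{j=0}^{d-1}$ with $f_{i,j}\in L$, and $v_x(f)=\inf_i\min_j v_x(f_{i,j})$. Using the compatibility $v_x=v_{x_L}/e_{x_L}$ on $L$, the $B^+$ conditions transfer to: $\inf_{i,j}v_{x_L}(f_{i,j})>-\infty$ for every $x_L\in\Sigma_L$, and $v_{x_L}(f_{i,j})\ge 0$ for all $i,j$ whenever $x_L$ lies outside a certain finite subset $E_L\subset\Sigma_L$ (namely, the one sitting above the finite exceptional image of $f$ in $\Sigma_K$). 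These bounds assemble into a single effective divisor $D$ on $X_L$, supported on $E_L$, with $f_{i,j}\in\Gamma(X_L,\mcO(D))$ for all $i,j$. Properness of $X_L$ then forces $\Gamma(X_L,\mcO(D))$ to be finite-dimensional over $\Fq$, so the $f_i$'s lie in a finite-dimensional $\Fq$-subspace of $(\Ksep)^{\times d}$, and \emph{a fortiori} of $\kp\otimes_k K$.

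The hardest part will be the careful bookkeeping around the comparison of the ``global'' valuations $v_x$ on $\Ksep$ with the valuations $v_{x_L}$ on $L$ (and the associated ramification indices, bounded by $[L:K]$), and around the three notions of ``almost all'' in $\Ssep$, $\Sigma_L$, and $\Sigma_K$. Once the divisor $D$ has been exhibited, the conclusion rests on the classical finite-dimensionality of $\Gamma(X_L,\mcO(D))$ for a coherent sheaf on a proper variety.
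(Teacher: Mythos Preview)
Your proposal is correct and follows essentially the same route as the paper's proof: both identify $(B^+)^{\GalK}$ with $B^+\cap\FKp$, characterise membership in $\Fp\otimes_kK$ by finite-dimensionality of the span of the Laurent coefficients, pass to the finite Galois extension $L=K\cdot\kp$ (the paper calls it $K_r$), and then invoke properness of $X_L$ to bound all coefficients by global sections of a line bundle $\mcO(D)$ on $X_L$. Your write-up is somewhat more explicit about the forward inclusion and about the elementary ``key reduction'' from a finite-dimensional subspace of $\kp\otimes_kK$ to one of the form $\kp\otimes_kW$, but the architecture is the same.
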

\begin{proof}
We note that $(B^+)^{\GalK}=B^+\cap\FKp$. So the desired equality $(B^+)^{\GalK}=\Fp\otimes_kK$
is an equality of subrings of $\FKp$.
By Lemma \ref{lem:onbfkpmodules}(a), we have $\FKp=(K')^e\llkurv t\rrkurv$
for a finite Galois extension $K'/K$ (it is Galois since $\kp\supset k$ is Galois
and $\kp\otimes_kK\isom (K')^e$). The inclusion $\FKp\subset\FKsepp$ corresponds to
a homomorphism $(K')^e\llkurv t\rrkurv\into(\Ksep)^d\llkurv t\rrkurv$ mapping
the $i$-the component of the source to $d/e$ components of the target, according to
 the $d/e$ different $K$-linear embeddings of $K'$ in $\Ksep$. It follows that the image
 of this homomorphism lies in $(K')^d\llkurv t\rrkurv$.

Given an element $f\in\FKp$, we may write it as a Laurent series $\sum_if_it^i$, with coefficients
$f_i=(f_{i1},\ldots,f_{id})\in K_r^d$. We let $V_f$ denote the $k$-vector subspace of $K_r$ generated
by the $f_{ij}$. Clearly, $\Fp\otimes_kK$ consists of those elements of $\FKp$ such that $\dim_kV_f$ is finite.

On the other hand, by definition $(B^+)^{\GalK}$ consists of those elements of $\FKp$ such that
$v_{x_r}(f)\neq-\infty$ for all $x_r\in\Sigma_{K_r}$ and $v_{x_r}(f)\ge 0$
for all but a finite number of $x_r\in\Sigma_{K_r}$.

Now, if $f\in\FKp$ is an element of $\Fp\otimes_kK$, then $\dim_kV_f$ is finite,
so the subset of $\Sigma_{K_r}$ consisting of the poles of the (coefficients of the) elements of $V_f$ is finite, so
$f$ is an element of $B^+$ by our above characterisation.

On the other hand, if $f\in\FKp$ is an element of $B^+$, then we may choose
a finite subset $\Sigma_0\subset\Sigma_{K_r}$ such that $v_{x_r}(f)\ge 0$ for all
$x_r\not\in\Sigma_0$. For $x_r\in\Sigma_0$, we set $n(x_r):=-v_{x_r}(f)$, which
is finite by assumption. Let $X_r$ denote the proper normal variety
over $k$ corresponding to $K_r$. Since $X_r$ proper, the space of global sections
of
\[\mathcal{O}_{X_r}\left(\sum_{x_r\in\Sigma_0} n(x_r)x_r\right)\]
is finite-dimensional. Since it contains $V_f$, this implies that $f\in\Fp\otimes_kK$
by our above characterisation.
\end{proof}

\begin{lem}\label{lem:bplus}
$S$ is a $\GalK$-stable multiplicative subset of $B^+$.
\end{lem}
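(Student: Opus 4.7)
The plan is to verify in sequence: (i) that $1 \in S$ and $S$ is closed under multiplication, (ii) that $S$ is stable under $\GalK$, and (iii) that $S \subset B^+$. The first two steps are essentially formal; the substance is in step (iii), and it relies crucially on Corollary \ref{cor:fixpt} and Lemma \ref{lem:Bplusanditsginv2}.

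For (i), closure under multiplication is immediate from $\sigma(st)/(st) = (\sigma(s)/s)(\sigma(t)/t)$ together with the fact that $\Fp\otimes_k K$ is a ring, and $1\in S$ trivially. For (ii), fix $g\in\GalK$ and $s\in S$. Since the $q$-th power map on $\Ksep$ commutes with the $K$-linear automorphism $g$, the endomorphism $\sigma$ commutes with $g$ on $\FKsepp$. Moreover, $g$ fixes $K$ pointwise and acts trivially on the $\Fp$-factor, hence it fixes $\Fp\otimes_k K$ pointwise. Combining these two facts gives $\sigma(g(s))/g(s) = g(\sigma(s)/s) = \sigma(s)/s \in \Fp\otimes_k K$, while $g(s)$ is plainly a unit of $\OFKsepp$. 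So $g(s)\in S$.

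For (iii), the key move is as follows. Given $s\in S$, set $\lambda := \sigma(s)/s \in \Fp\otimes_k K$, so that the scalar equation $\sigma(s) = \lambda\cdot s$ holds. Applying Corollary \ref{cor:fixpt} with $m=n=1$, $F=s$, $\Delta=\lambda$ yields
\[v_x(s) \;\ge\; \frac{v_x(\lambda)}{q-1} \qquad\text{for every }x\in\Ssep.\]
By Lemma \ref{lem:Bplusanditsginv2}, $\Fp\otimes_k K = (B^+)^{\GalK} \subset B^+$, so $\lambda$ itself lies in $B^+$. Unpacking the definition of $B^+$, this means $v_x(\lambda)\neq -\infty$ for every $x\in\Ssep$ and $v_x(\lambda)\ge 0$ for almost all such $x$ (in the sense that the set of exceptions has finite image in $\Sigma_K$). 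Dividing both estimates by $q-1$ transfers them verbatim to $v_x(s)$, which is the content of $s\in B^+$.

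The main obstacle — and really the only non-formal point — is the passage from the multiplicative functional equation $\sigma(s)=\lambda s$ to a valuative lower bound on $s$ in terms of $\lambda$. Corollary \ref{cor:fixpt} does exactly this, converting the $\sigma$-equivariance condition built into the definition of $S$ into control of the valuations $v_x(s)$. Once this input is in hand, the rest of the lemma is a matter of combining it with the identification of Galois-invariants in $B^+$ provided by Lemma \ref{lem:Bplusanditsginv2}.
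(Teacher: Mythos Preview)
Your proof is correct and follows essentially the same route as the paper's: the formal verification of stability under $\GalK$ and multiplication is dispatched quickly, and the inclusion $S\subset B^+$ is obtained by applying Proposition~\ref{cor:fixpt} (you call it a Corollary, but the label resolves to the same statement) with $m=n=1$ to the equation $\sigma(s)=\lambda s$, then feeding in the membership $\lambda\in B^+$ from Lemma~\ref{lem:Bplusanditsginv2}. The paper's own argument is identical in structure and content.
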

\begin{proof}
The fact that $S$ is a $\GalK$-stable multiplicative subset of $\FKsepp$ follows directly
from its definition.

Let us show that $S$ is contained in $B^+$.
For $s\in S$ choose $f\in\Fp\otimes_kK$ such that $\sigma(s)=f\cdot s$, such
an $f$ exists by definition of $S$. By
Lemma \ref{lem:Bplusanditsginv2} and Proposition \ref{cor:fixpt}, $v_x(s)\neq-\infty$ for all $x\in\Ssep$,
and there exists a finite subset $\Sigma_0$ of $\Sigma_K$
such that $v_x(f)\ge 0$ for all $x\in\Ssep$ not lying over $\Sigma_0$.

For all $x\in\Ssep$, Proposition \ref{cor:fixpt} shows that $v_x(s)\ge v_x(f)/(q-1)$.
So $s$ has the required properties that $v_x(s)\neq-\infty$ for all $x\in\Ssep$
and $v_x(s)\ge 0$ for all $x\in\Ssep$ not lying over $\Sigma_0$, since
this is the case for $f$.
\end{proof}

\begin{dfn}\label{dfn:TamagawasB}
Following \cite{Tam95}, we let $B\subset\FKsepp$ be the ring obtained by inverting $S\subset B^+$,
and set $\bB=(B,\sigma)$, where $\sigma$ is the given ring endomorphism of $\bFKsepp$.
\end{dfn}

\begin{lem}\label{lem:partofA}
$\bB$ is a bold ring with ring of scalars $\Fp$.
\end{lem}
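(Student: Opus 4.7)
The plan has two parts: first, verify that $\sigma$ restricts to an endomorphism of $B$, making $\bB=(B,\sigma)$ into a bold subring of $\bFKsepp$; second, compute the $\sigma$-invariants of $B$ and show they equal $\Fp$.

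For the first part, I want to show $\sigma(B^+)\subset B^+$ and $\sigma(S)\subset S$, from which $\sigma(B)\subset B$ follows since $B=B^+[S^{-1}]$. The first inclusion is immediate from Proposition \ref{prop:vx}(c): since $v_x(\sigma f)=q\cdot v_x(f)$, one has $v_x(\sigma f)\neq-\infty$ iff $v_x(f)\neq-\infty$, and $v_x(\sigma f)\ge 0$ iff $v_x(f)\ge 0$, so $\sigma$ preserves both defining conditions of $B^+$. For the second, if $s\in S$ then $s\in\OFKsepp^\times$ implies $\sigma(s)\in\OFKsepp^\times$ (since $\sigma$ is a ring endomorphism of $\OFKsepp$), and $\sigma(\sigma(s))/\sigma(s)=\sigma(\sigma(s)/s)$ lies in $\Fp\otimes_kK$ because $\sigma$ preserves $\Fp\otimes_kK\subset\OFKsepp$.

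For the second part, I reduce $B^\sigma=B\cap(\FKsepp)^\sigma$ to a computation of $(\FKsepp)^\sigma$. Using the chosen local parameter $t\in F$ at $\frkp$ we have $\FKsepp=(\kp\otimes_k\Ksep)\llkurv t\rrkurv$ with $\sigma$ acting trivially on $t$ and as $\id\otimes\sigma_q$ on $\kp\otimes_k\Ksep$. A Laurent series is $\sigma$-invariant iff each coefficient is; and $(\kp\otimes_k\Ksep)^{\id\otimes\sigma_q}=\kp\otimes_k(\Ksep)^{\sigma_q}=\kp\otimes_k\Fq=\kp$, so $(\FKsepp)^\sigma=\kp\llkurv t\rrkurv=\Fp$. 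Thus $B^\sigma\subset\Fp$.

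The reverse inclusion $\Fp\subset B^\sigma$ is the only genuine point. Since every element of $\Fp\subset\FKsepp$ is $\sigma$-invariant, it suffices to verify $\Fp\subset B$, and for this I use Lemma \ref{lem:Bplusanditsginv2}, which gives $\Fp\otimes_kK\subset(B^+)^{\GalK}\subset B^+\subset B$; restricting along the natural inclusion $\Fp\hookrightarrow\Fp\otimes_kK$ yields $\Fp\subset B$. This closes the argument, and I expect the only mildly delicate point to be the explicit identification $(\kp\otimes_k\Ksep)^\sigma=\kp$, which is straightforward once one uses that $k=\Fq$ and $\Ksep$ contains $\Fq$ as its field of absolute constants.
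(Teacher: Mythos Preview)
Your proof is correct and follows essentially the same approach as the paper's own proof: show $\sigma$-stability of $B$ via that of $B^+$ and $S$, then squeeze $B^\sigma$ between $\Fp\subset B$ and $B^\sigma\subset(\FKsepp)^\sigma=\Fp$. You have simply expanded the details the paper leaves implicit (in particular the explicit computation of $(\FKsepp)^\sigma$ and the appeal to Lemma~\ref{lem:Bplusanditsginv2} for $\Fp\subset B$). One harmless slip: you write $\Fp\otimes_kK\subset\OFKsepp$, but elements of $\Fp$ may involve negative powers of $t$, so the correct ambient ring is $\FKsepp$; this does not affect your argument, since all you need is that $\sigma$ preserves $\Fp\otimes_kK$.
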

\begin{proof}
$B$ is clearly $\sigma$-stable since $B^+$ and $S$ are. Furthermore,
since $\Fp\subset B$ and $B^\sigma\subset\FKsepp^\sigma=\Fp$, we
have $B^\sigma=\Fp$.
\end{proof}

We say that an element $f\in\FKsepp$ has \emph{order} $n\in\bbZ$
if, writing $f$ as $\sum f_it^i\in(\kp\otimes_k\Ksep)\llkurv t\rrkurv$ we have
$n=\inf\{i:\:f_i\neq 0\}$. We say that an element $f\in\FKsepp$ of order $n$
has \emph{invertible leading coefficient} if $f_n$ is invertible in $\kp\otimes_k\Ksep$.
If $f$ has order $0$, then we will denote by $f(0)$ the leading coefficient of $f$.
Note that the invertible elements of $\OFKsepp$ are precisely the elements of
$\FKsepp$ of order $0$ with invertible leading coefficient.

\begin{rem}\label{rem:invertfksepp}
Let us set $t_i:=e_i\cdot t\in\FKsepp$, where $e_i$
is the standard basis vector of the $i$-th copy of $\Ksep$ in the product $(\Ksep)^d$.
Clearly, an element $f\in\FKsepp$ is invertible if and only if
we can write
\[f=\left(\prod_{i=0}^{d-1}t_i^{n_i}\right)\cdot\widetilde{f},\]
for certain $n_i\in\bbZ$, where $\widetilde{f}$ is an element of $\OFKsepp^\times$.
\end{rem}

\begin{lem}\label{lem:arschrey}
Every element $f\in\OFKsepp^\times$ may be written as $f=\frac{\sigma(s)}{s}$
for some other element $s\in\OFKsepp^\times$.
\end{lem}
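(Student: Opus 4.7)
The plan is to construct $s = \sum_{n \geq 0} s_n t^n$ coefficient by coefficient, solving the equation $\sigma(s) = f \cdot s$. Using the isomorphism (\ref{eqn:kpotiKsep}), we identify $\OFKsepp$ with $((\Ksep)^d)\lleck t\rreck$, where $\sigma$ acts on each coefficient via $\sigma'$. Writing $f = \sum_{n \geq 0} f_n t^n$ and $s = \sum_{n \geq 0} s_n t^n$, comparison of coefficients yields
\[
\sigma(s_0) = f_0 \cdot s_0 \qquad \text{and} \qquad \sigma(s_n) - f_0 \cdot s_n = \sum_{i=1}^n f_i \, s_{n-i} \quad (n \geq 1).
\]
Since $f \in \OFKsepp^\times$, the leading coefficient $f_0$ is invertible in $(\Ksep)^d$. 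The first equation must be solved with $s_0$ invertible, so that $s$ itself is a unit; once $s_0$ is fixed, the right-hand side of the $n$-th equation depends only on $s_0, \ldots, s_{n-1}$, and we may solve recursively for each $s_n$.

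To solve the first equation, write $f_0 = (a_0, \ldots, a_{d-1}) \in ((\Ksep)^\times)^d$ and $s_0 = (y_0, \ldots, y_{d-1})$. The identity $\sigma'(s_0) = f_0 \cdot s_0$ unfolds into the relations $y_{i-1}^q = a_i \, y_i$ (indices mod $d$), which express $y_1, \ldots, y_{d-1}$ as monomials in $y_0$, and the closing constraint $y_{d-1}^q = a_0 \, y_0$ collapses to $y_0^{q^d - 1} = C$ for an explicit $C \in (\Ksep)^\times$. Since $q^d - 1$ is coprime to the characteristic $p$, the polynomial $X^{q^d - 1} - C$ is separable, so it has a root $y_0 \in (\Ksep)^\times$, determining an invertible $s_0$.

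The higher-order equations are handled analogously. Writing $s_n = (z_0, \ldots, z_{d-1})$ with right-hand side $(h_0, \ldots, h_{d-1})$, the relations $z_{i-1}^q - a_i \, z_i = h_i$ for $i \geq 1$ express $z_1, \ldots, z_{d-1}$ recursively in terms of $z_0$, and substitution into the closing relation $z_{d-1}^q - a_0 \, z_0 = h_0$ yields a single polynomial equation in $z_0$ of the form $z_0^{q^d} - \alpha \, z_0 = \beta$ with $\alpha \in (\Ksep)^\times$ (explicitly $\alpha = a_0 \cdot (a_1^{q^{d-1}} a_2^{q^{d-2}} \cdots a_{d-1}^q)$). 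Its derivative is the nonzero constant $-\alpha$, so the polynomial is separable and has a root $z_0 \in \Ksep$, from which $s_n$ is recovered.

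The only delicate point — what might reasonably be called the obstacle — is this separability check: in both cases the polynomial solving for $s_0$, resp.\ $s_n$, must have a root in the \emph{separable} closure $\Ksep$, not merely in some purely inseparable extension. The two observations $\gcd(q^d - 1, p) = 1$ (handling $s_0$) and $a_0 \neq 0$ (handling $s_n$) are exactly what save the argument. Assembling all the $s_n$, the power series $s := \sum_{n \geq 0} s_n t^n$ lies in $\OFKsepp$, is invertible since $s_0$ is, and satisfies $\sigma(s) = f \cdot s$ by construction, hence $f = \sigma(s)/s$ as required.
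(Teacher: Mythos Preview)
Your proof is correct and follows essentially the same approach as the paper: both write $s=\sum s_nt^n$, compare coefficients to reduce to recursive equations in $(\Ksep)^d$, unwind the cyclic action of $\sigma'$ to a single polynomial equation in one $\Ksep$-variable, and verify separability. The only cosmetic difference is that for the leading coefficient you solve $y_0^{q^d-1}=C$ (separable since $\gcd(q^d-1,p)=1$) whereas the paper solves the equivalent $s_{0,0}^{q^d}-\phi\,s_{0,0}=0$ (separable since the derivative $-\phi$ is nonzero); for $n\ge 1$ the two arguments are identical.
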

\begin{proof}
We write $f=\sum_{i\ge 0} f_it^i$ and use the ``ansatz'' $s=\sum_{j\ge0}s_jt^j$.
This gives
\[\sum_r\sigma(s_r)t^r=\sigma(s)=sf=\sum_{i,j}f_is_jt^{i+j}=\sum_r\left(\sum_{i=0}^rf_is_{r-i}\right)t^r.\]
We proceed by induction. For $r=0$, we must solve $\sigma(s_0)=f_0s_0$. We write
$f_0=(f_{0,0},\ldots,f_{0,d-1})$ and $s_0=(s_{0,0},\ldots,s_{0,d-1})$ for $f_{0,i},s_{0,i}\in\Ksep$.
Note that by assumption all $f_{0,i}\neq 0$.
Since
\[\sigma(s_0)=(s_{0,d-1}^q,s_{0,0}^q,s_{0,1}^1,\ldots,s_{0,d-1}^q)\]
our equation $\sigma(s_0)=f_0s_0$ is equivalent to the system of equations
\[s_{0,i}^q=f_{0,i+1}s_{0,i+1},\qquad i\in \bbZ/d\bbZ.\]
This means, for instance, that $s_{0,0}=s_{0,d-1}^q/f_{0,0}$ and $s_{0,d-1}=s_{0,d-2}^q/f_{0,d-1}$,
which gives
\[s_{0,0}^q=\frac{s_{0,d-1}^q}{f_{0,0}}=\frac{\left(s_{0,d-2}^q/f_{0,d-1}\right)^q}{f_{0,0}}.\]
Iterating this substitution, we obtain the equation
\[s_{0,0}^{q^d}-\left(f_{0,1}^{q^{d-1}}\cdot f_{0,2}^{q^{d-2}}\cdots f_{0,d-1}^q\cdot f_{0,0}\right)s_{0,0}=0.\]
Since all the $f_{0,i}\neq 0$, the constant $\phi:=f_{0,1}^{q^{d-1}}\cdot f_{0,2}^{q^{d-2}}\cdots f_{0,d-1}^q\cdot f_{0,0}$
is non-zero, so this is a separable equation for $s_{0,0}$ and hence has a non-trivial solution in $\Ksep$.
The $s_{0,i}$ for $i\neq 0$ are then determined by the assignments $s_{0,i}:=s_{0,i-1}^q/f_{0,i}$,
they are non-trivial since $s_{0,0}$ and the $f_{0,i}$ are.

Let us consider the case $r>0$, and write $s_r=(s_{r,0},\ldots,s_{r,d-1})$ and $f_r=(f_{r,0},\ldots,f_{r,d-1})$.
In this case, the equation $\sigma(s_r)=\sum_{i=0}^rf_is_{r-i}$
that we must solve is equivalent to the system of equations
\[s_{r,i+1}^q=\sum_{j=0}^rf_{i,0}s_{r-i,0}=:f_{0,i}s_{r,i}+C_{r,i},\]
where the $C_{r,i}\in\Ksep$ are constants dependant only on $f$ and the $s_{r'}$ for $r'<r$.

We may use the same type of replacement as before, and obtain an equation
\[s_{r,0}^{q^d}-\phi\cdot s_{r,0}=C_r\]
with $C_r\in\Ksep$ a constant determined by the $C_{r,i}$. Again, this is a separable equation for $s_{r,0}$,
so there exists a solution in $\Ksep$. The $s_{r,i}$
for $i\neq 0$ are then determined by the equations $s_{r,i}=(s_{r,i+1}^q-C_{r,i})/f_{0,i}$.

Finally, since we may choose the $s_{0,i}$ to be non-zero, our solution $s$ is in
fact invertible in $\OFKsepp$.
\end{proof}

\begin{prop}\label{prop:partofB}
$B$ is a $\GalK$-stable ring, and $B^{\GalK}\supset\FpK$.
\end{prop}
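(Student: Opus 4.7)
The plan. The $\GalK$-stability of $B$ is immediate: $B^+$ is $\GalK$-stable by Lemma \ref{lem:Bplusanditsginv}, $S$ is $\GalK$-stable by Lemma \ref{lem:bplus}, and localization preserves Galois stability.

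For $\FpK \subset B^{\GalK}$, I observe first that $\FpK$ already consists of $\GalK$-invariant elements of $\FKsepp$ (being contained in $\FKp = (\FKsepp)^{\GalK}$), so it suffices to show $\FpK \subset B$. Since $\Fp\otimes_kK \subset B^+ \subset B$ by Lemma \ref{lem:Bplusanditsginv2}, and $\FpK = \Frac(\Fp\otimes_kK)$, the task reduces to proving that every non-zero-divisor $h \in \Fp\otimes_kK$ is invertible in $B$. The chain $\Fp\otimes_kK \hookrightarrow \FpK \hookrightarrow \FKp$ sends $h$ to a unit of $\FKp$, which is a product of fields by Lemma \ref{lem:onbfkpmodules}; viewed in $\FKsepp = (\kp\otimes_k\Ksep)\llkurv t\rrkurv$, all $d$ components of $h$ in the decomposition $\kp\otimes_k\Ksep \cong (\Ksep)^d$ are therefore nonzero Laurent series.

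The crucial technique is a norm trick. I would set
\[ N(h) := \prod_{i=0}^{d-1}\sigma^i(h) \in \Fp\otimes_kK. \]
By the isomorphism (\ref{eqn:kpotiKsep}), $\sigma$ cyclically shifts the $d$ components of $(\Ksep)^d$ while Frobenius-twisting the entries, so the $j$-th component of $\sigma^i(h)$ is a Frobenius twist of the $(j-i\bmod d)$-th component of $h$. Writing $n_i$ for the $t$-adic order of the $i$-th component of $h$ and $M := \sum_i n_i$, this forces $N(h)$ to have constant $t$-adic order $M$ in every component of $\FKsepp$, and its leading coefficient in each component is a product of Frobenius twists of the nonzero leading coefficients of $h$, hence nonzero. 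Consequently $t^{-M}N(h) \in \OFKsepp^\times$, while simultaneously $t^{-M}N(h) \in \Fp\otimes_kK$ since $t^{\pm 1} \in \Fp$.

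Now Lemma \ref{lem:arschrey} furnishes $s \in \OFKsepp^\times$ with $\sigma(s)/s = t^{-M}N(h)$. Because this ratio lies in $\Fp\otimes_kK$, the definition of $S$ yields $s \in S$, so $t^{-M}N(h) \in B^\times$; combined with $t^{\pm 1} \in \Fp \subset B^+$, this gives $N(h) \in B^\times$, and finally
\[ h^{-1} = N(h)^{-1}\cdot\prod_{i=1}^{d-1}\sigma^i(h) \in B, \]
since each $\sigma^i(h) \in \Fp\otimes_kK \subset B^+$. The main obstacle is the third paragraph: verifying from the explicit formula for $\sigma$ on $(\Ksep)^d$ that the product $N(h)$ has constant $t$-adic order and nonzero leading coefficient in every component of $(\Ksep)^d\llkurv t\rrkurv$, which is precisely what lets us escape the hypothesis $h \in \OFKsepp^\times$ of Lemma \ref{lem:arschrey}.
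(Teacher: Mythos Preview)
Your proof is correct, and the norm trick is a genuinely different route from the paper's argument.

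The paper proceeds more directly: given $g/f\in\FpK$ with $f,g\in\Fp\otimes_kK$ and $f$ a non-zero-divisor, it invokes Remark~\ref{rem:invertfksepp} to write $f=\bigl(\prod_i t_i^{n_i}\bigr)\cdot\widetilde f$ with $\widetilde f\in\OFKsepp^\times$, and then simply replaces $f$ by $\widetilde f$. A single application of Lemma~\ref{lem:arschrey} to $\widetilde f$ then gives $s\in S$ with $\widetilde f=\sigma(s)/s$, whence $g/f=gs/\sigma(s)\in B$. The (unstated) reason this works is that $f$ already lives in $\FKp\cong K'\llkurv t\rrkurv^{\times e}$, so the exponents $n_i$ are constant on each $K'$-block of the $(\Ksep)^d$-decomposition; hence the monomial factor $\prod_i t_i^{n_i}$ and therefore $\widetilde f$ remain in $\Fp\otimes_kK$.

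Your approach sidesteps this structural point entirely: by passing to $N(h)=\prod_{i=0}^{d-1}\sigma^i(h)$ you \emph{force} the $t$-adic order to be the constant $M=\sum n_i$, so that a single scalar power $t^{-M}$ (rather than a block-weighted product of $t_i$'s) normalises it into $\OFKsepp^\times\cap(\Fp\otimes_kK)$. The cost is the extra factorisation $h^{-1}=N(h)^{-1}\prod_{i\ge 1}\sigma^i(h)$ at the end; the benefit is that you never need to analyse how the idempotents of $\kp\otimes_kK$ sit inside those of $\kp\otimes_k\Ksep$, which makes your argument more self-contained.
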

\begin{proof}
$B$ is clearly $\GalK$-stable, since $B^+$ and $S$ both are. We have $B^{\GalK}=B\cap\FKp$.

Let us show that $\FpK\subset B$. Consider
$g/f\in\FpK$ with $f,g\in\Fp\otimes_kK$. By Remark \ref{rem:invertfksepp},
we may assume that $f$ is in $\OFKsepp^\times$.
By Lemma \ref{lem:arschrey} there exists an element $s\in S$
with $f=\sigma(s)/s$. It follows that $g/f=gs/\sigma(s)\in B$,
since $gs\in B^+$ by Lemma \ref{lem:Bplusanditsginv} and $\sigma(s)\in S$.
\end{proof}

We turn to the inclusion $B^{\GalK}\subset\FpK$, which is
more difficult. Consider $b=b^+/s\in B^{\GalK}$, with
$b^+\in B^+$ and $s\in S\subset\OFKsepp^\times$. We set $f:=\sigma^d(s)/s$,
which is an element of $\Fp\otimes_kK$,
and for $N\ge 0$ -- following \cite{Tam04} -- we set
\[a_N:=b\cdot f\left(t^{q^d}\right)\cdot f\left(t^{q^{2d}}\right)\cdots f\left(t^{q^{Nd}}\right)\in\FKp.\]

\begin{rem}\label{rem:stratforsth}
Our goal is to show that for $N$ large enough
the element $a_N$ lies in $B^+$. By Lemma \ref{lem:bplus}
this will imply that $a_N\in\Fp\otimes_kK$, and in particular
that $b\in B$.
\end{rem}

\begin{lem}\label{lem:BGlemA}
There exists a finite set $\Sigma_0\subset\Sigma_K$ such that
for all $N\ge 0$ and all $x\in\Ssep$ not lying above $\Sigma_N$ we have $v_x(a_N)\ge 0$.
\end{lem}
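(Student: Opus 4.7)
The plan is to use a telescoping identity to rewrite $a_N$ and then exploit the fact that, componentwise on $\FKsepp\cong(\Ksep)^d\llkurv t\rrkurv$, the valuation $v_x$ becomes a genuine (multiplicative) Gauss valuation. Concretely, for each $l\in\{0,\ldots,d-1\}$ let $v_x^{(l)}$ denote the restriction of $v_x$ to the $l$-th factor $\Ksep\llkurv t\rrkurv$, extending $v_x|_{\Ksep}$ via the Gauss construction. Then $v_x^{(l)}$ is multiplicative and $v_x=\min_l v_x^{(l)}$, so the subadditivity in Proposition \ref{prop:vx}(b) is only an artefact of taking minima across components.

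The first step is to derive a closed form for $a_N$. Writing $s_j:=s(t^{q^{jd}})$, a direct computation using the identity $\sigma^d(h)(X^{q^d})=h(X)^{q^d}$ in characteristic $p$ gives $\sigma^d(s)(t^{q^{jd}})=s_{j-1}^{q^d}$, and hence $f(t^{q^{jd}})=s_{j-1}^{q^d}/s_j$ from $\sigma^d(s)=f\cdot s$. Telescoping the product then yields
\[a_N\;=\;\frac{b^+\cdot\prod_{j=0}^{N-1}s_j^{q^d-1}}{s_N}.\]
Since $s_j$ and $s$ share the same coefficients, $v_x^{(l)}(s_j)=v_x^{(l)}(s)$, and multiplicativity of $v_x^{(l)}$ produces the clean formula
\[v_x^{(l)}(a_N)\;=\;v_x^{(l)}(b^+)+\bigl[(q^d-1)N-1\bigr]\cdot v_x^{(l)}(s).\]

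Next I would link $v_x^{(l)}(s)$ to $v_x^{(l)}(f)$ via $\sigma^d(s)=fs$. Because $\sigma^d$ acts on each $\Ksep$-component by the $q^d$-th power, $v_x^{(l)}(\sigma^d(s))=q^d\cdot v_x^{(l)}(s)$, and multiplicativity yields the key relation $(q^d-1)v_x^{(l)}(s)=v_x^{(l)}(f)$. In particular, whenever $v_x^{(l)}(f)=0$ for every $l$ one obtains $v_x^{(l)}(a_N)=v_x^{(l)}(b^+)$ uniformly in $N$, so $v_x(a_N)\ge 0$ as soon as also $v_x(b^+)\ge 0$. I would therefore set $\Sigma_0\subset\Sigma_K$ equal to the union of the (finite) pole set of $b^+\in B^+$ and a finite set outside of which $v_x^{(l)}(f)=0$ for all $l$.

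The main obstacle is establishing the equality $v_x^{(l)}(f)=0$ (not merely $v_x^{(l)}(f)\ge 0$) for almost all $x$ and all $l$. The inequality $v_x^{(l)}(f)\ge 0$ is standard: by the proof of Lemma \ref{lem:Bplusanditsginv2}, the coefficients of $f\in\Fp\otimes_k K$ span a finite-dimensional $k$-subspace of $\kp\otimes_k\Ksep$, and a $k$-basis thereof involves only finitely many places of non-zero valuation. For the reverse inequality $v_x^{(l)}(f)\le 0$ one uses that $s\in\OFKsepp^\times$ forces $f=\sigma^d(s)/s$ to be a unit in $\FKsepp$, so each component $f^{(l)}\in\Ksep\llkurv t\rrkurv$ is non-zero; picking a single non-zero coefficient of each $f^{(l)}$ and excluding its finite set of places of positive valuation ensures $v_x^{(l)}(f)\le 0$ at all remaining $x$.
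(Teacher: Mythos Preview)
Your telescoping identity $a_N = b^+\cdot\bigl(\prod_{j=0}^{N-1}s_j^{q^d-1}\bigr)\big/s_N$ is correct and elegant, but the argument breaks at the multiplicativity claim for $v_x^{(l)}$. The componentwise Gauss extension $\hat v(\sum c_it^i):=\inf_i v_x(c_i)$ on $\Ksep\llkurv t\rrkurv$ is \emph{not} a valuation: take $f=a_0+\sum_{i\ge1}a_it^i$ with $v_x(a_0)=1$ and $v_x(a_i)=0$ for $i\ge1$, so $\hat v(f)=0$; computing $f^{-1}$ recursively one finds $v_x(c_n)\to-\infty$, hence $\hat v(f^{-1})=-\infty$ and $\hat v(f\cdot f^{-1})=0\neq\hat v(f)+\hat v(f^{-1})$. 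So your displayed equality $v_x^{(l)}(a_N)=v_x^{(l)}(b^+)+[(q^d-1)N-1]\,v_x^{(l)}(s)$ and the ``key relation'' $(q^d-1)v_x^{(l)}(s)=v_x^{(l)}(f)$ are unjustified, and the denominator $1/s_N$ is exactly the dangerous factor.

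The paper handles this by a device you are missing: it normalises $s$ to $\tilde s:=s/s(0)$, whose leading coefficient is $1$. From $\sigma^d(\tilde s)=\tilde f\cdot\tilde s$ with $\tilde f\in\Fp\otimes_kK$ one gets (via Proposition~\ref{cor:fixpt}, using only subadditivity) $v_x(\tilde s)\ge0$ for almost all $x$, and then the \emph{geometric series} gives $v_x(1/\tilde s)\ge0$; this is precisely the step that replaces your appeal to multiplicativity. Writing $a_N=\dfrac{\mu^N}{s(0)}\cdot b^+\cdot\dfrac{1}{\tilde s}\cdot\prod_{j=1}^N\tilde f(t^{q^{jd}})$ with $\mu=\sigma^d(s(0))/s(0)\in(\kp\otimes_kK)^\times$, one then bounds each factor by subadditivity alone. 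Your route can be repaired along the same lines: from subadditivity you only get $(q^d-1)v_x^{(l)}(s)\ge v_x^{(l)}(f)\ge0$, hence $v_x^{(l)}(s_N)\ge0$; to control $v_x^{(l)}(1/s_N)$ you must additionally throw the finitely many places supporting $s(0)$ into $\Sigma_0$ so that the constant term of $s_N$ is a $v_x$-unit, and then invoke the geometric series rather than multiplicativity.
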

\begin{proof}
The idea is to use that $b^+$, $s$ and $f$ all lie in $B^+$, and then apply
Proposition \ref{prop:vx}(b). In order to handle $1/s$, which is not necessarily
an element of $B^+$, we need some modifications. Let $s(0)$ denote the leading
coefficient of $s$, and set $\tils:=s/s(0)$. Clearly, $\tils$
is an element of $S$ with leading coefficient $1$. Setting $\tilf:=\sigma^d(\tils)/\tils$,
we have $\tilf\in\Fp\otimes_kK$ and $f=\mu\cdot\tilf$ with $\mu:=\sigma^d(s(0))/s(0)$ an invertible
element of $\kp\otimes_kK$. Now by definition and Proposition \ref{prop:vx}(b), we have
\begin{eqnarray*}
v_x(a_N)&=&v_x\left(\frac{b^+}{s}\cdot f\left(t^{q^d}\right)\cdots f\left(t^{q^{Nd}}\right)\right)\\
        &=&v_x\left(\frac{\mu^N}{s(0)}\cdot b^+ \cdot \frac{1}{\tils} \cdot \tilf\left(t^{q^d}\right)\cdots \tilf\left(t^{q^{Nd}}\right)\right)\\
        &\ge& N\cdot v_x(\mu) + v_x\left(\frac{1}{s(0)}\right) + v_x(b^+) + v_x\left(\frac{1}{\tils}\right) + N\cdot v_x(\tilf)
\end{eqnarray*}
Since $E:=\{\,\mu,1/s(0),b^+,\tilf\,\}$ is a finite subset of $B^+$, the
set $\Sigma_0'$ of those $x\in\Ssep$ for which there exists an $e\in E$ such that $v_x(e)<0$
has finite image in $\Sigma_K$. Call this image $\Sigma_0$, and consider any $x\in\Sigma_0$.
Proposition \ref{cor:fixpt} implies that $v_x(\tils)\ge v_x(\tilf)/(q^d-1)\ge 0$.
Since $\tils$ has leading coefficient $1$, we may calculate $1/\tils$ via the geometric
series, and obtain $v_x(1/\tils)\ge 0$, using Proposition \ref{prop:vx}.
Therefore, $v_x(a_N)$ is bounded below by a finite sum of non-negative numbers, so $v_x(a_N)\ge 0$
for all $x$ not lying above $\Sigma_0$.
\end{proof}

\begin{lem}[following \cite{Tam94b}]\label{slem:eps}
Let $s\in\OFKsepp^\times$, $x\in\Ssep$ and $N\ge 0$ fulfill
\begin{enumerate}
  \item $v_x(s)\ge 0$, and
  \item $v_x(s(0))<q^N$.
\end{enumerate}
Then, for every $a\in\FKp$ we have an inequality
\[v_x\left(\sigma^N(a)\right)\ge \left\lfloor\frac{v_x\left(s\cdot\sigma^N(a)\right)}{q^N}\right\rfloor\cdot q^N,\]
where for $x\in\bbR$ the term $\lfloor x\rfloor$ denotes the largest integer smaller than $x$.
\end{lem}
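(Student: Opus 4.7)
The plan is to unpack the strict-floor notation. Since the coefficients of $a\in\FKp$ lie in $\kp\otimes_k K$, whose components under the decomposition $\kp\otimes_k\Ksep\cong(\Ksep)^d$ land in the unramified extension $K\kp\subset\Ksep$, the valuation $v_x(a)$ is an integer, and Proposition~\ref{prop:vx}(c) gives $v_x(\sigma^N(a))=q^N v_x(a)\in q^N\bbZ$. Hence the claim is equivalent to the strict upper bound $v_x(s\cdot\sigma^N(a))<v_x(\sigma^N(a))+q^N$: the operator $\sigma^N$ imposes a rigid $q^N$-divisibility on $v_x(\sigma^N(a))$ that $s$ cannot upset by more than $q^N$.

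I would work componentwise in the product decomposition $\kp\otimes_k\Ksep\cong(\Ksep)^d$ from~(\ref{eqn:kpotiKsep}), which induces a decomposition of $\FKsepp$ into $d$ copies of $\Ksep\llkurv t\rrkurv$. On each factor, $v_x$ restricts to the classical Gauss valuation, which is multiplicative, so with $b=\sigma^N(a)$ one has
\[
v_x(sb)=\min_l\bigl(v^{(l)}(s^{(l)})+v^{(l)}(b^{(l)})\bigr).
\]
The key structural identity $\sigma'^N(z)_l=z_{l-N}^{q^N}$ from the displayed description of $\sigma'$ then yields $b_j^{(l)}=(a_j^{(l-N)})^{q^N}$ at each coefficient, so $v^{(l)}(b^{(l)})=q^N v^{(l-N)}(a^{(l-N)})$; each $b$-part in the summand above is a multiple of $q^N$, and one recovers $v_x(b)=q^N v_x(a)$.

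The assumptions $v_x(s)\ge 0$ and $v_x(s(0))<q^N$ guarantee that $v^{(l)}(s^{(l)})\ge 0$ for all $l$ and that some index $l_0$ satisfies $v^{(l_0)}(s^{(l_0)})\le v_x(s_0^{(l_0)})<q^N$. Picking an index $l$ that simultaneously realises the minimum of $v^{(l-N)}(a^{(l-N)})$ over $l$ and retains $v^{(l)}(s^{(l)})<q^N$ yields the summand $v^{(l)}(s^{(l)})+q^N v^{(l-N)}(a^{(l-N)})<q^N+q^N v_x(a)$, so $v_x(sb)<v_x(b)+q^N$ and the claimed inequality follows.

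The main obstacle is precisely coordinating the two conditions on $l$: the hypothesis $v_x(s(0))<q^N$ only produces \emph{some} index where $s$ is small, not necessarily the one where $v^{(l-N)}(a^{(l-N)})$ attains its minimum. I expect the proof to bridge this gap by an induction on the $t$-adic order of the coefficients of $s\cdot\sigma^N(a)$, extracting the leading non-vanishing term in a manner analogous to the proof of Proposition~\ref{cor:fixpt}, or by exploiting structural constraints on $s$ inherited from the surrounding construction (the context supplies $s\in S$, so $\sigma(s)/s\in\Fp\otimes_k K$), which forces the component Gauss valuations of $s$ to be suitably interlocked across $l$. This coordination is the technical heart of the argument.
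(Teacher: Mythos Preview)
Your reformulation of the statement as the strict inequality $v_x(s\cdot\sigma^N(a))<v_x(\sigma^N(a))+q^N$, using that $v_x(\sigma^N(a))\in q^N\bbZ\cup\{\infty\}$, is correct and is exactly the mechanism the paper exploits. However, the componentwise approach via the decomposition $\kp\otimes_k\Ksep\cong(\Ksep)^d$ has precisely the obstruction you name, and it is not resolved by either of your proposed fixes. Your second suggestion (using $s\in S$) is unavailable: the lemma is stated for arbitrary $s\in\OFKsepp^\times$ with $v_x(s)\ge 0$, and the later application does need it in that generality. There is also a hidden issue with your multiplicativity claim for the Gauss valuation $v^{(l)}$: since the infimum over the $t$-coefficients need not be attained, $v^{(l)}$ is in general only sub-multiplicative, so even the displayed identity $v_x(sb)=\min_l(v^{(l)}(s^{(l)})+v^{(l)}(b^{(l)}))$ requires justification.

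The paper follows your \emph{first} suggestion and does an induction, but on the $t$-coefficients rather than on any $t$-adic order. Writing $s=\sum_{i\ge 0}s_it^i$ and $b:=\sigma^N(a)=\sum_i b_it^i$, one sets $C:=\lfloor v_x(sb)/q^N\rfloor q^N$ and proves $v_x(b_i)\ge C$ for all $i$ by induction on $i$. The base case uses that $(sb)_0=s_0b_0$, so $v_x(s_0b_0)\ge v_x(sb)\ge C$; since $s_0$ is a unit of $\kp\otimes_k\Ksep$ one extracts $v_x(b_0)\ge C-v_x(s_0)>C-q^N$, and the $q^N$-integrality of $v_x(b_0)$ forces $v_x(b_0)\ge C$. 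For $i>0$ one uses the recursion $s_0b_i=(sb)_i-\sum_{j=1}^is_jb_{i-j}$: the right-hand side has $v_x\ge C$ by the inductive hypothesis and by $v_x(s_j)\ge 0$, and one concludes as in the base case. Thus the ``coordination'' problem never arises: instead of matching components in $(\Ksep)^d$, the argument isolates the leading $t$-coefficient $s_0$, where the invertibility hypothesis and the bound $v_x(s_0)<q^N$ act simultaneously.
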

\begin{proof}
We write $s=\sum_{i\ge 0}s_it^i$ and $b:=\sigma^N(a)=\sum_ib_it^i$ with coefficients
$s_i\in\kp\otimes_k\Ksep$ and $b_i\in \kp\otimes_kK$. We may assume that $b_i=0$ for $i<0$.
By assumption, $v_x(s_i)\ge 0$ for all $i$, and $v_x(s_0)<q^N$. Note
that since $s_0$ is invertible, the inequality $v_x(s_0\cdot b_i)\ge v_x(s_0)+v_x(b_i)$
is in fact an equality!

We set $C:=\lfloor v_x(sb)/q^N\rfloor\cdot q^N$, must prove that $v_x(b_i)\ge C$ for all $i$, and do this by induction on $i$.

For $i=0$, we consider the inequality $v_x(s_0)+v_x(b_0)= v_x(s_0b_0)\ge C$.
It implies that, $v_x(b_0)\ge C-v_x(s_0)>C-q^N$. However, by assumption the value of $v_x(b_0)$
lies in $q^N\cdot\bbZ\cup\{\infty\}$, and there exists no integral
multiple of $q^N$ strictly greater than $C-q^N$ and less than $C$. Therefore,
we have $v_x(b_0)\ge C$.

For $i>0$, we have $s_0b_i=(s b)_i-\sum_{j=1}^is_{j}b_{i-j}$. By induction,
we deduce that
\begin{eqnarray*}
v_x(s_0b_i)&=&v_x\left((s b)_i-\sum_{j=1}^is_{j}b_{i-j}\right)\\
&\ge&\min\left(v_x\,\big((sb)_i\big),\:\min_{1\le j\le i}\Big(v_x(s_j)+v_x(b_{i-j}\Big)\right)\\
&\ge&\min(C,\min(0+C))\ge C
\end{eqnarray*}
So $v_x(b_i)\ge C-v_x(s_0)$, which implies that $v_x(b_i)\ge C$ as in the case $i=0$
since $v_x(b_i)$ is an integral multiple of $q^N$ and $0\le v_x(s_0)<q^N$.
\end{proof}

\begin{lem}\label{lem:BGlemB}
There exists an $N_0\ge 1$ such that for all $N\ge N_0$ and all $x\in\Ssep$
we have $v_x(a_N)\neq-\infty$.
\end{lem}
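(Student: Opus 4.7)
By Lemma~\ref{lem:BGlemA}, there is a finite set $\Sigma_0\subset\Sigma_K$ such that $v_x(a_N)\ge 0$ for every $N$ whenever $x$ does not lie above $\Sigma_0$, so those $x$ need no further attention. For $x$ lying above some $x_K\in\Sigma_0$, the element $a_N\in\FKp$ is $\GalK$-invariant and $\GalK$ acts transitively on the valuations above $x_K$, so $v_x(a_N)$ depends only on $x_K$; only finitely many cases thus remain.

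Fix such an $x$. The plan is to apply the sublemma \ref{slem:eps} with the element $s$ from the expression $b=b^+/s$, taking $Nd$ in place of $N$. The hypothesis $v_x(s(0))<q^{Nd}$ holds for $N$ sufficiently large, since $v_x(s(0))$ is a fixed non-negative integer; taking the maximum over the finitely many cases gives a uniform $N_0$. For the hypothesis $v_x(s)\ge 0$, which may fail a priori, I would replace $s$ by $\alpha s$ for a suitable $\alpha\in K^\times$ chosen so that $v_{x_K}(\alpha)\ge -v_x(s)$ simultaneously for all finitely many $x_K\in\Sigma_0$ requiring adjustment (such $\alpha$ exists by Riemann--Roch on $X_K$). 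This preserves $s\in S$ because $\sigma(\alpha)/\alpha=\alpha^{q-1}\in K$, and alters $a_N$ only by multiplication by a fixed element of $K^\times$, which does not affect whether $v_x(a_N)\ne-\infty$.

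The sublemma then produces the bound
\[v_x(a_N)\;\ge\;\left\lfloor\frac{v_x(s\cdot a_N)}{q^{Nd}}\right\rfloor\cdot q^{Nd}.\]
Applying Proposition~\ref{prop:vx}(b) to $s\cdot a_N=b^+\cdot\prod_{i=1}^N\tau^i(f)$, together with the observation that $v_x(\tau^i(f))=v_x(f)$ (substitution $t\mapsto t^{q^{id}}$ merely redistributes coefficients in $t$), yields $v_x(s\cdot a_N)\ge v_x(b^+)+N\cdot v_x(f)$, a quantity growing at most linearly in $N$. As $q^{Nd}$ grows exponentially, the floor on the right-hand side is a finite integer, forcing $v_x(a_N)\ne-\infty$, and the uniform $N_0$ gives the lemma.

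The hard part is justifying the application of the sublemma, whose proof crucially uses the quantization $v_x((\sigma^{Nd}(a))_j)\in q^{Nd}\bbZ\cup\{\infty\}$---a consequence of the element being in the image of $\sigma^{Nd}$. Since $K$ need not be perfect, $\sigma^{Nd}$ is not surjective on $\FKp$ and $a_N$ may not lie in its image. Overcoming this requires exploiting the sparse $t$-structure of the factors $\tau^i(f)=f(t^{q^{id}})$, whose nonzero $t$-coefficients occur only at multiples of $q^{id}$; a careful coefficient-by-coefficient comparison of $a_N$ with $\sigma^{Nd}$ of an auxiliary element, modulo controllable high-$t$-order remainders, should allow the sublemma's inductive argument to go through, following Tamagawa's strategy in \cite{Tam95}.
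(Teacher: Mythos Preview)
Your argument has a genuine gap at the application of Lemma~\ref{slem:eps}, and you correctly identify it in your final paragraph. That lemma bounds $v_x(\sigma^N(a))$ in terms of $v_x(s\cdot\sigma^N(a))$; it says nothing about an element of $\FKp$ not presented as a $\sigma$-power. You compute $v_x(s\cdot a_N)$, which is easy, but this does not license the sublemma because $a_N$ is not given as $\sigma^{Nd}$ of anything. Your proposed repair via the sparse $t$-support of the $f(t^{q^{id}})$ is not an argument: once you multiply these factors together and then by $b$, the support is no longer sparse in any useful way, and no concrete coefficient comparison is offered.

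The paper's move avoids this entirely: apply $\sigma^{Nd}$ to $a_N$ \emph{first}, so that Lemma~\ref{slem:eps} applies with $a=a_N$, and then compute $s\cdot\sigma^{Nd}(a_N)$ in closed form. Iterating $\sigma^d(s)=fs$ gives
\[\sigma^{Nd}(b^+)=\sigma^{Nd}(b)\cdot s\cdot\prod_{i=0}^{N-1}\sigma^{id}(f),\]
and the key identity (in characteristic $p$, since $\sigma$ acts on coefficients and fixes $t$)
\[\sigma^{id}\big(g(t^{q^{id}})\big)=g^{q^{id}}\]
then yields
\[s\cdot\sigma^{Nd}(a_N)=\sigma^{Nd}(b^+)\cdot\prod_{i=1}^N\sigma^{(N-i)d}\big(f^{q^{id}-1}\big)=:\sigma^{Nd}(b^+)\cdot\phi,\]
with $\phi\in\Fp\otimes_kK$. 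Hence $v_x\big(s\cdot\sigma^{Nd}(a_N)\big)\ge q^{Nd}v_x(b^+)+v_x(\phi)\neq -\infty$, and the sublemma (together with Proposition~\ref{prop:vx}(c)) gives $v_x(a_N)\neq -\infty$. This is the missing step your sketch needs.

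A minor correction: your replacement $s\mapsto\alpha s$ multiplies $a_N$ by $\alpha^{N(q^d-1)}$, not by a fixed element of $K^\times$; this still has finite $v_x$, so the conclusion is unaffected, but the statement should be accurate. The paper handles one place $x_K\in\Sigma_0$ at a time using a local uniformiser, which makes the appeal to Riemann--Roch unnecessary.
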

\begin{proof}
By Lemma \ref{lem:BGlemB}, there exists a finite set $\Sigma_0\subset\Sigma_K$ 
such that $v_x(a_N)\ge 0>-\infty$ for all $x$ not lying above $\Sigma_0$.
Hence it suffices to prove that, for one given $x_K\in\Sigma_K$, there exists
an integer $N_0\ge 1$ such that for all $N\ge N_0$ and all $x$ lying above
$x_K$ we have $v_x(a_N)\neq-\infty$. We fix such an $x_K\in\Sigma_0$.

Let $\pi$ denote a local parameter of $K$ at $x_K$. For all $x$ over $x_K$, we
have $v_x(s)\ge v_x(f)/(q^d-1)>-\infty$ by Proposition \ref{cor:fixpt},
so that $s=\pi^{-n}\widetilde{s}$ for some $n\ge0$ and
$\widetilde{s}\in S$ satisfying $v_x(s)\ge 0$. As a first substep, we wish to
show that it is sufficient to deal with the case $s=\widetilde{s}$.
This will make our calculations easier!

If $n>0$, then
\[\widetilde{f}:=\frac{\sigma^d(\widetilde{s})}{\widetilde{s}}=\frac{\sigma^d(\pi^{n})}{\pi^{n}}\cdot\frac{\sigma(s)}{s}=\pi^{n(q^d-1)}f\in\Fp\otimes_kK,\]
and by setting $\widetilde{b^+}:=\pi^nb^+\in B^+$, we obtain $b=\widetilde{b^+}/\widetilde{s}$,
so that
\begin{eqnarray*}
\widetilde{a_N}&:=&b\cdot \widetilde{f}(t^{q^d})\cdots\widetilde{f}\big(t^{q^{Nd}}\big)\\
&=&b\cdot\pi^{n(q^d-1)}f(t^{q^d})\cdots\pi^{n(q^d-1)}f\big(t^{q^{Nd}}\big)\\
&=&\pi^{Nn(q^d-1)}a_N.
\end{eqnarray*}
In particular, $v_x(a_N)\neq-\infty$ if and only if $v_x(\widetilde{a_N})\neq-\infty$,
and we may assume in the following without loss of generality that the $s\in\OFKsepp^\times$ we
are given fulfills $v_x(s)\ge 0$.

We remark that for all $g\in\FKsepp$ and $i\ge 0$ we have
the formula
\begin{equation}\label{eqn:sigmaandpowers}
\sigma^{id}(g(t^{q^{id}}))=g^{q^{id}},
\end{equation}
in particular for our given $f\in\Fp\otimes_kK$.

Secondly, note that from $b^+=bs$ and $\sigma^d(s)=sf$ we obtain
$\sigma^d(b^+)=\sigma^d(b)\sigma^d(s)=\sigma^d(b)sf$,
and by induction for $N\ge 1$
\begin{equation}\label{eqn:iteratesigma}
\sigma^{Nd}(b^+)=\sigma^{Nd}(b)s\cdot\left(f\cdot\sigma^d(f)\cdots\sigma^{(N-1)d}(f)\right).
\end{equation}
Hence,
\begin{eqnarray*}
\sigma^{Nd}(a_N)s&=&\sigma^{Nd}\left(b\cdot f(t^{q^d})\cdots f(t^{q^{Nd}})\right)\cdot s\\
&=&\sigma^N(b)s\cdot\sigma^{Nd}\left(f(t^{q^d})\cdots f(t^{q^{Nd}})\right)\\
&=&\sigma^N(b^+)\cdot\frac{\sigma^{Nd}\left(f(t^{q^d})\cdots f(t^{q^{Nd}})\right)}{\sigma^{(N-1)d}(f)\cdots f}\quad\text{by Equation (\ref{eqn:iteratesigma})}\\
&=&\sigma^N(b^+)\cdot\prod_{i=1}^N\sigma^{(N-i)d}\left(\frac{\sigma^{id}(f(t^{q^{id}}))}{f}\right)\\
&=&\sigma^N(b^+)\cdot\prod_{i=1}^N\sigma^{(N-i)d}\left(f^{q^{id}-1}\right)\quad\text{by Equation (\ref{eqn:sigmaandpowers})}\\
&=:&\sigma^N(b^+)\cdot\phi,
\end{eqnarray*}
with $\phi\in\Fp\otimes_kK$, so it follows that $v_x(\sigma^N(a_N)s)\ge q^Nv_x(b^+)+v_x(\phi)\neq-\infty$.

Now if $N$ is large enough, namely, $q^N>v_x(s(0))$, then Lemma \ref{slem:eps} shows that
$q^Nv_x(a_N)=v_x(\sigma^N(a_N))\neq-\infty$, so $v_x(a_N)\neq-\infty$ as required.
\end{proof}

\begin{prop}\label{prop:part2ofB}
The ring $B$ fulfills $B^{\GalK}=\FpK$.
\end{prop}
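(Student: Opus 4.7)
The plan is to establish the non-trivial inclusion $B^{\GalK}\subset\FpK$ along the lines already sketched in Remark \ref{rem:stratforsth}; the reverse inclusion $\FpK\subset B^{\GalK}$ is recorded in Proposition \ref{prop:partofB}.

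Given $b\in B^{\GalK}=B\cap\FKp$, the first step is to write $b=b^+/s$ with $b^+\in B^+$ and $s\in S$, set $f:=\sigma^d(s)/s\in\Fp\otimes_kK$, and consider the elements $a_N:=b\cdot f(t^{q^d})\cdots f(t^{q^{Nd}})\in\FKp$ introduced before Lemma \ref{lem:BGlemA}. Lemma \ref{lem:BGlemA} controls $v_x(a_N)\ge 0$ uniformly in $N$ for all $x\in\Ssep$ whose image in $\Sigma_K$ lies outside a fixed finite set, while Lemma \ref{lem:BGlemB} provides an $N_0\ge 1$ such that for $N\ge N_0$ one has $v_x(a_N)\ne-\infty$ for every $x\in\Ssep$. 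Together these place $a_N$ in $B^+$ once $N\ge N_0$; since $a_N\in\FKp$ by construction, Lemma \ref{lem:Bplusanditsginv2} then gives $a_N\in B^+\cap\FKp=(B^+)^{\GalK}=\Fp\otimes_kK\subset\FpK$.

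To finish, I would verify that the product $\prod_{i=1}^Nf(t^{q^{id}})$ is a unit in $\FpK$. Each factor lies in $\Fp\otimes_kK$ and has the same constant coefficient $\sigma^d(s_0)/s_0$ as $f$, where $s_0\in(\kp\otimes_k\Ksep)^\times$ is the leading coefficient of $s$; since $\kp\otimes_kK$ is a product of fields by Lemma \ref{lem:onbfkpmodules} and the natural map $\kp\otimes_kK\hookrightarrow\kp\otimes_k\Ksep$ is injective, this constant coefficient is invertible already in $\kp\otimes_kK$. Hence $f(t^{q^{id}})$ has invertible leading coefficient in $\FKp$, is therefore a non-zerodivisor in the subring $\Fp\otimes_kK$, and so is a unit in $\FpK=\Frac(\Fp\otimes_kK)$. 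Dividing the identity $a_N=b\cdot\prod_{i=1}^Nf(t^{q^{id}})$ through by this invertible product then exhibits $b$ as an element of $\FpK$. All the genuine analytic difficulty has been absorbed into the estimate of Lemma \ref{slem:eps} and its application in Lemma \ref{lem:BGlemB}, so no serious obstacle remains; the proof is a matter of assembling these ingredients in order.
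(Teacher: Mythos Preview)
Your proof is correct and follows essentially the same approach as the paper's own argument: reduce to showing $a_N\in B^+$ for large $N$ via Lemmas \ref{lem:BGlemA} and \ref{lem:BGlemB}, invoke $(B^+)^{\GalK}=\Fp\otimes_kK$ (Lemma \ref{lem:Bplusanditsginv2}), and then divide. Your extra paragraph verifying that each $f(t^{q^{id}})$ is a unit in $\FpK$ is a point the paper passes over silently---it simply observes that both numerator and denominator lie in $\Fp\otimes_kK\subset\FpK$---so your version is, if anything, slightly more careful.
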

\begin{proof}
By Proposition \ref{prop:partofB} it suffices to show
that $B^\GalK\subset\FpK$. For $b\in B^{\GalK}$ and $N\ge 0$, define $a_N$ as
before Remark \ref{rem:stratforsth}. Lemmas \ref{lem:BGlemA} and \ref{lem:BGlemB}
show that for $N$ large enough, $a_N$ is an element of $B^+$. By construction,
it is an $\GalK$-invariant, so Lemma \ref{lem:Bplusanditsginv} shows that
$a_N\in\Fp\otimes_kK$. By definition, this shows that
\[b=\frac{a_N}{f\left(t^{q^d}\right)\cdot f\left(t^{q^{2d}}\right)\cdots f\left(t^{q^{Nd}}\right)}\]
is an element of $\FpK$, since both $a_N$ and the denominator lie in
$\Fp\otimes_kK\subset\FpK$.
\end{proof}

So far, we have shown that $\bB$ is a well-defined $\GalK$-stable bold
ring with scalar ring $\Fp$ and $B^{\GalK}=\FKp$. It remains to prove
that $B$ has property (c) of Claim \ref{claim:Bexists}.

\begin{lem}\label{lem:claimc}
Let $\bM$ be a $\frkp$-restricted $\bFpK$-module. Then $\Vp(\bM)\subset \bB\otimes_{\bFpK}\bM$.
\end{lem}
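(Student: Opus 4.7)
The plan is to translate the claim into a matrix equation over $\FKsepp$, clear denominators using an element $s\in S$ supplied by Lemma \ref{lem:arschrey}, and then apply Proposition \ref{cor:fixpt} to bound the $v_x$-valuations of the solution. Since $\bM$ is $\frkp$-restricted, the $\bFpK$-analogue of Lemma \ref{lem:onbffkmodules}(b) guarantees that its underlying $\FpK$-module is free; I would fix an $\FpK$-basis $e_1,\ldots,e_r$ of $\bM$ and let $\Delta\in\GL_r(\FpK)$ be the invertible matrix determined by $\tau(e_i)=\sum_j\Delta_{ji}e_j$. An element $v=\sum c_ie_i\in\bFKsepp\otimes_{\bFpK}\bM$ is $\tau$-invariant if and only if the coordinate column $C=(c_i)\in\FKsepp^r$ satisfies $C=\Delta\,\sigma(C)$, equivalently $\sigma(C)=\Delta^{-1}C$; so the task is to show that any such $C$ has entries in $B$.

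To bring the equation into a form to which Proposition \ref{cor:fixpt} can be applied, I would clear the denominator of $\Delta^{-1}$, writing $\Delta^{-1}=f^{-1}\Delta_0$ with $\Delta_0$ having entries in $\Fp\otimes_{\Fq}K$ and $0\neq f\in\Fp\otimes_{\Fq}K$. A short computation using the identification $\kp\otimes_k\Ksep\isom(\Ksep)^d$ of equation (\ref{eqn:kpotiKsep}) shows that $\Fp\otimes_{\Fq}K\subset B^+$, so in particular the entries of $\Delta_0$ lie in $B^+$. After multiplying $f$ by a suitable invertible Laurent monomial in the idempotent lifts $t_i$ from Remark \ref{rem:invertfksepp} (and absorbing this adjustment into the equation), I would arrange $f\in\OFKsepp^\times$. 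Lemma \ref{lem:arschrey} then furnishes an element $s\in\OFKsepp^\times$ with $\sigma(s)/s=f$, and since this ratio lies in $\Fp\otimes_{\Fq}K$, one has $s\in S$.

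Setting $D:=sC$, direct calculation gives
\[\sigma(D)=\sigma(s)\,\sigma(C)=(sf)\Delta^{-1}C=s\Delta_0C=\Delta_0D.\]
Proposition \ref{cor:fixpt} applied with $m=1$ then yields $v_x(D)\ge v_x(\Delta_0)/(q-1)$ for every $x\in\Ssep$. Because the entries of $\Delta_0$ lie in $B^+$, the right-hand side is never $-\infty$, and it is nonnegative outside a finite subset of $\Sigma_K$; thus $D\in(B^+)^r$, and hence $C=D/s\in(B^+[S^{-1}])^r=B^r$, proving $v\in\bB\otimes_{\bFpK}\bM$. The main obstacle in filling in this plan is the bookkeeping needed to arrange the scalar denominator $f$ of $\Delta^{-1}$ to lie in $\OFKsepp^\times$ so that Lemma \ref{lem:arschrey} is available; since $\FpK$ is only a product of pairwise isomorphic fields (Lemma \ref{lem:onbffkmodules}(a)), this has to be done componentwise, absorbing the orders and non-invertible part of the leading coefficient of $f$ into Laurent monomials in the $t_i$ that can be handled directly.
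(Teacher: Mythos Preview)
Your proof is correct and follows essentially the same route as the paper: choose a basis, write $\sigma(C)=\Delta^{-1}C$ with $\Delta^{-1}=f^{-1}\Delta'$ for $\Delta'\in\Mat_n(\Fp\otimes_k K)$ and $f\in(\Fp\otimes_k K)\cap\OFKsepp^\times$, use Lemma~\ref{lem:arschrey} to produce $s\in S$ with $\sigma(s)/s=f$, and apply Proposition~\ref{cor:fixpt} to $\sigma(sC)=\Delta'(sC)$. The obstacle you flag is handled in the paper simply by observing (via Remark~\ref{rem:invertfksepp}, applied componentwise with respect to the idempotents of $\kp\otimes_k K$ rather than the finer idempotents $e_i$ of $\kp\otimes_k\Ksep$) that any non-zero-divisor in $\Fp\otimes_k K$ can be multiplied by an element of $\Fp\otimes_k K$ to land in $\OFKsepp^\times$; since the needed Laurent monomial is constant on the blocks coming from $\kp\otimes_k K$, it stays inside $\Fp\otimes_k K$ and no separate ``absorption into the equation'' is needed.
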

\begin{proof}
We may assume, by choosing a basis,
that $\bM=(\FpK^{\oplus n},\tau)$ with
$\tau(m)=\Delta\sigma(m)$ for some matrix $\Delta\in\GL_n(\FpK)$ and all $m$.

Since $\Vp(\bM)=(\bFKsepp\otimes\bM)^{\tau}$, we have to prove that for all
$m\in\FKsepp^{\oplus n}$ the equation $\Delta\cdot\sigma(m)=m$ implies
that all entries of $m$ lie in $B$.

Let us denote the inverse of $\Delta$ by $\Delta^{-1}=(g_{ij}/f_{ij})_{i,j}$,
with $g_{ij}\in\Fp\otimes_kK$ and $f_{ij}\in(\Fp\otimes_kK)\cap\OFKsepp^\times$.
Setting $f:=\prod_{i,j}f_{ij}$, we see that $\Delta^{-1}=\frac{1}{f}\Delta'$
for some matrix $\Delta'$ with entries in $\Fp\otimes_kK\subset B^+$.

By Lemma \ref{lem:arschrey}, we may write $f=\sigma(s)/s$ for some
$s\in S$. For any element $m\in M$ write $m':=sm$. Now the equation $\tau(m)=m$
is equivalent to the equation $\sigma(m')=\Delta'\cdot m'$.
By Proposition \ref{cor:fixpt}, this implies that $m'$ has entries in $B^+$,
so in particular $m=m'/s$ has entries in $B$, as claimed.
\end{proof}

\begin{thm}\label{thm:Bexists}
The ring $B$ fulfills Claim \ref{claim:Bexists}.
\end{thm}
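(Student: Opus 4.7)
The plan is straightforward: all three properties in Claim \ref{claim:Bexists} have already been verified in the preceding lemmas and propositions, so the proof of Theorem \ref{thm:Bexists} reduces to citing them in order.

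First, I would establish property (a) of Claim \ref{claim:Bexists}, namely that $\sigma_{\FKsepp}(B)\subset B$ and $B^\sigma=\Fp$. This is precisely the content of Lemma \ref{lem:partofA}, which follows immediately from the $\sigma$-stability of $B^+$ (Lemma \ref{lem:Bplusanditsginv}) and of the multiplicative set $S$ (Lemma \ref{lem:bplus}), together with the triviality of $\sigma$-invariants in $\FKsepp$.

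Next, for property (b), the $\GalK$-stability and the inclusion $\FpK\subset B^{\GalK}$ are given by Proposition \ref{prop:partofB}. The reverse inclusion $B^{\GalK}\subset\FpK$ is the substantive direction (requiring the careful Tamagawa-style estimates on the auxiliary elements $a_N$) and is precisely the statement of Proposition \ref{prop:part2ofB}. Combining the two yields $B^{\GalK}=\FpK$ as required.

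Finally, property (c), that $\Vp(\bM)\subset\bB\otimes_{\bFpK}\bM$ for every $\frkp$-restricted $\bFpK$-module $\bM$, is exactly Lemma \ref{lem:claimc}; there the key maneuver is to clear denominators of $\Delta^{-1}$ using $f=\sigma(s)/s$ with $s\in S$ (Lemma \ref{lem:arschrey}) and then apply the fixed-point estimate of Proposition \ref{cor:fixpt} to conclude that the $\tau$-invariant vectors have entries in $B^+[S^{-1}]=B$. No new work is required; this is purely a bookkeeping step, and the main conceptual obstacle -- the interplay between the ``smallness'' condition $B^{\GalK}=\FpK$ and the ``largeness'' condition in (c) -- has already been overcome in the preceding sections.
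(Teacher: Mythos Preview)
Your proposal is correct and matches the paper's own proof essentially line for line: the paper simply cites Lemma \ref{lem:partofA} for (a), Propositions \ref{prop:partofB} and \ref{prop:part2ofB} for (b), and Lemma \ref{lem:claimc} for (c). The only addition in the paper is the one-line remark that $B$ is a subring of $\FKsepp$ by construction, which you take as implicit.
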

\begin{proof}
By construction, $B$ is a subring of $\FKsepp$.
By Lemma \ref{lem:partofA}, it fulfills Claim \ref{claim:Bexists}(a).
By Propositions \ref{prop:partofB} and \ref{prop:part2ofB}, it
fulfills Claim \ref{claim:Bexists}(b). By Lemma \ref{lem:claimc}, it
also fulfills Claim \ref{claim:Bexists}(c).
\end{proof}

\section{Algebraic Monodromy Groups}

We recall the setup of Tannakian duality.

\begin{dfn}
\begin{enumerate}
  \item Let $F$ be a field. A \emph{pre-Tannakian category over $F$} is
  an $F$-linear rigid tensor category $\mcT$ such that all objects are of
  finite length, and for which the natural homomorphism $F\to\End_{\mcT}(\bbu)$
  is an isomorphism.
  \item Let $\mcT$ be a pre-Tannakian category, and consider an object
  $X$ of $\mcT$. Then $\llkurv X\rrkurv_\otimes$ denotes the smallest
  full abelian subcategory of $\mcT$ closed under tensor products
  and subquotients in $\mcT$.
  \item Let $\mcT$ be a pre-Tannakian category over $F$. Let $F'/F$ be a field
  extension. A \emph{fibre functor} on $\mcT$ is a faithful $F$-linear exact
  tensor functor $\omega:\:\mcT\to\Vect_{F'}$, where $\Vect_{F'}$ denotes the
  category of finite-dimensional $F'$-vector spaces. If $F'=F$, the fibre functor
  is called \emph{neutral}.
  \item A \emph{Tannakian category over $F$} is a pre-Tannakian category for which
  there exists a fibre functor over some field extension $F'/F$.
  \item Let $\mcT$ be a Tannakian category over $F$, consider a fibre functor
  $\omega$ of $\mcT$ over $F'/F$, and fix an object $X$ of $\mcT$. The
  \emph{algebraic monodromy group} of $X$ with respect to $\omega$
  is the functor
  \[G_{\omega}(X):\:\left(\!\left(\text{$F'$-algebras}\right)\!\right)\To\left(\!\left(\text{groups}\right)\!\right),\]
  mapping an $F'$-algebra $R'$ to the group of tensor automorphisms
  of the functor $R'\otimes_{F'}\omega(-)$ from $\llkurv X\rrkurv_\otimes$ to $R'$-modules.
\end{enumerate}
\end{dfn}

\begin{prop}
Let $\mcT$ be a Tannakian category over $F$, consider a fibre functor
$\omega$ of $\mcT$ over $F'/F$, and fix an object $X$ of $\mcT$. Then
the algebraic monodromy group of $X$ with respect to $\mcT$ is
representable by an affine group scheme over $F'$.
\end{prop}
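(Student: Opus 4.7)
The plan is to realise $G_\omega(X)$ as a closed subgroup scheme of $\GL(\omega(X))$, which is itself an affine group scheme over $F'$.

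First I would use that, since $\mcT$ is rigid, $\llkurv X \rrkurv_\otimes$ contains the dual $X^\vee$ as well, and hence every object $Y \in \llkurv X \rrkurv_\otimes$ is a subquotient of some finite direct sum of mixed tensor powers $T^{a,b} := X^{\otimes a} \otimes (X^\vee)^{\otimes b}$. This follows directly from the construction of $\llkurv X \rrkurv_\otimes$ as the smallest full subcategory of $\mcT$ closed under tensor products and subquotients.

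Next I would observe that a tensor automorphism $\varphi = (\varphi_Y)_Y$ of $R' \otimes_{F'} \omega(-)$ restricted to $\llkurv X \rrkurv_\otimes$ is determined by its single component $\varphi_X \in \GL(\omega(X))(R')$: tensor compatibility forces $\varphi_{X^{\otimes a}} = \varphi_X^{\otimes a}$ and $\varphi_{X^\vee} = (\varphi_X^\vee)^{-1}$, hence fixes $\varphi_{T^{a,b}}$; and for arbitrary $Y$, realised as a subquotient via morphisms $Z \into T \onto Y$ with $T$ a direct sum of $T^{a,b}$'s, naturality with respect to the inclusion and projection forces $\varphi_Y$ uniquely.

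Conversely, I would identify which $g \in \GL(\omega(X))(R')$ extend to such a tensor automorphism. For each morphism $f$ in $\llkurv X \rrkurv_\otimes$ between finite direct sums of $T^{a,b}$'s, the commutation condition $\varphi_{\mathrm{target}} \circ \omega(f) = \omega(f) \circ \varphi_{\mathrm{source}}$ translates into a polynomial equation in the matrix entries of $g$, since each $\varphi_{T^{a,b}}$ is a polynomial expression in $g$ and $g^{-1}$. The common vanishing locus is a closed subfunctor $H \subset \GL(\omega(X))$; one checks routinely that $H$ contains the identity, is closed under composition and inverses, hence is a subgroup functor. By the first two steps, $H(R') = G_\omega(X)(R')$ naturally in $R'$, so $G_\omega(X)$ is representable by the closed subgroup scheme $H$ of $\GL(\omega(X))$, which is affine.

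The main obstacle is the bookkeeping needed to turn the abstract tensor-compatibility of $\varphi$ into explicit polynomial conditions on $g$, and to verify that checking naturality against all morphisms in $\llkurv X \rrkurv_\otimes$ reduces to checking it against morphisms among the $T^{a,b}$'s (and their direct sums). This verification is carried out in \S2 of Deligne--Milne, \emph{Tannakian Categories}, from which the proposition may alternatively be extracted directly.
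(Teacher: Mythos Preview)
Your sketch is correct and follows the standard Tannakian argument of Deligne--Milne, realising $G_\omega(X)$ as the closed subgroup of $\GL(\omega(X))$ cut out by the stabilisation conditions coming from morphisms among mixed tensor powers. The paper, by contrast, offers no argument at all: it simply cites \cite[Theorem~3.1.7(a)]{Sta08} and remarks that the result is well-known to experts. So there is nothing to compare at the level of strategy---your proposal \emph{is} the standard proof that the citation points to (or its Deligne--Milne antecedent).

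One small remark on your sketch: the reduction you flag as ``the main obstacle'' is genuinely where the content lies. The clean way to see that commutation with morphisms among direct sums of $T^{a,b}$'s forces $g$ to preserve $\omega(Z)$ for every subobject $Z\subset T$ is to note that such a $Z$ is encoded by the morphism $\bbu\to Z^\vee\otimes Z\hookrightarrow T^\vee\otimes T$, and both $\bbu=T^{0,0}$ and $T^\vee\otimes T$ are themselves direct sums of $T^{a,b}$'s. Commuting with this morphism is precisely what pins down $\omega(Z)$ as a $g$-stable subspace. You correctly defer this to Deligne--Milne, but it is worth knowing that the bookkeeping is not circular.
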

\begin{proof}
\cite[Theorem $3.1.7$(a)]{Sta08}. This seems to be well-known (to the experts).
\end{proof}

Let $F,\Fq,A,K,\iota$ be as in Section $2$, and choose a maximal
ideal $\frkp\neq\ker\iota$. In Section $2$, we have constructed
the category $\AIsomotK$ of $A$-isomotives over $K$. Using either
the results of Section $2$, or the embedding $I$ of Proposition
\ref{prop:embedaisomotk}, we see that it is a pre-Tannakian category.
The category $\RepFpGalK$ is a Tannakian category, since it fulfills the
properties required by a pre-Tannakian category, and the
forgetful functor $U:\:\RepFpGalK\to\Vect_{\Fp}$ is a fibre functor.

In Section $2$, we also constructed the functor
\[\Vp=R_{\frkp}\circ\big(\FKp\otimes_{\FpK}(-)\big)\circ\big(\FpK\otimes_{\FK}(-)\big)\circ I:\:\AIsomotK\to\RepFpGalK,\]
associating to an $A$-isomotive its rational Tate module. It
is faithful, $F$-linear and exact as a composition of
such functors. Therefore, $\AIsomotK$ is Tannakian, with fibre functor $U\circ\Vp$.

Given an $A$-isomotive $\bX$, we set $G_{\frkp}(\bX):=G_{U\circ\Vp}(X)$,
the \emph{algebraic monodromy group} of $X$ at $\frkp$.
On the other hand, we may consider $\Gamma_{\frkp}(\bX)$, the image
of $\GalK:=\Gal(\Ksep/K)$ in $\Aut_{\Fp}\big(\Vp(\bX)\big)$. This
might be called the \emph{$\frkp$-adic monodromy group} of
$\bX$, or rather $\Vp(\bX)$.

\begin{prop}\label{prop:monodromygroupsofabstractgroups}
Let $F'$ be a field, $V$ be a finite-dimensional $F'$-vector space
and consider a subgroup $\Gamma\subset\GL(V)(F')$ with associated
algebraic group $G:=\overline{\Gamma}^{\mathrm{Zar}}\subset\GL(V)$.
The natural homomorphism $G\to G_U(V)$, with target
the the algebraic monodromy
group of $V$ as a representation of $G$ with respect to
the forgetful fibre functor $U$, is an isomorphism.
\end{prop}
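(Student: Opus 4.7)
The natural homomorphism $G \to G_U(V)$ arises because every $g \in G(R')$ acts on every object of $\llkurv V\rrkurv_\otimes$ via functoriality of tensor constructions: its action on $V$ extends canonically to tensor powers and duals, and restricts to subobjects and quotients in $\llkurv V\rrkurv_\otimes$ because, by hypothesis, morphisms between such subquotients are $G$-equivariant. This yields a tensor automorphism of $U|_{\llkurv V\rrkurv_\otimes}$ functorially in $R'$. My plan is to show that the resulting morphism of affine group schemes $G \to G_U(V)$ is both a monomorphism and an epimorphism, hence an isomorphism.

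The monomorphism property is immediate: the composite $G \to G_U(V) \to \GL(V)$ given by evaluation at the object $V$ is just the inclusion $G \subset \GL(V)$, so $G \to G_U(V)$ is injective on $R'$-points functorially in $R'$.

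The main work is surjectivity. Here I would invoke the tensor-theoretic form of Chevalley's theorem: every closed subgroup $G \subset \GL(V)$ is cut out as the simultaneous stabilizer of some collection of lines inside suitable tensor constructions of $V$ and $V^\vee$. More precisely, there exists an object $T$ of $\llkurv V\rrkurv_\otimes$ together with a line $L \subset T$ in $\llkurv V\rrkurv_\otimes$ such that an element $h \in \GL(V)(R')$ lies in $G(R')$ if and only if the induced endomorphism of $T \otimes R'$ stabilises $L \otimes R'$. Given a tensor automorphism $\phi \in G_U(V)(R')$, its component $\phi_V$ on $V$ determines $\phi_T$ by functoriality of the tensor construction; the inclusion $L \hookrightarrow T$ is a morphism in $\llkurv V\rrkurv_\otimes$, and the tensor-naturality of $\phi$ forces $\phi_T$ to restrict to an automorphism of $L$. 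Therefore $\phi_V \in G(R')$, producing the inverse map $G_U(V) \to G$.

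The principal obstacle is securing the tensor-theoretic form of Chevalley's theorem in arbitrary characteristic and inside $\llkurv V\rrkurv_\otimes$, rather than inside some larger ambient category of representations; this is a classical result in the theory of affine group schemes (cf.\ Deligne--Milne). An alternative route, which I would prefer if the infrastructure of \cite{Sta08} provides it, is to appeal to Tannakian reconstruction directly: because $V$ is a faithful representation of $G$, every finite-dimensional representation of $G$ is a subquotient of a tensor construction of $V$, so $\llkurv V\rrkurv_\otimes$ is equivalent to $\Rep_{F'}(G)$, and standard Tannakian duality identifies $\Aut^\otimes(U)$ with $G$.
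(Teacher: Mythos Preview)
Your argument is correct, and in fact you have supplied more than the paper does: the paper's own ``proof'' of this proposition is simply a citation to \cite[Proposition~3.3.3(b)]{Sta08} together with the remark that the statement is well-known. So there is no approach to compare against beyond what one finds in the standard Tannakian literature.

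Both of the routes you sketch are valid. The second one---observing that $V$ is a faithful representation of $G$, hence $\llkurv V\rrkurv_\otimes = \Rep_{F'}(G)$, and then invoking the reconstruction theorem $\Aut^\otimes(U)\cong G$---is the cleanest and is essentially what the cited reference (or Deligne--Milne) would contain. The Chevalley route also works but requires a little more care: you need the scheme-theoretic form of Chevalley's theorem (so that $G$ equals the stabiliser of $L$ as a closed subscheme of $\GL(V)$, not merely on field-valued points), which is available in arbitrary characteristic. One small point worth making explicit in either approach is that, because $G$ is the Zariski closure of $\Gamma$, the $\Gamma$-stable subspaces of any tensor construction of $V$ coincide with the $G$-stable ones; this is what guarantees that $\llkurv V\rrkurv_\otimes$ is the same category whether computed inside abstract representations of $\Gamma$ or algebraic representations of $G$, and hence that the proposition really applies in the paper's intended situation where the ambient category is $\RepFpGalK$.
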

\begin{proof}
\cite[Proposition 3.3.3(b)]{Sta08}. This seems
to be well-known (to the experts).
\end{proof}

It follows that $\Gamma_{\frkp}(\bX)$ is Zariski-dense subgroup
of the group of $\Fp$-rational
points of the algebraic monodromy group of $\Vp(\bX)$ with respect
to the forgetful fibre functor $U$ of $\RepFpGalK$. In order
to prove Theorem \ref{thm:secondmainthm}(a), we must compare
$G_{U\circ\Vp}(\bX)$ and $G_U(\Vp\bX)$. It is here
that we invoke one of the main results of my article \cite{Sta08}.

\begin{thm}\label{thm:sta08y}
Let $F'/F$ be a separable field extension, $\mcT$ a Tannakian
category over $F$, $\mcT'$ a Tannakian category over $F'$
and $\omega:\:\mcT'\to\Vect_{F'}$ a neutral fibre functor.
Let $V:\:\mcT\to\mcT'$ be an $F$-linear exact functor which
is $F'/F$-fully faithful, and semisimple on objects.

For every object $X$ of $\mcT$ the natural homomorphism $G_{\omega}\big(V(X)\big)\to G_{\omega\circ V}(X)$
is an isomorphism of algebraic groups.
\end{thm}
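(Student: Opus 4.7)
The plan is to apply the standard Tannakian criteria for when a tensor functor induces an isomorphism on affine group schemes, after reducing to the tensor subcategories generated by $X$ and $V(X)$ and after passing through an appropriate extension of scalars of $\mcT$ from $F$ to $F'$. As a first reduction, note that the exactness and tensor-compatibility of $V$ imply that its restriction to $\llkurv X\rrkurv_\otimes$ factors through $\llkurv V(X)\rrkurv_\otimes$. We therefore obtain a well-defined tensor functor $V_0\colon \llkurv X\rrkurv_\otimes\to\llkurv V(X)\rrkurv_\otimes$ of Tannakian categories (over $F$ and $F'$ respectively), and it is this $V_0$ that induces the morphism $G_\omega(V(X))\to G_{\omega\circ V}(X)$ we must show to be an isomorphism.

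To show that this morphism is a closed immersion, I would appeal to the standard Deligne--Milne criterion: it suffices that every object of $\llkurv V(X)\rrkurv_\otimes$ occur as a subquotient of some $V_0(Y)$ with $Y\in\llkurv X\rrkurv_\otimes$. By the very construction of $\llkurv V(X)\rrkurv_\otimes$ as the smallest tensor- and subquotient-stable full abelian subcategory containing $V(X)=V_0(X)$, every one of its objects is such a subquotient of an iterated tensor product built from $V_0(X)$, and closed immersion follows.

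For the reverse direction (faithful flatness, hence in conjunction with the closed immersion, isomorphism), the natural strategy is to introduce the $F'$-linear Tannakian base-change $\mcT\otimes_F F'$ and an induced $F'$-linear exact tensor functor $\overline{V}\colon\mcT\otimes_FF'\to\mcT'$. Separability of $F'/F$ is what makes this base-change a well-behaved Tannakian category (an analogue of the fact that $F'\otimes_F A$ is semisimple for a semisimple $F$-algebra $A$ when $F'/F$ is separable). The $F'/F$-full faithfulness hypothesis on $V$ translates into genuine full faithfulness of $\overline{V}$. Combined with the hypothesis that $V$ is semisimple on objects and with Proposition \ref{prop:relfullfaithfulisnonsemisimpleonobjects}, one deduces that $\overline{V}$ reflects subobjects: any subobject of $\overline{V}(Y)$ inside $\mcT'$ descends to a subobject of $Y$ in $\mcT\otimes_F F'$, because such a subobject is cut out by an idempotent endomorphism (using semisimplicity after scalar extension of any short exact sequence) and idempotents descend by full faithfulness. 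This is exactly the Deligne--Milne criterion for faithful flatness.

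The main obstacle, and the reason the hypotheses are set up as they are, is ensuring the interplay between the two Tannakian categories over different base fields. Concretely, one must show that $\mcT\otimes_F F'$ exists and is Tannakian, that $\overline{V}$ inherits the required properties from $V$, and that semisimplicity is preserved under the extension $F\to F'$. The separability assumption is precisely what is needed to keep simple objects simple (or at least semisimple) under scalar extension, so that the semisimplicity hypothesis on $V$ can be leveraged as a splitting principle in $\mcT\otimes_F F'$. Once these foundational points are set up, verification of the Tannakian criteria reduces to a routine check, and the two-sided comparison yields the desired isomorphism $G_\omega(V(X))\isom G_{\omega\circ V}(X)$.
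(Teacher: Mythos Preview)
The paper does not give a self-contained proof here; it simply cites \cite[Proposition~3.1.8]{Sta08}, where the requisite scalar-extension machinery for Tannakian categories is developed. Your sketch outlines what is plausibly the strategy of that reference, and the overall architecture---restrict to $\llkurv X\rrkurv_\otimes$ and $\llkurv V(X)\rrkurv_\otimes$, pass to the scalar extension $\mcT\otimes_FF'$ to obtain an $F'$-linear functor $\overline{V}$, then invoke the Deligne--Milne criteria for closed immersion and faithful flatness---is sound. The closed-immersion half is correct as you state it.

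The faithful-flatness half, however, has a real gap. You assert that ``any subobject of $\overline{V}(Y)$ \ldots\ is cut out by an idempotent endomorphism (using semisimplicity after scalar extension of any short exact sequence)''. This reasoning is valid only when $\overline{V}(Y)$ is semisimple. The hypothesis that $V$ is semisimple on objects says that $V$ carries semisimple $Y$ to semisimple $V(Y)$; it says nothing about arbitrary $Y$, and combined with Proposition~\ref{prop:relfullfaithfulisnonsemisimpleonobjects} it only gives the biconditional ``$Y$ semisimple $\Leftrightarrow$ $V(Y)$ semisimple''. Since the Deligne--Milne criterion requires the essential image to be closed under subobjects for \emph{every} $Y$, your idempotent argument does not cover the general case. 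What is actually needed is first to show that $\overline{V}$ is fully faithful on \emph{all} of $\mcT\otimes_FF'$ (not merely on the objects coming from $\mcT$, which is all that $F'/F$-full faithfulness gives directly); the semisimplicity-on-objects hypothesis together with separability of $F'/F$ is precisely what makes this extension step go through, and this is the substantive content buried in \cite{Sta08}. Once genuine full faithfulness of $\overline{V}$ is in hand, one can either run an induction on the length of $Y$, or observe more directly that full faithfulness forces $\Hom(\bbu,Y)\cong\Hom(\bbu,\overline{V}(Y))$ for all $Y$, i.e.\ the two closed subgroups of $\GL(\omega V(X))$ have the same invariants on every representation, hence coincide.
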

\begin{proof}
\cite[Proposition $3.1.8$]{Sta08}.
\end{proof}

\begin{proof}[Proof of Theorem \ref{thm:secondmainthm}(a)]
Theorem \ref{thm:mainthm} and Proposition \ref{prop:tateconj}
show that $\Vp$ has the properties
required in Theorem \ref{thm:sta08y}. Together with
Proposition \ref{prop:monodromygroupsofabstractgroups}, we
see that the image of $\Gamma_{\frkp}(\bX)\to G_{\frkp}(\bX)(\Fp)$
is indeed Zariski-dense in $G_{\frkp}(\bX)$ for every
$A$-isomotive $\bX$.
\end{proof}

\begin{dfn}
A semisimple $F$-algebra $E$ is \emph{separable} if the
center of each simple factor of $E$ is a separable field
extension of $F$.
\end{dfn}

\begin{prop}\label{thm:redcrit}
Let $F'$ be a field, $V$ a finite-dimensional $F'$-vector space, and consider a closed algebraic subgroup $G\subset\GL(V)$.
If $V$ is semisimple as a representation of $G$, and $\End_G(V)$ is a separable $F'$-algebra, then the identity
component $G^\circ$ is a reductive group.
\end{prop}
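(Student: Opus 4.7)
The plan is to show that the geometric unipotent radical $U := R_u(G^\circ_{\overline{F'}})$ is trivial, which by definition yields reductivity of $G^\circ$. The whole argument takes place after base change to $\overline{F'}$.

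The key input is an upgrade of the semisimplicity hypothesis from $F'$ to $\overline{F'}$, and this is where separability enters. A classical result on finite-dimensional modules asserts that a semisimple $G$-module $V$ over $F'$ remains semisimple after every field extension if and only if its endomorphism algebra $\End_G(V)$ is separable as an $F'$-algebra. Explicitly: in the isotypic decomposition $V = \bigoplus_i V_i^{n_i}$, setting $D_i := \End_G(V_i)$ with center $Z_i$, separability of each $Z_i/F'$ ensures that $Z_i \otimes_{F'} \overline{F'}$ is a product of copies of $\overline{F'}$, so that $D_i \otimes_{F'} \overline{F'}$ is a product of matrix algebras over $\overline{F'}$, and hence each base-changed isotypic component, and therefore $V_{\overline{F'}}$ itself, remains a semisimple $G_{\overline{F'}}$-module.

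The remainder is standard. Since $U$ is characteristic in $G^\circ_{\overline{F'}}$ and the latter is normal in $G_{\overline{F'}}$, $U$ is normal in $G_{\overline{F'}}$; hence the space of invariants $V_{\overline{F'}}^U$ is a $G_{\overline{F'}}$-submodule of $V_{\overline{F'}}$. By the upgraded semisimplicity, I choose a $G_{\overline{F'}}$-stable complement $W$, giving $V_{\overline{F'}} = V_{\overline{F'}}^U \oplus W$. If $W$ were nonzero, the classical fact that a unipotent algebraic group acting on a nonzero finite-dimensional vector space over an algebraically closed field has a nonzero fixed vector would force $W^U \neq 0$, contradicting $W^U \subseteq V_{\overline{F'}}^U \cap W = 0$. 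Hence $W = 0$, so $U$ acts trivially on $V_{\overline{F'}}$, and faithfulness of the closed embedding $G_{\overline{F'}} \hookrightarrow \GL(V_{\overline{F'}})$ forces $U = 1$.

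I expect the main obstacle to be the first step, namely the descent of semisimplicity under the base change $F' \to \overline{F'}$ via separability of $\End_G(V)$; this is a statement about the base-change behaviour of division algebras with separable centres that needs care in positive characteristic. The remainder is a routine combination of the Lie--Kolchin fixed-point property for unipotent groups with the extraction of a complement from the semisimple representation hypothesis.
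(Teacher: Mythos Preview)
Your argument is correct and is the standard proof of this fact. The paper itself does not prove the proposition: it simply cites \cite[Proposition~3.2.1]{Sta08} and remarks that the statement ``seems to be well-known (to the experts)''. So there is nothing to compare against beyond noting that your proof supplies exactly the kind of argument the citation points to: pass to $\overline{F'}$, use separability of $\End_G(V)$ to ensure $V_{\overline{F'}}$ stays semisimple as a $G_{\overline{F'}}$-representation, observe that the unipotent radical $U$ of $G^\circ_{\overline{F'}}$ is normal in $G_{\overline{F'}}$, split off $V_{\overline{F'}}^U$, and kill the complement via the fixed-point property of unipotent groups. Your identification of the only nontrivial step (preservation of semisimplicity under base change via separability of the endomorphism algebra) is also accurate.
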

\begin{proof}
\cite[Proposition $3.2.1$]{Sta08}.
This seems to be well-known (to the experts).
\end{proof}

\begin{proof}[Proof of Theorem \ref{thm:secondmainthm}(b)]
Let $\bX$ be a semisimple $A$-isomotive with separable
endomorphism algebra. By Theorem \ref{thm:secondmainthm}(a)
the algebraic monodromy group $G:=G_{\frkp}(\bX)$ acts
faithfully on $\Vp(\bX)$, the rational Tate module of $\bX$.
Since $\Vp$ is fully faithfull by Proposition \ref{prop:tateconj},
$\End_G(\Vp\bX)\isom\Fp\otimes_F\End(\bX)$,
so this is a semisimple separable $\Fp$-algebra by
\cite[no. 7, \S 5, Proposition 6, Corollaire]{BourbA}.
Therefore, Proposition \ref{thm:redcrit} implies that $G^\circ$
is indeed a reductive group.
\end{proof}

\bibliographystyle{alpha}

\end{document}